\numberwithin{equation}{section}
\newtheorem{theorem}{Theorem}
\newtheorem{Theorem}[theorem]{Theorem}
\newtheorem*{NNTheorem}{Theorem}
\newtheorem*{NNConjecture}{Conjecture}
\newtheorem{lemma}{Lemma}[section]
\newtheorem{Lemma}[lemma]{Lemma}
\newtheorem{corollary}{Corollary}[section]
\newtheorem{Corollary}[corollary]{Corollary}
\newtheorem{proposition}{Proposition}[section]
\newtheorem{Proposition}[proposition]{Proposition}
\newtheorem{Question}{Question}
\theoremstyle{definition}
\newtheorem{Convention}{Convention}
\theoremstyle{remark}
\newtheorem{remark}{Remark}[section]
\newtheorem{Remark}[remark]{Remark}
\newtheorem*{NNRemark}{Remark}
\renewcommand{\Im}{\operatorname{Im}}
\renewcommand{\Re}{\operatorname{Re}}
\renewcommand{\epsilon}{\varepsilon}
\renewcommand{\phi}{\varphi}
\newcommand{\field}[1]{\mathbb{#1}}
\newcommand{\R}{\field{R}}
\newcommand{\C}{\field{C}}
\newcommand{\Z}{\field{Z}}
\newcommand{\Id}{\operatorname{Id}}
\newcommand{\id}{\operatorname{id}}
\newcommand{\cE}{{\mathcal E}}
\newcommand{\cH}{{\mathcal H}}
\newcommand{\cK}{{\mathcal K}}
\newcommand{\cL}{{\mathcal L}}
\newcommand{\cM}{{\mathcal M}}
\newcommand{\cQ}{{\mathcal Q}}
\newcommand{\cT}{{\mathcal T}}
\newcommand{\cZ}{{\mathcal Z}}
\newcommand{\noz}{n}
\newcommand{\PcL}{\operatorname{P}\hspace*{-2pt}\cL}
\newcommand{\SLZ}{\operatorname{SL}(2,{\mathbb Z})}
\newcommand{\PSLZ}{\operatorname{PSL}(2,{\mathbb Z})}
\newcommand{\SL}{\operatorname{SL}(2,{\mathbb R})}
\newcommand{\PSO}{\operatorname{PSO}(2,{\mathbb R})}
\newcommand{\PSL}{\operatorname{PSL}(2,{\mathbb R})}
\newcommand{\GL}{\operatorname{GL}_+(2,{\mathbb R})}
\newcommand{\Upq}{\operatorname{U}(p,q)}
\newcommand{\CP}{{\mathbb C}\!\operatorname{P}^1}
\newcommand{\card}{\operatorname{card}}
\newcommand{\Area}{\operatorname{Area}}
\newcommand{\const}{\mathit{const}}
\definecolor{mygreen}{cmyk}{0.75,0,.75,0.75}
\newlength{\halfbls}\setlength{\halfbls}{.5\baselineskip}
\begin{document}
\title{Zero Lyapunov exponents of the Hodge bundle}

\author{Giovanni Forni}
\address{Giovanni Forni: Department of Mathematics, University of Maryland, College Park, MD 20742-4015, USA}
\email{gforni@math.umd.edu.}
\author{Carlos Matheus}
\address{Carlos Matheus: CNRS, LAGA, Institut Galil\'ee, Universit\'e Paris 13, 99, Avenue Jean-Baptiste
Cl\'ement, 93430, Villetaneuse, France}
\email{matheus@impa.br.}
\urladdr{http://www.impa.br/$\sim$cmateus}
\author{Anton Zorich}
\address{Anton Zorich:
Institut de math\'ematiques de Jussieu
and Institut universitaire de France,
Universit\'e Paris 7, Paris, France}
\email{zorich@math.jussieu.fr}

\date{April 18, 2014}

\begin{abstract}
By the results of G.~Forni and of R.~Trevi\~no, the Lyapunov spectrum
of  the  Hodge  bundle  over  the  Teichm\"uller geodesic flow on the
strata  of  Abelian  and  of quadratic differentials does not contain
zeroes  even though for certain invariant submanifolds zero exponents
are  present  in  the  Lyapunov  spectrum.  In  all  previously known
examples,  the  zero  exponents  correspond to those $\PSL$-invariant
subbundles  of  the  real Hodge bundle for which the monodromy of the
Gauss---Manin  connection  acts by isometries of the Hodge metric. We
present  an  example  of an arithmetic Teichm\"uller curve, for which
the   real  Hodge  bundle  does  not  contain  any  $\PSL$-invariant,
subbundles,  and  nevertheless  its  spectrum of Lyapunov
exponents   contains  zeroes.  We  describe  the  mechanism  of  this
phenomenon;  it covers the previously known situation as a particular
case.  Conjecturally,  this is the only way zero exponents can appear
in the Lyapunov spectrum of the Hodge bundle for any $\PSL$-invariant
probability measure.
\end{abstract}

\maketitle

\vspace*{-0.5cm}

\setcounter{tocdepth}{2}
\tableofcontents

%\newpage

%##############################################################
%##############################################################
%##############################################################
\section{Introduction}

A  complex  structure  on  the  Riemann  surface  $X$  of  genus  $g$
determines  a  complex  $g$-dimensional  space of holomorphic 1-forms
$\Omega(X)$ on $X$, and the Hodge decomposition
$$
H^1(X;\C{}) = H^{1,0}(X)\oplus H^{0,1}(X) \simeq
\Omega(X)\oplus\bar \Omega(X)\ .
$$
The pseudo-Hermitian intersection form
\begin{equation}
\label{eq:intersection:form}
\langle\omega_1,\omega_2\rangle:=\frac{i}{2}
\int_{X} \omega_1\wedge  \overline{\omega_2}\qquad\qquad\qquad
\end{equation}
is   positive-definite   on  $H^{1,0}(X)$  and  negative-definite  on
$H^{0,1}(X)$.

For any linear subspace $V\subset H^1(X,\C{})$ define its holomorphic
and anti-holomorphic parts respectively as
$$
V^{1,0}:=V\cap H^{1,0}(X) \qquad\text{and}\qquad
V^{0,1}:=V\cap H^{0,1}(X)\,.
$$

A subspace $V$ of the complex cohomology which decomposes as a direct
sum  of  its  holomorphic  and  anti-homolomorphic  parts,  that  is,
$V=V^{1,0}\oplus  V^{0,1}$,  will  be  called a \emph{split} subspace
(the  case  when  one  of  the  summands  is  null is not excluded: a
subspace    $V$    which    coincides   with   its   holomorphic   or
anti-homolomorphic   part  is  also  considered  as  \textit{split}).
Clearly,   the   restriction   to  any  split  subspace  $V$  of  the
pseudo-Hermitian   form  of  formula~\eqref{eq:intersection:form}  is
non-degenerate. Note that the converse is, in general, false.

The complex Hodge bundle $H^1_\C$ is the bundle over the moduli space
$\cM_g$  of  Riemann  surfaces  with  fiber  the  complex  cohomology
$H^1(X,\C)$  at any Riemann surface $X$. The complex Hodge bundle can
be   pulled  back  to  the  moduli  space  of  Abelian  or  quadratic
differentials   under  the  natural  projections  $\cH_g\to\cM_g$  or
$\cQ_g\to\cM_g$  respectively.  A  subbundle $V$ of the complex Hodge
bundle  is  called  a \emph{split} subbundle if all of its fibers are
split  subspaces or, in other terms, if it decomposes as a direct sum
of its holomorphic and anti-holomorphic parts.

Let  $\cL_1$  be  an  orbifold  in  some stratum of unit area Abelian
differentials   (respectively,   in  some  stratum  of  unit  area
meromorphic quadratic differentials with at most simple poles).
Throughout   this   paper   we   say   that   such   an  orbifold  is
$\SL$-\textit{invariant}  (respectively, $\PSL$-\textit{invariant}) if it is
the support of a Borel probability measure, invariant with respect to
the  natural  action  of  the group $\SL$ (respectively, of the group
$\PSL$)  and ergodic with respect to the Teichm\"uller geodesic flow.
The  action  of $\SL$ (respectively, of $\PSL$) on $\cL_1$ lifts to a
cocycle on the complex Hodge bundle $H^1_\C$ over $\cL_1$ by parallel
transport  of  cohomology  classes  with  respect to the Gauss--Manin
connection.  This  cocycle  is  called the complex Kontsevich--Zorich
cocycle.

It   follows   from   this   definition   that  the  pseudo-Hermitian
intersection     form     is     $\SL$-equivariant     (respectively,
$\PSL$-equivariant) under the complex Kontsevich--Zorich cocycle. The
complex  Kontsevich--Zorich cocycle has a well-defined restriction to
the  real  Hodge  bundle $H^1_\R$ (the real part of the complex Hodge
bundle), called simply the Kontsevich--Zorich cocycle.

By the results H.~Masur~\cite{Masur} and of W.~Veech~\cite{Veech}, the
Teichm\"uller geodesic flow is ergodic on all connected components of
all   strata   in   the   moduli  spaces  of  Abelian differentials
and in the moduli spaces of meromorphic quadratic
differentials with at most simple poles  with   respect   to   the   unique $\SL$-invariant
(respectively, $\PSL$-invariant),  absolutely continuous,  finite  measure. By the
further    results    of    G.~Forni~\cite{Forni:positive}   and   of
R.~Trevi\~no~\cite{Trevino}, it  is  known  that  the  action  of the
Teichm\"uller  geodesic flow on the real or complex Hodge bundle over
such  $\SL$-invariant  (respectively,  $\PSL$-invariant) orbifolds
has only non-zero Lyapunov exponents.

In this paper we continue our investigation on the occurrence of zero
Lyapunov   exponents  for  special  $\PSL$-invariant  orbifolds  (see
\cite{Forni:Matheus:Zorich_1}    ,    \cite{Forni:Matheus:Zorich_2}).
Previous examples of $\SL$-invariant (respectively, $\PSL$-invariant)
measures  with  zero exponents in the Lyapunov spectrum were found in
the class of cyclic covers over $\CP$ branched exactly at four points
(see~\cite{Bouw:Moeller},      \cite{Eskin:Kontsevich:Zorich:cyclic},
\cite{ForniSurvey},         \cite{Forni:Matheus:Zorich_1}         and
\cite{Forni:Matheus:Zorich_2}).  In all of those examples the neutral
Oseledets  subbundle  (that  is,  the  subbundle of the zero Lyapunov
exponent  in the Oseledets decomposition) is a smooth $\SL$-invariant
(respectively, $\PSL$-invariant) split subbundle.

Our  main  contribution  in  this  paper  is the analysis of a cocycle   acting   on   the
complex  Hodge  bundle  over  a  certain $\PSL$-invariant   orbifold   (which   projects  onto
an  arithmetic Teichm\"uller  curve)  in  the  moduli space of holomorphic quadratic
differentials in genus four. This particular example was inspired by   the   work   of  C.~McMullen
on  the  Hodge  theory  of general cyclic  covers~\cite{McMullen}.
It is the first explicit example of a cocycle with the Lyapunov spectrum containing zero
exponents such that the neutral Oseledets subbundle, which is by definition flow-invariant,
is nevertheless \textit{not} $\PSL$-invariant. In other words, the neutral subbundle
in this example \textit{is not} a pullback of a flat  subbundle  of  the  Hodge
bundle  over the corresponding Teichm\"uller curve.

In  fact, the zero exponents in this new example, as well as those in
all  previously  known  ones,  can  be  explained  by a simple common
mechanism.  Conjecturally  such a mechanism is completely general and
accounts  for  all zero exponents with respect to any $\SL$-invariant
(respectively,  $\PSL$-invariant)  probability  measure on the moduli
spaces  of Abelian (respectively, quadratic) differentials. It can be
outlined  as  follows.  We conjecture that a semisimplicity property
holds  for  the  complex  Hodge  bundle  in  the  spirit  of  Deligne
Semisimplicity Theorem. Namely, we conjecture that the restriction of
the  complex  Hodge bundle to any $\SL$-invariant (respectively,
$\PSL$-invariant)  orbifold  as  above  splits  into  a direct sum of
irreducible      $\SL$-invariant      (respectively,      irreducible
$\PSL$-invariant), continuous, split subbundles.\footnote{This 
conjecture has been recently proved by S.~Filip \cite{Fil13b}. Semisimplicity of the Kontsevich--Zorich cocycle on the {\it real} Hodge bundle had been proved earlier by Avila, Eskin and M\"oller (see Theorem 1.5 in \cite{Avila:Eskin:Moeller}) after a weaker semisimplicity result, establishing semisimplicity of the algebraic hulls of the cocycle, was proved by Eskin and Mirzahani (see \cite{Eskin:Mirzakhani}, Appendix A, also quoted as Theorem 2.1 in \cite{Avila:Eskin:Moeller}).}

The  \textit{continuous} vector subbundles in the known examples are,
actually,  smooth  (even  analytic,  or holomorphic). However, in the
context of this paper it is important to distinguish subbundles which
are only \textit{measurable} and those which are \textit{continuous}.
To    stress   this   dichotomy   in the general case we  shall   always   speak
about~\textit{continuous}  subbundles,  even  when  we  know  that they are
smooth (analytic, holomorphic). In particular, a $\SL$-invariant
(respectively, $\PSL$-invariant) subbundle of the Hodge bundle is
called \emph{irreducible} if it has no non-trivial \emph{continuous}
 $\SL$-invariant  (respectively, $\PSL$-invariant) subbundle.
 In the special case of subbundles defined over suborbifolds which
 project onto Teichm\"uller curves \textit{all} $\SL$-invariant (respectively,
 $\PSL$-invariant) subbundles are continuous, in fact smooth, since
 by definition the action of the group on the suborbifold is transitive.

We describe this splitting in our example. In fact, it was
observed  by  M.~M\"oller  (see  Theorem  2.1 in~\cite{Moeller}) that
whenever  the  projection  of  the  invariant orbifold $\cL_1$ to the
moduli space $\cM_g$ is a Teichm\"uller curve (as in our example) the
Deligne    Semisimplicity   Theorem~\cite{Deligne:87}   implies   the
existence  and  uniqueness  of the above-mentioned decomposition. The
action of the group $\SL$ (respectively, $\PSL$) on each irreducible,
invariant  split  subbundle  of the complex Hodge bundle is a cocycle
with  values  in  the group $U(p,q)$ of pseudo-unitary matrices, that
is,  matrices preserving a quadratic form of signature $(p,q)$. It is
a   general   result,   very   likely  known  to  experts,  that  any
$U(p,q)$-cocycle   has   at  least  $\vert  p-q\vert$  zero  Lyapunov
exponents  (we  include  a proof of this simple fundamental result in
Appendix~\ref{a:Lyapunov:spectrum:of:pseudo-unitary:cocycles}).

In  the  very  special  case of cyclic covers branched at four points,
considered   in~\cite{Bouw:Moeller},  \cite{Eskin:Kontsevich:Zorich},
\cite{Forni:Matheus:Zorich_1},   \cite{Forni:Matheus:Zorich_2},  only
pseudo-unitary irreducible cocycles of type $(0,2)$, $(2,0)$ $(0,1)$,
$(1,0)$,  and  $(1,1)$  arise.  In  the first four cases the Lyapunov
spectrum  of  the  corresponding  invariant  irreducible component is
null,  while  in the fifth case there is a symmetric pair of non-zero
exponents.  The  examples  which  we  present  in  this  paper  are
suborbifolds of the locus of cyclic covers branched at six points.
In this case we have a decomposition  into  two   (complex conjugate)
continuous components  of  type  $(3,1)$  and  $(1,3)$. It follows that the zero
exponent  has multiplicity at least $2$ in each  component
(which  is  of  complex  dimension  $4$). We prove that, in fact, the
multiplicity   of  the  zero  exponent  is  exactly  $2$.  Our main example
is a suborbifold which projects onto a certain arithmetic Teichm\"uller
curve. In this case we prove that the above-mentioned decomposition
is in fact \textit{irreducible}. The irreducibility   of   the   components
implies   that  the  complex two-dimensional  neutral  Oseledets subbundles
of  both components  cannot  be $\PSL$-invariant.
For general suborbifolds of our locus of cyclic covers branched at six points,
it follows from  results of~\cite{Forni:Matheus:Zorich_2}  (see in particular
Theorem 8 in that paper) that whenever the complex two-dimensional
neutral  Oseledets subbundles  of  both  components  are  $\PSL$-invariant, then they
are also continuous, in fact smooth. It follows then from our irreducibility
result that the neutral  Oseledets subbundles are not $\PSL$-invariant
on the full locus of cyclic covers branched at six points, which contains our
main example. Moreover, recent work of Avila, Matheus  and  Yoccoz
\cite{Avila:Matheus:Yoccoz}  suggests that the neutral Oseledets subbundles
are also not continuous  there.

As in all known examples, our cocycle is non-degenerate, in the sense
that the multiplicity of the zero exponent is exactly equal to $\vert
p-q\vert$.  Conjecturally,  all  cocycles  arising from the action of
$\SL$   (respectively,   $\PSL$)  on  the  moduli  space  of  Abelian
(respectively,  quadratic)  differentials  are  non-degenerate in the
above  sense and are simple, in the sense that all non-zero exponents
are   simple  in  every  irreducible  $\SL$-invariant  (respectively,
$\PSL$-invariant) continuous component of the complex Hodge bundle.
(The  simplicity of the Lyapunov spectrum for the canonical invariant
measure  on  the  connected  components  of  the  strata  of  Abelian
differentials is proved in~\cite{Avila:Viana}; an analogous statement
for  the  strata  of  \textit{quadratic} differentials for the moment
remains conjectural.)

%  However,   it   is  unknown  whether  $\SL$-invariant  (respectively,
%  $\PSL$-invariant)  suborbifolds  of  the  moduli  spaces of unit area
%  Abelian   (respectively,   quadratic)   differentials  (endowed  with
%  invariant probability measures) are always quasiprojective varieties,
%  so  in  general  we  cannot directly apply the Deligne Semisimplicity
%  Theorem.   Note   also   that   the  decomposition  into  irreducible
%  $\SL$-invariant  (respectively, $\PSL$-invariant) components might be
%  finer  than  the decomposition coming from the Deligne Semisimplicity
%  Theorem,  since  the  summands  in that decomposition are irreducible
%  with  respect  to  the  action  by  parallel  transport  of  the full
%  fundamental  group  of  the moduli space.

Note   that   currently   one   cannot   naively  apply  the  Deligne
Semisimplicity Theorem to construct an $\SL$-invariant (respectively,
a  $\PSL$-invariant)  splitting  of  the  Hodge bundle over a general
invariant  suborbifold  $\cL$.  Even  though  by  recent  results  of
A.~Eskin and M.~Mirzakhani~\cite{Eskin:Mirzakhani}   each   such
invariant  suborbifold  is an affine subspace in the ambient stratum, it
is not known whether it is a quasiprojective variety or not\footnote{It has been proved recently  by S.~Filip \cite{Fil13a} that all invariant suborbifolds are quasiprojective varieties.}.

Note also that the conjectural decomposition of the complex Hodge bundle into
irreducible $\SL$-invariant  (respectively, $\PSL$-invariant) components might be
finer  than  the decomposition coming from the Deligne Semisimplicity
Theorem.  The  summands in the first (hypothetical) decomposition are
irreducible  only  with  respect  to the action by parallel transport
\textit{along   the   $\GL$-orbits}   in   $\cL$,   or  equivalently,
\textit{along  the  leaves  of  the foliation by \mbox{Teichm\"uller}
discs} in the projectivization $\PcL$, while the decomposition of the
Hodge  bundle  provided  by  the  Deligne  Semisimplicity  Theorem is
invariant with respect to the action by parallel transport \textit{of
the  full  fundamental group} of $\cL$. For example, the Hodge bundle
$H^1_{\C{}}$ over the moduli space $\cH_g$ of Abelian differentials
splits  into  a  direct sum of $(1,1)$-tautological subbundle and its
$(g-1,g-1)$-orthogonal complement. This splitting is $\GL$-invariant,
but  it  is by no means invariant under the parallel transport in the
directions transversal to the orbits of $\GL$. The only case when the
two  splittings  certainly  coincide corresponds to the Teichm\"uller
curves,  when the entire orbifold $\cL$ is represented by a single
orbit of $\GL$.

We  conclude  the  introduction  by  formulating  an  outline  of the
principal  conjectures.  

\begin{NNConjecture}
Let $\cL_1$ be a suborbifold in the moduli space of unit area Abelian
differentials  or  in  the  moduli  space  of  unit  area meromorphic
quadratic  differentials  with  at  most  simple  poles. Suppose that
$\cL_1$  is  endowed with a Borel probability measure, invariant with
respect  to  the  natural action of the group $\SL$ (respectively, of
the  group  $\PSL$)  and  ergodic  with  respect to the Teichm\"uller
geodesic  flow.  The  Lyapunov  spectrum  of the complex Hogde bundle
$H^1_{\C{}}$  over the Teichm\"uller geodesic flow on $\cL_1$ has the
following properties.

(I.)  Let  $r$  be  the  total number of zero entries in the Lyapunov
spectrum.   By  passing,  if  necessary,  to  an  appropriate  finite
(possibly  ramified)  cover  $\hat\cL_1$ of $\cL_1$ one can decompose
the vector bundle induced from the Hodge bundle over $\hat\cL_1$ into
a   direct   sum   of   irreducible   $\SL$-invariant  (respectively,
irreducible  $\PSL$-invariant) continuous split subbundles.\footnote{This part of the conjecture in a more precise form has been recently established by S.~Filip \cite{Fil13b}.}
Denote by $(p_i,q_i)$  the signature of the restriction of the pseudo-Hermitian
intersection  form to the corresponding split subbundle. Then $\sum_i
|p_i-q_i|=r$.

(II.)  By passing,  if  necessary,  to  an  appropriate finite (possibly
ramified)  cover $\hat\cL_1$ of $\cL_1$ one can decompose
the vector bundle  induced  from the Hodge  bundle  over  $\hat\cL_1$ into a
direct sum of irreducible $\SL$-invariant (respectively,  irreducible $\PSL$-invariant)
continuous split  subbundles, such that the
nonzero part of the Lyapunov spectrum of each summand is simple.
\end{NNConjecture}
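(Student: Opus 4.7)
The plan is to reduce both parts of the conjecture to a single semisimple decomposition statement for $H^1_\C$ over an appropriate finite ramified cover $\hat\cL_1 \to \cL_1$, and then to analyze each irreducible summand as a pseudo-unitary cocycle in the spirit of the concrete example treated in this paper.

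Producing the decomposition is the first task. Over Teichm\"uller curves this is precisely M\"oller's application of Deligne's Semisimplicity Theorem~\cite{Deligne:87} cited above. To extend it to general invariant suborbifolds, one would combine the Eskin-Mirzakhani affineness of $\cL_1$ with an algebraization statement on a suitable finite cover, after which Deligne applies to the polarized variation of Hodge structures. Each resulting summand decomposes into its $(1,0)$ and $(0,1)$ parts, so the pseudo-Hermitian form~\eqref{eq:intersection:form} restricts non-degenerately with some signature $(p_i,q_i)$, and the restricted Kontsevich-Zorich cocycle takes values in $U(p_i,q_i)$.

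For part (I) the lower bound $r \ge \sum_i |p_i-q_i|$ is immediate from the pseudo-unitary lemma in Appendix~\ref{a:Lyapunov:spectrum:of:pseudo-unitary:cocycles}. The reverse inequality is the main obstacle: one must prove that on each irreducible summand the zero exponent has multiplicity exactly $|p_i-q_i|$. I would combine Forni's variational formula, which controls the derivative of the Hodge norm along the Teichm\"uller flow in terms of a second fundamental form, with the irreducibility of the summand: any extra neutral direction would, via the variational formula, force the pseudo-Hermitian form to degenerate on a flow-invariant subspace that one must then promote to a proper continuous invariant split sub-subbundle, contradicting irreducibility. The delicate point is carrying out this promotion without assuming a priori smoothness of the Oseledets filtration, which is precisely why non-degeneracy is currently out of reach in general.

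For part (II) it remains to show that within each irreducible $U(p_i,q_i)$ summand the $2\min(p_i,q_i)$ nonzero exponents are simple. The pseudo-unitary pairing relates positive and negative exponents symmetrically, so simplicity reduces to simplicity among the positive ones. I would adapt the twisting-and-pinching strategy of Avila-Viana~\cite{Avila:Viana} to the signature $(p,q)$-setting, using irreducibility of the summand to preclude invariant flags and the hyperbolic dynamics along typical Teichm\"uller geodesics to produce pinching. Even in the classical case of strata of quadratic differentials this approach remains conjectural, so this second step is the other principal obstacle; one should expect that progress on part (II) will follow, rather than precede, progress on the non-degeneracy statement of part (I).
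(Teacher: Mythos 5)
You should begin by noting that the statement you are attempting to prove is labeled a Conjecture, and the paper does not prove it: the authors explicitly write that they ``conclude the introduction by formulating an outline of the principal conjectures'' and ``postpone a discussion of more detailed versions of these conjectures \ldots to a separate paper.'' What the paper does is verify the conjecture in a specific family of examples (Theorems~\ref{th:R}, \ref{th:C}, \ref{th:L}), and prove the easy inequality via Theorem~\ref{th:spec:of:unitary:cocycle}. So there is no proof in the paper to compare against; your text is a research roadmap, and to your credit you are explicit about where the two principal obstacles lie (non-degeneracy of the neutral multiplicity, and simplicity of the nonzero spectrum).

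That said, there is one substantive gap in the roadmap beyond the two obstacles you already flag, and it concerns the first step. You propose to obtain the decomposition by algebraizing $\cL_1$ (via Eskin--Mirzakhani) and then applying Deligne's Semisimplicity Theorem. Setting aside that the quasiprojectivity of affine invariant submanifolds was not known at the time, the sharper point is that Deligne semisimplicity produces a decomposition into summands that are irreducible with respect to parallel transport along the \emph{full} fundamental group of $\cL_1$. The conjecture, by contrast, calls for summands that are irreducible only with respect to parallel transport \emph{along the $\GL$-orbits}, i.e., for the Kontsevich--Zorich cocycle restricted to the foliation by Teichm\"uller discs. The paper explicitly warns that the conjectural decomposition ``might be finer than the decomposition coming from the Deligne Semisimplicity Theorem,'' and illustrates this with the tautological $(1,1)\oplus(g-1,g-1)$ splitting of $H^1_\C$ over $\cH_g$, which is $\GL$-invariant but not monodromy invariant. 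So even granting algebraization, Deligne does not directly hand you the decomposition the conjecture requires; you would still need an argument that a maximally refined $\GL$-invariant, continuous, split decomposition exists, and separately that the Forni-type degeneration argument and the Avila--Viana pinching argument apply to each of its summands. The only setting in which the two notions of irreducibility are forced to coincide is that of Teichm\"uller curves, where the whole orbifold is a single $\GL$-orbit---and this is precisely the setting in which the paper carries out the verification.

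Your remarks on parts (I) and (II) are consistent with the paper's discussion: the lower bound $r \ge \sum_i |p_i-q_i|$ is indeed Theorem~\ref{th:spec:of:unitary:cocycle} (Appendix~\ref{a:Lyapunov:spectrum:of:pseudo-unitary:cocycles}) applied summand by summand, and both the non-degeneracy (equality) and the simplicity assertions are open even in the classical stratum case for quadratic differentials, as the paper itself notes after stating the conjecture. You should present your variational and pinching steps clearly as programmatic rather than as completed arguments, and you should address the refinement issue above before invoking Deligne.
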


%\textcolor{red}{Added in proof: S. Filip \cite{Fil13b} recently proved a considerable part of this conjecture.}

%####################################################################
\subsection{Statement of the results}
\label{ss:A:concrete:example}

Let us consider  a  flat  surface  $S$  glued  from  six  unit squares as in
Figure~\ref{fig:oneline:6}.  It  is easy to see that this surface has
genus  zero,  and that the flat metric has five conical singularities
with  the  cone angle $\pi$ and one conical singularity with the cone
angle  $3\pi$. Thus, the quadratic differential representing the flat
surface  $S$ belongs to the stratum $\cQ(1,-1^5)$ in the moduli space
of meromorphic quadratic differentials.

\begin{figure}[htb]
   %
   % One-cylinder representative in Q(-1^5,1)
   %
\includegraphics{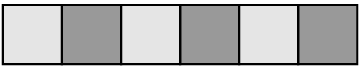}

\begin{picture}(0,0)(55,25) % (55,30)
\put(-22.5,0){0}
\put(2,0){1}
\put(27,0){2}
\put(51,0){3}
\put(75,0){4}
\put(99.5,0){5}
   %
   % ------------------------------------
\put(-21.5,21){\rotatebox{180}{\tiny 5}}
\put(3,21){\rotatebox{180}{\tiny 4}}
\put(28.5,21){\rotatebox{180}{\tiny 3}}
\put(51,21){\rotatebox{180}{\tiny 2}}
\put(76.5,21){\rotatebox{180}{\tiny 1}}
\put(100.5,21){\rotatebox{180}{\tiny 0}}
\put(-35,1){\tiny 5}
\put(115,1){\tiny 0}
\put(-21.5,-10){\rotatebox{180}{\tiny 1}}
\put(3,-10){\rotatebox{180}{\tiny 0}}
\put(28.5,-10){\rotatebox{180}{\tiny 3}}
\put(51,-10){\rotatebox{180}{\tiny 2}}
\put(76.5,-10){\rotatebox{180}{\tiny 5}}
\put(100.5,-10){\rotatebox{180}{\tiny 4}}
\end{picture}

\vspace{40pt} % {50pt}
\caption{
\label{fig:oneline:6}
Basic square-tiled surface $S$ in $\cQ(1,-1^5)$.
}
\end{figure}

The equation
\begin{equation}
\label{eq:cyclic:cover:equation}
w^3=(z-z_1)\cdot\dots\cdot(z-z_6)
\end{equation}
defines  a Riemann surface $\hat X$ of genus four, and a triple cover
$p:\hat  X\to  \CP$,  $p(w,z)=z$.  The  cover  $p$ is ramified at the
points  $z_1,\dots,z_6$  of $\CP$ and at no other points. By placing the
ramification  points  $z_1,\dots,z_6$  at  the single zero and at the
five  poles  of the flat surface $S$ as in Figure~\ref{fig:oneline:6}
we  induce  on $\hat X$ a flat structure, thus getting a square-tiled
surface  $\hat  S$. It is immediate to check that $\hat S$ belongs to
the  stratum  $\cQ(7,1^5)$  of holomorphic quadratic differentials in
genus four.

Let us consider    the    corresponding   arithmetic   Teichm\"uller   curve
$\hat\cT\subset\cM_4$  and  the  Hodge  bundle over it.  The following
theorem, announced in \cite{Forni:Matheus:Zorich_2}, Appendix B,
summarizes the statement of Proposition~\ref{prop:Lyapunov:spectrum}
and of Corollary~\ref{cor:no:R:subspaces}.

\begin{Theorem}
\label{th:R}
The  Lyapunov  spectrum  of  the  real Hodge bundle $H^1_{\R{}}$ with
respect  to  the  geodesic flow on the arithmetic Teichm\"uller curve
$\hat\cT$ is
$$
\left\{\frac{4}{9},\frac{4}{9},0,0,0,0,-\frac{4}{9},-\frac{4}{9}\right\}\,.
$$
The real  Hodge bundle $H^1_{\R{}}$ over $\hat\cT$ does not have any
nontrivial $\PSL$-invariant subbundles.
\end{Theorem}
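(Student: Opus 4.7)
The proof splits into two parts --- computing the spectrum and ruling out invariant real subbundles --- both of which rest on the same structural observation. The cyclic cover $p:\hat X\to\CP$ defined by equation~\eqref{eq:cyclic:cover:equation} has deck group $\Z/3\Z$, and since $H^1(\CP,\C)=0$, this deck action splits the complex Hodge bundle over $\hat\cT$ into two eigenspace subbundles $H^1_\zeta$ and $H^1_{\bar\zeta}$, where $\zeta=e^{2\pi i/3}$. These subbundles are $\PSL$-invariant because the deck action commutes with the Gauss--Manin connection. A Chevalley--Weil computation for the $\Z/3\Z$-action on $H^{1,0}(\hat X)$ (equivalently a direct count using the six branch points and the cube-root defining equation) gives $\dim H^{1,0}_\zeta=3$ and $\dim H^{0,1}_\zeta=1$, with conjugate dimensions on $H^1_{\bar\zeta}$. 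Hence the pseudo-Hermitian form of~\eqref{eq:intersection:form} restricts to signatures $(3,1)$ and $(1,3)$ on these split $\PSL$-invariant subbundles.

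From the general result of Appendix~\ref{a:Lyapunov:spectrum:of:pseudo-unitary:cocycles} on $U(p,q)$-cocycles, each of these subbundles contributes at least $|p-q|=2$ zero exponents, and its remaining spectrum consists of a symmetric pair $\pm\lambda$. Complex conjugation interchanges $H^1_\zeta$ and $H^1_{\bar\zeta}$ and preserves the real Hodge bundle, so the exponents in the two components coincide and the real Lyapunov spectrum must take the form $\{\lambda,\lambda,0,0,0,0,-\lambda,-\lambda\}$. To pin down $\lambda=4/9$ I would realize the Kontsevich--Zorich cocycle over $\hat\cT$ as the monodromy of an explicit finite-index subgroup of $\SLZ$ --- the Veech group of the origami $\hat S$ --- acting on a concrete symplectic basis of $H^1(\hat X,\Z)$ obtained from the eighteen squares of $\hat S$, and then either compute the top exponent directly from the cocycle matrices along parabolic generators, or apply the Eskin--Kontsevich--Zorich sum formula to the restriction of the stratum $\cQ(7,1^5)$ to $\hat\cT$ to obtain $2\lambda=8/9$.

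For the second part, any $\PSL$-invariant real subbundle $V_\R\subset H^1_\R$ complexifies to an invariant complex subbundle $V_\C\subset H^1_\C$ that is stable under complex conjugation and therefore of the form $V_\C=V_\zeta\oplus\overline{V_\zeta}$ with $V_\zeta\subset H^1_\zeta$ a $\PSL$-invariant complex subbundle. It therefore suffices to prove that the monodromy representation of the affine group of $\hat S$ on $H^1_\zeta$ is \emph{irreducible}: then $V_\zeta\in\{0,H^1_\zeta\}$, and consequently $V_\R\in\{0,H^1_\R\}$. I would verify irreducibility by exhibiting two or three explicit affine diffeomorphisms of $\hat S$ whose images in $U(3,1)$ share no common invariant proper subspace, which reduces to a linear-algebra check on $\C^4$: no common eigenvector and no common invariant $2$- or $3$-plane.

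The main obstacle is the explicit monodromy computation underlying both parts. Concretely, one must carry out on the eighteen-square origami $\hat S$ a careful enumeration of the horizontal and vertical cylinder decompositions, lift the associated affine multi-twists to the homology of $\hat X$ in a basis compatible with the $\Z/3\Z$-eigenspace splitting, and verify both the numerical value $\lambda=4/9$ and the irreducibility of the resulting four-dimensional complex representation. These computations are routine but combinatorially delicate, and are best carried out with computer assistance; the resulting matrices then serve as a self-contained certificate for both conclusions of the theorem.
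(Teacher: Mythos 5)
Your computation of the Lyapunov spectrum is sound and matches the paper's route (Proposition~\ref{prop:Lyapunov:spectrum}, Lemma~\ref{lm:8:9}): McMullen's signature formula gives $(3,1)$ and $(1,3)$, the pseudo-unitary cocycle theorem gives at least two zeros per summand with a symmetric pair $\pm\lambda$, complex conjugation forces the two spectra to agree, and the EKZ sum formula over the three-element $\PSLZ$-orbit pins down $\lambda=4/9$. That half is fine.

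There is, however, a genuine gap in the second half. You assert that because $V_\C$ is $\PSL$-invariant and stable under complex conjugation, it is ``therefore of the form $V_\C=V_\zeta\oplus\overline{V_\zeta}$.'' That step does not follow. A $\PSL$-invariant, conjugation-stable subbundle need not split along the deck-group eigenspaces: the orthogonal projections $\pi_1\colon V_\C\to\cE(\zeta)$ and $\pi_2\colon V_\C\to\cE(\zeta^2)$ are $\PSL$-equivariant, and irreducibility of $\cE(\zeta)$ and $\cE(\zeta^2)$ tells you each image is $0$ or everything, but it leaves open the ``graph'' case in which $V_\C$ is $4$-dimensional, meets $\cE(\zeta)$ and $\cE(\zeta^2)$ trivially, and both projections are isomorphisms. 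Such a $V_\C$ would be conjugation-stable (hence the complexification of a $4$-dimensional real subbundle), and its existence is precisely equivalent to the existence of a $\PSL$-equivariant isomorphism $\cE(\zeta)\xrightarrow{\ \sim\ }\cE(\zeta^2)$. Irreducibility alone does not exclude this. The paper's Proposition~\ref{prop:only:E} closes exactly this hole: after handling dimensions $1,2,3,5,6,7$ by irreducibility and orthogonality, it rules out the $4$-dimensional graph case by exhibiting a monodromy element $C=BA$ on $\cE(\zeta)$ whose spectrum differs from that of $\bar C$ \emph{even up to multiplication by $\zeta^k$} (the orbifold ambiguity), using the explicit characteristic polynomial~\eqref{eq:char:pol:C}. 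Your proposal needs this additional non-isomorphism argument; without it, Corollary~\ref{cor:no:R:subspaces} does not follow from irreducibility of $\cE(\zeta)$.
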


It follows from the above theorem that  the neutral Oseledets subbundle  $E_0$ over
$\hat \cT$ is not $\PSL$-invariant.  It  seems  likely  that  it is  also not
continuous.

Note  that  the  cyclic  group $\Z/3\Z$ acts naturally on any Riemann
surface  $\hat  X$  as  in~\eqref{eq:cyclic:cover:equation}  by  deck
transformations of the triple cover $p:\hat X\to \CP$. In coordinates
this action is defined as
\begin{equation}
\label{eq:T}
T: (z,w)\mapsto (z,\zeta w)\,,
\end{equation}
where $\zeta=e^{2\pi i/3}$. Thus, the complex Hodge bundle $H^1_\C{}$
splits over  the locus of cyclic covers~\eqref{eq:cyclic:cover:equation} into a
direct sum of two flat subbundles (that is, vector subbundles invariant under
the parallel transport with respect to the Gauss--Manin connection):
\begin{equation}
\label{eq:Ezeta:oplus:Ezeta2}
H^1_\C{}=\cE(\zeta)\oplus \cE(\zeta^2)\,,
\end{equation}
where $\cE(\zeta)$, $\cE(\zeta^2)$ are the eigenspaces of the induced
action of the generator $T$ of the group of deck transformations.

The above Theorem~\ref{th:R} has an equivalent formulation in terms of the complex
Hodge bundle, which summarizes the statements of Proposition~\ref{prop:Lyapunov:spectrum}
and of Proposition~\ref{prop:only:E} below.

\begin{Theorem}
\label{th:C}
The  complex   Hodge   bundle   $H^1_{\C{}}$   over the  arithmetic
Teichm\"uller   curve   $\hat\cT$   does   not   have   any   non-trivial
$\PSL$-invariant complex subbundles other than $\cE(\zeta)$   and
$\cE(\zeta^2)$. The   Lyapunov   spectrum  of  each  of  the  subbundles  $\cE(\zeta)$,
$\cE(\zeta^2)$ with respect to the geodesic flow on $\hat\cT$ is
$$
\left\{\frac{4}{9},0,0,-\frac{4}{9}\right\}\,.
$$
\end{Theorem}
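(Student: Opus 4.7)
The plan is to treat the two assertions of Theorem~\ref{th:C} separately: the Lyapunov computation on each eigenbundle, and the $\PSL$-irreducibility of each eigencomponent $\cE(\zeta^i)$. Both rely on the cyclic symmetry~\eqref{eq:T} of the cover~\eqref{eq:cyclic:cover:equation} together with the explicit square-tiled description of $\hat S$. The starting observation is that, since the deck transformation $T$ commutes with the Gauss--Manin parallel transport, the $T$-eigenspace decomposition~\eqref{eq:Ezeta:oplus:Ezeta2} is preserved by the Kontsevich--Zorich cocycle, and any continuous $\PSL$-invariant subbundle $W$ of $H^1_\C{}$ splits as $(W\cap\cE(\zeta))\oplus(W\cap\cE(\zeta^2))$. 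Hence the assertion about $\PSL$-invariant subbundles reduces to the irreducibility of each eigenbundle, and the Lyapunov spectra of $\cE(\zeta)$ and $\cE(\zeta^2)$ are exchanged by complex conjugation, so it suffices to analyze $\cE(\zeta)$.

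First I would compute the Hodge-theoretic type of $\cE(\zeta)$. Since $T$ preserves the Hodge decomposition, $\cE(\zeta)=\cE(\zeta)^{1,0}\oplus\cE(\zeta)^{0,1}$. For a cyclic cover $w^3=\prod_{j=1}^6(z-z_j)$ the Chevalley--Weil formula (in the form worked out by McMullen for such covers) yields $\dim\cE(\zeta)^{1,0}=3$ and $\dim\cE(\zeta)^{0,1}=1$, so the restriction of the pseudo-Hermitian form~\eqref{eq:intersection:form} to $\cE(\zeta)$ has signature $(3,1)$, and dually the restriction to $\cE(\zeta^2)$ has signature $(1,3)$. Consequently the Kontsevich--Zorich cocycle restricts to $\cE(\zeta)$ as a $U(3,1)$-cocycle, and the general lemma proved in Appendix~\ref{a:Lyapunov:spectrum:of:pseudo-unitary:cocycles} forces at least $|3-1|=2$ zero Lyapunov exponents. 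Combined with the symmetry $\lambda\mapsto -\lambda$ enforced by the pseudo-unitary structure, the spectrum on $\cE(\zeta)$ must have the form $\{\lambda,0,0,-\lambda\}$ for some $\lambda\ge 0$.

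Next I would identify $\lambda=4/9$. Because $\hat\cT$ is arithmetic and $\hat S$ is explicitly square-tiled (Figure~\ref{fig:oneline:6}), the Veech group is a finite-index subgroup of $\SLZ$ and is generated by finitely many explicit parabolic matrices coming from the horizontal and vertical cylinder decompositions of $\hat S$. The monodromy of the Kontsevich--Zorich cocycle over these generators is given by explicit affine Dehn twist matrices acting on an integral basis of $H^1(\hat S;\Z)$; passing to the $T$-eigenbasis produces $4\times 4$ matrix generators on $\cE(\zeta)$. The value of $\lambda$ is then extracted by one of two standard routes: either via the Eskin--Kontsevich--Zorich formula, computing the sum of nonnegative Lyapunov exponents on $\cE(\zeta)$ as a geometric invariant of the Hodge subbundle $\cE(\zeta)^{1,0}\to\hat\cT$, or directly from the explicit parabolic generators, reducing the top exponent on the $U(3,1)$-component to a scalar cocycle on the negative-definite line $\cE(\zeta)^{0,1}$.

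Finally, for the irreducibility of $\cE(\zeta)$ as a $\PSL$-bundle, on a Teichm\"uller curve any $\PSL$-invariant subbundle is automatically continuous since the group acts transitively on the base, so it is enough to prove that the monodromy representation of the Veech group on the fiber $\cE(\zeta)\simeq\C^4$ admits no proper non-trivial invariant subspace. I would carry this out by examining the explicit parabolic generators constructed in the previous step, exhibiting two of them whose common invariant subspaces in $\C^4$ are only $\{0\}$ and $\cE(\zeta)$ itself. Complex conjugation then gives irreducibility of $\cE(\zeta^2)$, and the preliminary reduction completes the proof. The main obstacle I anticipate is precisely the combinatorial bookkeeping in this last step: identifying enough elements of the Veech group of $\hat S$, transporting them to explicit $4\times 4$ matrices on the eigenbundle, and then verifying both the numerical value $\lambda=4/9$ and the absence of any common invariant subspace—the rest of the argument is essentially a packaging of structural results (cyclic symmetry, Chevalley--Weil, pseudo-unitary cocycles) that are already at hand.
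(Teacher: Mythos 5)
There is a genuine gap in the reduction at the start of your argument. You claim that, because the deck transformation $T$ commutes with Gauss--Manin parallel transport, any continuous $\PSL$-invariant subbundle $W\subset H^1_\C$ must split as $(W\cap\cE(\zeta))\oplus(W\cap\cE(\zeta^2))$. This does not follow. What does follow from $T$ commuting with the monodromy is that the eigenbundles $\cE(\zeta)$, $\cE(\zeta^2)$ are themselves flat (hence $\PSL$-invariant), but a monodromy-invariant subspace $W$ need not be $T^*$-invariant: if there existed a monodromy-equivariant isomorphism $\phi:\cE(\zeta)\to\cE(\zeta^2)$, then the graph $W=\{v+\phi(v):v\in\cE(\zeta)\}$ would be a $4$-dimensional $\PSL$-invariant subbundle with $W\cap\cE(\zeta)=W\cap\cE(\zeta^2)=0$. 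So proving each eigenbundle irreducible is not enough to conclude the first assertion of Theorem~\ref{th:C}; you must also exclude such a graph, and this is precisely the hard part of Proposition~\ref{prop:only:E}. The paper rules it out by producing an explicit monodromy element $C$ on $\cE(\zeta)$ (with $\cE(\zeta^2)$-monodromy $\bar C$, up to the $\zeta^k$-ambiguity from the orbifold structure) whose spectrum differs from that of $\bar C$ even after multiplication by $\zeta^k$, so no equivariant isomorphism exists. Once you add this step — and use the orthogonality of the two eigenbundles to dispose of dimensions $1,2,3$ and $5,6,7$ — the rest of your outline (signature $(3,1)$ from McMullen, the $U(3,1)$-cocycle lemma, $\lambda=4/9$ via the Eskin--Kontsevich--Zorich/Siegel--Veech computation, and irreducibility of each $\cE(\zeta^i)$ from two monodromy matrices with no common invariant subspace) matches the paper's route.

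A secondary, smaller issue: you propose extracting the monodromy from ``explicit parabolic matrices coming from the horizontal and vertical cylinder decompositions.'' The paper remarks that in this example the waist curves of horizontal cylinders on $\hat S_1,\hat S_2,\hat S_3$ are null-homologous, so the corresponding Dehn-twist monodromies are actually \emph{elliptic} (of finite order), not parabolic, and the authors do not know whether the monodromy group contains any parabolic elements at all. Your irreducibility argument should therefore be phrased for general monodromy elements, as in the paper's Lemma~\ref{lm:lin:alg} applied to the explicit matrices $X,Y$ and their exterior squares $U,V$.
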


Actually,  there  is nothing special about the arithmetic Teichm\"uller
curve    $\cT\subset\cQ(1,-1^5)$   considered   above.   By taking   any
$\PSL$-invariant   suborbifold   $\cL\subseteq\cQ(1,-1^5)$   we   can
construct  a  cyclic  cover~\eqref{eq:cyclic:cover:equation} for each
flat  surface $S$ in $\cL$ placing the six ramification points at the
zero  and at the five poles of the quadratic differential. We get the
induced quadratic differential on the resulting cyclic cover. In this
way we get a $\PSL$-invariant suborbifold $\hat\cL\subseteq\cQ(7,1^5)$.
By  construction,  it  has  the  same properties  as  $\cL$, namely, it is
endowed with a Borel probability measure,  invariant  with  respect to the
natural action of the group $\PSL$  and  ergodic with respect to the Teichm\"uller
geodesic flow. (See  the end of Section~\ref{s:Hodge:bundle} for a generalization
of this construction.) Let  $\hat\cZ$ denote the suborbifold of all
cyclic covers branched at six points, namely, the suborbifold
obtained by the above construction in the case $\cL=\cQ(1,-1^5)$
(see also \cite{Forni:Matheus:Zorich_2}, Appendix B).

\begin{Theorem}
\label{th:L}
The  complex  Hodge  bundle  $H^1_{\C{}}$ over the invariant orbifold
$\hat\cL$  decomposes  into  the  direct  sum of two $\PSL$-invariant,
continuous  split subbundles   $H^1_\C{}=\cE(\zeta)\oplus   \cE(\zeta^2)$   of
signatures $(1,3)$ and $(3,1)$ respectively.
The  Lyapunov  spectrum  of  each  of  the  subbundles  $\cE(\zeta)$,
$\cE(\zeta^2)$  with  respect  to  the Teichm\"uller geodesic flow on
$\hat\cL$ is
$$
\left\{\frac{4}{9},0,0,-\frac{4}{9}\right\}\,.
$$
\end{Theorem}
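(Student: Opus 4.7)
My plan is to deduce the decomposition from the fiberwise $\Z/3\Z$-symmetry, compute the signatures by an explicit basis count, and then determine the Lyapunov spectrum on each eigenbundle. The deck transformation $T:(z,w)\mapsto(z,\zeta w)$ of each cyclic cover is a fiberwise holomorphic automorphism, so $T^*$ preserves the Hodge decomposition on every fiber and commutes with the Gauss--Manin connection; consequently $\cE(\zeta)$ and $\cE(\zeta^2)$ are flat, continuous, $\PSL$-invariant, split subbundles of $H^1_\C$ over $\hat\cL$. For the signatures I would exhibit a basis of $H^{1,0}(\hat X)$ in the form $\omega_{k,P}=P(z)\,dz/w^k$ with $k\in\{1,2\}$ and $P$ a polynomial, and check holomorphy locally at each of the six branch points (using the local parameter $t$ with $t^3=z-z_i$) and at the three unramified preimages of $\infty$ (where $w\sim \tilde z^{-2}$ in the parameter $\tilde z=1/z$). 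The computation gives $\omega_{1,P}$ holomorphic iff $P$ is constant and $\omega_{2,P}$ holomorphic iff $\deg P\le 2$. Since $T^*\omega_{k,P}=\zeta^{-k}\omega_{k,P}$, one obtains $\dim\cE(\zeta)^{1,0}=3$ and $\dim\cE(\zeta^2)^{1,0}=1$; complex conjugation exchanges the two $T^*$-eigenspaces and swaps the $(1,0)$ and $(0,1)$ parts, so the reversed dimensions hold for the $(0,1)$-parts, and the pseudo-Hermitian form restricts to the claimed signatures $(3,1)$ and $(1,3)$ on the two eigenbundles.

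On each $4$-dimensional summand the cocycle is pseudo-unitary with $|p-q|=2$, so the general bound from Appendix~\ref{a:Lyapunov:spectrum:of:pseudo-unitary:cocycles} forces the zero Lyapunov exponent to have multiplicity at least two, and the symmetry $\lambda\leftrightarrow -\lambda$ of the spectrum (from preservation of the pseudo-Hermitian form) constrains the spectrum on each component to the form $\{\lambda,0,0,-\lambda\}$ for some $\lambda\ge 0$.

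The remaining and most delicate task is to establish $\lambda=4/9$, independently of the choice of $\hat\cL$. My plan is to apply an Eskin--Kontsevich--Zorich-type identity on the eigenbundle $\cE(\zeta)$: the sum of its non-negative Lyapunov exponents decomposes as a topological contribution from the degrees of the Hodge subbundles $\cE(\zeta)^{1,0}$ and $\cE(\zeta)^{0,1}$ over the Teichm\"uller base, plus a Siegel--Veech contribution from horizontal cylinders on $\hat S$. Since each cylinder on $\hat S$ is the lift of a cylinder on $S\in\cQ(1,-1^5)$ with multiplicity prescribed by the monodromy of the cover, the Siegel--Veech term on $\hat\cL$ can be reduced to the corresponding term on $\cL$, and the combined expression collapses to $4/9$ regardless of $\cL$. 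The main obstacle is this cylinder-lifting cancellation: one must track the combinatorial data carefully and confirm that the degree-plus-Siegel--Veech identity for $\lambda$ is rigid across all $\PSL$-invariant measures on $\cQ(1,-1^5)$, so that the spectrum $\{4/9,0,0,-4/9\}$ persists on every $\hat\cL$ in the cyclic-cover locus $\hat\cZ$ rather than only on the arithmetic Teichm\"uller curve of Theorem~\ref{th:C}.
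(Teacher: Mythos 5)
Your overall strategy is sound, and the first two stages are essentially correct. The explicit holomorphic basis $\omega_{k,P}=P(z)\,dz/w^k$ and the local holomorphy count at the six branch points (order $t^{2-k}$) and at the three unramified points above $\infty$ (order $\tilde z^{\,2k-2-\deg P}$) do give $\dim\cE(\zeta)^{1,0}=3$, $\dim\cE(\zeta^2)^{1,0}=1$, hence signatures $(3,1)$ and $(1,3)$; this is a correct, more elementary alternative to the paper's citation of McMullen's signature formula. (As a side remark, the statement of Theorem~\ref{th:L} lists the signatures in the opposite order; your count agrees with Section~\ref{ss:Splitting:of:the:Hodge:bundle} and with formula~\eqref{eq:signature}, so the theorem statement has a typo.) The pseudo-unitary constraint forcing each spectrum to the form $\{\lambda,0,0,-\lambda\}$ is exactly Theorem~\ref{th:spec:of:unitary:cocycle}.

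Where you diverge from the paper, and where a genuine gap appears, is in the evaluation $\lambda=4/9$. You propose to apply an Eskin--Kontsevich--Zorich type identity directly to the eigenbundle $\cE(\zeta)$, with a "topological contribution from the degrees of the Hodge subbundles $\cE(\zeta)^{1,0}$, $\cE(\zeta)^{0,1}$ over the Teichm\"uller base." Over a general invariant orbifold $\hat\cL$ (not just a Teichm\"uller curve), that degree term is not a priori defined or computable without substantial extra Hodge-theoretic input. The paper sidesteps this entirely: it applies the EKZ formula only to the \emph{full} real Hodge bundle $H^1_\R$ (where the formula from~\cite{Eskin:Kontsevich:Zorich} holds for any $\PSL$-invariant probability measure), obtaining $\lambda_1+\dots+\lambda_4=8/9$, and then uses that the spectrum of $H^1_\C=\cE(\zeta)\oplus\overline{\cE(\zeta)}$ equals that of $H^1_\R$, forcing $\{\lambda,\lambda,0,0,0,0,-\lambda,-\lambda\}$ and hence $\lambda=4/9$. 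This halving argument is simpler than an eigenbundle-level EKZ identity and avoids the degree computation you would otherwise need.

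The other under-developed step is the non-varying claim itself, i.e.\ that the sum $8/9$ is the same for every $\PSL$-invariant $\cL\subset\cQ(1,-1^5)$. You gesture at "cylinder-lifting cancellation," which is the right idea, but to close the argument you need the two precise facts of Lemma~\ref{lm:SVconst:for:cover} and formula~\eqref{eq:carea:genus:0}: first, in genus zero the Siegel--Veech constant $c_{\mathit{area}}(S)$ depends only on the stratum $\cQ(1,-1^5)$, not on the particular invariant measure; second, since $S$ has genus zero with a unique zero, every maximal cylinder has the zero on one boundary and exactly two simple poles on the other, so (with $d=3$ odd) its waist curve lifts to a \emph{single} closed geodesic of length $3$ in the triple cover and $c_{\mathit{area}}(\hat S)=\tfrac13\,c_{\mathit{area}}(S)$. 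These two facts, fed into the EKZ formula for the full Hodge bundle, give the uniform value $8/9$ and hence $\lambda=4/9$ on every $\hat\cL$. Without them your proof remains a sketch at the decisive step.
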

The  only  difference  between the more general Theorem~\ref{th:L} and the
previous one,  treating  the particular case $\hat\cL=\hat\cT$, is that now we
do   not   claim   irreducibility  of  the  subbundles  $\cE(\zeta)$,
$\cE(\zeta^2)$  for  all  invariant  orbifolds  $\hat\cL$  as  above.
Theorem~\ref{th:L} follows from Theorem~\ref{th:spec:of:unitary:cocycle}
below and from Proposition~\ref{prop:5:2}.

Note  that  the  stratum $\cQ(1,-1^5)$ is naturally isomorphic to the
stratum      $\cH(2)$.      Thus,      the      classification     of
C.~McMullen~\cite{McMullen:genus2}   describes  all  $\PSL$-invariant
suborbifolds  in  $\cQ(1,-1^5)$:  they are represented by an explicit
infinite   series   of   suborbifolds   corresponding  to  arithmetic
Teichm\"uller  curves, by an explicit infinite series of suborbifolds
corresponding  to  non-arithmetic  Teichm\"uller  curves  and  by the
entire  stratum.  By  the way, note that the subbundles $\cE(\zeta)$,
$\cE(\zeta^2)$  of  the  Hodge  bundle over the invariant suborbifold
$\hat {\mathcal Z}\subset\cQ(7,1^5)$ (induced from the entire stratum
$\cQ(1,-1^5)$)   are   irreducible:   indeed,   this   follows   from
Theorem~\ref{th:C}    as    $\hat\cT\subset   \hat   {\mathcal   Z}$.
Theorem~\ref{th:L} confirms the Conjecture stated in the introduction
for   all   resulting   $\PSL$-invariant   suborbifolds  (up  to  the
irreducibility  of  the  decomposition  in  the  case of suborbifolds
$\hat\cL\neq \hat\cT, \hat{\mathcal Z}$).

As proved in~\cite{Forni:Matheus:Zorich_2}, Appendix B, Theorem 8, from Theorem~\ref{th:R}
and Theorem~\ref{th:L} above and from Theorem 3 of~\cite{Forni:Matheus:Zorich_2}
 we can derive the following result.

\begin{Corollary}
If the neutral  Oseledets subbundle  $E_0$ of the Kontsevich--Zorich cocycle over the invariant suborbifold $\hat\cL$ is $\PSL$-invariant, then it is continuous, in fact smooth. In particular,  since
$\hat\cT \subset \hat\cZ$, the subbundle  $E_0$ is not  almost everywhere $\PSL$-invariant over
the suborbifold $\hat  {\mathcal  Z}$ endowed with the canonical measure.
\end{Corollary}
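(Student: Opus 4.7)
The plan is to derive both assertions from the general criterion given by Theorem~8 of~\cite{Forni:Matheus:Zorich_2}, Appendix~B, combined with Theorems~\ref{th:R} and~\ref{th:L}.

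First I would verify that the cited criterion applies to the present setting. By Theorem~\ref{th:L}, the complex Hodge bundle over $\hat\cL$ decomposes as $\cE(\zeta)\oplus\cE(\zeta^2)$ into $\PSL$-invariant, continuous, split subbundles of pseudo-Hermitian signatures $(1,3)$ and $(3,1)$, each with Lyapunov spectrum $\{4/9,0,0,-4/9\}$. Thus the zero Lyapunov exponent has multiplicity two in each summand, and the neutral Oseledets subbundle $E_0$ intersects each summand in a complex two-plane, corresponding to the pseudo-Hermitian ``null directions'' predicted by the signature mismatch. Under exactly these hypotheses, Theorem~8 of~\cite{Forni:Matheus:Zorich_2} asserts that whenever $E_0$ is $\PSL$-invariant it must in fact be continuous and even smooth; this yields the first claim.

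For the second assertion I would argue by contradiction. Suppose $E_0$ were almost everywhere $\PSL$-invariant over $\hat\cZ$ with respect to the canonical measure. By what was just shown, $E_0$ would then extend to a smooth $\PSL$-invariant subbundle of $H^1_\R$ over $\hat\cZ$ of real rank four. Restricting along the inclusion $\hat\cT\subset\hat\cZ$ would then produce a continuous $\PSL$-invariant subbundle of the real Hodge bundle over $\hat\cT$ of the same rank four, hence a proper nontrivial subbundle since $H^1_\R$ over $\hat\cT$ has real rank eight. This contradicts Theorem~\ref{th:R}, which asserts that the real Hodge bundle over $\hat\cT$ admits no nontrivial $\PSL$-invariant subbundles.

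The one delicate point is the step that transfers information from $\hat\cZ$ to the subvariety $\hat\cT$: since $\hat\cT$ carries canonical measure zero in $\hat\cZ$, an a.e.-defined measurable subbundle need not restrict to $\hat\cT$ in any meaningful way. This is precisely why the continuity/smoothness conclusion supplied by Theorem~8 of~\cite{Forni:Matheus:Zorich_2} is indispensable, since it promotes the a.e.\ invariant object to a genuine subbundle defined at every point, for which the restriction to the Teichm\"uller curve is well defined and automatically $\PSL$-invariant. Granting that step, the remainder of the argument -- comparing ranks to rule out the trivial and total cases, and invoking Theorem~\ref{th:R} -- is immediate.
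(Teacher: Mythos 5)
Your proposal matches the paper's intended argument: the first claim is Theorem~8 of~\cite{Forni:Matheus:Zorich_2} (which itself rests on Theorem~3 of that paper) applied to the $U(3,1)\oplus U(1,3)$ decomposition furnished by Theorem~\ref{th:L}, and the second claim follows by restricting the resulting continuous subbundle to $\hat\cT\subset\hat\cZ$ and invoking Theorem~\ref{th:R}. You also correctly flag the key subtlety — that $\hat\cT$ has measure zero in $\hat\cZ$, so only after promoting the a.e.\ invariant object to a continuous one does the restriction make sense — which is exactly why the first claim is needed for the second.

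One phrasing point worth correcting: you describe $E_0$'s intersection with each summand as the pseudo-Hermitian ``null directions'' coming from the signature mismatch. In fact, by Lemma~\ref{lemma:isometryUpq} (and Proposition~\ref{prop:isometryU31}), when the neutral subspace has dimension exactly $|p-q|$ the restriction of the form to it is \emph{definite}, not isotropic; the isotropic pieces are $E_u$ and $E_s$. This does not affect the validity of your argument, since what you actually use is the continuity criterion and the rank count, but the ``null directions'' intuition is the opposite of what holds.
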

A.~Avila, C.~Matheus and J.-C.~Yoccoz~\cite{Avila:Matheus:Yoccoz}
have recently  proved  that  indeed  $E_0$  is  also not  continuous  over  the
suborbifold   $\hat  \cZ$.

\smallskip

Note  that  a  Riemann  surface $X$, or a pair given by a Riemann surface
and an Abelian   or quadratic   differential,   might   have   a   nontrivial
automorphism  group.  This  automorphism  group is always finite. The
fiber of the Hodge bundle $H^1_{\C}$ over the corresponding point $x$
of  the moduli space is defined as the quotient of $H^1(X,\C)$ by the
corresponding  finite group $G_x$ of induced linear automorphisms. In other words,
the bundle $H^1_{\C}$ is an \textit{orbifold vector bundle}, in the sense
that it is a \textit{fibered} space $H$ over a base $M$ such that  the  fiber  $H_x$
over  any $x\in M$ is the quotient $H_x =V_x/G_x$  of  a  vector space $V_x$
over a finite subgroup $G_x$ of the group $\textrm{Aut} (V_x)$ of linear
automorphisms of $V_x$.

% \textcolor{red}{In  this  article  we  will be interested in studying
% certain  subbundles  of the complex Hodge bundle. Recall that, in the
% context  of  an  orbifold  vector  bundle  $H$, a \textit{subbundle} $K
% \subset  H$ is an orbifold vector bundle such that for every $x\in M$
% there  exists  a  $G_x$-invariant subspace $W_x\subset V_x$ such that
% $K_x  =  W_x/G_x$.

Since the Hodge bundle $H^1_{\C}$ is an orbifold vector bundle, the complex
Kontsevich-Zorich cocycle is an example of  an \textit{orbifold linear cocycle} on
an orbifold vector bundle $H$ over a flow $T_t$ on $M$, i.e., a flow $F_t$ on $H$
such that the restrictions $F_t : H_x \to H_{T_tx}$ are well-defined and are projections
of linear maps $\hat F_t : V_x \to V_{T_t x}$. Note that such linear maps are only
defined up to precomposition with the action of elements of $G_x$  on $V_x$ and
postcomposition with the action of elements of $G_{T_tx}$  on $V_{T_tx}$.

In  this paper we always work within the locus of cyclic covers.
For  any generic cyclic cover $x$ as in~\eqref{eq:cyclic:cover:equation} the
automorphism group  is  isomorphic to the  cyclic  group $\Z/3\Z$.
The induced  action  on the subspaces $\cE_x(\zeta)$ and $\cE_x(\zeta^2)$
is  particularly  simple:  the induced group $G_x$  of linear automorphisms
acts  by  multiplication by the complex numbers $\zeta^k$ for $k=0,1,2$
(we recall that $\zeta= e^{2 \pi i/3}$).
This implies that any complex vector subspace of $\cE_x(\zeta)$  or  $\cE_x(\zeta^2)$
is invariant. The elements of  the  monodromy  representations  of  the bundles
$\cE(\zeta)$ and $\cE(\zeta^2)$, hence in particular the  restrictions of the
Kontsevich--Zorich cocycle to those bundles,  are thus given by  linear maps
defined  only up to composition with  the maps $\zeta^k \Id$, that is, up
to multiplication by $\zeta^k$,  for $k=0,1,2$,

% \textcolor{red}{An    equivalent    definition    of    the   complex
% Kontsevich-Zorich  cocycle  and  the  lift  of  the  $\PSL$-action to
% $H_{\C}^1$  in the language of \textit{half-translation surfaces} (see,
% e.g.,  [HS] for details) can be seen as follows. Let $\mathcal{F}$ be
% a  fundamental  domain  of  the \textit{mapping-class group} $\Gamma_g$
% acting   on   the  Teichm\"uller  space  $\hat{\cQ}_g$  of  quadratic
% differentials of genus $g$. Given $m\in\PSL$ and $q\in\mathcal{L}_1$,
% we  consider  $\hat{q}\in\mathcal{F}$  over $q$ and an element $f$ of
% $\Gamma_g$  such $f^*(m\hat{q})\in\mathcal{F}$. Then, we associate to
% $m$   the   actions   on   homology   of  the  elements  $f\circ  T$,
% $T\in\textrm{Aut}(q)$,   where  $\textrm{Aut}(q)$  is  the  group  of
% automorphisms   of   $q$.   Note   that,   in  terms  the  charts  of
% half-translation  structures,  $\PSL$  acts  by  post-composition and
% $\textrm{Aut}(q)$   (as   a   subgroup   of   $\Gamma_g$)   acts   by
% pre-composition,  so  that  their actions commute. In particular, the
% concept of $\PSL$-invariance introduced above makes sense.

%--------------------------------------------------------------------
\subsection{Lyapunov spectrum of pseudo-unitary cocycles}

Consider  an  invertible  transformation  (or  a  flow)  ergodic with
respect  to  a  finite measure. Let $U$ be a $\log$-integrable cocycle over
this  transformation  (flow)  with  values  in  the  group  $\Upq$ of
pseudo-unitary    matrices.   The   Oseledets   Theorem   (i.e.   the
multiplicative  ergodic  theorem) can be applied to complex cocycles.
Denote  by $\lambda_1,\dots,\lambda_{p+q}$ the corresponding Lyapunov
spectrum.

%\marginpar{In  the proof we use the fact that in our base space
%there  exists  a  compact  subset of positive measure. What condition
%should  we  impose  on the topological space and on the measure which
%which would assure this property and would not sound too ugly?}

\begin{Theorem}
\label{th:spec:of:unitary:cocycle}
The  Lyapunov  spectrum  of a pseudo-unitary cocycle $U$ is symmetric
with  respect  to  the  sign  change  and  has  at least $|p-q|$ zero
exponents.

In  other  words, the Lyapunov spectrum of an integrable cocycle with
the  values  in  the  group $\Upq$ of pseudo-unitary matrices has the
form
$$
\lambda_1\ge\dots\ge\lambda_r\ge 0 = \dots = 0
\ge -\lambda_r\ge \dots \ge -\lambda_1\,,
$$
where $r=\min(p,q)$. In particular, if $r=0$, the spectrum is null.
\end{Theorem}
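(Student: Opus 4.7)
The plan is to exploit the Oseledets decomposition together with the invariance of the pseudo-Hermitian form. Applying the multiplicative ergodic theorem to the complex cocycle $U$, we obtain (on a full-measure set) a direct sum decomposition $\C^{p+q}=\bigoplus_\lambda E_\lambda$ into Oseledets subspaces, where the distinct Lyapunov exponents are indexed by $\lambda$ and vectors in $E_\lambda\setminus\{0\}$ grow at exponential rate exactly $\lambda$ (in any reference Hermitian norm $\|\cdot\|$) under iteration of $U$. The first step is to show that the pseudo-Hermitian form $\langle\cdot,\cdot\rangle_{p,q}$ pairs $E_\lambda$ with $E_\mu$ trivially whenever $\lambda+\mu\neq 0$.

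Fix $v\in E_\lambda$, $w\in E_\mu$. Since $U$ takes values in $\Upq$, the quantity $\langle U_t v, U_t w\rangle_{p,q}$ is independent of $t$. On the other hand, the Cauchy--Schwarz inequality applied to the auxiliary Hermitian norm yields
\begin{equation*}
|\langle U_tv,U_tw\rangle_{p,q}|\leq C\,\|U_tv\|\cdot\|U_tw\|\,,
\end{equation*}
and by Oseledets this right-hand side grows no faster than $e^{(\lambda+\mu+\epsilon)t}$ for any $\epsilon>0$. If $\lambda+\mu<0$, letting $t\to+\infty$ forces the constant $\langle v,w\rangle_{p,q}$ to vanish; if $\lambda+\mu>0$, the same argument with $t\to-\infty$ applies. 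Hence the form pairs $E_\lambda$ only with $E_{-\lambda}$.

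Since the total form is non-degenerate and the decomposition is orthogonal with respect to it across distinct pairs $\{\lambda,-\lambda\}$, the induced pairing $E_\lambda\times E_{-\lambda}\to \C$ is non-degenerate for each $\lambda\neq 0$, which immediately gives $\dim_\C E_\lambda=\dim_\C E_{-\lambda}$ and so establishes the symmetry of the spectrum. To bound the multiplicity of the zero exponent from below, I would next compute the signature of the restriction of $\langle\cdot,\cdot\rangle_{p,q}$ to a hyperbolic block $E_\lambda\oplus E_{-\lambda}$ with $\lambda>0$: choosing dual bases relative to the non-degenerate pairing and passing to the basis $\{e_i+f_i,\,e_i-f_i\}$ diagonalizes the form with $\dim E_\lambda$ positive and $\dim E_\lambda$ negative eigenvalues. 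Thus the contribution of each nonzero block to the total signature is of the form $(d_\lambda,d_\lambda)$.

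Subtracting these balanced contributions from the ambient signature $(p,q)$ shows that the restriction of the form to $E_0$ has signature $(p_0,q_0)$ with $p_0-q_0=p-q$ and $p_0+q_0=\dim_\C E_0$; in particular $\dim_\C E_0\geq|p-q|$, which is precisely the claimed lower bound on the number of zero exponents. The main subtlety is the first step: making rigorous the statement that vectors in $E_\lambda$ grow at rate $\lambda$ in a fixed reference norm requires the tempered distortion estimates supplied by Oseledets' theorem, and one must verify that the constant $C$ above can be taken independent of $t$ along a full-measure orbit (or, equivalently, absorbed into an arbitrarily small $\epsilon$). Everything else reduces to standard linear algebra on the preserved pseudo-Hermitian form.
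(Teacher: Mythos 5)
Your proof is correct and begins from the same central observation as the paper: the pseudo-Hermitian form pairs the Oseledets subspaces $E_\lambda$ and $E_\mu$ trivially unless $\lambda + \mu = 0$. Where you diverge is in the downstream linear algebra, and your route is arguably cleaner. The paper first establishes that any isotropic subspace of a $(p,q)$-form has dimension at most $\min(p,q)$, and then runs both conclusions through that bound: symmetry of the spectrum comes from the fact that $E_\lambda$ and $E_{-\lambda}$ are each isotropic inside the non-degenerate block $E_\lambda\oplus E_{-\lambda}$, so both are capped at $\min$ of that block's signature and must therefore have equal dimension; the lower bound on $\dim E_0$ comes from the fact that the stable and unstable sums $E_s$ and $E_u$ are isotropic, so $\dim E_s, \dim E_u \le \min(p,q)$, leaving $\dim E_0 \ge |p-q|$ by a dimension count. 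You instead get symmetry directly from the non-degeneracy of the sesquilinear pairing $E_\lambda\times E_{-\lambda}\to\C$ (non-degeneracy being forced by the non-degeneracy of the ambient form and the orthogonality relations), and you get the multiplicity bound by additivity of signature over the orthogonal decomposition, observing that each hyperbolic block $E_\lambda\oplus E_{-\lambda}$ contributes a balanced $(d_\lambda,d_\lambda)$, so the non-degenerate restriction to $E_0$ inherits $p_0 - q_0 = p - q$ and hence $\dim E_0 = p_0+q_0 \ge |p-q|$. You should make explicit that the restriction to $E_0$ is non-degenerate (it pairs trivially with everything else, so a null vector in $E_0$ would violate ambient non-degeneracy); this is implicit in your signature subtraction. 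The one technical point you flag is real: the paper handles the "constant $C$" issue by Luzin's theorem applied to the measurable family of norms together with recurrence to a positive-measure compact set, which is the same content as your remark about absorbing the distortion into an arbitrary $\epsilon$, so your proof is complete once that is spelled out.
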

This  theorem might  be known to experts, and, in any case, the proof
is  completely elementary. For the sake of completeness, it is given
in Appendix~\ref{a:Lyapunov:spectrum:of:pseudo-unitary:cocycles}.

%--------------------------------------------------------------------
\subsection{Outline of the proofs and plan of the paper}

We begin by recalling in Section~\ref{ss:Splitting:of:the:Hodge:bundle}
some basic properties of cyclic covers. In Section~\ref{ss:Construction:of:PSL:invariant:orbifolds}
we construct plenty  of  more  general  $\PSL$-invariant  orbifolds in loci of
cyclic covers.

By  applying  results  of C.~McMullen~\cite{McMullen}, we then show in
Section~\ref{ss:Splitting:of:the:Hodge:bundle} that
in the particular case of the    arithmetic    Teichm\"uller    disc   $\hat\cT$   defined   in
Section~\ref{ss:A:concrete:example},           the          splitting
$H^1_\C{}=\cE(\zeta)\oplus\cE(\zeta^2)$  of  the complex Hodge bundle
over   $\hat\cT$   decomposes  the  corresponding  cocycle  over  the
Teichm\"uller  geodesic  flow  on  $\hat\cT$  into  the direct sum of
complex         conjugate         $\operatorname{U}(3,1)$         and
$\operatorname{U}(1,3)$-cocycles.                                  By
Theorem~\ref{th:spec:of:unitary:cocycle}  the  Lyapunov  spectrum  of
each of the two cocycles has the form
$$
\{\lambda, 0, 0, -\lambda\}\,,
$$
with  nonnegative  $\lambda$.  Since  the  two  cocycles  are complex
conjugate,  their  Lyapunov  spectra  coincide.  Hence,  the Lyapunov
spectrum of real and complex Hodge bundles over $\hat\cT$ has the form
$$
\{\lambda, \lambda, 0, 0, 0, 0, -\lambda, -\lambda\}\,.
$$

To  compute $\lambda$ we construct in Section~\ref{ss:The:PSLZ:orbit}
the $\PSLZ$-orbit of the square-tiled surface $\hat S$. This orbit is
very small: it contains only two other square-tiled surfaces. Knowing
the cylinder decompositions of the resulting square-tiled surfaces in
the    $\PSLZ$-orbit    of    $\hat    S$,  we apply   a   formula
from~\cite{Eskin:Kontsevich:Zorich}  for  the  sum  of  the  positive
Lyapunov  exponents  of  the  Hodge  bundle  over  the  corresponding
arithmetic  Teichm\"uller  disc  $\hat\cT$  to get the explicit value
$\lambda=4/9$.      This      computation     is     performed     in
Section~\ref{ss:Spectrum:of:the:Lyapunov:exponents}.
(In   Section~\ref{s:non:varying}  we  present  an  alternative,  more
general, way to compute Lyapunov exponents in similar situations.)

In  Section~\ref{s:Irreducibility:of:the:Hodge:bundle}  we  check the
irreducibility  of  the  subbundles  $\cE(\zeta)$  and $\cE(\zeta^2)$
essentially    by    hands.
   Note that the monodromy representation of the subbundles $\cE(\zeta)$
and  $\cE(\zeta^2)$  factors through the action of the Veech group of
$\hat\cT$.  We encode the action of the group $\PSLZ$ on the orbit of
$\hat  S$ by a graph $\Gamma$ associating oriented edges to the basic
transformations
$$
h=
\begin{pmatrix}
1&1\\
0&1
\end{pmatrix}
\qquad
\text{ and }
\qquad
r=\left(
\begin{array}{rr}
0&-1\\
1&0
\end{array}\right)\,.
$$
The resulting graph $\Gamma$ is represented at
Figure~\ref{fig:PSL2Z:orbit}.  We choose a basis of homology on every
square-tiled  surface  in the $\PSLZ$-orbit of $\hat S$ and associate
to  every  oriented  edge  of  the  graph the corresponding monodromy
matrix.  Any  closed path on the graph defines the free homotopy type
of   the   corresponding  closed  path  on  the  Teichm\"uller  curve
$\hat\cT$.  The  monodromy  along  such  path  on  $\hat\cT$  can  be
calculated  as  the  product  of  matrices associated to edges of the
graph   in   the   order   following   the  path  on  the  graph.  In
Proposition~\ref{prop:Ezeta:does:not:have:subbundles}    we   construct
two explicit   closed   paths   and   show  that  the  induced  monodromy
transformations  cannot have common invariant subspaces. This implies
the irreducibility claims in Theorems~\ref{th:R} and~\ref{th:C}.
The evaluation of the monodromy representation is outlined in
Appendix~\ref{a:matrix:calculation}.

Following    a    suggestion   of   M.~M\"oller,   we   sketch   in
Lemma~\ref{lm:Ezeta:strong:irreducibility} in Section~\ref{ss:Zariski:closure} the computation of the Zariski closure of the monodromy
group  of  $\cE(\zeta)$. The details of this calculation are explained in Appendix~\ref{a:Zariski:closure}. Then, using Lemma~\ref{lm:Ezeta:strong:irreducibility}, we prove in Proposition~\ref{prop:strongly:irreducible} in Section~\ref{ss:strong_irreducibility} the
\textit{strong   irreducibility}\footnote{I.e.,   the
irreducibility  of  lifts  of  $\cE(\zeta)$ and $\cE(\zeta^2)$ to any
finite  (possibly ramified) cover of $\hat\cT$.} of $\cE(\zeta)$ and
of $\cE(\zeta^2)$.

%for  each of the three square-tiled
%surfaces   in   the   $\PSLZ$-orbit  of  $\hat  S$  we  construct  in
%Appendix~\ref{a:matrix:calculation}  an appropriate basis of homology
%cycles  in  the dual vector bundles $\cE(\zeta)$ and $\cE(\zeta^2)$. Then
%we   compute   the  six  monodromy  matrices  induced  by  the  basic
%transformations
   %
%$$
%h=
%\begin{pmatrix}
%1&1\\
%0&1
%\end{pmatrix}
%\qquad
%\text{ and }
%\qquad
%r=
%\begin{pmatrix}
%0&-1\\
%1&0
%\end{pmatrix}
%$$
   %
%of  $\PSLZ$  on  the three flat surfaces in the $\PSLZ$-orbit. Having
%computed  these  matrices the test of irreducibility becomes a simple
%computational      exercise,      which      is      performed     in
%Section~\ref{s:Irreducibility:of:the:Hodge:bundle}.

In  Section~\ref{s:non:varying}  we  prove the non-varying phenomenon
for  certain  $\PSL$-invariant loci of cyclic covers. Namely, we show
that  the  sum  of  the  Lyapunov  exponents  is  the  same  for  any
$\PSL$-invariant  suborbifold in such loci.

Finally, in  Appendix~\ref{a:Lyapunov:spectrum:of:pseudo-unitary:cocycles}  we
discuss  some basic facts concerning linear algebra of pseudo-unitary
cocycles   and   prove  Theorem~\ref{th:spec:of:unitary:cocycle}.

%In
%Appendix~\ref{a:matrix:calculation}  we  compute  the six matrices of
%the    monodromy    representation   mentioned   above   and   verify
%irreducibility   of   the  cocycles  claimed  in  Theorems~\ref{th:R}
%and~\ref{th:C}.

%####################################################################
\section{Hodge  bundle  over invariant suborbifolds in loci of cyclic
covers}
\label{s:Hodge:bundle}

%-------------------------------------------------
\subsection{Splitting of the Hodge bundle over loci of cyclic covers}
\label{ss:Splitting:of:the:Hodge:bundle}
Consider  a  collection of $n$ pairwise-distinct points $z_i\in\C{}$.
The equation
\begin{equation}
\label{eq:cyclic:cover:general}
w^d=(z-z_1)\cdot\dots\cdot(z-z_n)
\end{equation}
defines  a  Riemann surface $\hat X$, and a cyclic cover $p:\hat X\to
\CP$, $p(w,z)=z$. Consider the
canonical   generator   $T$   of   the   group   $\Z/d\Z$   of   deck
transformations; let
$$
T^\ast: H^1(X;\C{})\to H^1(X;\C{})
$$
be      the      induced      action     in     cohomology.     Since
$(T^\ast)^d=\operatorname{Id}$, the eigenvalues of $T^\ast$ belong to
a       subset      of      $\{\zeta,\dots,\zeta^{d-1}\}$,      where
$\zeta=\exp\left(\dfrac{2\pi  i}{d}\right)$.  We  excluded  the  root
$\zeta^0=1$   since   any   cohomology  class  invariant  under  deck
transformations  would  be a pullback of a cohomology class on $\CP$,
and $H^1(\CP)=0$.

For $k=1,\dots,d-1$ denote
\begin{equation}
\label{eq:E:zeta:k}
\cE(\zeta^k):=\operatorname{Ker}(T^\ast-\zeta^k\operatorname{Id})
\subseteq H^1(X;\C{})\ .
\end{equation}

Denote
$$
\cE^{1,0}(\zeta^k):=\cE(\zeta^k)\cap H^{1,0}\quad
\text{ and }\quad
\cE^{0,1}(\zeta^k):=\cE(\zeta^k)\cap H^{0,1}\ .
$$
Since  a  generator $T$ of the group of deck transformations respects
the complex structure, it induces a linear map
$$
T^\ast: H^{1,0}(X)\to H^{1,0}(X)\,.
$$
This map preserves the pseudo-Hermitian form~\eqref{eq:intersection:form} on
$H^{1,0}(X)$.  This  implies  that $T^\ast$ is a unitary operator on
$H^{1,0}(X)$, and hence $H^{1,0}(X)$ admits a splitting into a direct
sum of eigenspaces of $T^\ast$,
\begin{equation}
\label{eq:H10:direct:sum:for:k}
H^{1,0}(X)=\bigoplus_{k=1}^{d-1}\cE^{1,0}(\zeta^k)\ .
\end{equation}
The  latter observation also implies that for any $k=1,\dots,d-1$ one
has   $\cE(\zeta^k)=\cE^{1,0}(\zeta^k)\oplus\cE^{0,1}(\zeta^k)$.  The
vector   bundle   $\cE^{1,0}(\zeta^k)$   over  the  locus  of  cyclic
covers~\eqref{eq:cyclic:cover:general}  is a holomorphic subbundle of
$H^1_{\C{}}$.

The decomposition
$$
H^1(X;\C{})=\oplus \cE(\zeta^k)\,,
$$
is  preserved by the Gauss---Manin connection, which implies that the
complex   Hodge   bundle   $H^1_{\C{}}$  over  the  locus  of  cyclic
covers~\eqref{eq:cyclic:cover:general}  splits  into  a direct sum of
the  subbundles  $\cE(\zeta^k)$  invariant with respect to the parallel
transport of the the Gauss---Manin connection.

\begin{NNTheorem}[C.~McMullen]
The signature of the intersection form on
$\cE(\zeta^{-k})$ is given by
\begin{equation}
\label{eq:signature}
(p,q)=\big([n(k/d)-1],[n(1-k/d)-1]\big)\,.
\end{equation}
In particular,
\begin{equation}
\label{eq:dim}
\dim\cE(\zeta^k)=
\begin{cases}
n-2&\text{ if }d\text{ divides }kn\,,\\
n-1&\text{otherwise}
\end{cases}\,.
\end{equation}
\end{NNTheorem}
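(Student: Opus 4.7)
The plan is to exhibit explicit bases of the eigenspaces $\cE^{1,0}(\zeta^{-k})$ using the Chevalley--Weil style differentials $\omega_{k,j} := z^j w^{-k}\,dz$ on the cyclic cover $\hat X$ defined by \eqref{eq:cyclic:cover:general}, and then to compute the signature from the dimensions of $\cE^{1,0}(\zeta^{-k})$ and $\cE^{0,1}(\zeta^{-k})$. The starting observation is that $T^\ast w = \zeta w$ implies $T^\ast\omega_{k,j} = \zeta^{-k}\omega_{k,j}$, so each $\omega_{k,j}$ lies in the $\zeta^{-k}$-eigenspace of the action of the deck group. Moreover, because the pseudo-Hermitian form of \eqref{eq:intersection:form} is positive-definite on $H^{1,0}$, negative-definite on $H^{0,1}$, and vanishes on cross terms of pure type $(1,0)$ and $(0,1)$, the signature on the split subspace $\cE(\zeta^{-k}) = \cE^{1,0}(\zeta^{-k}) \oplus \cE^{0,1}(\zeta^{-k})$ is exactly $(p,q)=(\dim\cE^{1,0}(\zeta^{-k}),\dim\cE^{0,1}(\zeta^{-k}))$.

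The core of the argument is therefore the local order computation at each ramification point of $\hat X\to\CP$. Near $z_i$, using a local parameter $t$ with $z-z_i=t^d$, one finds $w = t\cdot(\text{unit})$ and $dz = d\,t^{d-1}dt$, so $\omega_{k,j}$ has order $d-1-k$; this is non-negative for every $1\le k\le d-1$. Near a preimage of $\infty$, set $e=\gcd(n,d)$, $n=en'$, $d=ed'$, and use a local parameter $t$ with $1/z = t^{d'}$; then $w = t^{-n'}\cdot(\text{unit})$, and the order of $\omega_{k,j}$ becomes $kn'-jd'-d'-1$. Requiring non-negativity at every branch point and rewriting the inequality in terms of $kn/d$ yields the range $0\le j\le \lceil kn/d\rceil-2$ in the unramified case and the same count $\lceil kn/d\rceil-1$ in the ramified case, which matches the stated formula $p=[n(k/d)-1]$. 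Linear independence of the $\omega_{k,j}$ for distinct $j$ is immediate, and to verify they span $\cE^{1,0}(\zeta^{-k})$ it suffices to sum the dimensions over $k=1,\dots,d-1$ and check equality with $g=\dim H^{1,0}(\hat X)$ computed by Riemann--Hurwitz, which gives $\sum_k p_k$ exactly.

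For the anti-holomorphic part, I would use complex conjugation. Since $T$ is a real automorphism, conjugation intertwines $T^\ast$ with itself; because $\overline{\zeta^{-k}}=\zeta^k$, it induces a conjugate-linear isomorphism $\cE^{0,1}(\zeta^{-k})\cong\cE^{1,0}(\zeta^k)$. Substituting $k\mapsto d-k$ in the formula for $p$ gives $\dim\cE^{1,0}(\zeta^{d-k}) = \dim\cE^{1,0}(\zeta^{-k})$ replaced by $[n(1-k/d)-1]$, which is exactly the claimed value of $q$. The dimension formula \eqref{eq:dim} then follows by adding: when $d\mid kn$ one gets $p+q=(nk/d-1)+(n-nk/d-1)=n-2$, and when $d\nmid kn$ the ceilings combine to $p+q=n-1$.

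The only real obstacle is the bookkeeping at infinity in the ramified case $d\nmid n$, where one has to pass to a local parameter $t$ that desingularises the cover and carefully track how the fractional exponent $n'/d'$ interacts with the floor/ceiling to produce the uniform expression $[n(k/d)-1]$ regardless of whether $d$ divides $kn$. Everything else is a routine compactification and local-coordinate calculation on the cyclic cover, combined with the elementary linear algebra of the Hodge decomposition restricted to eigenspaces of a finite-order holomorphic automorphism.
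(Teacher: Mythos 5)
The paper does not prove this theorem; it is stated as a result of C.~McMullen and cited to his paper \emph{Braid groups and Hodge theory}, so there is no in-text proof to compare against. Your proposal is the standard Chevalley--Weil style computation and is, modulo two expository points, essentially the argument McMullen gives.

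Both points concern the pole order calculation at infinity. First, the relevant dichotomy is not ``unramified over $\infty$'' versus ``ramified over $\infty$'' (i.e.\ $d\mid n$ vs.\ $d\nmid n$) but rather $d\mid kn$ versus $d\nmid kn$; and, in fact, if you carry the estimate $j\le kn'/d'-1-1/d'$ through carefully using $e\mid (kn\bmod d)$, the admissible range is $0\le j\le\lceil kn/d\rceil-2$ \emph{uniformly} in both cases, so no case distinction is needed at all. Second, the value you obtain, $p=\lceil kn/d\rceil-1$, forces the bracket in the statement to be read as a ceiling-type rounding (so $[n(k/d)-1]=\lceil nk/d\rceil-1$); otherwise $p+q$ would be off by one from $\dim\cE(\zeta^k)$ when $d\nmid kn$. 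Your derivation produces the correct value; you should simply flag that this is how the bracket must be interpreted. Everything else -- the observation that the form is definite on each pure-type piece and vanishes on cross terms so that the signature is $(\dim\cE^{1,0},\dim\cE^{0,1})$, the identification $\overline{\cE^{1,0}(\zeta^{-k})}=\cE^{0,1}(\zeta^{k})$ giving $q=p_{d-k}$, and the Riemann--Hurwitz check $\sum_k p_k=g$ to confirm the $\omega_{k,j}$ span -- is correct and complete.
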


By applying   these   general   results   to   the   particular   cyclic
cover~\eqref{eq:cyclic:cover:equation},       we       see       that
$H^1_\C{}=\cE(\zeta)\oplus\cE(\zeta^2)$,  where  the signature of the
intersection    form    on    $\cE(\zeta)$    is   $(3,1)$   and   on
$\cE(\zeta^2)=\overline{\cE(\zeta)}$ is $(1,3)$.

\medskip
%--------------------------------------------------------------------
\noindent\textbf{Bibliographical remarks.}
Cyclic covers over $\CP$ branched at four points were used by I.~Bouw
and  M.~M\"oller  in~\cite{Bouw:Moeller}  to  construct new series of
nonarithmetic \mbox{Teichm\"uller} curves. Similar cyclic covers were
independently   used   by  G.~Forni~\cite{ForniSurvey}  and  then  by
G.~Forni  and C.~Matheus~\cite{Forni:Matheus} to construct arithmetic
Teichm\"uller  curves  with  completely  degenerate  spectrum  of the
Lyapunov  exponents  of the Hodge bundle with respect to the geodesic
flow.  The  monodromy  of the Hodge bundle is explicitly described in
these  examples by C.~Matheus and J.-C.~Yoccoz~\cite{Matheus:Yoccoz}.
More  general arithmetic \mbox{Teichm\"uller} curves corresponding to
cyclic  covers  over  $\CP$  branched  at  four  points  are  studied
in~~\cite{Forni:Matheus:Zorich_1}.  The  Lyapunov spectrum of the Hodge
bundle  over  such  arithmetic  Teichm\"uller  curves  is  explicitly
computed   in~\cite{Eskin:Kontsevich:Zorich:cyclic}.   More  generally,
Abelian     covers     are     studied    in    this    context    by
A.~Wright~\cite{Wright}.   Our  consideration  of  cyclic  covers  as
in~\eqref{eq:cyclic:cover:general}   is  inspired  by  the  paper  of
C.~McMullen~\cite{McMullen},   where   he   studies   the   monodromy
representation  of the braid group in the Hodge bundle over the locus
of cyclic covers.

For  details  on geometry of cyclic covers see the original papers of
I.~Bouw~\cite{Bouw:thesis}        and~\cite{Bouw}        and       of
J.~K.~Koo~\cite{Koo},    as    well    as   the   recent   paper   of
A.~Elkin~\cite{Elkin} citing the first three references as a source.

%--------------------------------------------------------------------
\subsection{Construction      of      $\PSL$-invariant      orbifolds
in loci of cyclic covers}
\label{ss:Construction:of:PSL:invariant:orbifolds}
Suppose  for  simplicity  that $d$ divides $n$, where $n$ and $d$ are
the  integer  parameters in equation~\eqref{eq:cyclic:cover:general}.
The  reader  can easily  extend  the  considerations  below to the
remaining case.

Let   $\cL$   be  a  $\PSL$-invariant  suborbifold  in  some  stratum
$\cQ(m_1,\dots,m_k,-1^l)$   in   the   moduli   space   of  quadratic
differentials  with  at  most  simple  poles  on  $\CP$. For any such
invariant  orbifold  $\cL$  and for any couple of integers $(d,n)$ we
construct  a new $\PSL$-invariant suborbifold $\hat\cL$ such that the
Riemann  surfaces  underlying the flat surfaces from $\hat\cL$ belong
to  the  locus  of cyclic covers~\eqref{eq:cyclic:cover:general}. The
construction is performed as follows.

Let  $S=(\CP,q)\in\cL$.  In  the simplest case, when the total number
$k+l$  of  zeroes and poles of the meromorphic quadratic differential
$q$  on  $\CP$  coincides with the number $n$ of ramification points,
one  can  place  the points $z_1,\dots z_n$ exactly at the zeroes and
poles  of  the  corresponding  quadratic  differential  $q$. (Here we
assume   that   $d$   divides  $n$,  so  that  the  cyclic  cover  as
in~\eqref{eq:cyclic:cover:general}  is  not  ramified  at  infinity.)
Consider  the induced quadratic differential $p^\ast q$ on the cyclic
cover  $\hat  X$.  By applying  this  operation  to  every  flat surface
$S\in\cL$,  we  get  the promised orbifold $\hat\cL$.
Since by assumption $\cL$  is  $\PSL$-invariant,
the induced orbifold
$\hat\cL$  is  also $\PSL$-invariant,  and  in  the  simplest  case,  when
$k+l=n$, we get $\dim\hat\cL=\dim\cL$. In particular, starting with a
Teichm\"uller curve, we get a Teichm\"uller curve.

In  the  concrete example from Section~\ref{ss:A:concrete:example} we
start  with  an arithmetic Teichm\"uller curve $\cT$ corresponding to
the  stratum $\cQ(1,-1^5)$. Placing the points $z_1,\dots,z_6$ at the
single  zero  and at the five poles of each flat surface $S$ in $\cT$
we  get  an arithmetic Teichm\"uller curve $\hat\cT$ corresponding to
the  stratum  $\cQ(7,1^5)$. By construction, $\hat\cT$ belongs to the
locus of cyclic covers~\eqref{eq:cyclic:cover:equation}.

The latter construction can be naturally generalized to the case when
$k+l\neq n$.

When  $\PcL$ is a nonarithmetic Teichm\"uller curve, the construction
can be modified by placing the points $z_1,\dots,z_n$ at all possible
subcollections    of    $n$    distinct   \textit{periodic   points};
see~\cite{Gutkin:Hubert:Schmidt} for details.

The  construction  can  be  generalized  further.  Let  $\cL_1$  be a
$\PSL$-invariant        suborbifold       of       some       stratum
$\cQ_1(m_1,\dots,m_k,-1,\dots,-1)$   in  genus  zero.  Fix  a  subset
$\Sigma$     in     the     ordered     set    with    multiplicities
$\{m_1,\dots,m_k,-1,\dots,-1\}$;   let  $j$  be  the  cardinality  of
$\Sigma$.  For each  flat  surface $S=(\CP,q)$ in $\cL$, consider all
possible  cyclic  covers  as  in~\eqref{eq:cyclic:cover:general} such
that  the points $z_1,\dots,z_j$ run over all possible configurations
of the zeroes and poles corresponding to the subset $\Sigma$, and the
remaining   points   $z_{j+1},\dots,z_n$   run   over   all  possible
configurations  of  $n-j$ distinct regular points in $S$. Considering
for  each  configuration  a  quadratic differential $p^\ast q$ on the
resulting  cyclic  cover  $\hat  X$,  we construct a $\PSL$-invariant
suborbifold $\hat\cL$ of complex dimension $(\dim\cL+n-j)$.

Of  course,  the  proof  that  when  $\cL_1$  is endowed with a Borel
probability  measure, invariant with respect to the natural action of
the  group  $\PSL$  and  ergodic  with  respect  to the Teichm\"uller
geodesic flow, the new suborbifold $\hat\cL_1$ is also endowed with a
$\PSL$-invariant  measure satisfying the same properties, requires in
general   case  $n-j>0$  some  extra  work  (see,  for  example,  the
paper~\cite{Eskin:Marklof:Morris}   in  this  spirit).

%####################################################################
\section{Concrete example: the calculations}
\label{s:Concrete:example:the:calculations}

In   this   section   we  treat  in  all  details  the  example  from
Section~\ref{ss:A:concrete:example}.

%-------------------------------------------------------------
\subsection{The $\PSLZ$-orbit}
\label{ss:The:PSLZ:orbit}

It   is   an  exercise (left to the reader)  to  verify  that  the  $\PSLZ$-orbit
of  the square-tiled  surface  $S$  of  Figure~\ref{fig:oneline:6}  has the
structure  presented in Figure~\ref{fig:PSL2Z:orbit:below} below. By
historical  reasons,  the  initial  surface  $S$  is  denoted  as  $S_3$
there.

\begin{figure}[htb]

%----------------------------------------------------------------------------
\includegraphics{oneline6.eps}

%----------------------------------------------------------------------------
\includegraphics{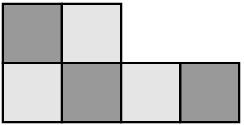}

%----------------------------------------------------------------------------
\includegraphics{twolines6.eps}

% Indices for the surface on the left %----------------------------------------------
\begin{picture}(0,0)(76,-79.5) % (-92,-92) +168,+12.5
\begin{picture}(0,0)(-1,98.5)
\put(0,0){5}
\put(0,-24.5){4}
\put(0,-49){3}
\put(0,-73.5){2}
\put(0,-98){1}
\put(0,-122.5){0}
\end{picture}
\begin{picture}(0,0)(1.5,146)
\put(-14,61){\tiny\textit A}
\put(-15,-12.75){\tiny\textit B}
\put(-14,-88){\tiny\textit A}
\put(15,61){\tiny\textit C}
\put(15,36.5){\tiny\textit F}
\put(15,12){\tiny\textit C}
\put(15,-12.5){\tiny\textit E}
\put(15,-37){\tiny\textit C}
\put(15,-61.5){\tiny\textit D}
\put(14.5,-88){\tiny\textit C}
\end{picture}
\begin{picture}(0,0)(-7,146)
\put(-10.5,64){\tiny $0$}
\put(-26,52.5){\rotatebox{180}{\tiny $0$}}
\put(4,52.5){\rotatebox{180}{\tiny $4$}}
\put(-26,28){\rotatebox{180}{\tiny $1$}}
\put(4,28){\rotatebox{180}{\tiny $5$}}
\put(-26,3.5){\rotatebox{180}{\tiny $2$}}
\put(4,3.5){\rotatebox{180}{\tiny $2$}}
\put(-26,-20.5){\rotatebox{180}{\tiny $3$}}
\put(4,-20.5){\rotatebox{180}{\tiny $3$}}
\put(-26,-45){\rotatebox{180}{\tiny $4$}}
\put(4,-45){\rotatebox{180}{\tiny $0$}}
\put(-26,-70){\rotatebox{180}{\tiny $5$}}
\put(4,-70){\rotatebox{180}{\tiny $1$}}
\put(-10.5,-88){\tiny $5$}
\put(-11.5,-98){\scriptsize $S_3$}
\end{picture}
\end{picture}

% Indices for the surface in the middle %----------------------------------------------
\begin{picture}(0,0)(0,-91.5) % (-168,-104) +168,+12.5
\begin{picture}(0,0)(0,97.5)
\put(-24.5,-98){5}
\put(-24.5,-122.5){4}
\put(0,-49){3}
\put(0,-73.5){2}
\put(0,-98){1}
\put(0,-122.5){0}
\end{picture}
\begin{picture}(0,0)(1.5,146)
\put(-16,12){\tiny\textit C}
\put(14.5,12){\tiny\textit F}
\put(-16,-12.75){\tiny\textit A}
\put(-40,-37){\tiny\textit E}
\put(-16,-34){\tiny\textit C}
\put(14.5,-37){\tiny\textit D}
\put(-40,-61){\tiny\textit B}
\put(-40,-85){\tiny\textit E}
\put(-14,-88){\tiny\textit C}
\put(14.5,-88){\tiny\textit F}
\end{picture}
\begin{picture}(0,0)(-7,146)
\put(-12,15){\tiny $0$}
\put(-26,4.5){\rotatebox{180}{\tiny $2$}} % (-30,0.5)
\put(4,4.5){\rotatebox{180}{\tiny $0$}}
\put(-26,-19.5){\rotatebox{180}{\tiny $3$}}
\put(4,-19.5){\rotatebox{180}{\tiny $1$}}
\put(-36,-34){\tiny $4$}
\put(-51,-45){\rotatebox{180}{\tiny $4$}}
\put(4,-45){\rotatebox{180}{\tiny $2$}}
\put(-51,-70){\rotatebox{180}{\tiny $5$}}
\put(4,-70){\rotatebox{180}{\tiny $3$}}
\put(-36,-88){\tiny $5$}
\put(-12,-88){\tiny $3$}
\put(-25,-98){\scriptsize $S_2$}
\end{picture}
\end{picture}

% Indices for the surface on the right %----------------------------------------------
\begin{picture}(0,0)(-75,-103.5) % (-244,-116) +168+1,+12.5
\begin{picture}(0,0)(0,97.5)
\put(-24.5,-98){5}
\put(-24.5,-122.5){4}
\put(0,-49){3}
\put(0,-73.5){2}
\put(0,-98){1}
\put(0,-122.5){0}
\end{picture}
\begin{picture}(0,0)(1.5,146)
\put(-16,12){\tiny\textit C}
\put(14.5,-12.75){\tiny\textit F}
\put(-16,-12.75){\tiny\textit A}
\put(-40,-37){\tiny\textit B}
\put(-16,-34){\tiny\textit C}
\put(14.5,-61){\tiny\textit D}
\put(-40,-61){\tiny\textit E}
\put(-40,-85){\tiny\textit B}
\put(-14,-88){\tiny\textit C}
\end{picture}
\begin{picture}(0,0)(-7,146)
\put(-12,15){\tiny $0$}
\put(-26,4.5){\rotatebox{180}{\tiny $2$}} % (-30,0.5)
\put(4,4.5){\rotatebox{180}{\tiny $2$}}
\put(-26,-19.5){\rotatebox{180}{\tiny $3$}}
\put(4,-19.5){\rotatebox{180}{\tiny $3$}}
\put(-36,-34){\tiny $4$}
\put(-51,-45){\rotatebox{180}{\tiny $4$}}
\put(4,-45){\rotatebox{180}{\tiny $0$}}
\put(-51,-70){\rotatebox{180}{\tiny $5$}}
\put(4,-70){\rotatebox{180}{\tiny $1$}}
\put(-36,-88){\tiny $5$}
\put(-12,-88){\tiny $3$}
\put(-25,-98){\scriptsize $S_1$}
\end{picture}
\end{picture}

% Coordinate system %----------------------------------------------
\begin{picture}(0,0)(-120,140) % (-75,-103.5)
\put(0,0){\vector(0,1){120}}
\put(2,120){$x$}
\put(0,0){\vector(-1,0){240}}
\put(-240,-7){$y$}
\end{picture}

%-------------------------------------------------------------------
\includegraphics{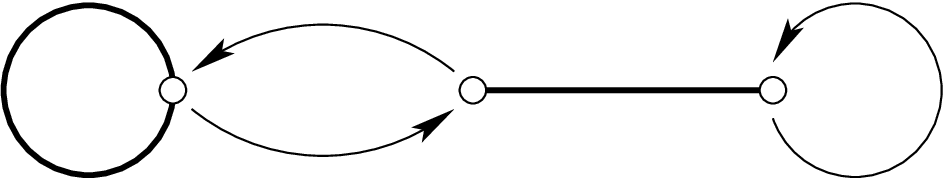}
\begin{picture}(0,0)(-80,100) % (-75,110)
\put(-176,-66){$h$}
\put(-118,-61){$r$}
\put(-67,-50){$h$}
\put(-67,-83){$h$}
\put(-9,-66){$r$}
\put(-140,-71){\scriptsize $S_3$}
\put(-97,-71){\scriptsize $S_2$}
\put(-36,-66){\scriptsize  $S_1$}
   %
% \begin{picture}(0,0)(-2.5,-2.5) % (-2.5,-2.5)
% \put(-135,-74){\circle{10}}
% \put(-98,-74){\circle{10}}
% \put(-31,-68){\circle{10}}
% \end{picture}
\end{picture}

\vspace{180bp}
\caption{
\label{fig:PSL2Z:orbit:below}
$\PSLZ$-orbit of $S$.
}
\end{figure}

\begin{Convention}
\label{conv:hor:vert}
By typographical reasons, we are forced to use a peculiar orientation
as   in   Figure~\ref{fig:PSL2Z:orbit:below}  and  in  all
remaining  Figures in this paper. The notions ``horizontal''
and   ``vertical''  correspond  to  this  ``landscape  orientation'':
``horizontal''  means  ``parallel  to the $x$-axes'' and ``vertical''
means  ``parallel  to  the  $y$-axes''.  Under  this  convention, the
leftmost  surface  $S_3$ of Figure~\ref{fig:PSL2Z:orbit:below} has a
single  \textit{horizontal}  cylinder  of  height  $1$  and width $6$.
\end{Convention}

The three square-tiled surfaces $\hat S_1, \hat S_2, \hat S_3$ in the
$\PSLZ$-orbit    of    $\hat    S=\hat    S_3$   are   presented   in
Figure~\ref{fig:three:surfaces}.
%Figures~\ref{fig:oneline:shear}--\ref{fig:twoline:rotate}  illustrate
%in detail the action of the generators
 %%
This  figure also shows how the surfaces $\hat S_1, \hat S_2, \hat S_3$
are related by the basic transformations
$$
h=
\begin{pmatrix}
1&1\\
0&1
\end{pmatrix}
\qquad
\text{ and }
\qquad
r=
\begin{pmatrix}
0&-1\\
1&0
\end{pmatrix}
$$
given by the action of $\PSLZ$ on the flat surfaces
$\hat S_1, \hat S_2, \hat S_3$.

%---------------------------------------------------------------------
\subsection{Spectrum of Lyapunov exponents}
\label{ss:Spectrum:of:the:Lyapunov:exponents}

\begin{Lemma}
\label{lm:8:9}
The  sum  of  the  nonnegative Lyapunov exponents of the Hodge bundle
$H^1$  with  respect  to  the  geodesic flow on $\hat\cT$ is equal to
$8/9$.
\end{Lemma}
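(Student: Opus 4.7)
The plan is to apply the Eskin--Kontsevich--Zorich formula~\cite{Eskin:Kontsevich:Zorich} for the sum of nonnegative Lyapunov exponents of the Hodge bundle over an arithmetic Teichm\"uller curve. For a square-tiled surface $S_0$ in a stratum $\cQ(m_1,\dots,m_n)$ of quadratic differentials, this formula expresses $\lambda_1+\cdots+\lambda_g$ as a sum of two contributions: a ``singularity'' term depending only on the orders $m_i$ (arising from the degree of the relative Hodge bundle over the Teichm\"uller curve via Riemann--Roch), plus a ``cylinder'' term obtained by averaging, over the $\PSLZ$-orbit of $S_0$, the sum of ratios $h/w$ (height over width) of the horizontal cylinders of each representative.

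Both ingredients are explicit in our setting. First, the ambient stratum is $\cQ(7,1^5)$, so the singularity term is determined by $m=7$ once and $m=1$ five times and reduces to a closed-form rational number. Second, by Section~\ref{ss:The:PSLZ:orbit} the $\PSLZ$-orbit of $\hat S = \hat S_3$ consists of exactly three square-tiled surfaces $\hat S_1$, $\hat S_2$, $\hat S_3$, each tiled by $18$ unit squares (i.e.\ triple cyclic covers of the $6$-square base $S_j \in \cQ(1,-1^5)$). For each of them I would read off the horizontal cylinder decomposition directly from Figure~\ref{fig:three:surfaces}, obtained by lifting the cylinder decomposition of $S_j$ along the triple cover and tracking which cylinders on the base split into three cylinders upstairs and which remain a single cylinder three times wider; this is a finite combinatorial bookkeeping exercise using the deck transformation $T$ of equation~\eqref{eq:T} applied at the level of the horizontal cylinders.

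Having listed all horizontal cylinders of $\hat S_1, \hat S_2, \hat S_3$ with their heights and widths, the cylinder contribution is simply
$$
\frac{1}{3}\sum_{j=1}^{3}\ \sum_{\text{horiz.\ cyl.\ of }\hat S_j}\frac{h}{w}\,,
$$
which is a rational number. Substituting this and the singularity term into the EKZ formula, and observing that the genus of each $\hat S_j$ is $g=4$ (consistent with the cover being of degree $3$ over genus $0$ branched at six points, giving $g = (3-1)(6-2)/2 = 4$ by Riemann--Hurwitz), the two contributions collapse to $8/9$.

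The main obstacle is not conceptual but organizational: correctly enumerating the horizontal cylinders of the three $18$-square cyclic covers $\hat S_j$ under the landscape orientation convention of Convention~\ref{conv:hor:vert}, and being scrupulous about the normalization constants in the EKZ formula as stated for \emph{quadratic} rather than Abelian differentials (in particular the factor involving the orientation double cover). Once these two points are handled, the computation is mechanical and yields the claimed value $8/9$.
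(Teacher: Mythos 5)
Your proposal is correct and takes essentially the same route as the paper's proof: apply the Eskin--Kontsevich--Zorich formula, compute the singularity term from the orders $(7,1^5)$ of the zeros (which gives $19/27$), compute the Siegel--Veech contribution $\frac{\pi^2}{3}c_{\mathit{area}}(\hat\cT)$ as the average over the three-element $\PSLZ$-orbit of $\sum h/w$ over horizontal cylinders (which gives $5/27$), and add to get $8/9$. Your caution about the normalization for quadratic (as opposed to Abelian) differentials is well-placed but does not change the outcome, since the paper also applies the EKZ formula for $H^1_+=H^1_\R$ directly.
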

\begin{proof}
By  the  formula  for  the sum of Lyapunov exponents of the subbundle
$H^1_{\R{}}=H^1_+$ from~\cite{Eskin:Kontsevich:Zorich} one has

\begin{equation}
\label{eq:general:sum:of:plus:exponents:for:quadratic}
\lambda_1 + \dots + \lambda_g
\ = \
\cfrac{1}{24}\,\sum_{j=1}^\noz \cfrac{d_j(d_j+4)}{d_j+2}
+\frac{\pi^2}{3}\cdot c_{\mathit{area}}(\hat\cT)
\end{equation}
where  the  Siegel--Veech  constant  for the corresponding arithmetic
Teichm\"uller disc $\hat\cT$ is computed as
$$
c_{\mathit{area}}(\hat\cT)=
\cfrac{3}{\pi^2}\cdot
\cfrac{1}{\card(\PSLZ\cdot \hat S)}\
\sum_{\hat S_i\in\PSLZ\cdot \hat S}\ \
\sum_{\substack{
\mathit{horizontal}\\
\mathit{cylinders\ cyl}_{ij}\\
such\ that\\\hat S_i=\sqcup\mathit{cyl}_{ij}}}\
\cfrac{h_{ij}}{w_{ij}}\,,
$$
In   our   case  $\hat\cT\subset\cQ(7,1^5)$,  so  the  first  summand
in~\eqref{eq:general:sum:of:plus:exponents:for:quadratic} gives
$$
\frac{1}{24}\,\sum_{j=1}^\noz \cfrac{d_j(d_j+4)}{d_j+2}=
\frac{1}{24} \left(\frac{7\cdot 11}{9}  +
    5\cdot \frac{1\cdot 5}{3}\right)=
\frac{19}{27}\,.
$$
Observing  the  cylinder  decompositions of the three surfaces in the
$\PSLZ$-orbit of the initial square-tiled cyclic cover, we get:
$$
\frac{\pi^2}{3}\cdot c_{\mathit{area}}(\hat\cT)=
\frac{1}{3}\left(\frac{1}{18} + 2\left(\frac{1}{12} +
   \frac{1}{6}\right)\right)=
\frac{5}{27}\,.
$$
Thus,      taking      the      sum      of     the     two     terms
in~\eqref{eq:general:sum:of:plus:exponents:for:quadratic} we get
$$
\lambda_1 + \dots + \lambda_4=\frac{19}{27}+\frac{5}{27}=\frac{8}{9}
$$
\end{proof}

Consider    the  $\PSL$-invariant  subbundles   $\cE(\zeta)$,
$\cE(\zeta^2)$  of  the  Hodge  bundle $H^1_{\C{}}$ over $\hat\cT$ as
in~\eqref{eq:E:zeta:k}.    Note    that   in   our   case   we   have
$H^1_{\C{}}=\cE(\zeta)\oplus\cE(\zeta^2)$.

\begin{Proposition}
\label{prop:Lyapunov:spectrum}
The   Lyapunov  spectrum  of  the  real  and  complex  Hodge  bundles
$H^1_{\R{}}$  and  $H^1_{\C{}}$  with respect to the geodesic flow on
the arithmetic Teichm\"uller curve $\hat\cT$ is
$$
\left\{\frac{4}{9},\frac{4}{9},0,0,0,0,-\frac{4}{9},-\frac{4}{9}\right\}\,.
$$

The  Lyapunov  spectrum  of  each  of  the  subbundles  $\cE(\zeta)$,
$\cE(\zeta^2)$ with respect to the geodesic flow on $\hat\cT$ is
$$
\left\{\frac{4}{9},0,0,-\frac{4}{9}\right\}\,.
$$
\end{Proposition}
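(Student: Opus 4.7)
The plan is to combine three ingredients: the signature computation for $\cE(\zeta)$ and $\cE(\zeta^2)$ (via McMullen's formula, already recorded in Section~\ref{ss:Splitting:of:the:Hodge:bundle}), the constraint on the Lyapunov spectrum of a pseudo-unitary cocycle (Theorem~\ref{th:spec:of:unitary:cocycle}), and the explicit value $8/9$ for the sum of non-negative exponents (Lemma~\ref{lm:8:9}).

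First I would observe that, since $\cE(\zeta)$ and $\cE(\zeta^2)$ are preserved by the Gauss--Manin connection, the complex Kontsevich--Zorich cocycle over $\hat\cT$ splits as a direct sum of its restrictions to these two subbundles. By McMullen's theorem recalled in Section~\ref{ss:Splitting:of:the:Hodge:bundle}, the pseudo-Hermitian form~\eqref{eq:intersection:form} has signature $(3,1)$ on $\cE(\zeta)$ and $(1,3)$ on $\cE(\zeta^2)$, and the restricted cocycles are equivariant for this form. Thus each restricted cocycle takes values in $U(3,1)$, respectively $U(1,3)$. Applying Theorem~\ref{th:spec:of:unitary:cocycle} with $(p,q)=(3,1)$ forces the Lyapunov spectrum of $\cE(\zeta)$ to have the form $\{\lambda_1,0,0,-\lambda_1\}$ with $\lambda_1\geq 0$, and similarly the spectrum of $\cE(\zeta^2)$ is $\{\lambda_2,0,0,-\lambda_2\}$ with $\lambda_2\geq 0$.

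Next I would identify $\lambda_1=\lambda_2$. Since $\cE(\zeta^2)=\overline{\cE(\zeta)}$, complex conjugation is a (conjugate-linear) bundle isomorphism intertwining the two restricted cocycles. The Lyapunov exponents of a linear cocycle and of its complex conjugate coincide (singular values are invariant under complex conjugation), so $\lambda_1=\lambda_2=:\lambda$. Hence the full complex Hodge spectrum is the union of the two, namely
\[
\{\lambda,\lambda,0,0,0,0,-\lambda,-\lambda\}.
\]
Because $H^1_{\C}$ is the complexification of $H^1_{\R}$, the Lyapunov spectrum of the real Hodge bundle coincides with that of $H^1_{\C}$, so it is the same list.

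Finally I would pin down $\lambda$ by the trace formula. By Lemma~\ref{lm:8:9}, the sum of the four largest exponents of $H^1_{\R}$ equals $8/9$. From the spectrum just displayed, that sum is $\lambda+\lambda+0+0=2\lambda$, giving $\lambda=4/9$. Substituting back yields the spectra asserted in the proposition.

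The only non-routine step is the identification of $\lambda_1$ with $\lambda_2$: one has to make sure the Oseledets exponents really are conjugation-invariant, which follows from the fact that the singular value decomposition of the complex cocycle matrices and that of their complex conjugates are related by entrywise conjugation. Everything else is an assembly of Theorem~\ref{th:spec:of:unitary:cocycle}, the signature computation, and Lemma~\ref{lm:8:9}.
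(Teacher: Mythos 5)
Your proof is correct and follows essentially the same route as the paper: split $H^1_\C$ as $\cE(\zeta)\oplus\cE(\zeta^2)$, invoke McMullen's signature $(3,1)$/$(1,3)$ and Theorem~\ref{th:spec:of:unitary:cocycle} to get spectra $\{\lambda,0,0,-\lambda\}$, use complex conjugation to equate the two $\lambda$'s, and pin down $\lambda=4/9$ from Lemma~\ref{lm:8:9}. The only cosmetic difference is that the paper observes the conjugation symmetry first and you defer it; the substance is identical.
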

\begin{proof}
Note  that  the  vector  bundles  $\cE(\zeta)$ and $\cE(\zeta^2)$ are
complex conjugate. Hence, their Lyapunov spectra coincide.

The pseudo-Hermitian Hodge bilinear form on $H^1_{\C{}}$ is preserved
by  the  Gauss--Manin connection. By the theorem of C.~McMullen cited
at  the  end  of  Section~\ref{ss:Splitting:of:the:Hodge:bundle}, the
signature  of  its restriction to $\cE(\zeta)$, $\cE(\zeta^2)$ equals
to  $(3,1)$ and $(1,3)$ respectively. Thus, the restriction of the
cocycle      to      $\cE(\zeta)$,     $\cE(\zeta^2)$     lies     in
$\operatorname{U}(3,1)$  and $\operatorname{U}(1,3)$ respectively.
Hence,  by  Theorem~\ref{th:spec:of:unitary:cocycle}  the spectrum of
each of $\cE(\zeta)$ and $\cE(\zeta^2)$ has the form
$$
\{\lambda, 0, 0, -\lambda\}\,,
$$
where               $\lambda\ge               0$.               Since
$H^1_{\C{}}=\cE(\zeta)\oplus\cE(\zeta^2)$,  the  spectrum of Lyapunov
exponents of the Hodge bundle $H^1_{\C{}}$ is the union of spectra of
$\cE(\zeta)$  and  of  $\cE(\zeta^2)$. Since the Lyapunov spectrum of
$H^1_{\R{}}$  coincides with the one of $H^1_{\C{}}$ we conclude that
the spectrum of $H^1_{\R{}}$ is
$$
\{\lambda, \lambda, 0, 0, 0, 0, -\lambda, -\lambda\}\,.
$$
By Lemma~\ref{lm:8:9} we get $\lambda=4/9$.
\end{proof}

Recall  that  the  Oseledets  subspace  (subbundle)  $E_0$  (the  one
associated to the zero exponents) is called \textit{neutral Oseledets
subspace (subbundle)}.

\begin{Proposition}\label{prop:isometryU31}
The  Kontsevich-Zorich cocycle over $\hat{\cT}$ acts by isometries on
the   neutral  Oseledets
subbundle $E_0$ of each of the bundles $\cE(\zeta), \cE(\zeta^2)$. In
other  words,  the  restriction  of  the pseudo-Hermitian form to the
subbundle  $E_0$  of each of the bundles $\cE(\zeta), \cE(\zeta^2)$ is either
positive-definite or negative-definite.
\end{Proposition}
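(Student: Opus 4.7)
The plan is to exploit the Oseledets decomposition together with the invariance of the pseudo-Hermitian intersection form, following exactly the line of reasoning used in the proof of Theorem~\ref{th:spec:of:unitary:cocycle}. I would focus first on the subbundle $\cE(\zeta)$, which carries a pseudo-Hermitian form of signature $(3,1)$ preserved by the cocycle. By Proposition~\ref{prop:Lyapunov:spectrum} the Oseledets decomposition splits $\cE(\zeta)$ as
\[
\cE(\zeta) = E_{4/9} \oplus E_0 \oplus E_{-4/9},
\]
with $\dim_{\C} E_{\pm 4/9} = 1$ and $\dim_{\C} E_0 = 2$.

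Next I would invoke the standard isotropy argument underlying Theorem~\ref{th:spec:of:unitary:cocycle}: since the form is invariant while vectors in $E_{4/9}$ grow exponentially and vectors in $E_{-4/9}$ decay exponentially under the cocycle, both lines must be isotropic, and the Hermitian pairing between $E_{4/9}$ and $E_{-4/9}$ must be non-degenerate (the pairing of a vector $v\in E_0$ with a vector $w\in E_{\pm 4/9}$ tends to zero when one propagates backwards or forwards in time, so $E_0$ is Hermitian-orthogonal to $E_{4/9}\oplus E_{-4/9}$). Consequently $E_{4/9}\oplus E_{-4/9}$ is a two-dimensional subspace on which the restriction of the Hermitian form is non-degenerate with two isotropic lines, hence of signature $(1,1)$, and $E_0$ is precisely its Hermitian-orthogonal complement in $\cE(\zeta)$. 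A count of signatures then forces the restriction of the form to $E_0\subset\cE(\zeta)$ to have signature $(3,1)-(1,1)=(2,0)$, i.e.\ to be positive-definite. The entirely parallel argument applied to $\cE(\zeta^2)$, which has signature $(1,3)$, shows that the restriction of the form to the neutral subbundle inside $\cE(\zeta^2)$ has signature $(0,2)$, i.e.\ is negative-definite.

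The desired conclusion is then automatic: on each of the neutral subbundles the Kontsevich--Zorich cocycle preserves a definite Hermitian form, hence acts by unitary transformations and in particular by isometries. The only non-routine ingredient is the identification of $E_0$ with the Hermitian-orthogonal complement of $E_{4/9}\oplus E_{-4/9}$, which is the main potential obstacle; however this identification follows directly from the same elementary isotropy computations carried out in Appendix~\ref{a:Lyapunov:spectrum:of:pseudo-unitary:cocycles}, so no genuinely new idea is needed beyond a careful signature bookkeeping together with the fact that $\cE(\zeta^2)=\overline{\cE(\zeta)}$ transports the argument verbatim to the second summand.
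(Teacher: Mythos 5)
Your proof is correct and takes essentially the same route as the paper: the paper cites the general Lemma~\ref{lemma:isometryUpq} (a pseudo-unitary cocycle with $\dim E_0 = |p-q|$ acts by isometries on $E_0$), which is proved in Appendix~\ref{a:Lyapunov:spectrum:of:pseudo-unitary:cocycles} from precisely the isotropy and orthogonality facts of Lemma~\ref{l:symp-orth} that you invoke. The only cosmetic difference is in the final bookkeeping step: the appendix argues by contradiction from the bound $\dim \leq \min(p,q)$ on isotropic subspaces (Lemma~\ref{lemma:nullcone}), whereas you carry out the equivalent signature count $(3,1) = (1,1) + (2,0)$ after identifying $E_0$ with the Hermitian-orthogonal complement of $E_{4/9}\oplus E_{-4/9}$.
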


\begin{proof}
The   Kontsevich-Zorich   cocycle  over  $\hat\cT$  on  $\cE(\zeta)$,
respectively,  on  $\cE(\zeta^2)$,  is  a  $U(3,1)$,  respectively, a
$U(1,3)$,   cocycle.   Moreover,  by  Proposition~\ref{prop:Lyapunov:spectrum},  the
dimension   of  the  corresponding  neutral  Oseledets  subspaces  is
$2=3-1=|1-3|$.  By  Lemma~\ref{lemma:isometryUpq} below, this implies
that  the  Kontsevich-Zorich  cocycle  acts  by  isometries along the
neutral Oseledets subspace.
\end{proof}

\begin{remark}This Proposition was motivated by a question of Y.~Guivarch to the authors.
\end{remark}

We  prove  in Section~\ref{s:non:varying} a non-varying phenomenon
similar    to   the   one   proved   by   D.~Chen   and   M.~M\"oller
in~\cite{Chen:Moeller}  for strata in lower genera: certain invariant
loci of cyclic covers share the same sum of the Lyapunov exponents.

%####################################################################
\section{Closed geodesics on an arithmetic Teichm\"uller curve}
\label{s:Closed:geodesics:on:an:arithmetic:Teichm:curve}

In  this  section we describe the basic facts concerning the geometry
of  a  general  arithmetic  Teichm\"uller  curve.  We  do  not  claim
originality:  these elementary facts are in part already described in
the    literature    (see~\cite{Herrlich},    \cite{Hubert:Lelievre},
\cite{Hubert:Schmidt},                 \cite{Moeller:Matheus:Yoccoz},
\cite{McMullen:square:tiled},         \cite{Schmithusen:Veech:group},
\cite{Schmithusen:examples},      \cite{Yoccoz},     \cite{Zmiaikou},
\cite{Zorich:square:tiled}  and  references  there);  in  part widely
known  in  folklore  concerning  square-tiled  surfaces (as in recent
experiments~\cite{Delecroix:Lelievre}); in part they can be extracted
from  the  broad  literature  on  coding  of geodesics on surfaces of
constant   negative   curvature   (see,   for  example,  \cite{Dalbo}
and~\cite{Series} and references there).

Consider  a  general  square-tiled  surface  $S_0$.  Throughout  this
section  we  assume  that the flat structure on $S_0$ is defined by a
quadratic  differential no matter whether it is a global square of an
Abelian  differential  or  not.  In  particular,  we deviate from the
traditional   convention   and   always   consider  the  Veech  group
$\Gamma(S_0)$  of  $S_0$  as  a  subgroup  of  $\PSL$, and never as a
subgroup of $\SL$.

We use the same notation $\cT$ for the arithmetic Teichm\"uller curve
defined  by  $S_0$  and for the corresponding hyperbolic surface with
cusps.

Note  that, when working with geodesic flows, in some situations one has
to  consider the points of the unit tangent bundle while in the other
situations the points of the base space. In our concrete example with
arithmetic Teichm\"uller curves, the orbit $\PSL\cdot S_0\subset Q_g$
of  a  square-tiled  surface  $S_0$  in the moduli space of quadratic
differentials  $\cQ_g$  plays  the role of the unit tangent bundle to
the  arithmetic  Teichm\"uller  curve $\cT\subset\cM_g$ in the moduli
space  $\cM_g$  of curves. The corresponding projection is defined by
``forgetting'' the quadratic differential:
$$
\cQ_g\ni S=(C,q)\mapsto C\in\cM_g\,.
$$

%--------------------------------------------------------------------
\subsection{Encoding a Veech group by a graph}
\label{ss:Encoding:a:Veech:group:by:a:graph}
Recall  that  $\PSLZ$  is isomorphic to the group with two generators
$h$  and  $r$ satisfying the relations
\begin{equation}
\label{eq:PSLZ:relations}
r^2=\id\qquad\text{and}\qquad (hr)^3=\id\,.
\end{equation}
As    generators    $r$    and    $h$    one   can   chose   matrices
$$
h=
\begin{pmatrix}
1&1\\
0&1
\end{pmatrix}
\qquad
\text{ and }
\qquad
r=
\begin{pmatrix}
0&-1\\
1&0
\end{pmatrix}\,.
$$

Having an irreducible square-tiled  surface  $S_0$  defined  by  a  quadratic differential,
construct  the  following  graph  $\mathbb{G}$. Its vertices are in a
bijection  with the elements of the orbit $\PSLZ\cdot S_0$. Its edges
are  partitioned  in two types. Edges of ``r-type'' are not oriented.
Edges of ``h-type'' are oriented. The edges are naturally constructed
as  follows.  Each vertex $S_i\in\mathbb{G}$ is joined by the edge of
the  $r$-type with the vertex represented by the square-tiled surface
$r\cdot  S_i$.  Each  vertex $S_i\in\mathbb{G}$ is also joined by the
oriented  edge  of the ``h-type'' with the vertex $h\cdot S_i$, where
the edge is oriented from $S_i$ to $h\cdot S_i$.

By  construction,  the graph $\mathbb{G}$ with marked vertex $S_0$ is
naturally   identified  with  the  coset  $\PSLZ/\Gamma(S_0)$,  where
$\Gamma(S_0)$  is  the Veech group of the square-tiled surface $S_0$.
(Irreducibility  of  $S_0$  implies  that  $\Gamma(S_0)$  is indeed a
subgroup of $\PSLZ$.)

The  structure  of  the  graph carries complete information about the
Veech group $\Gamma(S_0)$. Namely, any path on the graph $\mathbb{G}$
composed  from  a  collection  of its edges defines the corresponding
word in ``letters'' $h, h^{-1}, r$. Any \textit{closed} path starting
at   $S_0$   naturally   defines   an  element  of  the  Veech  group
$\Gamma(S_0)\subseteq\PSLZ$.    Reciprocally,    any    element    of
$\Gamma(S_0)$  represented  as  a  word  in generators $h, h^{-1}, r$
defines  a closed path starting at $S_0$. Two closed homotopic paths,
with  respect  to  the  homotopy  in $\mathbb{G}$ with the fixed base
point   $S_0$,   define   the   same   element  of  the  Veech  group
$\Gamma(S_0)$. Clearly, the resulting map
\begin{equation}
\label{eq:pi1:to:Gamma}
\pi_1(\mathbb{G},S_0)\to\Gamma(S_0)\subseteq\PSLZ\,.
\end{equation}
is  a  group homomorphism, and even epimorphism.

For  any  flat  surface $S=g\cdot S_0$ in the $\PSL$-orbit $\PSL\cdot
S_0$  of  the  initial  square-tiled  surface  $S_0$  the Veech group
$\Gamma(S)$  is  conjugated  to  the  Veech  group  of $S_0$, namely,
$\Gamma(S)=g\cdot\Gamma(S_0)\cdot   g^{-1}$.  One  can  construct  an
analogous  graph  $\mathbb{G}_S$ for $S$ which would be isomorphic to
the  initial one. The only change would concern the representation of
the  edges  in $\PSL$: the edges of the $h$-type would be represented
now by the elements $ghg^{-1}$ and the edges of the $r$-type would be
represented by the elements $grg^{-1}$.

Note  that  by  the  result~\cite{Hubert:Lelievre:nonconguence}  of
P.~Hubert  and S.~Lelievre, in general, $\Gamma(S_0)$ \textit{is not}
a congruence subgroup.

One  can  formalize  the  properties  of  the  graph  $\mathbb{G}$ as
follows:

\begin{itemize}
\item[(i)]  Each  vertex  of  $\mathbb{G}$ has valence three or four,
where  one  valence  is  represented  by  an  outgoing  edge  of  the
``h-type'',  another  one  --- by an incoming edge of the ``h-type'';
the  remaining  one or two valences are represented by an $r$-edge or
an $r$-loop respectively;
%\newline
    %
\item[(ii)]  The  path  $hrhrhr$  (where we follow the orientation of
each  $h$-edge) starting from any vertex of the graph $\mathbb{G}$ is
closed.
\end{itemize}

\begin{Question}
Does any abstract graph satisfying properties (i) and (ii) represents
the $\PSLZ$-orbit of some square-tiled surface $S_0$?
\end{Question}

J.~Ellenberg  and  D.~McReynolds  gave  an  affirmative answer to the
latter  question  for  square-tiled surfaces with markings (i.e. with
``fake  zeroes''),  see~\cite{Ellenberg:McReynolds}.  We  are curious
whether the answer is still affirmative for flat surfaces without any
fake zeroes?

Note   that   certain   infinite   collections  of  pairwise-distinct
square-tiled surfaces might share the same Veech group $\Gamma$, and,
thus,  the  same  graph $\mathbb{G}$. As the simplest example one can
consider already $\Gamma=\PSLZ$: infinite collections of square-tiled
surfaces with the Veech group $\PSLZ$ are constructed
%\marginpar{Does Gabi      have     earlier     examples?}
in~\cite{Schmithusen:Veech:group},~\cite{Schmoll:PSLZ},~\cite{Herrlich}, and~\cite{Forni:Matheus:Zorich_1}.

However,   if   one  considers  any  fixed  stratum,  the  number  of
square-tiled  surfaces  with  a  fixed isomorphism class of the Veech
group    is   finite   within   this   stratum   (see~\cite[Corollary
1.7]{Smillie:Weiss}). Nevertheless, these square-tiled surfaces might
be  distributed into several $\PSLZ$-orbits (see, say, Example 5.3 of
F. Nisbach's Ph.D. thesis~\cite{Nisbach}).

%--------------------------------------------------------------------
\subsection{Partition    of    an   arithmetic   Teichm\"uller   disc
into hyperbolic triangles}

Consider  the  modular curve (modular surface)
$$
\mathcal{MOD}=\PSO\backslash\PSL/\PSLZ\,.
$$
Consider  its  canonical  fundamental domain in the upper half plane,
namely the hyperbolic triangle
\begin{equation}
\label{eq:fundam:domain}
\{z\,|\, \Im z>0\}\ \cap\
\{z\,|\, -1/2\le \Re z\le 1/2\}\ \cap\
\{z\,|\quad |z|\ge 1\}
\end{equation}
with angles $0,\pi/3,\pi/3$. Any arithmetic Teichm\"uller curve $\cT$
has  a  natural  structure  of  a  (possibly ramified) cover over the
modular curve, and, thus, it is endowed with the natural partition by
isometric triangles as above. We accurately say ``partition'' instead
of  ``triangulation''  because of the following subtlety: the side of
the  triangle  represented  by  the circle arc in the left picture of
Figure~\ref{fig:modular:surface}  might be folded in the middle point
$B$  and glued to itself, as it happens, for example, already for the
modular  surface  $\mathcal{MOD}$. The vertices and the sides of this
partition  define  a  graph  $\check{\mathbb{G}}$  embedded  into the
compactified   surface   $\bar\cT$,  where  we  apply  the  following
convention:  each  side of the partition, which is bent in the middle
and  glued  to  itself,  is  considered  as  a loop of the graph, see
Figures~\ref{fig:modular:surface}   and~\ref{fig:graph:of:a:surface}.
In particular, the middle point of such side \textit{is not} a vertex
of the graph $\check{\mathbb{G}}$.

\begin{figure}
   %
%----------------------------------------------------------------------------
\includegraphics{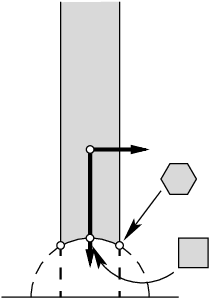}
\includegraphics{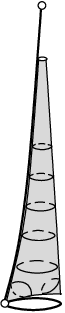}
\includegraphics{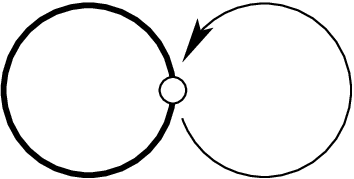}

\begin{picture}(0,0)(0,0) %
\put(-97,-97){$h$}
\put(-109,-118){$r$}
\put(-126,-147){$A$}
\put(-111,-141){$B$}
\put(-85,-147){$A$}
\put(-18,-147){$A$}
\put(2,0){$C$}
\put(3,-165){$\check{\mathbb{G}}$}
\put(100,-130){$\mathbb{G}$}
\put(75,-150){$r$}
\put(134,-150){$h$}
\end{picture}

\vspace{170pt}
\caption{
\label{fig:modular:surface}
Modular   surface,  its  fundamental  domain,  and  associated  graphs
$\mathbb{G}$ and $\check{\mathbb{G}}$.
}
\end{figure}

The   degree   of  the  cover  $\cT\to\mathcal{MOD}$  equals  to  the
cardinality  of the $\PSLZ$-orbit of the initial square-tiled surface
$S_0$,
\begin{equation}
\label{eq:card}
\deg(\cT\to\mathcal{MOD}) =\card\big(\PSLZ\cdot S_0\big)\,.
\end{equation}
The  cover  $\cT\to\mathcal{MOD}$  might be ramified over two special
points  of  $\mathcal{MOD}$. The first possible ramification point is
the  point  $B$  (having  coordinate $i$); it corresponds to the flat
torus       glued       from      the      unit      square,      see
Figure~\ref{fig:modular:surface}.  The  second  possible ramification
point  is  the  point  $A$  represented  by  the  identified  corners
$e^{-(\pi    i)/3}$   and   $e^{(\pi   i)/3}$   of   the   hyperbolic
triangle~\eqref{eq:fundam:domain}.  The  latter  point corresponds to
the flat torus glued from the regular hexagon.

Any  preimage of the point $B$ (see Figure~\ref{fig:modular:surface})
is  either  regular or has ramification degree two. In the first case
the preimage is a conical singularity of the hyperbolic surface $\cT$
with the cone angle $\pi$ (as for the modular surface $\mathcal{MOD}$
itself); in the latter case it is a regular point of $\cT$.

Any  preimage of the point $A$ (see Figure~\ref{fig:modular:surface})
is  either regular or has the ramification degree three. In the first
case  the preimage is a conical singularity of the hyperbolic surface
$\cT$  with  the  cone  angle  $2\pi/3$  (as  for the modular surface
$\mathcal{MOD}$  itself); in the latter case it is a regular point of
$\cT$.

For   each   of   the   two   special  points  of  the  base  surface
$\mathcal{MOD}$  some  preimages  might be regular and some preimages
might  be  ramification  points. The cover $\cT\to\mathcal{MOD}$ does
not have any other ramification points.

A  square-tiled  surface  $S=(X,q)$  in the moduli space of quadratic
differentials   defines   a  conical  point  $X$  of  the  arithmetic
Teichm\"uller  disc  if  and  only if $(X,q)$ and $(X,-q)$ define the
same  point  in  the  moduli  space.  In  other words, a square-tiled
surface   $S$   projects   to  a  conical  point  of  the  arithmetic
Teichm\"uller  disc  if  turning  it  by $\pi/2$ we get an isomorphic
square-tiled surface.

%--------------------------------------------------------------------
\subsection{Encoding an arithmetic Teichm\"uller curve by a graph}
\label{ss:dual:graph}
Note  that  the  set of the preimages in $\cT$ of the point $B$ (with
coordinate         $i$)         in        $\mathcal{MOD}$        (see
Figure~\ref{fig:modular:surface})        under        the       cover
$\cT\to\mathcal{MOD}$   coincides   with   the   collection   of  the
projections of the orbit $\PSLZ\cdot S_0$ in the moduli space $\cQ_g$
of  quadratic  differentials  to  the moduli space $\cM_g$ of curves.
Since  the  cover $\cT\to\mathcal{MOD}$ is, in general, ramified over
$i$,  the  cardinality  of  the  latter  set  might  be less than the
degree~\eqref{eq:card}  of the cover. In this sense, the square-tiled
surfaces  are  particularly  \textit{inconvenient}  to  enumerate the
hyperbolic triangles as above.

Consider a flat torus $T$ which does not correspond to any of the two
conical  points  of the modular surface $\mathcal{MOD}$. For example,
let $T$ correspond to the point $4i$ of the fundamental domain. Let
$$
g=\begin{pmatrix}2&0\\0&1/2\end{pmatrix}\in\PSL\,.
$$
Then  $T=g\cdot T_0$, where $T_0$ stands for the torus glued from the
standard  unit  square.  Consider  the  following  two  closed  paths
$\gamma_h,  \gamma_r$  in  the  modular surface starting at $4i$, see
Figure~\ref{fig:modular:surface}.  The  path  $\gamma_h$  follows the
horizontal  horocyclic loop, while the path $\gamma_r$ descends along
the vertical geodesic from $4i$ to $i$ and returns back following the
same  vertical geodesic. The point $T$ of $\mathcal{MOD}$ and the two
loops  $\gamma_h$,  $\gamma_r$  can be considered as a realization of
the   graph  $\mathbb{G}_T$  under  the  usual  convention  that  the
``folded'' path $\gamma_r$ is considered as the loop of the graph.

\begin{figure}
\includegraphics{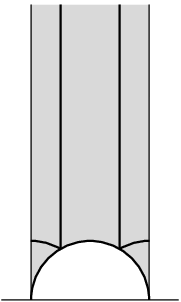}
\includegraphics{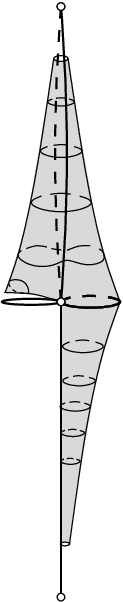}
\includegraphics{Tdisc3.eps}

\begin{picture}(0,0)(0,0) %
\put(-120,-145){$A$}
\put(-92,-145){$A$}
\put(-134,-170){$C_1$}
\put(-75,-170){$C_1$}
\put(0,-98){$A$}
\put(11,-7){$C_2$}
\put(11,-178){$C_1$}
\put(-10,-120){$\check{\mathbb{G}}$}
\put(93,-120){$\mathbb{G}$}
\put(45,-144){$r$}
\put(80,-157){$h$}
\put(80,-133){$h$}
\put(110,-139){$r$}
\put(143,-144){$h$}
\end{picture}

\vspace{170pt}
\caption{
\label{fig:graph:of:a:surface}
Partition of the Teichm\"uller curve $\hat\cT$, and associated graphs
$\mathbb{G}$ and $\check{\mathbb{G}}$.
}
\end{figure}

For  any  square-tiled  surface  $S_0$ consider the surface $S=g\cdot
S_0$  in  the  $\PSL$-orbit of $S_0$. By construction, it projects to
$T=g\cdot  T_0$  under  the cover $\cT\to\mathcal{MOD}$. Consider all
preimages  of $T$ under this cover, and consider the natural lifts of
the  loops $\gamma_h$ and $\gamma_r$. Under the usual convention that
``folded''  paths  are  considered  as loops of the graph, we get the
graph                       $\mathbb{G}_S$                       from
Section~\ref{ss:Encoding:a:Veech:group:by:a:graph}.

The geometry of the hyperbolic surface $\cT$ is completely encoded by
each     of    the    graphs    $\mathbb{G}\simeq\mathbb{G}_S$    and
$\check{\mathbb{G}}$.   For  example,  the  cusps  of  $\cT$  can  be
described as follows (see~\cite{Hubert:Lelievre}).

\begin{Lemma}
The  cusps  of  the  hyperbolic  surface  $\cT$  are  in  the natural
bijection with the orbits of the subgroup generated by the element
$$
h=
\begin{pmatrix}
1&1\\
0&1
\end{pmatrix}
$$
on  $\mathbb{G}\simeq\PSLZ/\Gamma(S_0)$. In other words, the cusps of
the  hyperbolic  surface  $\cT$ are in the natural bijection with the
maximal   positively  oriented  chains  of  $h$-edges  in  the  graph
$\mathbb{G}$.
\end{Lemma}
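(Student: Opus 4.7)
The plan is to combine the classical identification of cusps of a finite-covolume Fuchsian quotient with orbits of parabolic fixed points, and a double-coset transposition to pass from the cusp language to the graph-orbit language.

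First I would realize $\cT$ as $\Gamma(S_0)\backslash\mathbb{H}$, where $\mathbb{H}$ is the upper half plane and the Veech group $\Gamma(S_0)\subseteq\PSLZ$ has finite index (since $S_0$ is square-tiled). Because $\cT$ is a non-compact hyperbolic orbifold of finite area, the classical theory of Fuchsian groups of the first kind gives a natural bijection between the cusps of $\cT$ and the $\Gamma(S_0)$-orbits on the set of parabolic fixed points of $\Gamma(S_0)$ in $\partial\mathbb{H}$. Since every parabolic element of $\PSLZ$ has some power in $\Gamma(S_0)$ (by finite index), the parabolic fixed points of $\Gamma(S_0)$ and of $\PSLZ$ agree and are equal to $\mathbb{Q}\cup\{\infty\}$.

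Next I would use the fact that $\PSLZ$ acts transitively on $\mathbb{Q}\cup\{\infty\}$ by M\"obius transformations, with stabilizer of $\infty$ equal to $\langle h\rangle$; this gives a $\PSLZ$-equivariant bijection $\mathbb{Q}\cup\{\infty\}\simeq\PSLZ/\langle h\rangle$ and hence
\[
\{\text{cusps of }\cT\}
\;\longleftrightarrow\;
\Gamma(S_0)\backslash\PSLZ/\langle h\rangle
\;\simeq\;
\langle h\rangle\backslash\PSLZ/\Gamma(S_0)
\;=\;
\langle h\rangle\backslash\mathbb{G}\,,
\]
where the middle bijection is the tautological one induced by $g\mapsto g^{-1}$, and the last equality uses the definition $\mathbb{G}\simeq\PSLZ/\Gamma(S_0)$ from Section~\ref{ss:Encoding:a:Veech:group:by:a:graph}, with $\langle h\rangle$ acting by left multiplication.

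Finally I would interpret $\langle h\rangle$-orbits on $\mathbb{G}$ graph-theoretically: by construction each oriented $h$-edge points from $g\Gamma(S_0)$ to $hg\Gamma(S_0)$, so the $\langle h\rangle$-orbit through a vertex is exactly the vertex set of the maximal positively oriented $h$-chain through it, and finiteness of $\mathbb{G}$ forces such a chain to close up into a cycle. The argument is essentially formal once the classical ``cusps equal parabolic orbits'' dictionary is invoked; there is no serious obstacle. The only mild subtlety is the double-coset transposition $g\mapsto g^{-1}$, needed because cusps naturally index right $\langle h\rangle$-cosets while the graph carries a left $\langle h\rangle$-action. Working throughout in $\PSLZ$ rather than $\SLZ$, as emphasized at the beginning of Section~\ref{s:Closed:geodesics:on:an:arithmetic:Teichm:curve}, keeps the stabilizer of $\infty$ equal to the clean cyclic group $\langle h\rangle$ with no $\pm I$ ambiguity.
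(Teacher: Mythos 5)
Your argument is correct and is the standard one: identify $\cT$ with a finite-index Fuchsian quotient, invoke the classical dictionary between cusps and orbits of parabolic fixed points, observe that the parabolic fixed points of $\Gamma(S_0)$ coincide with those of $\PSLZ$ (namely $\Q\cup\{\infty\}$) by finite index, and then perform the double-coset transposition $g\mapsto g^{-1}$ to pass from $\Gamma(S_0)\backslash\PSLZ/\langle h\rangle$ to $\langle h\rangle\backslash\PSLZ/\Gamma(S_0)=\langle h\rangle\backslash\mathbb{G}$. Each of these steps is sound, and you have correctly singled out the one potentially confusing point, the transposition between left and right $\langle h\rangle$-actions.

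Be aware, though, that the paper does not actually prove this Lemma; it is stated with a pointer to Hubert--Leli\`evre and is regarded there as a known fact about arithmetic Teichm\"uller curves and their graphs. So there is no paper proof to compare yours against; what you have supplied is the expected textbook derivation that the authors took for granted. A minor point worth flagging: the paper writes $\cT$ as $\PSO\backslash\PSL/\Gamma(S_0)$, i.e.\ with $\Gamma(S_0)$ acting on the right of $\mathbb{H}=\PSO\backslash\PSL$, whereas you take $\cT=\Gamma(S_0)\backslash\mathbb{H}$. These are isomorphic via inversion, which effectively absorbs the $g\mapsto g^{-1}$ step; depending on which convention one adopts, the transposition is either explicit (as in your write-up) or already built in. Either way the outcome is the same, and your emphasis on working in $\PSLZ$ rather than $\SLZ$ (so that the stabilizer of $\infty$ is exactly $\langle h\rangle$ with no $\pm I$ ambiguity) is precisely the subtlety the authors flag in the remark following the Lemma, citing Zmiaikou.
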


\begin{Remark}
As  it  was  pointed  out  by  D.~Zmiaikou, the Lemma above should be
applied  in  the  context of the action of $\PSL$ and \textit{not} of
$\SL$, see~\cite{Zmiaikou}.
\end{Remark}

It  is clear from the construction that the graphs $\mathbb{G}_S$ and
$\check{\mathbb{G}}$  are  in natural duality: under the natural
embedding  of  $\mathbb{G}_S$ into $\cT$ described above, the vertices
of   the   graph  $\mathbb{G}_S$  are  in  the  canonical  one-to-one
correspondence  with  the  hyperbolic  triangles  in the partition of
$\cT$;    the    edges    of    the    graphs    $\mathbb{G}_S$   and
$\check{\mathbb{G}}$   are  also  in  the  the  canonical  one-to-one
correspondence;  under our usual convention concerning the loops, one
can assume that the dual loops intersect transversally.
This allows us to encode the paths on $\cT$, and, more particularly, the
closed loops on $\cT$, with a fixed base point (or rather the homotopy
classes  of  such  loops in a homotopy fixing the base point), by the
closed  loops  on the graph $\mathbb{G}$. This observation is used in
the  next  section,  where we discuss the monodromy representation of
our  main  example, that is, the arithmetic Teichm\"uller curve $\hat\cT$
defined in Section~\ref{ss:A:concrete:example}.
\begin{Remark}
One  can  go  further,  and  encode  the  hyperbolic geodesics on any
arithmetic  Teichm\"uller  disc using the continued fractions and the
associated  sequences  in ``letters'' $h$, $h^{-1}$, $r$. This coding
is   the  background  of  numerous  computer  experiments  evaluating
approximate  values  of \mbox{Lyapunov} exponents of the Hodge bundle
over  general  arithmetic  Teichm\"uller  discs, as, for example, the
ones       described       in~\cite{Eskin:Kontsevich:Zorich}       or
in~\cite{Delecroix:Lelievre}.  We  refer the reader to the detailed
surveys~\cite{Series}   and~\cite{Dalbo}  (and  to  references  cited
there)  for  generalities  on  geometric  coding  of geodesics on the
modular  surface.  The  coding  adapted particularly to Teichm\"uller
discs is described in~\cite{Moeller:Matheus:Yoccoz}.
\end{Remark}
   %

%####################################################################
\section{Irreducibility of the Hodge bundle in the example}
\label{s:Irreducibility:of:the:Hodge:bundle}

% \begin{Proposition}
% \label{prop:no:invariant}
%    %
% Subbundles  $\cE(\zeta),  \cE(\zeta^2)$ and their exterior powers have no
% nontrivial  continuous  subbundles over the Teichm\"uller curve $\cT$
% invariant with respect to the geodesic flow on $\cT$.
% \end{Proposition}

% Then  we  take  two words (more or less at random) in ``letters'' $h$
% and  $r$
% as in
% Proposition~\ref{pr:coding:of:closed:geodesics} in
% section~\ref{ss:Encoding:of:closed:geodesics}
% in  such  a  way  that  the  two induced paths on the graph
% $\Gamma$  from  Figure~\ref{fig:orbit:in:H2} starting from the vertex
% $1$  are  closed.  Each  of the two paths defines a closed hyperbolic
% geodesic  on  the  Riemann surface corresponding to the Teichm\"uller
% curve  $\\hat\cT$. We check that the two geodesics intersect.
% Taking the products of holonomy matrices defined by the
% initial two words we get the holonomy along each of the two closed geodesics.

In this section we prove that the orthogonal splitting
into the subbundles $\cE(\zeta)$  and $\cE(\zeta^2)$ is the unique irreducible
decomposition of the Hodge bundle  into $\PSL$-invariant (continuous) complex subbundles. We then
use the fact that the Zariski closure of the monodromy representations on the bundles
 $\cE(\zeta)$  and $\cE(\zeta^2)$ is determined in Appendix~\ref{a:Zariski:closure} to generalize our irreducibility result to
all finite covers of the Teichm\"uller curve $\hat\cT$ (strong irreducibility).

\subsection{Irreducibilty of the decomposition}
\label{ss:irreducibility}

We  start  with  the  following  elementary Lemma from linear algebra
which we present without proof.

\begin{Lemma}
\label{lm:lin:alg}
Let  $A, B$ be two $n\!\times\! n$-matrices. If $\det(AB-BA)\neq 0$, then
the  corresponding  linear automorphisms of $\R^{n}$ (respectively
$\C^{n}$) do not have any common one-dimensional invariant subspaces.
\end{Lemma}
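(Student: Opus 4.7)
The plan is to proceed by contraposition: I will show that the existence of a common one-dimensional invariant subspace forces the commutator $AB-BA$ to be singular, which contradicts the hypothesis $\det(AB-BA)\neq 0$.

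The first step is to translate the hypothesis into eigenvector language. A one-dimensional subspace $\langle v\rangle$ (with $v\neq 0$) is invariant under a linear map $T$ precisely when $Tv$ is a scalar multiple of $v$, i.e.\ $v$ is an eigenvector of $T$. Hence a common one-dimensional invariant subspace of $A$ and $B$ corresponds to a nonzero vector $v$ and scalars $\lambda,\mu$ (in $\R$ or $\C$, according to the case) with $Av=\lambda v$ and $Bv=\mu v$.

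The second step is the commutator computation: since $A$ and $B$ act as scalars on $v$, they commute on the line $\langle v\rangle$, and one obtains
\[
(AB-BA)\,v \;=\; A(\mu v)-B(\lambda v) \;=\; \mu\lambda\,v-\lambda\mu\,v \;=\; 0.
\]
Thus $v\in\ker(AB-BA)$, so $AB-BA$ is not invertible and $\det(AB-BA)=0$, contradicting the assumption.

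There is no real obstacle here: the argument is a one-line application of the fact that common eigenvectors of $A$ and $B$ lie in the kernel of the commutator. The only mild point to watch is the distinction between the real and complex cases in the statement. In the real case one needs a genuine real eigenvector (not merely a pair of complex conjugate ones), but this is exactly what a common one-dimensional \emph{real} invariant subspace provides, so the same computation applies verbatim in both settings.
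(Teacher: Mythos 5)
Your proof is correct, and since the paper states Lemma~\ref{lm:lin:alg} explicitly ``without proof,'' your contrapositive argument---a common eigenvector $v$ satisfies $(AB-BA)v=0$, so $\det(AB-BA)=0$---is exactly the intended elementary justification. Your remark about the real case (that a real one-dimensional invariant subspace supplies a genuine real eigenvector) is the right caveat and disposes of the only subtlety.
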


It would be convenient to work with the dual \textit{homology} vector
bundle   over   the   Teichm\"uller   curve  $\hat\cT$  and  with  its
decomposition  into  direct  sum  of  $\PSL$-invariant subbundles
$\mathcal{E}_*(\zeta)\oplus \mathcal{E}_*(\zeta^2)$, where
\begin{equation*}
%\label{eq:E:zeta:k}
\mathcal{E}_*(\zeta^k):=\operatorname{Ker}(T_\ast-\zeta^k\operatorname{Id})
\subseteq H_1(X;\C{})\,,
\end{equation*}
compare to~\eqref{eq:E:zeta:k}. Of course, since $H^1(X;\C)$ and $H_1(X;\C)$ are in duality, we can safely replace $\cE(\zeta^k)$ with $\cE_*(\zeta^k)$ in our subsequent discussion of the complex Kontsevich-Zorich cocycle over $\hat\cT$.

\begin{Proposition}
\label{prop:Ezeta:does:not:have:subbundles}
The subbundles  $\mathcal{E}(\zeta)$ and $\mathcal{E}(\zeta^2)$ over the Teichm\"uller curve $\hat\cT$ are irreducible, i.e., they do not have any nontrivial $\PSL$-invariant complex subbundles.
\end{Proposition}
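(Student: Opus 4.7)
Since $\cE(\zeta^2) = \overline{\cE(\zeta)}$, it suffices to treat $\cE(\zeta)$ (equivalently the dual $\cE_*(\zeta)$ of complex dimension $4$); complex conjugation then gives the other case. The plan is to prove irreducibility of the monodromy representation on $\cE_*(\zeta)$ by exhibiting two closed loops $\gamma_A, \gamma_B$ in the graph $\Gamma$ of Section~\ref{ss:The:PSLZ:orbit} based at $\hat S_3$ whose associated monodromy matrices $A, B$ have no nontrivial common invariant subspace in the fiber $\cE_*(\zeta)_{\hat S_3}$.

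\textbf{Computing $A$ and $B$.} First we fix a basis of $H_1(\hat S_i; \C)$ for $i = 1, 2, 3$ that diagonalizes the deck transformation $T_*$ and is compatible with the splitting $\cE_*(\zeta) \oplus \cE_*(\zeta^2)$, so that the $\cE_*(\zeta)$-summand has constant rank $4$ across the orbit. Next we determine the action of the edges of $\Gamma$: the edge of type $h$ leaving $\hat S_i$ acts as a horizontal multi-Dehn twist along the horizontal cylinder decomposition of $\hat S_i$ visible in Figure~\ref{fig:three:surfaces}, while the edge of type $r$ performs the change of basis exchanging the roles of horizontal and vertical saddle connections. Each oriented edge of $\Gamma$ is thereby assigned an explicit $4 \times 4$ matrix (the restriction to $\cE_*(\zeta)$), and the monodromy of any closed loop is the ordered product of these matrices; the detailed evaluation is carried out in Appendix~\ref{a:matrix:calculation}. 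Finally we pick $\gamma_A, \gamma_B$ as two explicit loops at $\hat S_3$ and let $A, B$ denote the resulting matrices.

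\textbf{Excluding invariant subspaces of each dimension.} The key reduction is that the pseudo-Hermitian form on $\cE(\zeta)$ is $\PSL$-equivariant, so any invariant subbundle $V$ has an invariant orthogonal complement $V^\perp$. Since $\cE(\zeta)$ has signature $(3,1)$, the maximal isotropic dimension is $1$; consequently a 3-dimensional invariant $V$ yields an invariant line $V^\perp$, while a 2-dimensional invariant $V$ either is non-degenerate (with a 2-dimensional invariant complement $V^\perp$) or contains its 1-dimensional radical $V \cap V^\perp$ as an invariant line. Thus it remains to rule out common invariant lines of $A, B$ in $\cE_*(\zeta)$ and common invariant 2-planes of $A, B$ in $\cE_*(\zeta)$. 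For the lines, Lemma~\ref{lm:lin:alg} reduces the task to the verification $\det(AB - BA) \neq 0$. For the 2-planes, we pass to the induced action on $\wedge^2 \cE_*(\zeta) \simeq \C^6$: a common invariant 2-plane $\mathrm{span}(v_1, v_2)$ would produce the common invariant line $\C(v_1 \wedge v_2) \subset \wedge^2 \cE_*(\zeta)$, so Lemma~\ref{lm:lin:alg} applied to $\wedge^2 A, \wedge^2 B$ reduces the task to $\det((\wedge^2 A)(\wedge^2 B) - (\wedge^2 B)(\wedge^2 A)) \neq 0$. Both verifications are finite linear-algebra computations on the matrices obtained above.

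\textbf{Main obstacle.} The hard part will be the monodromy bookkeeping in the second step: choosing homology bases on the three distinct surfaces $\hat S_1, \hat S_2, \hat S_3$ that are simultaneously compatible with the $\Z/3\Z$-symmetry, tracking precisely how these bases transform under $h$ and $r$ as one crosses the edges of $\Gamma$, and finally selecting $\gamma_A, \gamma_B$ rich enough that both commutator determinants are nonzero. Loops localized near $\hat S_3$ tend to produce matrices that are too ``block-like'' in any diagonalizing basis, making $[A,B]$ drop rank; the loops must therefore traverse all three vertices of $\Gamma$ with enough mixing so that the resulting pair $(A,B)$ is generic enough for the two determinant conditions to hold simultaneously.
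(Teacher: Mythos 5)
Your proposal follows essentially the same route as the paper: reduce to $\cE(\zeta)$ by complex conjugation, compute the monodromy along two explicit loops, rule out invariant lines via Lemma~\ref{lm:lin:alg} and the nonvanishing of $\det(AB-BA)$, reduce three-dimensional invariants to lines by orthogonality with respect to the (non-degenerate) pseudo-Hermitian form, and rule out invariant $2$-planes by passing to $\wedge^2 \cE_*(\zeta)$ and checking $\det\bigl((\wedge^2 A)(\wedge^2 B)-(\wedge^2 B)(\wedge^2 A)\bigr)\neq 0$. The extra discussion of the $(3,1)$ signature and maximal isotropic dimension is harmless but unnecessary; non-degeneracy of the form already suffices for the codimension-one reduction, and the $\wedge^2$ computation handles all $2$-dimensional invariant subspaces regardless of whether they are degenerate.
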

\begin{proof}We note that $\cE_*(\zeta)$ and $\cE_*(\zeta^2)$ are complex-conjugate
and the monodromy respects the complex conjugation, hence  it suffices to prove
that one of them, for instance $\cE_*(\zeta)$ is irreducible.
In  Section~\ref{ss:calculation} of Appendix~\ref{a:matrix:calculation} we take (more or less at random) the
following  two  closed  oriented paths $\rho_1$, see~\eqref{eq:rho1},
and    $\rho_2$,    see~\eqref{eq:rho2},   starting and ending at the same
vertex $\hat S_1$,  on the graph of Figure~\ref{fig:PSL2Z:orbit} (representing the $\PSLZ$-orbit of $\hat{S}$):
\begin{align*}
\rho_1:&=h\cdot r h^{-3} r\cdot h\cdot r h^{-2} r\\
\rho_2:&=r h^{-1} r\cdot h^3\cdot r h^{-1} r \,,
\end{align*}
where  each  path  should  be  read from left to right. The paths are
chosen  to be compatible with the orientation of the graph. Using the
explicit  calculation  of  the  monodromy\footnote{Note that all monodromy matrices are
defined up to a multiplication by the complex numbers $\zeta^k$, $k=0,1,2$, induced
by the action of the automorphism group of a cyclic cover~\eqref{eq:cyclic:cover:equation}.} representation performed in Appendix~\ref{a:matrix:calculation},        we       compute       in
Section~\ref{ss:calculation}    the   monodromy   $X,Y:   \cE_*(\zeta)\to
\cE_*(\zeta)$  along the paths $\rho_1,\rho_2$ respectively, and verify
that  $\det(XY-YX)\neq 0$. By Lemma~\ref{lm:lin:alg} this proves that
$\cE_*(\zeta)$  does  not  have  any one-dimensional $\PSL$-invariant
subbundles. Since the monodromy preserves the pseudo-Hermitian
intersection  form,  which is non-degenerate, by  duality  the  complex
four-dimensional  bundle $\cE_*(\zeta)$  does  not  have  any $\PSL$-invariant
codimension one subbundles, i.e., three-dimensional ones.

Given  the  monodromy matrices $X,Y$ along the paths $\rho_1,\rho_2$
we  compute  in  Section~\ref{ss:calculation}  the  induced monodromy
matrices  $U,V$  in  the second wedge product $\Lambda^2 \cE_*(\zeta)$ of
$\cE_*(\zeta)$,  and  verify  that  $\det(UV-VU)\neq 0$. This proves that
$\cE_*(\zeta)$  does  not  have  any two-dimensional $\PSL$-invariant subbundles.
\end{proof}

\begin{NNRemark}
The  same kind of a straightforward proof of irreducibility based on
Lemma~\ref{lm:lin:alg}   was   implemented   in   a  similar  setting
in~\cite[Appendix B]{Zorich:how:do}.
\end{NNRemark}

\begin{Proposition}
\label{prop:only:E}
The  complex  Hodge  bundle  $H^1_{\C{}}$ over the Teichm\"uller curve
$\hat\cT$  has no nontrivial $\PSL$-invariant
complex subbundles other than $\cE(\zeta)$ and $\cE(\zeta^2)$.
\end{Proposition}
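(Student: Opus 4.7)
My plan is to combine the irreducibility established in Proposition~\ref{prop:Ezeta:does:not:have:subbundles} with the Gauss--Manin invariance of the splitting $H^1_\C = \cE(\zeta) \oplus \cE(\zeta^2)$. Let $W \subseteq H^1_\C$ be a nontrivial $\PSL$-invariant complex subbundle; the goal is to show that $W = \cE(\zeta)$ or $W = \cE(\zeta^2)$.

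Since the Gauss--Manin connection preserves the decomposition $H^1_\C = \cE(\zeta) \oplus \cE(\zeta^2)$, the projections $\pi_k : H^1_\C \to \cE(\zeta^k)$ are $\PSL$-equivariant. Consequently $W \cap \cE(\zeta^k)$ and $\pi_k(W)$ are $\PSL$-invariant subbundles of $\cE(\zeta^k)$, and by Proposition~\ref{prop:Ezeta:does:not:have:subbundles} each is either $\{0\}$ or all of $\cE(\zeta^k)$. I would then run the standard four-case analysis on the pair $(W \cap \cE(\zeta), W \cap \cE(\zeta^2))$. Three cases are immediate: if both intersections are full then $W = H^1_\C$; if $W \cap \cE(\zeta) = \cE(\zeta)$ and $W \cap \cE(\zeta^2) = \{0\}$, then $\pi_{\zeta^2}(W) = \{0\}$ is forced (otherwise a diagonal subtraction places $\cE(\zeta^2) \subseteq W$, contradicting the intersection assumption), so $W = \cE(\zeta)$; symmetrically $W = \cE(\zeta^2)$ in the third case.

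The interesting case is $W \cap \cE(\zeta) = W \cap \cE(\zeta^2) = \{0\}$ with $W \neq 0$. Here $\pi_k|_W$ is injective, and since $W \neq 0$ and $\pi_k(W)$ is a $\PSL$-invariant subbundle of the irreducible $\cE(\zeta^k)$, it must be surjective. Thus $\dim W = 4$ and $W$ is the graph of a $\PSL$-equivariant complex-linear isomorphism $\phi : \cE(\zeta) \to \cE(\zeta^2)$. The whole proof reduces to ruling out the existence of any such $\phi$.

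The hard part is this last step. Since the Kontsevich--Zorich cocycle is defined over $\R$, the monodromy on $\cE(\zeta^2) = \overline{\cE(\zeta)}$ is the complex conjugate of the monodromy on $\cE(\zeta)$, and a $\PSL$-equivariant $\phi$ must therefore satisfy $\phi \circ M = \overline{M} \circ \phi$ for every monodromy matrix $M$ on $\cE(\zeta)$. To conclude, I would take the two explicit monodromies $X, Y$ on $\cE(\zeta)$ already computed in Appendix~\ref{a:matrix:calculation} for the proof of Proposition~\ref{prop:Ezeta:does:not:have:subbundles}, and verify that the linear system $\phi X = \overline{X}\, \phi$, $\phi Y = \overline{Y}\, \phi$ on the $16$ entries of the $4 \times 4$ matrix $\phi$ admits only the trivial solution $\phi = 0$. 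A more conceptual alternative would be to appeal to Lemma~\ref{lm:Ezeta:strong:irreducibility} and the Zariski-closure computation of Appendix~\ref{a:Zariski:closure}, from which one can argue algebraically that the standard four-dimensional representation of the Zariski closure of the monodromy is not equivalent to its complex conjugate, again precluding $\phi$.
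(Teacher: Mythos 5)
Your reduction of the problem to ruling out a $\PSL$-equivariant isomorphism $\phi\colon\cE(\zeta)\to\cE(\zeta^2)$ is exactly what the paper does, and your Schur-lemma case analysis on $(W\cap\cE(\zeta),\,W\cap\cE(\zeta^2))$ is logically equivalent to the paper's orthogonal-projection argument. The divergence is the final step: the paper exhibits a single explicit monodromy matrix $C=BA$ and shows that its spectrum differs from that of $\bar C$ even up to multiplication by $\zeta^k$, $k=0,1,2$, which is a conjugacy invariant and hence forbids the intertwiner; you propose instead to solve the intertwining system $\phi X=\bar X\phi$, $\phi Y=\bar Y\phi$ directly, or to appeal to the Zariski-closure computation.

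Your direct linear-system route as written has a gap. Because $\cE(\zeta)$ is an orbifold bundle on which the cyclic deck group acts by $\zeta^k\,\Id$, the monodromy matrices are only well defined up to multiplication by $\zeta^k$, $k\in\{0,1,2\}$ (see the discussion of orbifold linear cocycles at the end of Section~\ref{ss:A:concrete:example} and the footnote in the proof of Proposition~\ref{prop:Ezeta:does:not:have:subbundles}). The correct intertwining condition is therefore $\phi X=\zeta^a\bar X\phi$ and $\phi Y=\zeta^b\bar Y\phi$ for \emph{some} $a,b\in\{0,1,2\}$; a priori you would have to check all nine linear systems, and a trivial kernel for $a=b=0$ alone does not preclude $\phi$. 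The paper's spectral formulation absorbs this ambiguity in one stroke, which is precisely why it compares $\operatorname{spec}(C)$ with $\zeta^k\operatorname{spec}(\bar C)$ rather than asking for exact conjugacy. Your second alternative, via the Zariski closure $\mathrm{SU}(3,1)$ from Lemma~\ref{lm:Ezeta:strong:irreducibility}, is sound: the standard representation of $\mathrm{SU}(3,1)$ is not isomorphic to its complex conjugate (equivalently, the defining representation of $\mathfrak{sl}_4$ is not self-dual), and the paper does run essentially this kind of argument when extending the result to finite covers in Proposition~\ref{prop:only:E:bis}, where the key input becomes the irrationality statement of Lemma~\ref{lemma:irrational}.
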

\begin{proof}
By Proposition~\ref{prop:Ezeta:does:not:have:subbundles} the bundles $\cE(\zeta)$ and
$\cE(\zeta^2)$
%(denoted by the same symbol)
do  not  have  any non-trivial $\PSL$-invariant subbundles.
Since  the  complex  Hodge  bundle  $H^1_{\C{}}$  over  $\hat\cT$  is
decomposed into the direct sum of two orthogonal $\PSL$-invariant
subbundles
$$
H^1_\C{}=\cE(\zeta)\oplus \cE(\zeta^2)\,,
$$
this   implies  that  $H^1_\C{}$  cannot  have  $\PSL$-invariant
subbundles  of dimension $1,2,3$, otherwise the orthogonal projections
to  the direct summands would produce nontrivial $\PSL$-invariant
subbundles. Moreover, since  the  flat  connection preserves the nondegenerate
pseudo-Hermitian intersection form, this implies that the orthogonal complement to
a  $\PSL$-invariant subbundle cannot have dimension $1,2,3$, and
thus  the  Hodge  bundle  does  not  have  any  $\PSL$-invariant
subbundles of dimension $5,6,7$.

If  there  existed  a $\PSL$-invariant complex subbundle $V$ of
dimension $4$ different   from  $\cE(\zeta)$  and  $\cE(\zeta^2)$,  its  orthogonal
projections   $\pi_1,\pi_2$   to   $\cE(\zeta)$  and  $\cE(\zeta^2)$,
respectively, would be $\PSL$-invariant isomorphisms. The composition
$\pi_1^{-1}\circ  \pi_2$ would establish a $\PSL$-invariant isomorphism
between  subbundles $\cE(\zeta)$ and $\cE(\zeta^2)$. This would imply
that the vector bundles $E(\zeta)$
and  $E(\zeta^2)$  would  be isomorphic  and  would  have isomorphic
monodromy  representations. However the bundles $\cE(\zeta)$
and  $\cE(\zeta^2)$  are  complex  conjugate and the monodromy representation
respects complex conjugation, hence  the proof will be completed
by finding a monodromy matrix $C$ on $\cE(\zeta)$ which has a different
spectrum from its complex conjugate $\bar C$ even up to multiplication
times $\zeta^k$ for $k=0, 1, 2$.

In fact, let us consider closed paths  $\mu_1$  and  $\mu_2$  starting  and ending at
the same vertex $\hat S_3=\hat{S}$ on the graph of Figure~\ref{fig:PSL2Z:orbit} given
by
\begin{equation}
\label{eq:mu_1_2}
\mu_1:=h \quad \textrm{ and }\quad
\mu_2:=(r^{-1}h^{-1} r)\cdot (r^{-1}h^{-1}r)\,.
\end{equation}
A  closed  path on the graph of the $\PSLZ$-orbit of $\hat S$ defines
the  free  homotopy  type  of  a path on the corresponding arithmetic
Teichm\"uller  curve $\hat\cT$.
An explicit computation (see Sections~\ref{ss:Construction:of:homology:bases} and~\ref{ss:calculation} of  Appendix~\ref{a:matrix:calculation})
shows   that   the   monodromy matrices  $A,  B:  \cE_*(\zeta)\to \cE_*(\zeta)$ associated to
$\mu_1, \mu_2$ are
\begin{equation}
\label{eq:matrixA}
A:=A_3^{hor}=
\left(\begin{array}{cccc}
0 & 0 & 1 & \zeta^2 \\
\zeta & 0 & 0 & \zeta \\
0 & \zeta & 0 & \zeta \\
0 & 0 & 0 & -\zeta^2
\end{array}\right)\,,
\end{equation}
\begin{equation}
\label{eq:matrixB}
B:=A_1^{vert}\cdot A_3^{vert}=
\left(\begin{array}{cccc}
0&\zeta^2-1&\zeta&0\\
0&\zeta&0&0 \\
\zeta&\zeta-\zeta^2&1-\zeta^2&0 \\
1-\zeta^2&1-\zeta^2&1-\zeta&1
\end{array}\right)\,.
\end{equation}

We claim that the spectrum of the matrix
$C=B\cdot A$ is different from that of its complex conjugate $\bar C$ even up to
the action of the automorphism group of the cyclic cover, that is, up to multiplication
by the complex numbers  $\zeta^k$, $k=0,1,2$.

In fact, a computation (see Appendix~\ref{a:Zariski:closure}) shows  that
\begin{equation}
\label{eq:matrixC}
C:=B\cdot A =
\left(\begin{array}{cccc}
1-\zeta &\zeta^2& 0 &-2\zeta\\
\zeta^2&0&0& \zeta^2 \\
\zeta^2-1&\zeta-1&\zeta &-2 \\
\zeta-1&\zeta-\zeta^2&1-\zeta^2&2\zeta
\end{array}\right)\,,
\end{equation}
and that the characteristic polynomial of the matrix $C$ is
\begin{equation}
\label{eq:char:pol:C}
T^4 + (\zeta^2-\zeta)T^3 -2\zeta^2T^2 + (\zeta^2-1) T + \zeta= (T-1)( T^3-2\zeta T^2+2T-\zeta)\,.
\end{equation}
If the spectrum of $C$ and $\bar C$ up to multiplication times  $\zeta^k$, $k=0,1,2$, have a
common element not equal to $1$, then there exist $k\in \{0, 1, 2\}$ and $T \in \C$ such that
$$
T^3-2\zeta T^2+2T -\zeta =
 (\zeta^k T)^3-2 \bar \zeta (\zeta^k T)^2+2\zeta^k T -
\bar \zeta=0\,.
$$
By subtracting the two identities above, taking into account that $\zeta^3=1$, we can derive
the following identity
$$
2(\zeta^{2k-1}-\zeta) T^2 + 2(\zeta^k-1)T + \zeta -1/\zeta =0\,.
$$
The roots of the above second degree equation can be computed by hand for
$k=0, 1, 2$ and it can then be checked that none of them is a root of the characteristic polynomial
in formula~\eqref{eq:char:pol:C}. The argument is therefore completed.
\end{proof}

\begin{Corollary}
\label{cor:no:R:subspaces}
The  real  Hodge  bundle  $H^1_{\R{}}$ over the Teichm\"uller curve
$\hat\cT$ has no non-trivial
$\PSL$-invariant subbundles.
\end{Corollary}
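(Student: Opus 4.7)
The plan is to derive the corollary directly from Proposition~\ref{prop:only:E} by complexification, using the fact that the two exceptional invariant complex subbundles $\cE(\zeta)$ and $\cE(\zeta^2)$ are exchanged by complex conjugation and hence neither of them is the complexification of a real subbundle.

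More precisely, suppose for contradiction that $V_{\R{}}\subset H^1_{\R{}}$ is a nontrivial $\PSL$-invariant real subbundle. First I would form its complexification $V_{\C{}} := V_{\R{}}\otimes_{\R{}}\C{}\subset H^1_{\C{}}$. This is a nontrivial $\PSL$-invariant complex subbundle of $H^1_{\C{}}$ (since the Gauss--Manin connection is defined over $\R$, and parallel transport commutes with tensoring by $\C$), and moreover $V_{\C{}}$ is invariant under the complex conjugation involution $c: H^1_{\C{}} \to H^1_{\C{}}$ because $V_{\C{}}$ is generated by real classes.

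Next I would apply Proposition~\ref{prop:only:E}, which asserts that the only nontrivial $\PSL$-invariant complex subbundles of $H^1_{\C{}}$ over $\hat\cT$ are $\cE(\zeta)$ and $\cE(\zeta^2)$. Hence $V_{\C{}}$ equals either $\cE(\zeta)$ or $\cE(\zeta^2)$. However, the complex conjugation $c$ swaps these two subbundles, i.e. $c(\cE(\zeta))=\cE(\zeta^2)$ and $\cE(\zeta)\cap\cE(\zeta^2)=\{0\}$, so neither of them is $c$-invariant. This contradicts the fact that $V_{\C{}}$ is $c$-invariant.

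The argument is essentially a one-line consequence of Proposition~\ref{prop:only:E}, so no step should present any real obstacle; the only point that needs to be verified carefully is that the complex conjugation on $H^1_{\C{}}$ indeed exchanges $\cE(\zeta)$ and $\cE(\zeta^2)$, which follows immediately from the definition~\eqref{eq:E:zeta:k} together with $\overline{\zeta}=\zeta^2$ and the fact that the deck transformation $T$ is defined over $\R$, so that $\overline{T^\ast v}=T^\ast\overline{v}$ for every $v\in H^1(X;\C{})$.
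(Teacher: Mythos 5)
Your proof is correct and is essentially the same as the paper's: complexify the putative real invariant subbundle, observe that the complexification is invariant under complex conjugation, and then note that by Proposition~\ref{prop:only:E} the only candidates $\cE(\zeta)$, $\cE(\zeta^2)$ are swapped (not fixed) by conjugation. Your extra remark verifying that conjugation exchanges the two eigenbundles — via $\overline{\zeta}=\zeta^2$ and the fact that $T^\ast$ commutes with conjugation because $T$ acts on real cohomology — just makes explicit a point the paper leaves implicit.
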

\begin{proof}
Let  $V$ be a $\PSL$-invariant subbundle of the real Hodge bundle
$H^1_{\R{}}$ over  $\hat\cT$.  Its complexification $V_{\C{}}$ is a $\PSL$-invariant
subbundle of the complex Hodge bundle.
Moreover,   by   construction  it  is  invariant  under  the  complex
conjugation. Since $\cE(\zeta)$ and $\cE(\zeta^2)$ are complex conjugate,
Proposition~\ref{prop:only:E} implies that $V$ is trivial.
\end{proof}

%--------------------------------------------------------------------
\subsection{Zariski closure of the monodromy group}
\label{ss:Zariski:closure}
Following  a suggestion of M.~M\"oller we sketch in this section the computation of the
Zariski  closure  of  the monodromy group of the bundle $\cE(\zeta)$.
This     computation  (performed in details in Appendix~\ref{a:Zariski:closure} of this paper) implies     a     stronger     version    of
Proposition~\ref{prop:Ezeta:does:not:have:subbundles} stated in Proposition~\ref{prop:strongly:irreducible}.  The idea of this computation is due to A.~Eskin.

\begin{lemma}
\label{lm:Ezeta:strong:irreducibility}
The connected
component  of  the  identity  of  the Zariski  closure  of the monodromy group of the bundle
$\cE(\zeta)$  over  the  Teichm\"uller curve $\hat\cT$ is isomorphic to
$\textrm{SU}(3,1)$.
\end{lemma}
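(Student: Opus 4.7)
The plan is to identify the Zariski closure $G$ of the monodromy of $\cE(\zeta)$ inside $U(3,1)$ and show that its identity component $G^0$ equals $\textrm{SU}(3,1)$. Since the monodromy preserves the non-degenerate pseudo-Hermitian form of signature $(3,1)$ on $\cE(\zeta)$, we have $G\subseteq U(3,1)$; and because $\cE(\zeta)$ underlies a polarized complex variation of Hodge structure over the Teichm\"uller curve $\hat\cT$, the identity component $G^0$ is reductive by Deligne's semisimplicity theorem. Moreover, Proposition~\ref{prop:Ezeta:does:not:have:subbundles} guarantees that $\cE(\zeta)$ is $\PSL$-irreducible; since $G/G^0$ is finite and permutes the isotypic components of $G^0$, either $G^0$ acts irreducibly on $\C^4$ or $\C^4$ splits into $G^0$-invariant subspaces of equal dimension $1$ or $2$ that are transitively permuted by $G/G^0$.

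The next step is to invoke the classification of connected reductive subgroups of $\textrm{SU}(3,1)$ whose standard $4$-dimensional representation is either irreducible or an induced representation of the above form. Besides $\textrm{SU}(3,1)$ itself, the possible candidates for $G^0$ are, up to conjugation, the subgroups preserving an additional non-degenerate bilinear form on $\C^4$ (giving $\textrm{SO}$-type or $\textrm{Sp}$-type subgroups), subgroups acting through a tensor product decomposition $\C^4\cong \C^2\otimes \C^2$, and the image of $\textrm{SL}(2,\C)$ under its symmetric cube representation. Each alternative imposes strong algebraic constraints on every element of the monodromy group.

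The core of the proof is then to eliminate each alternative by explicit computation with concrete monodromy matrices. Using the matrices $A$, $B$, and $C=BA$ from~\eqref{eq:matrixA},~\eqref{eq:matrixB},~\eqref{eq:matrixC}, I would first check that no nonzero bilinear form on $\C^4$ is simultaneously $A$-invariant and $B$-invariant, which rules out the $\textrm{SO}$- and $\textrm{Sp}$-type candidates. Next, I would check that the characteristic polynomial of $C$ in~\eqref{eq:char:pol:C} does not factor in the way forced by a tensor product decomposition, and that its eigenvalues do not form the geometric progression dictated by the symmetric cube representation. The induced representation cases are ruled out in the same spirit, by verifying that the action of a specific monodromy element has a fixed point pattern incompatible with a nontrivial permutation of $G^0$-invariant subspaces.

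The main technical obstacle is this last step: the bookkeeping required to dispose of every alternative coming from the classification. These explicit linear-algebra verifications, which involve matrices with entries in $\Z[\zeta]$, are relegated to Appendix~\ref{a:Zariski:closure}; once they are carried out, the classification forces $G^0=\textrm{SU}(3,1)$.
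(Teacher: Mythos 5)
Your approach is genuinely different from the paper's and the two deserve comparison. The paper argues bottom-up: McMullen's signature theorem and a determinant computation place the identity component $G^0$ of the Zariski closure inside $\textrm{SU}(3,1)$, and then one hyperbolic element $C=BA$ is found, its real logarithm $X=\log C$ is placed in the Lie algebra of $G^0$, and fifteen conjugates of $X$ by powers of $A$ and $B$ are checked to be $\R$-linearly independent, forcing $\dim\mathfrak{g}_0 = \dim\mathfrak{su}(3,1) = 15$. This is entirely mechanical once the monodromy matrices are in hand, which is its main advantage. You instead argue top-down: Deligne semisimplicity gives reductivity (in fact semisimplicity) of $G^0$, Proposition~\ref{prop:Ezeta:does:not:have:subbundles} gives irreducibility, and the classification of irreducible semisimple subgroups of $\textrm{GL}_4$ leaves a short list to exclude by inspection of invariants.

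While your outline is sound in principle, three of its steps are not really filled in and hide real work. First, the classification you invoke lives over $\C$ — the irreducible semisimple subgroups of $\textrm{SL}_4(\C)$ are $\textrm{SL}_4$, $\textrm{Sp}_4$, $\textrm{SO}_4\cong(\textrm{SL}_2\times\textrm{SL}_2)/\{\pm1\}$ acting by tensor product, and $\textrm{SL}_2$ acting by $\textrm{Sym}^3$ — and you must then descend to the real form inside $\textrm{SU}(3,1)$, a step you elide. Second, every eigenvalue argument you propose must account for the fact, emphasized in Section~\ref{ss:A:concrete:example}, that the monodromy matrices on $\cE(\zeta)$ are only well-defined up to multiplication by $\zeta^k$, $k=0,1,2$ (the deck transformations act by scalars on the eigenspace); so ``the eigenvalues of $C$ do not form a geometric progression'' has to be checked simultaneously for $C$, $\zeta C$, and $\zeta^2 C$, and similarly for the tensor-product pattern. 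Third, the induced-representation branch of Clifford theory — $\C^4$ breaking into one- or two-dimensional $G^0$-blocks permuted by $G/G^0$ — is the most delicate case and is dismissed with only the phrase ``a fixed point pattern incompatible with a nontrivial permutation''; note that you cannot appeal to strong irreducibility here, since Proposition~\ref{prop:strongly:irreducible} is deduced from this very lemma. To close that case you would have to argue, for instance, that a hyperbolic element (such as $C$, whose Galois-irreducible cubic factor in~\eqref{eq:char:pol:C} has distinct eigenvalues, none on the unit circle except one) cannot preserve or permute such a block structure even up to the $\zeta^k$ ambiguity. In short, the classification route is a legitimate alternative and has the merit of explaining conceptually why $\textrm{SU}(3,1)$ is forced, but each exclusion demands explicit computation of comparable difficulty to the paper's single linear-independence check, plus the bookkeeping above; the paper's method trades conceptual transparency for a uniform, easily verified numerical criterion.
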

\begin{proof}
It    follows    from   the   Theorem   of   C.~McMullen   cited   in
Section~\ref{ss:Splitting:of:the:Hodge:bundle}   that  the  monodromy
group   $G$   of   the   flat   bundle   $\cE(\zeta)$  preserves  the
pseudo-Hermitian form of signature $(3,1)$. The direct computation of
the  generators  of $G$ shows that it is generated by matrices having
determinant  $\zeta^k$, with $k$ integer. Hence, the connected
component  of  the  Zariski  closure  of  $G$ containing the identity
element is isomorphic to a subgroup of $\textrm{SU}(3,1)$. In order to prove that this
subgroup  is in fact the whole $\textrm{SU}(3,1)$, it is sufficient to show
that  the  Lie  algebra  of  the  Zariski closure of $G$ has the same
dimension as the Lie algebra $\mathfrak{su}(3,1)$, that is $15$. In other
words,  it is sufficient to find $15$ linearly independent vectors
in  the  Lie  algebra  of  the  Zariski  closure of $G$, which we do,
basically, by hands.

For  any  hyperbolic  (or  parabolic)  element $C$ in $G$ the
vector $X=\log(C)$ belongs to the the Lie algebra $\mathfrak{g}_0$ of the
Zariski  closure  of $G$. Also, together with any vector $X$, the Lie
algebra         $\mathfrak{g}_0$        contains        the        vector
$\operatorname{Ad}_g(X)=gXg^{-1}$,  where  $g$ is any element in $G$.
Thus,  it  is  sufficient  to find a single vector in the Lie algebra
$\mathfrak{g}_0$,  then  conjugate it by elements of $G$;  as soon as
we get by this procedure $15$ linearly independent vectors, the proof
is completed.

The  rest  of  the computation is computer-assisted. We first find an
explicit  hyperbolic $4\times 4$ matrix $C$ in $G$ and an algebraic
expression  for  the  matrix  $P$  which conjugates $C$ to a diagonal
matrix  $D$.  This  allows  us to compute $X=\log C=P\cdot\log D\cdot
P^{-1}$ with arbitrary high precision.

As  soon  as  we have a collection of linearly independent vectors in
$\mathfrak{g}_0$  we  construct  a  new  vector  as follows: we take some
element  $g$  in  $G$ and compute the distance from $gXg^{-1}$ to the
subspace  generated  by our collection of independent vectors. If the
distance is large enough, the new vector is linearly independent from
the previous ones and  we  add  it  to  our  collection.  If  the  distance  is
suspiciously small, we try another element $g$ in $G$.

This algorithm is implemented in practice as follows.
Let $A$ and $B$ be the matrices of formulas~\eqref{eq:matrixA} and
~\eqref{eq:matrixB} respectively. Both  elements  are  elliptic; $A$ has order $18$,
$B$ has order $6$; the  monodromy  group  $G$ is generated by $A$ and $B$.
We check that the matrix $C:= B\cdot A$ of formula~\eqref{eq:matrixC}  is  hyperbolic,
then we compute $X=\log C$ as indicated above, and show that the $15$ vectors
\begin{align*}
A^{n}\,\cdot \,&\, X\cdot A^{-n} &n=0,\dots,8;\\
B\cdot A^{n}\,\cdot \,&\, X\cdot A^{-n}\cdot B^{-1} &n=0,2,3,4,5,6
\end{align*}
are   linearly   independent. See Appendix ~\ref{a:Zariski:closure} of this paper for more details.
\end{proof}

\begin{Remark}
Our  initial  plan was to use parabolic elements in the group and not
hyperbolic  ones.  Parabolic  elements have an obvious advantage that
their  logarithms  are  polynomials  and  thus, the vector in the Lie
algebra  corresponding to an integer parabolic matrix can be computed
explicitly. As  a natural candidate for a parabolic element one can consider the
map in cohomology of a square-tiled surface induced by a simultaneous
twist  of  the  horizontal  cylinders by
$$
h^n=
\begin{pmatrix}
1&n\\
0&1
\end{pmatrix}
% \qquad
% \text{ or }
% \qquad
% v^m=
% \begin{pmatrix}
% 1&0\\
% m&1
% \end{pmatrix}
$$
with  $n$  equal  to, say, the least common multiple of the widths of
the cylinders.

In the case of square-tiled  surfaces corresponding to Abelian differentials
we  would  certainly  get parabolic elements in this way. However, in
our case square-tiled surfaces correspond to quadratic differentials.
A  direct computation shows that the square-tiled surfaces $\hat S_1,
\hat     S_2,     \hat     S_3$    in    the    $\PSLZ$-orbit    (see
Figure~\ref{fig:three:surfaces})  have  the  following  property: the
waist  curve  of  any horizontal cylinder is homologous to zero. As a
result,  the  monodromy  along  any  path  on the Teichm\"uller curve
$\hat\cT$  represented by an element $h^n$ as above is elliptic (i.e.
has  finite  order)  and  not  parabolic. We do not know whether the
monodromy group in our example has at least one parabolic element.
\end{Remark}

\subsection{Strong irreducibilty of the decomposition}
\label{ss:strong_irreducibility}

\begin{Proposition}
\label{prop:strongly:irreducible}
The   subbundles   $\cE(\zeta)$   and   $\cE(\zeta^2)$  over the Teichm\"uller curve $\hat\cT$
are  strongly irreducible,  i.e.,  their  lifts  to  any finite (possibly ramified)
cover of $\hat\cT$ are irreducible.
\end{Proposition}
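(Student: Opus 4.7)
The plan is to reduce strong irreducibility to Lemma~\ref{lm:Ezeta:strong:irreducibility} by passing to Zariski closures. A finite (possibly ramified) cover $\hat\cT'\to\hat\cT$ corresponds to a finite-index subgroup $\Gamma'\subseteq\pi_1(\hat\cT)$ at the level of orbifold fundamental groups, and the monodromy representation of the lifted bundle $\cE(\zeta)|_{\hat\cT'}$ is simply the restriction of the original monodromy representation of $\cE(\zeta)$ to $\Gamma'$. Denoting by $G$ the image of the full monodromy representation on $\cE(\zeta)$, and by $H$ the image of $\Gamma'$, one has $[G:H]<\infty$. Hence strong irreducibility amounts to showing that every finite-index subgroup $H\leq G$ still acts irreducibly on $\C^4$.

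For this I would use the following standard fact: if $H\leq G$ has finite index, then the Zariski closure $\overline{H}\subseteq\overline{G}$ also has finite index. Indeed, writing $G=\bigsqcup_{i=1}^{n}g_i H$ as a finite disjoint union, the finitely many translates $g_i\overline{H}$ form a closed cover of $\overline{G}$. A closed finite-index subgroup of an algebraic group necessarily contains the connected component of the identity, so one obtains $\overline{H}\supseteq\overline{G}^{0}$. By Lemma~\ref{lm:Ezeta:strong:irreducibility}, $\overline{G}^{0}\cong\textrm{SU}(3,1)$.

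To conclude, I invoke the irreducibility of the standard representation of $\textrm{SU}(3,1)$ on $\C^4$: its complexified Lie algebra is $\mathfrak{sl}(4,\C)$, whose standard representation is a fundamental, hence irreducible, representation. Therefore any subgroup of $\textrm{GL}(4,\C)$ whose Zariski closure contains $\textrm{SU}(3,1)$ must act irreducibly on $\C^4$. Consequently $H$ acts irreducibly on $\cE(\zeta)$, which is precisely the strong irreducibility of $\cE(\zeta)$ over $\hat\cT$. The parallel statement for $\cE(\zeta^2)=\overline{\cE(\zeta)}$ is then immediate from complex conjugation, which interchanges the two monodromy representations and preserves the property of being irreducible.

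The only genuine obstacle in this argument is the input from Lemma~\ref{lm:Ezeta:strong:irreducibility}, that is, the identification $\overline{G}^{0}\cong\textrm{SU}(3,1)$, which is established by the computer-assisted exhibition of $15$ linearly independent vectors in the Lie algebra of $\overline{G}$. Once that identification is in hand, strong irreducibility is essentially automatic, since $\textrm{SU}(3,1)$ is already a simple real Lie group acting irreducibly on $\C^4$, and passing to finite-index subgroups of $G$ does not disturb the identity component of the Zariski closure.
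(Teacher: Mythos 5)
Your proof is correct and follows essentially the same strategy as the paper: reduce to $\cE(\zeta)$ via complex conjugation, pass to finite covers, and invoke Lemma~\ref{lm:Ezeta:strong:irreducibility} to identify the identity component of the Zariski closure with $\textrm{SU}(3,1)$, which acts irreducibly on $\C^4$.

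The one place where you diverge from the paper is the justification that the identity component of the Zariski closure is unchanged under passage to a finite cover. You argue abstractly: the lifted monodromy group $H$ has finite index in $G$, so $\overline{H}$ has finite index in $\overline{G}$, and any finite-index closed subgroup of an algebraic group contains the identity component. The paper instead argues concretely via the structure of Lemma~\ref{lm:Ezeta:strong:irreducibility}: the Lie algebra $\mathfrak{su}(3,1)$ was exhibited as a span of conjugates of $\log C$ with $C$ hyperbolic, and since some power $(gCg^{-1})^k$ lands in the lifted monodromy group, those same generators lie in the Lie algebra of $\overline{H}$. Your version is cleaner and more portable — it does not rely on how the Lie algebra was produced, and would apply verbatim to any monodromy group once the identity component of the Zariski closure is known — while the paper's version avoids appealing to the abstract lemma about finite-index algebraic subgroups and instead re-runs the concrete computation. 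Both are sound.
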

\begin{proof}
First note that $\cE(\zeta)$ and $\cE(\zeta^2)$ are complex-conjugate
and the monodromy respects the complex conjugation. Moreover, for any
finite,  possibly  ramified,  cover  the  induced vector bundles stay
complex  conjugate.  Thus, it suffices to show that one of them, say,
$\cE(\zeta)$ is strongly irreducible.

The  second  observation  is  that  the  component of identity of the
Zariski  closure  of  the  monodromy  group  of  a  vector  bundle is
invariant  under  finite covers. In order to see this it is sufficient to note
that  for  any  hyperbolic  or  parabolic element $g$ in the original
monodromy  group, the monodromy group of the vector bundle induced on
a    finite    cover    contains    some    power   of   $g$.   Thus,
Lemma~\ref{lm:Ezeta:strong:irreducibility}  implies  the statement of
the Proposition.
\end{proof}

We would like to derive from the above Proposition~\ref{prop:strongly:irreducible} a generalization
of Proposition~\ref{prop:only:E} to arbitrary finite covers of the Teichm\"uller curve $\hat \cT$.
The proof of that Proposition can be in fact generalized after we have established the following
algebraic lemma.

\begin{lemma}
\label{lemma:irrational}
The matrix $C$ in formula~\eqref{eq:matrixC} has a simple eigenvalue $\mu\in \C$ of modulus
one which is not a root of unity.
\end{lemma}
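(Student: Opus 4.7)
The characteristic polynomial of $C$ factors as $(T-1)\,p(T)$ with $p(T) = T^3 - 2\zeta T^2 + 2T - \zeta$, as already displayed in~\eqref{eq:char:pol:C}. The plan is first to locate a unit-modulus root $\mu$ of $p$ using the pseudo-Hermitian structure, and then to rule out the possibility that $\mu$ is a root of unity by a Galois-theoretic argument of Kronecker type followed by a short explicit check.

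For the first step, $C\in U(3,1)$ preserves the pseudo-Hermitian intersection form on $\cE(\zeta)$, so the multiset of its eigenvalues is stable under the involution $\lambda\mapsto 1/\bar\lambda$ (the same symmetry exploited in Lemma~\ref{lemma:isometryUpq}). The hyperbolicity of $C$ has been verified in the proof of Lemma~\ref{lm:Ezeta:strong:irreducibility} (with details in Appendix~\ref{a:Zariski:closure}), so $p$ has some root $\lambda$ with $|\lambda|\neq 1$. Then $1/\bar\lambda$ is also a root of $p$ distinct from $\lambda$, and the third root $\mu$ must be fixed by the involution, whence $|\mu|=1$. Since $\mu$ differs in modulus from $\lambda$ and $1/\bar\lambda$, and since $p(1)=3-3\zeta\neq 0$ rules out $\mu=1$, the eigenvalue $\mu$ of $C$ is simple.

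Now assume for contradiction that $\mu$ is a root of unity, and consider the polynomial
\begin{equation*}
\Pi(T)\,:=\,p(T)\,\bar p(T)\,=\,(T^3-2\zeta T^2+2T-\zeta)\,(T^3-2\zeta^2 T^2+2T-\zeta^2)\,\in\,\Z[T],
\end{equation*}
whose six complex roots are $\mu,\bar\mu,\lambda,1/\bar\lambda,\bar\lambda,1/\lambda$. Only $\mu$ and $\bar\mu$ lie on the unit circle, since $|\lambda|$ and $1/|\lambda|$ are both different from $1$. Every Galois conjugate of a root of unity over $\Q$ is again a root of unity, so in particular lies on the unit circle; therefore the Galois orbit of $\mu$ over $\Q$ is contained in $\{\mu,\bar\mu\}$. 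This forces $[\Q(\mu):\Q]\le 2$, so $\mu$ would be a primitive $n$-th root of unity with $\phi(n)\le 2$, i.e., $n\in\{1,2,3,4,6\}$, leaving the eight candidates
\begin{equation*}
\mu\in\{1,\,-1,\,\zeta,\,\zeta^2,\,i,\,-i,\,-\zeta,\,-\zeta^2\}.
\end{equation*}

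The concluding step is a short direct verification: using only $\zeta^3=1$ and $1+\zeta+\zeta^2=0$, one evaluates $p$ at each of these eight candidates and checks that none yields zero. For instance, $p(\zeta)=\zeta^3-2\zeta^3+2\zeta-\zeta=-1+\zeta$ and $p(-\zeta^2)=-1-4\zeta^2-\zeta=3+3\zeta$ are manifestly nonzero, and the remaining six cases are entirely analogous. This contradicts the hypothesis and finishes the proof. The only substantive external input is the hyperbolicity of $C$ from the Appendix; the potential difficulty would have been an elliptic $C$ (in which case all eigenvalues of $C$ sit on the unit circle and the degree bound $[\Q(\mu):\Q]\le 2$ collapses), but this possibility is ruled out there.
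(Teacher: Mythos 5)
Your proof is correct and takes a genuinely different route from the paper's for the main step. Both arguments begin by locating the unique unit-modulus root $\mu$ of $p(T)=T^3-2\zeta T^2+2T-\zeta$ via the symmetry $\alpha\mapsto 1/\bar\alpha$ of its root set. To rule out that $\mu$ is a root of unity, the paper computes the minimal polynomial of $\mu$ over $\Q$ explicitly: from $p(\mu)=0$ it rearranges to $\mu(\mu^2+2)=\zeta(2\mu^2+1)$, cubes to eliminate $\zeta$, factors the resulting degree-$9$ integer polynomial, and identifies the degree-$6$ factor $M(T)=T^6+2T^5+8T^4+5T^3+8T^2+2T+1$ as the minimal polynomial of $\mu$; it then invokes the fact that every cyclotomic polynomial of degree $6$ has all coefficients in $\{0,\pm 1\}$. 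You sidestep the explicit computation of $M$: you form the norm $\Pi=p\,\bar p\in\Z[T]$, note (granting hyperbolicity of $C$) that its only unit-modulus roots are $\mu$ and $\bar\mu$, and observe that if $\mu$ were a root of unity then all its Galois conjugates would be roots of unity and hence unit-modulus, forcing $[\Q(\mu):\Q]\le 2$, i.e.\ $\phi(n)\le 2$, leaving only the eight candidates $\{\pm 1,\pm\zeta,\pm\zeta^2,\pm i\}$ to exclude by direct evaluation. Both proofs are sound; your degree bound is more elementary and avoids both the factorization of the auxiliary degree-$9$ polynomial and the cyclotomic coefficient estimate, at the cost of invoking the hyperbolicity of $C$ from Appendix~\ref{a:Zariski:closure} explicitly --- a fact the paper also ultimately relies on, though only tersely, in its parenthetical remark about the sum of the roots. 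One amusing remark: $\Pi$ is monic of degree $6$ with integer coefficients and has $\mu$ as a root, so in fact $\Pi=M$; the two proofs inspect the same polynomial from opposite ends.
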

\begin{proof}  Let $P_\zeta(T) = T^3-2\zeta T^2 +2T -\zeta$ be the factor
of the characteristic polynomial of the matrix $C$, written in formula~\eqref{eq:char:pol:C}.
Since $\zeta^3=1$ the relation $\overline{ P_\zeta(T)}= P_\zeta( 1/\bar T)$ holds, hence
$P_\zeta(T)$ has exactly one root $\mu \in \C$ of modulus one (note that $P_\zeta(T)$
cannot have all the roots on the unit circle since the sum of all of its roots is equal to
$-2 \zeta$ which has modulus equal to $2$). We will compute the minimal polynomial
$M(T)$ (with integer coefficients) of $\mu$ and check that it is not a cyclotomic polynomial.
The general procedure to compute the minimal polynomial of the roots of
$P_\zeta(T)$ is to compute the resultant of $P_\zeta(T)$ and $\zeta^3-1$. In this
particular case, it can be done by hand as follows. Assume $P_\zeta(T)=0$, then
$T(T^2 +2) = \zeta (2 +T^2)$, hence
$$
 T^3(T^2+2)^3 = \zeta^3 (2 +T^2)^3
=(2 +T^2)^3.
$$
It follows then that $P_\zeta(T)$ is a divisor of the following polynomial with integer coefficients:
$$
Q(T):=T^9+ 6T^7-8T^6+12T^5 -12T^4 +8 T^3 -6T^2-1\,.
$$
The above polynomial factorizes as follows into irreducible factors:
$$
Q(T)= (T-1)(T^2-T+1)(T^6 + 2T^5 + 8T^4 + 5 T^3 + 8 T^2 +2T +1)\,.
$$
(The above factorization can be guessed by reduction modulo $2$. In fact, $Q(T) \equiv_2 T^9-1$
and $T^9-1\equiv_2 (T-1)(T^2-T+1)(T^6-T^3+1)$ and it is immediate to check that the factors
$T-1$, $T^2-T+1$ and $T^6-T^3+1$ are irreducible modulo $2$.)
Since $P_\zeta(T)$ and $(T-1)(T^2-T+1)$ have clearly no common roots, it follows that
$$
M(T)= T^6 + 2T^5 + 8T^4 + 5 T^3 + 8 T^2 +2T +1\,.
$$
The  polynomial  $M(T)$  is  not  cyclotomic.  In  fact,  it is known
(see~\cite{Migotti})  that for all positive integers $n$ with at most
two  distinct odd prime factors, the $n$-th cyclotomic polynomial has
all  the coefficients in $\{0, 1, -1\}$. It is also known that if $n$
has  $r$  distinct  odd  prime factors then $2^r$ is a divisor of the
degree  of  the  $n$-th  cyclotomic polynomial, which is equal to the
value  $\phi(n)$  of the Euler's $\phi$-function. It follows that all
cyclotomic  polynomials  of degree $6$ (which in fact appear only for
$n=7, 9, 14$ and $18$) have all the coefficients in $\{0, 1, -1\}$.
\end{proof}

\begin{Proposition}
\label{prop:only:E:bis}
The  complex  Hodge  bundle  $H^1_{\C{}}$ over any finite (possibly ramified)
cover of the Teichm\"uller curve $\hat\cT$  has no nontrivial $\PSL$-invariant
complex subbundles other than the lifts of the subbundles $\cE(\zeta)$ and $\cE(\zeta^2)$.
\end{Proposition}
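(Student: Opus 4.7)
The plan is to transcribe the proof of Proposition~\ref{prop:only:E}, substituting Proposition~\ref{prop:strongly:irreducible} (strong irreducibility) for Proposition~\ref{prop:Ezeta:does:not:have:subbundles}, and Lemma~\ref{lemma:irrational} (non-cyclotomicity of $\mu$) for the explicit spectral check of $C$. Let $f \colon \tilde \cT \to \hat \cT$ be a finite (possibly ramified) cover, and denote by $\tilde \cE(\zeta), \tilde \cE(\zeta^2)$ the pullbacks of $\cE(\zeta), \cE(\zeta^2)$. These summands are four-dimensional, complex conjugate, and orthogonal for the Hodge pseudo-Hermitian form; by Proposition~\ref{prop:strongly:irreducible} both remain $\PSL$-irreducible on $\tilde \cT$. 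The same orthogonal-projection dichotomy used in Proposition~\ref{prop:only:E} --- a nontrivial $\PSL$-equivariant projection of an invariant subbundle to an irreducible summand must be surjective, and the orthogonal complement of an invariant subbundle is itself invariant since the Hodge form is Gauss--Manin flat --- rules out $\PSL$-invariant subbundles of $f^\ast H^1_\C$ of dimensions $1, 2, 3$ directly, and of dimensions $5, 6, 7$ by passing to the orthogonal complement.

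It remains to exclude a four-dimensional $\PSL$-invariant subbundle $V$ distinct from $\tilde \cE(\zeta)$ and $\tilde \cE(\zeta^2)$. Exactly as in Proposition~\ref{prop:only:E}, irreducibility of the summands forces both projections $V \to \tilde \cE(\zeta)$ and $V \to \tilde \cE(\zeta^2)$ to be $\PSL$-equivariant isomorphisms, and their composition produces a $\PSL$-equivariant isomorphism $\tilde \cE(\zeta) \to \tilde \cE(\zeta^2) = \overline{\tilde \cE(\zeta)}$, defined only up to composition with multiplication by $\zeta^k$ ($k=0,1,2$), the ambiguity coming from the cyclic cover's automorphism group. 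In monodromy terms, this means there exists $P$ with $P \rho(g) P^{-1} = \zeta^{k(g)} \overline{\rho(g)}$ for every $g$ in the monodromy group of the cover; equivalently the spectrum of $\rho(g)$ equals the spectrum of $\overline{\rho(g)}$ in $\C^\ast / \langle \zeta \rangle$.

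The contradiction will come from a single element. The monodromy group of the cover has finite index in the monodromy group of $\hat \cT$, hence there exists $N \geq 1$ such that $C^N$ belongs to it, where $C = B \cdot A$ is the matrix in formula~\eqref{eq:matrixC}. By Lemma~\ref{lemma:irrational} the unique eigenvalue $\mu$ of $C$ on the unit circle (among the roots of the nontrivial factor $P_\zeta(T) = T^3 - 2\zeta T^2 + 2T - \zeta$) is not a root of unity. Therefore the unit-circle eigenvalues of $C^N$ are $\{1, \mu^N\}$ and those of $\overline{C^N}$ are $\{1, \bar \mu^N\}$, and the required multiset equality in $\C^\ast / \langle \zeta \rangle$ forces either $\mu^N = \zeta^k \bar \mu^N$ for some $k$, which yields $\mu^{2N} \in \langle \zeta \rangle$ and hence $\mu^{6N} = 1$, or $\mu^N = \zeta^k$ for some $k$, which yields $\mu^{3N} = 1$. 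In either case $\mu$ is a root of unity, contradicting Lemma~\ref{lemma:irrational}.

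I expect the only subtle point to be the simultaneous bookkeeping of the $\Z/3\Z$ scalar ambiguity from the cyclic cover's automorphism group and the passage from $C$ to an unspecified power $C^N$ dictated by the cover; it is precisely the combination of these two sources of flexibility that makes the full non-cyclotomicity content of Lemma~\ref{lemma:irrational} essential (irrationality of $\arg \mu$ alone would not suffice), while the remainder of the argument is a direct transcription of Proposition~\ref{prop:only:E}.
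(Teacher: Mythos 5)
Your proof is correct and follows the same route as the paper's own argument: reduce via Proposition~\ref{prop:strongly:irreducible} and the orthogonal-projection dichotomy to ruling out a $\PSL$-equivariant isomorphism between the lifts of $\cE(\zeta)$ and $\cE(\zeta^2)$, then kill such an isomorphism by noting that $C^N$ lies in the monodromy group of any finite cover for suitable $N$ and that the unit-circle eigenvalue $\mu^N$ cannot match $\bar\mu^N$ by Lemma~\ref{lemma:irrational}. You are in fact slightly more careful than the paper's published proof of this proposition, which compares the spectra of $C^k$ and $\bar C^k$ without explicitly re-invoking the $\zeta^j$ scalar ambiguity from the automorphism group of the cyclic cover (an ambiguity the paper did track in the proof of Proposition~\ref{prop:only:E}); your explicit analysis of the unit-circle eigenvalues modulo $\langle\zeta\rangle$ closes this small gap cleanly.
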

\begin{proof}
By Proposition~\ref{prop:strongly:irreducible} the lifts of the bundles $\cE(\zeta)$ and $\cE(\zeta^2)$
to any finite (possibly ramified) cover of the Teichm\"uller curve $\hat\cT$  do  not  have  any
non-trivial $\PSL$-invariant subbundles. By the same argument as in the proof of Proposition~\ref{prop:only:E}, the proof can then be reduced to prove that there is no $\PSL$-invariant
isomorphism between  the lifts of the subbundles $\cE(\zeta)$ and $\cE(\zeta^2)$, which are
complex conjugate subbundles of the Hodge bundle.  By Lemma~\ref{lemma:irrational},
the monodromy matrix $C=BA$ of formula~\eqref{eq:matrixC} has a (simple)  complex
eigenvalue $\mu\in \C$ of modulus $1$ which is not a root of unity. It follows that any power of
$C$ has a non-real eigenvalue of modulus $1$, hence in particular the spectrum of any
power of $C$ is different from the spectrum of its complex conjugate. Thus for any (possibly
ramified) finite cover of the Teichm\"uller curve $\hat\cT$, the monodromy representations
on the lift of the bundles $\cE(\zeta)$ and $\cE(\zeta^2)$ are not isomorphic. In fact, for
any finite cover of the $\hat \cT$, there exists a path with monodromy representation on
the lifts of $\cE(\zeta)$ and of $\cE(\zeta^2)$ given by a power $C^k$ of $C$ and
by its complex conjugate  $\bar C^k$ respectively, which have different spectrum and thus
are not isomorphic.
\end{proof}

By the same argument as in the proof of Corollary~\ref{cor:no:R:subspaces}, this
time based on Proposition~\ref{prop:only:E:bis} (instead of Proposition~\ref{prop:only:E}) we can prove
that the real Hodge bundle is strongly irreducible.

\begin{Corollary}
\label{cor:no:R:subspaces:bis}
The  real  Hodge  bundle  $H^1_{\R{}}$ over any finite (possibly ramified)
cover of the Teichm\"uller curve $\hat\cT$ has no non-trivial
$\PSL$-invariant subbundles.
\end{Corollary}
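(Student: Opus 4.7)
The plan is to derive this corollary from the complex statement Proposition~\ref{prop:only:E:bis} by a straightforward complexification argument, mirroring exactly the passage from Proposition~\ref{prop:only:E} to Corollary~\ref{cor:no:R:subspaces} in Section~\ref{ss:irreducibility}. Fix any finite (possibly ramified) cover $\tilde\cT \to \hat\cT$, and let $V \subseteq H^1_{\R{}}$ be a non-trivial $\PSL$-invariant subbundle over $\tilde\cT$. I will show that $V$ must coincide with the whole real Hodge bundle.

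First I would complexify: set $V_\C := V \otimes_\R \C \subseteq H^1_{\C{}}$. Since the Gauss--Manin connection, and hence the $\PSL$-action, commute with complexification, $V_\C$ is a non-trivial $\PSL$-invariant complex subbundle. Moreover, since $V_\C$ arises from a real subbundle, it is invariant under the fiberwise complex conjugation of $H^1_{\C{}}$, i.e. $\overline{V_\C} = V_\C$.

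Next I would apply Proposition~\ref{prop:only:E:bis}, which asserts that the only non-trivial $\PSL$-invariant complex subbundles of $H^1_{\C{}}$ over any finite cover of $\hat\cT$ are the lifts of $\cE(\zeta)$, of $\cE(\zeta^2)$, and the full bundle itself. The key observation is that $\cE(\zeta)$ and $\cE(\zeta^2) = \overline{\cE(\zeta)}$ are interchanged by complex conjugation; they are genuinely distinct, since by the McMullen signature theorem recalled in Section~\ref{ss:Splitting:of:the:Hodge:bundle} they carry the pseudo-Hermitian form with different signatures $(3,1)$ and $(1,3)$. Consequently neither of them is invariant under complex conjugation, and the same holds for their lifts to any cover.

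Combining the two previous steps, the only conjugation-stable candidate for $V_\C$ among the possibilities allowed by Proposition~\ref{prop:only:E:bis} is $H^1_{\C{}}$ itself. Therefore $V = H^1_{\R{}}$, which contradicts the assumption that $V$ is a proper subbundle and completes the proof. There is no real obstacle here beyond Proposition~\ref{prop:only:E:bis}: once the complex classification of $\PSL$-invariant subbundles on all finite covers is in hand, the real case follows mechanically from the complex-conjugation symmetry of the decomposition $H^1_{\C{}} = \cE(\zeta) \oplus \cE(\zeta^2)$.
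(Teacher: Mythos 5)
Your argument is correct and is exactly the one the paper uses: the paper's proof of Corollary~\ref{cor:no:R:subspaces:bis} consists of the single remark that one repeats the complexification argument of Corollary~\ref{cor:no:R:subspaces}, substituting Proposition~\ref{prop:only:E:bis} for Proposition~\ref{prop:only:E}, and your write-up spells out precisely that step, including the key fact that the lifts of $\cE(\zeta)$ and $\cE(\zeta^2)$ are swapped (hence not individually preserved) by complex conjugation.
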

   %

%####################################################################
%\input{matrix_calculation_Siegel_Veech.tex}
%####################################################################
\section{Non-varying phenomenon for certain loci of cyclic covers}
\label{s:non:varying}

It  is  known  that  the  sum  of the Lyapunov exponents of the Hodge
bundle  along  the  Teichm\"uller  geodesic  flow is the same for all
$\SL$-invariant  suborbifold in any hyperelliptic locus in the moduli
space      of      Abelian      or      quadratic      differentials,
see~\cite{Eskin:Kontsevich:Zorich}.  In the paper~\cite{Chen:Moeller}
D.~Chen  and  M.~M\"oller  proved the conjecture of M.~Kontsevich and
one of the authors on non-varying of the sum of the positive Lyapunov
exponents  for  all  Teichm\"uller  curves  in  certain strata of low
genera.  We  show  that analogous non-varying phenomenon is valid for
certain loci of cyclic covers.

Let  $M$  be  a  flat surface in some stratum of Abelian or quadratic
differentials.  Together  with every closed regular geodesic $\gamma$
on $M$ we have a bunch of parallel closed regular geodesics filling a
maximal  cylinder $\mathit{cyl}$ having a conical singularity at each
of  the  two  boundary  components.  By  the  \textit{width} $w$ of a
cylinder  we  call  the  flat  length  of  each  of  the two boundary
components, and by the \textit{height} $h$ of a cylinder --- the flat
distance between the boundary components.

The  number of maximal cylinders filled with regular closed geodesics
of  bounded  length  $w(cyl)\le L$ is finite. Thus, for any $L>0$ the
following quantity is well-defined:

\begin{equation}
\label{eq:N:area}
N_{\mathit{area}}(M,L):=
\frac{1}{\Area(M)}
\sum_{\substack{
\mathit{cyl}\subset M\\
w(\mathit{cyl})<L}}
\Area(cyl)
\end{equation}
Note  that  in the above definition we do not assume that the area of
the  flat  surface  is equal to one. For a flat surface $M$ denote by
$M_{(1)}$  a  proportionally  rescaled  flat surface of area one. The
definition  of $N_{\mathit{area}}( M,L)$ immediately implies that for
any $L>0$ one has
\begin{equation}
\label{eq:area:rescaling}
N_{\mathit{area}}(M_{(1)},L)=
N_{\mathit{area}}\left(M,\sqrt{\Area(M)} L\right)\,.
\end{equation}

The following limit, when it exists,
\begin{equation}
\label{eq:SV:definition}
c_{\mathit{area}}(M):=
\lim_{L\to+\infty}\frac{N_{\mathit{area}}(M_{(1)},L)}{\pi L^2}
\end{equation}
is  called  the  \textit{Siegel---Veech  constant}. By a theorem of
H.~Masur  and  A.~Eskin~\cite{Eskin:Masur},  for  any $\PSL$-invariant
suborbifold  in  any  stratum  of meromorphic quadratic differentials
with  at  most simple poles, the limit does exist and is the same for
almost  all  points  of  the  suborbifold  (which  explains  the term
``constant'').         Moreover,         by         Theorem         3
from~\cite{Eskin:Kontsevich:Zorich},  in genus zero the Siegel--Veech
constant  $c_{\mathit{area}}(M)$  depends only on the ambient stratum
$\cQ(d_1,\dots,d_m)$ and:
\begin{equation}
\label{eq:carea:genus:0}
c_{\mathit{area}}(M)=
-\cfrac{1}{8\pi^2}\,\sum_{j=1}^m \cfrac{d_j(d_j+4)}{d_j+2}\,.
\end{equation}

Let  $S=(\CP,q)\in\cQ_1(n-5,-1^{n-1})$,  where $n\ge 4$. Suppose that
the  limit~\eqref{eq:SV:definition}  exists for $S$. Let $p:\hat C\to
\CP$ be a ramified cyclic cover
\begin{equation}
\label{eq:cyclic:cover:d:n}
w^d=(z-z_1)\cdots(z-z_n)
\end{equation}
with ramification points exactly at the singularities of $q$. Suppose
that  $d$  divides  $n$,  and  that  $d>2$. Let us consider the induced flat
surface  $\hat  S:=(\hat  C,  p^\ast  q)$. The Lemma below mimics the
analogous Lemma in the original paper~\cite{Eskin:Kontsevich:Zorich}.

\begin{Lemma}
\label{lm:SVconst:for:cover}
The  Siegel--Veech  constants of the two flat surfaces are related as
follows:
\begin{equation}
\label{eq:c:area:hat:equals:2:c:area}
c_{\mathit{area}}(\hat S)=
\begin{cases}
\frac{1}{d}\cdot c_{\mathit{area}}(S),&\text{when $d$ is odd}\\
\frac{4}{d}\cdot c_{\mathit{area}}(S),&\text{when $d$ is even.}
\end{cases}
\end{equation}
\end{Lemma}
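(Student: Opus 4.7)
The plan is to reduce the computation of $c_{\mathit{area}}(\hat S)$ to that of $c_{\mathit{area}}(S)$ by understanding precisely how each maximal cylinder on $S$ lifts to $\hat S$ under the cyclic cover $p\colon \hat C\to\CP$. The key input is purely combinatorial: for every cylinder $\mathit{cyl}\subset S$, the core curve $\gamma$ separates the $n$ ramification points into two groups of sizes $2$ and $n-2$. Once this is known, the monodromy of $p$ along $\gamma$ is $\zeta^{\pm 2}$, and the rest is direct book-keeping of widths, heights, and areas (as in the analogous arguments of Eskin--Kontsevich--Zorich).

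To establish the combinatorial claim, I would apply the Gauss--Bonnet formula separately to the two open discs $D_1,D_2\subset\CP$ bounded by $\gamma$: for each $D_i$,
\[
\sum_{s\in D_i^{\mathrm{int}}}(2\pi-\theta_s)\;+\;\sum_{j}(\pi-\alpha_j^{D_i})\;=\;2\pi\chi(D_i)\;=\;2\pi,
\]
where $\alpha_j^{D_i}$ is the corner angle at the $j$-th boundary vertex measured from the $D_i$-side. Each simple pole (cone angle $\pi$) contributes $\pi$ if it lies in $D_i^{\mathrm{int}}$; if it lies on $\partial D_i$, horizontality of the boundary forces its $D_i$-side corner angle to be $0$, so it again contributes $\pi$. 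The single zero, of cone angle $(n-3)\pi$, contributes $(5-n)\pi$ if interior to $D_i$ and $\pi-\alpha^{D_i}_{\mathrm{zero}}$ if on $\partial D_i$, with $\alpha^{D_i}_{\mathrm{zero}}$ a non-negative integer multiple of $\pi$. A short case analysis on the location of the zero (interior of $D_1$, interior of $D_2$, on the boundary between the cylinder and $D_1$, or on the boundary between the cylinder and $D_2$) then forces $\{m_1,m_2\}=\{2,n-2\}$ in every case, where $m_i$ denotes the number of ramification points on the $D_i$-side.

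With this in hand, the number of connected components of $p^{-1}(\gamma)$ equals $\gcd(m_1,d)=\gcd(2,d)$, and each component is a $d/\gcd(2,d)$-fold cover of $\gamma$. Hence a cylinder on $S$ of width $w$ and height $h$ has as preimage a single cylinder of width $dw$ and height $h$ when $d$ is odd, and exactly two cylinders of width $dw/2$ and height $h$ when $d$ is even. Summing over all cylinders of $S$ with the width cutoff $w(\mathit{cyl}')<L$ on $\hat S$, and using $\Area(\hat S)=d\cdot\Area(S)$, one obtains
\[
N_{\mathit{area}}(\hat S,L)=
\begin{cases}
N_{\mathit{area}}(S,L/d) & \text{if $d$ is odd,}\\
N_{\mathit{area}}(S,2L/d) & \text{if $d$ is even.}
\end{cases}
\]
Combining this with the rescaling identity \eqref{eq:area:rescaling} applied to both $S$ and $\hat S$ (with factors $\sqrt{\Area(S)}$ and $\sqrt{d\,\Area(S)}$ respectively), and passing to the limit in \eqref{eq:SV:definition}, yields the two cases of \eqref{eq:c:area:hat:equals:2:c:area}.

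The main obstacle is the combinatorial claim in the first step. One must correctly analyze horizontal cylinder boundaries passing through cone points of cone angles $\pi$ and $(n-3)\pi$, including the subtle case where the zero lies at a corner of the boundary; identifying all disc-side corner angles as integer multiples of $\pi$ (so that Gauss--Bonnet gives an integer equation) is what forces $m_1\in\{2,n-2\}$ in every configuration. Everything after this point is routine.
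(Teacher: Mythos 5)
Your proposal follows essentially the same route as the paper's proof: determine how maximal cylinders on $S$ lift under the cyclic cover $p$, then push the rescaling identity and the definition of the Siegel--Veech constant through. The one place where your write-up genuinely adds content is the combinatorial step: the paper simply asserts that the boundary component of a maximal cylinder not containing the zero ``contains exactly two poles'' without proof, whereas you derive the equivalent statement via Gauss--Bonnet on the two discs cut out by the core curve. Your formulation in terms of the number of ramification points on each side of $\gamma$ (interior plus boundary) is in fact slightly cleaner than the paper's, because it is that total count that controls the monodromy of $p$ around $\gamma$, independently of whether a given pole lies on $\partial D_2$ or in the interior of $D_2$. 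One small gap to flag: when the zero lies on a boundary component, that component could a priori pass through the zero several times, contributing $(2t-n+3)\pi$ to Gauss--Bonnet rather than the single-corner formula $\pi-\alpha^{D_i}_{\mathrm{zero}}$ you wrote; but adding the two Gauss--Bonnet identities and using $m_1+m_2=n$ forces $t=1$, so your conclusion $\{m_1,m_2\}=\{2,n-2\}$ does hold. The remaining steps (component count $\gcd(2,d)$ for $p^{-1}(\gamma)$, the widths and heights of the lifted cylinders, the identity $N_{\mathit{area}}(\hat S,L)=N_{\mathit{area}}(S,L/d)$ or $N_{\mathit{area}}(S,2L/d)$, and the passage to the limit via \eqref{eq:area:rescaling}) agree with the paper, with the only additional point you leave implicit being that each lifted cylinder is again maximal, which follows because, since $d>2$ and $d\mid n$, every preimage of a singularity of $q$ is a singularity of $p^{\ast}q$.
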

\begin{proof}
Let us consider  any  maximal  cylinder $cyl$ on the underlying flat surface
$S$.  By  maximality of the cylinder, each of the boundary components
contains  at least one singularity of $q$. Since $S$ is a topological
sphere,  the two boundary components of $cyl$ do not intersect. Since
$q$  has  a  single  zero,  this  zero belongs to only one of the two
components of $cyl$. Since the other component contain only poles, it
contains exactly two poles.

Each of these two poles is a ramification point of $p$ of degree $d$.
Thus,  any  closed geodesic (waist curve) of the cylinder $cyl$ lifts
to  a single closed geodesic of the length $d$ when $d$ is odd and to
two distinct closed geodesics of the lengths $d/2$ when $d$ is even.

Now  note  that, since $d>2$ the quadratic differential $p^\ast q$ is
holomorphic.  The  condition that $d$ divides $n$ implies that $p$ is
non-ramified   at  infinity.  At  each  of  the  ramification  points
$z_1,\dots,z_n$  the  quadratic  differential $q$ has a zero or pole,
and  it  has no other singularities on $\CP$. Hence, the (nontrivial)
zeroes  of  $p^\ast  q$  are  exactly  the  preimages  of  the points
$z_1,\dots,z_n$,  and,  hence,  any  maximal  cylinder  on  $\hat  S$
projects to a maximal cylinder on $S$.

Note  also  that  since  $S$  has area one, $\hat S$ has area $d$. We
consider separately two different cases.
\smallskip

\textbf{Case when $\mathbf{d}$ is odd.}

Applying~\eqref{eq:area:rescaling}        followed       by       the
definition~\eqref{eq:N:area}  and  then followed by our remark on the
relation  between the corresponding maximal cylinders $\widehat{cyl}$
and $cyl$ we get the following sequence of relations:
\begin{multline*}
N_{\mathit{area}}(\hat S_{(1)},\sqrt{d}\cdot L)
  =
N_{\mathit{area}}(\hat S,d\cdot L)
  =\\=
\sum_{\substack{
\widehat{cyl}\subset \hat S\\
w(\widehat{cyl})<d\cdot L}}
\frac{\Area(\widehat{cyl})}{\Area(\hat S)}
  =
\sum_{\substack{
cyl\subset S\\
w(cyl)<L}}
\frac{\Area(cyl)}{\Area(S)}
  =
N_{\mathit{area}}(S,L)\,.
\end{multline*}

Hence,
\begin{multline*}
c_{\mathit{area}}(\hat S_{(1)})=
\lim_{R\to+\infty}
\frac{N_{\mathit{area}}(\hat S_{(1)},R)}{\pi R^2}=
\lim_{L\to+\infty}
\frac{N_{\mathit{area}}(\hat S_{(1)},\sqrt{d}L)}{\pi\cdot d\cdot L^2}
=\\=
\frac{1}{d}\lim_{L\to+\infty}
\frac{N_{\mathit{area}}(S,L)}{\pi L^2}=
\frac{1}{d}\cdot c_{\mathit{area}}(S)\,,
\end{multline*}
where we used the substitution $R:=\sqrt{d} L$.
\smallskip

\textbf{Case when $\mathbf{d}$ is even.}

These time our relations are slightly modified due to the fact that a
preimage  of  a  maximal  cylinder downstairs having a waste curve of
length  $\ell$  is a disjoint union of two maximal cylinders with the
waste curves of length $d\cdot\ell/2$.
\begin{multline*}
N_{\mathit{area}}(\hat S_{(1)},\frac{\sqrt{d}}{2} L)
  =
N_{\mathit{area}}(\hat S,\frac{d}{2} L)
  =\\=
\sum_{\substack{
\widehat{cyl}\subset \hat S\\
w(\widehat{cyl})<\frac{d}{2}\cdot L}}
\frac{\Area(\widehat{cyl})}{\Area(\hat S)}
  =
\sum_{\substack{
cyl\subset S\\
w(cyl)<L}}
\frac{\Area(cyl)}{\Area(S)}
  =
N_{\mathit{area}}(S,L)\,.
\end{multline*}

Hence,
\begin{multline*}
c_{\mathit{area}}(\hat S_{(1)})=
\lim_{R\to+\infty}
\frac{N_{\mathit{area}}(\hat S_{(1)},R)}{\pi R^2}=
\lim_{L\to+\infty}
\frac{N_{\mathit{area}}(\hat S_{(1)},\frac{\sqrt{d}}{2}L)}{\pi\cdot \frac{d}{4}\cdot L^2}
=\\=
\frac{4}{d}\lim_{L\to+\infty}
\frac{N_{\mathit{area}}(S,L)}{\pi L^2}=
\frac{4}{d}\cdot c_{\mathit{area}}(S)\,,
\end{multline*}
where we used the substitution $R:=\frac{\sqrt{d}}{2} L$.
\end{proof}

\begin{Proposition}
Under the assumptions of Lemma~\ref{lm:SVconst:for:cover} one gets
\begin{equation}
\label{eq:c:area:hat:answer}
\frac{\pi^2}{3}\cdot c_{\mathit{area}}(\hat S_{(1)})=
\frac{k}{12\cdot d}\cdot\frac{(n-1)(n-2)}{n-3}
\,,\ \text{where }
k=
\begin{cases}
1,&\text{when $d$ is odd}\\
4,&\text{when $d$ is even.}
\end{cases}
\end{equation}
\end{Proposition}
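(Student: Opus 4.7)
The plan is to combine the previously established Lemma~\ref{lm:SVconst:for:cover}, which expresses $c_{\mathit{area}}(\hat S_{(1)})$ in terms of $c_{\mathit{area}}(S)$ up to the multiplicative factor $k/d$, with the explicit closed-form~\eqref{eq:carea:genus:0} for the Siegel--Veech constant of any meromorphic quadratic differential on $\CP$. Since $S\in\cQ_1(n-5,-1^{n-1})$, the singularity data consists of a single zero of order $n-5$ and $n-1$ simple poles, so~\eqref{eq:carea:genus:0} reduces to a one-variable computation.

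First I would substitute $d_1=n-5$ and $d_2=\dots=d_n=-1$ into~\eqref{eq:carea:genus:0}. The zero contributes $(n-5)(n-1)/(n-3)$ (with the factor $n-1$ coming from the overall prefactor $1/8\pi^2$ combined with the numerics, to be tracked separately from the number of poles), while each of the $n-1$ simple poles contributes $(-1)(3)/1=-3$. Factoring out $n-1$ from the bracketed sum gives the numerator $n-5-3(n-3)=-2(n-2)$, so after the overall sign from $-1/(8\pi^2)$ one obtains the clean expression
\[
c_{\mathit{area}}(S)=\frac{(n-1)(n-2)}{4\pi^2(n-3)}.
\]

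Next I apply Lemma~\ref{lm:SVconst:for:cover} to get $c_{\mathit{area}}(\hat S_{(1)})=(k/d)\,c_{\mathit{area}}(S)$ with $k=1$ when $d$ is odd and $k=4$ when $d$ is even, and finally multiply by $\pi^2/3$ as required by the statement. The factor $\pi^2$ cancels the $4\pi^2$ in the denominator (leaving a $12$), producing exactly the claimed identity~\eqref{eq:c:area:hat:answer}.

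No real obstacle is expected: both ingredients are already in hand and the argument is essentially bookkeeping. The only subtlety is to track the sign of the pole contributions in~\eqref{eq:carea:genus:0} correctly and to recognize that the bracketed sum factors as $(n-1)\cdot[(n-5)/(n-3)-3]$; the fortuitous simplification $n-5-3(n-3)=-2(n-2)$ is what makes the final expression collapse into the tidy form stated in the proposition.
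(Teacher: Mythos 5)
Your proof is correct and takes essentially the same approach as the paper: substitute the singularity data of $\cQ_1(n-5,-1^{n-1})$ into~\eqref{eq:carea:genus:0}, simplify using $n-5-3(n-3)=-2(n-2)$, and then apply Lemma~\ref{lm:SVconst:for:cover}. The parenthetical remark attributing the factor $n-1$ in the zero's contribution to the prefactor $1/8\pi^2$ is mistaken (that $n-1$ is just $d_1+4$), but this does not affect the arithmetic, which is carried out correctly.
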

\begin{proof}
By applying  the formula~\eqref{eq:carea:genus:0} for the Siegel---Veech
constant   of   any   $\PSL$-invariant   suborbifold   in  a  stratum
$\cQ_1(d_1,\dots,d_m)$  in  genus  zero  to  a particular case of the
stratum $\cQ_1(n-5,-1^{n-1})$, we get
$$
\frac{\pi^2}{3}\cdot c_{\mathit{area}}(S)=
\frac{1}{24}\left(3(n-1)-\frac{(n-5)(n-1)}{n-3}\right)=
\frac{1}{12}\frac{(n-1)(n-2)}{n-3}\,.
$$
By applying Lemma~\ref{lm:SVconst:for:cover} we complete the proof.
\end{proof}

Let $\cM_1$ be a $\PSL$-invariant suborbifold
in the stratum $\cQ_1(n-5,-1^{n-1})$.
It is immediate to check that
the locus
$\hat \cM_1$  of flat surfaces $\hat S_{(1)}$
induced by cyclic covers~\eqref{eq:cyclic:cover:d:n}, where $d$ divides $n$,
belongs to the stratum
\begin{align*}
\cQ_1(d(n-3)-2,(d-2)^{n-1}),&\ \text{ when $d$ is odd}\\
\cH_1\left(d(n-3)/2-1,(d/2-1)^{n-1}\right),&\ \text{ when $d$ is even}
\end{align*}

Applying  Theorems 1 and 2 from~\cite{Eskin:Kontsevich:Zorich} we get
the following

\begin{Proposition}
\label{prop:5:2}
The sum of the Lyapunov exponents of the Hodge bundle $H^1$ over
$\cM_1$ is equal to
\begin{equation}
\label{eq:sum:lambda}
\lambda_1+\dots+\lambda_g=
\begin{cases}
%  1/24( (d(n-3)-2)(d(n-3)+2)/(d(n-3))+(n-1)(d-2)(d+2)/d)
% +  (n-1)(n-2)/(12 d (n-3))
\cfrac{\left(d^2-1\right) (n-2)}{12 d} &\text{ when $d$ is odd}\\
 \\
%
% 1/12( (d(n-3)/2-1)(d(n-3)/2+1)/(d(n-3)/2)+(n-1)(d/2-1)(d/2+1)/(d/2))
% + (n-1)(n-2)/(3 d (n-3))
\cfrac{(n-2) \left(d^2 (n-3)+2 n\right)}{12 d (n-3)} &\text{ when $d$ is even}\,.
\end{cases}
\end{equation}
\end{Proposition}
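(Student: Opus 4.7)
The plan is to apply directly the formula of Eskin, Kontsevich and Zorich (Theorems~1 and~2 of~\cite{Eskin:Kontsevich:Zorich}) for the sum of the non-negative Lyapunov exponents of the Hodge bundle, and then combine it with the Siegel--Veech constant already computed in equation~\eqref{eq:c:area:hat:answer}. In both parities of $d$ the EKZ formula takes the shape
\begin{equation*}
\lambda_1+\dots+\lambda_g \ = \ \kappa(\hat S)\ +\ \frac{\pi^2}{3}\,c_{\mathit{area}}(\hat S_{(1)})\,,
\end{equation*}
where $\kappa(\hat S)$ is a combinatorial quantity depending only on the orders of the zeros (and poles) of the differential; only its explicit form differs depending on whether $p^\ast q$ is viewed as a strict quadratic differential or as the global square of an Abelian differential.

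First I would pin down the stratum of $\hat S_{(1)}$. A local computation in the coordinates $z=w^d$ near a ramification point shows that a singularity of $q$ of order $m$ lifts to a singularity of $p^\ast q$ of order $d(m+2)-2$. Applied to the zero of order $n-5$ and the $n-1$ simple poles of $q$, this reproduces the strata stated just before Proposition~\ref{prop:5:2}; note that for even $d$ all these orders are even, so $p^\ast q$ is the global square of an Abelian differential.

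Next I would compute $\kappa(\hat S)$. In the odd case, plugging the orders $d(n-3)-2$ and $d-2$ into $\tfrac{1}{24}\sum_j d_j(d_j+4)/(d_j+2)$ and using $a(a+4)/(a+2)=a+2-4/(a+2)$ to separate a linear and a reciprocal part, a short simplification yields
\begin{equation*}
\kappa(\hat S)\ =\ \frac{n-2}{12\,d(n-3)}\bigl[d^2(n-3)-2(n-2)\bigr]\,.
\end{equation*}
In the even case, one uses instead the Abelian-differential formula $\tfrac{1}{12}\sum_j m_j(m_j+2)/(m_j+1)$ with the identity $a(a+2)/(a+1)=a+1-1/(a+1)$; the factors of $2$ coming from the relation between Abelian and quadratic orders cancel out, and one lands on exactly the same closed form for $\kappa(\hat S)$.

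Finally I would add the area contribution. Writing~\eqref{eq:c:area:hat:answer} as $\frac{\pi^2}{3}c_{\mathit{area}}(\hat S_{(1)})=k(n-1)(n-2)/\bigl(12\,d(n-3)\bigr)$ with $k=1$ for odd $d$ and $k=4$ for even $d$, the sum $\lambda_1+\dots+\lambda_g$ becomes
\begin{equation*}
\frac{(n-2)\bigl[d^2(n-3)-2(n-2)+k(n-1)\bigr]}{12\,d(n-3)}\,.
\end{equation*}
For $k=1$ the bracket factors as $(n-3)(d^2-1)$, cancelling the $(n-3)$ in the denominator and producing $(d^2-1)(n-2)/(12d)$; for $k=4$ it collapses to $d^2(n-3)+2n$, giving $(n-2)\bigl(d^2(n-3)+2n\bigr)/\bigl(12\,d(n-3)\bigr)$. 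This matches the two branches of~\eqref{eq:sum:lambda}. The main obstacle in the argument is not conceptual but purely algebraic bookkeeping: one has to keep track of the different normalization constants of the quadratic-differential and Abelian-differential versions of the EKZ formula, and of the parity-dependent factor $k$ in the Siegel--Veech term, until the miraculous factorization in the odd case and the clean cancellation in the even case emerge.
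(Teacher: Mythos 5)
Your proof is correct and follows the same route as the paper: the paper simply cites Theorems 1 and 2 of Eskin--Kontsevich--Zorich together with the Siegel--Veech constant from formula~\eqref{eq:c:area:hat:answer}, and you have filled in exactly the local-order computation ($m\mapsto d(m+2)-2$), the algebraic simplification of the combinatorial term $\kappa(\hat S)$, and the verification that the two parity branches collapse to the stated closed forms. In particular your observation that the quadratic-differential formula $\tfrac{1}{24}\sum d_j(d_j+4)/(d_j+2)$ evaluated at $d_j=2m_j$ coincides identically with the Abelian formula $\tfrac{1}{12}\sum m_j(m_j+2)/(m_j+1)$, so that $\kappa(\hat S)=\tfrac{n-2}{12d(n-3)}[d^2(n-3)-2(n-2)]$ in both parities and only the factor $k\in\{1,4\}$ in the area term differs, is exactly the clean way to organize the bookkeeping; the final factorizations $(n-3)(d^2-1)$ for $k=1$ and $d^2(n-3)+2n$ for $k=4$ both check out.
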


Consider the particular case, when $d=3$ and $n=3m$.
Then
$$
\lambda_1+\dots+\lambda_g=\frac{2n-4}{9}\,,
$$
were, $g=n-2$, by Riemann---Hurwitz formula.

Note       that       $H^1=\cE(\zeta)\oplus\cE(\zeta^2)$,       where
by~\cite{McMullen}  the  restriction of the Hodge form to $\cE(\zeta)$
has  signature  $(m-1,2m-1)$ and the restriction of the Hodge form to
$\cE(\zeta^2)$ has signature $(2m-1,m-1)$. Thus, each of the
subspaces has $m$ zero exponents.

%####################################################################
\appendix

\section{Lyapunov spectrum of pseudo-unitary cocycles: the proofs}
\label{a:Lyapunov:spectrum:of:pseudo-unitary:cocycles}
In this appendix we prove Theorem~\ref{th:spec:of:unitary:cocycle}. Its presentation below is inspired by discussions of the second author with A. Avila and J.-C. Yoccoz.

Recall  that  we  consider  an  invertible  transformation $T$ or a flow
$T_t$
preserving  a  finite  ergodic measure $\mu$ on a locally compact topological space
$M$. Let $U$ be a $\log$-integrable cocycle over this transformation (flow)
with  values  in  the  group  $\Upq$  of pseudo-unitary matrices. The
Oseledets  Theorem  (i.e.  the multiplicative ergodic theorem) can be
applied to complex cocycles. Denote by
\begin{equation}
\label{eq:Lyapunov:spectrum}
\lambda_1\ge\dots\ge\lambda_{p+q}
\end{equation}
the Lyapunov spectrum of the pseudo-unitary cocycle $U$. Let
\begin{equation}
\label{eq:Lyapunov:spectrum:without:multiplicities}
\lambda_{(1)} >  \dots > \lambda_{(s)}
\end{equation}
be  all \textit{distinct} Lyapunov exponents from the above spectrum.
By applying  the  transformation  (respectively,  the  flow)  both  in
forward  and  backward  directions, we get the corresponding Oseledets
decomposition
\begin{equation}
\label{eq:Oseledets:direct:sum}
E_{\lambda_{(1)}}\oplus\dots\oplus E_{\lambda_{(s)}}
\end{equation}
at  $\mu$-almost every point of the base space $M$. By definition all
nonzero  vectors  of each subspace $E_{\lambda_{(k)}}$ share the same
Lyapunov  exponent  $\lambda_{(k)}$ which changes sign under the time
reversing.

\begin{Lemma}
\label{l:symp-orth}
For   any  nonzero $\lambda_{(k)}$,  the  subspace
$E_{\lambda_{(k)}}$      of      the     Oseledets     direct     sum
decomposition~\eqref{eq:Oseledets:direct:sum}  is isotropic. Any two
subspaces  $E_{\lambda_{(i)}}$,  $E_{\lambda_{(j)}}$  such  that
$\lambda_{(j)}\neq-\lambda_{(i)}$  are  orthogonal with respect to the
pseudo-Hermitian  form.
\end{Lemma}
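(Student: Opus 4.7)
The plan is to combine the $\Upq$-invariance of the pseudo-Hermitian form $\langle\cdot,\cdot\rangle_{p,q}$ under the cocycle $U$ with the exponential growth/decay estimates provided by the two-sided Oseledets multiplicative ergodic theorem. The essential analytic tool is the uniform Cauchy--Schwarz-type inequality
\[
|\langle u,v\rangle_{p,q}| \le \|u\|\cdot\|v\|\,,
\]
for some fixed measurable family of auxiliary Hermitian positive-definite inner products on the fibers, with associated norms $\|\cdot\|$. This inequality is immediate after writing $\langle u,v\rangle_{p,q} = u^\ast J v$ with $J^\ast J = \Id$ and applying the ordinary Cauchy--Schwarz inequality to $u$ and $Jv$.

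Now fix $u \in E_{\lambda_{(i)}}$ and $v \in E_{\lambda_{(j)}}$ at a $\mu$-generic point. Since $U$ takes values in $\Upq$, we have the invariance $\langle U_n u, U_n v\rangle_{p,q} = \langle u, v\rangle_{p,q}$ for all $n \in \Z$ (and for all $t\in\R$ in the flow case). Combining this identity with the Cauchy--Schwarz bound and the Oseledets estimates $\|U_n u\| \le C_{\epsilon} e^{(\lambda_{(i)}+\epsilon)n}$ and $\|U_n v\| \le C_{\epsilon} e^{(\lambda_{(j)}+\epsilon)n}$, valid for any $\epsilon>0$ and all sufficiently large $n>0$, one obtains
\[
|\langle u,v\rangle_{p,q}| \le C_{\epsilon}^{\,2} \, e^{(\lambda_{(i)}+\lambda_{(j)}+2\epsilon) n}\,.
\]
If $\lambda_{(i)}+\lambda_{(j)} < 0$, choosing $\epsilon$ small and letting $n \to +\infty$ forces $\langle u,v\rangle_{p,q} = 0$. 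The case $\lambda_{(i)}+\lambda_{(j)}>0$ is handled symmetrically by applying the same reasoning to the backward dynamics: invertibility together with $\log$-integrability guarantee the two-sided Oseledets theorem, and under time-reversal each Oseledets subspace $E_{\lambda_{(k)}}$ keeps the same role but with Lyapunov exponent $-\lambda_{(k)}$, so the analogous estimate holds as $n\to -\infty$.

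Hence, whenever $\lambda_{(i)} + \lambda_{(j)} \neq 0$, the pairing $\langle u, v\rangle_{p,q}$ vanishes. Specializing to $i \neq j$ with $\lambda_{(j)} \neq -\lambda_{(i)}$ yields the orthogonality statement, and specializing to $i = j$ with $\lambda_{(k)} \neq 0$ yields the isotropy of $E_{\lambda_{(k)}}$. I do not anticipate any genuine obstacle in this argument; the only mildly delicate point is the use of the Oseledets estimates in both time directions, together with the observation that the forward and backward Oseledets decompositions coincide with exponents of opposite sign---both of which are standard consequences of invertibility and the log-integrability of the cocycle.
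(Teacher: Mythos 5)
Your strategy is the same as the paper's: exploit the $\Upq$-invariance of the pseudo-Hermitian form along the cocycle together with the Oseledets exponential decay of the product of norms, and conclude vanishing by letting $t\to+\infty$ or $t\to-\infty$ according to the sign of $\lambda_{(i)}+\lambda_{(j)}$. There is, however, a gap in the way you control the constant in the Cauchy--Schwarz-type bound. You claim the uniform inequality $|\langle u,v\rangle_{p,q}|\le\|u\|\,\|v\|$ by choosing a measurable family of auxiliary inner products compatible with the pseudo-Hermitian form (so that fiberwise $J^\ast J=\Id$), and then you apply the Oseledets estimates with respect to the associated norms. But the Oseledets theorem produces those estimates only for the norm appearing in the $\log$-integrability hypothesis; you do not verify that the cocycle is $\log$-integrable with respect to your compatible norm, nor that the Lyapunov exponents and Oseledets subspaces are unchanged under the switch. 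The two measurable families of norms need not be tempered-equivalent, so the Oseledets estimates do not automatically transfer, and as written the step ``combine Cauchy--Schwarz (constant $1$) with Oseledets'' is not justified.

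The paper closes exactly this gap, and without needing any compatibility of norms: keep the original norm for which $\log$-integrability holds, and use Luzin's theorem to produce a compact set $\cK\subset M$ of positive measure on which the measurable function $x\mapsto|\langle v_1,v_2\rangle_x|/(\|v_1\|_x\,\|v_2\|_x)$ is bounded by a finite constant $\const(\cK)$. By ergodicity (Poincar\'e recurrence would already suffice) almost every orbit revisits $\cK$ along a sequence of times $t_k\to+\infty$ (or $t_k\to-\infty$). Evaluating the invariant pseudo-Hermitian product along those return times and using the Oseledets decay of $\|T_{t_k}v_1\|\,\|T_{t_k}v_2\|$ forces $\langle v_1,v_2\rangle=0$. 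If you replace your compatible-norm step by this Luzin-plus-recurrence argument, your proof becomes complete and coincides with the paper's.
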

\begin{proof}
Consider   a  (measurable family of) norm(s) $\|.\|$ for  which  the  cocycle  $U$  is $\log$-integrable.
By Luzin's theorem, the absolute value of the (measurable family of) pseudo-Hermitian product(s)
$\langle.,.\rangle$ of any two  vectors  $v_1,v_2$  in  $\C^{p+q}$  is  uniformly  bounded on any
compact  set $\cK$ of positive measure in $M$ by the product of their norms,
   %
%\marginpar{What condition should we impose on the base
%topological space $M$?}
   %
$$
|\langle v_1,v_2\rangle|_x\le
\const(\cK)\cdot \|v_1\|_x\cdot\|v_2\|_x
\quad
\text{ for any }x\in\cK\,,
$$
up  to  a multiplicative constant $\const(\cK)$ depending only on the
norm  and  on the compact set $\cK$. By ergodicity of the transformation (flow),  the  trajectory  of  almost any point returns infinitely
often to the compact set $\cK$.

Suppose  that  there is a pair of Lyapunov exponents $\lambda_{(i)}$,
$\lambda_{(j)}$  satisfying  $\lambda_{(i)}\neq-\lambda_{(j)}$. We do
not exclude the case when $i=j$. Consider a pair of vectors $v_i,v_j$
such  that $v_i\in E_{\lambda_{(i)}}$, $v_j\in E_{\lambda_{(j)}}$. By
definition of $E_{\lambda_{(i)}}$, we have
$$
\|T_t(v_1)\|_x\cdot\|T_t(v_2)\|_x\sim
\exp\big((\lambda_{(i)}+\lambda_{(j)})\, t\big)\,.
$$
When  $\lambda_{(i)}+\lambda_{(j)}<0$  the latter expression tends to
zero  when  $t\to+\infty$;  when  $\lambda_{(i)}+\lambda_{(j)}>0$ the
latter expression tends to zero when $t\to-\infty$. In both cases, we
conclude  that  for a subsequence of positive or negative times $t_k$
(chosen  when  the  trajectory  visits  the  compact  set  $\cK$) the
pseudo-Hermitian  product  $\langle T_{t_k}(v_1),T_{t_k}(v_2)\rangle$
tends to zero. Since the pseudo-Hermitian product is preserved by the
flow,  this  implies that it is equal to zero, so $\langle
v_1,v_2\rangle=0$.   Thus,   we   have  proved  that  every  subspace
$E_{(\lambda_i)}$,  except  possibly $E^\mu_{(0)}$, is isotropic, and
that  any  pair of subspaces $E_{\lambda_{(i)}}$, $E_{\lambda_{(j)}}$
such   that   $\lambda_{(j)}\neq-\lambda_{(i)}$  is  orthogonal  with
respect to the pseudo-Hermitian form.
\end{proof}

We  proceed with the following elementary linear algebraic fact about
isotropic subspaces of a pseudo-Hermitian form of signature $(p,q)$.

\begin{lemma}
\label{lemma:nullcone}
The  dimension  $\dim_\C{}  V$  of  an  isotropic  subspace  $V$ of a
pseudo-Hermitian  form  of  signature  $(p,q)$  is  bounded  above by
$\min(p,q)$.
\end{lemma}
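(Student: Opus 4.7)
The plan is a short linear-algebra argument based on the definiteness splitting of the ambient space, requiring no input beyond the pseudo-Hermitian analogue of Sylvester's law of inertia.

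First, I would fix an orthogonal direct sum decomposition $W = W^+ \oplus W^-$ of the complex $(p+q)$-dimensional vector space $W$ carrying the pseudo-Hermitian form $\langle\cdot,\cdot\rangle$, with $\dim_\C W^+ = p$, $\dim_\C W^- = q$, the form positive definite on $W^+$ and negative definite on $W^-$. Such a splitting exists by Sylvester's law of inertia in the Hermitian setting, or equivalently by diagonalizing the Hermitian Gram matrix of the form in any complex basis.

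Without loss of generality assume $p \le q$, so that $\min(p,q) = p$; the other case is strictly symmetric after swapping the roles of $W^+$ and $W^-$. Consider the projection $\pi_+ : W \to W^+$ with kernel $W^-$, and restrict it to the isotropic subspace $V$. The one real step is to verify that $\pi_+|_V$ is injective: if $v \in V$ satisfies $\pi_+(v) = 0$, then $v \in W^-$, on which the form is negative definite, so $\langle v, v\rangle \le 0$ with equality only when $v=0$; but isotropy of $V$ gives $\langle v, v\rangle = 0$, forcing $v=0$. This injectivity immediately yields
\[
\dim_\C V \;\le\; \dim_\C W^+ \;=\; p \;=\; \min(p,q),
\]
as desired.

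There is essentially no obstacle in this proof: the statement is a structural fact about indefinite Hermitian forms, and the signature decomposition settles it at once. The only point worth noting is that in the Hermitian setting the weaker condition $\langle v,v\rangle = 0$ for every $v \in V$ already implies total isotropy $\langle v,w\rangle = 0$ for all $v,w\in V$ by polarization, so the two possible readings of ``isotropic'' coincide here; in fact the projection argument only uses self-pairing vanishing, which is precisely what Lemma~\ref{l:symp-orth} provides for the Oseledets subspaces $E_{\lambda_{(k)}}$ with nonzero $\lambda_{(k)}$.
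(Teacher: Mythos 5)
Your argument is correct and is essentially the same as the paper's: both rest on the signature splitting $W = W^+ \oplus W^-$ and the observation that the kernel of the projection onto the $\min(p,q)$-dimensional definite summand meets an isotropic subspace only in $\{0\}$. The paper writes this in coordinates (extracting the first $p$ coordinates and finding a dependence among $p+1$ vectors), while you phrase it intrinsically as injectivity of $\pi_+|_V$; the two are the same dimension count.
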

\begin{proof}
By  choosing  an  appropriate  basis,  we  can  always  suppose  that
$\langle\vec a,\vec b\rangle$ has the form
$$
\langle\vec a,\vec b\rangle=a^1\bar b^1+\dots+a^p\bar b^p -
      a^{p+1}\bar b^{p+1}-\dots-a^{p+q}\bar b^{p+q}
$$
where  $\vec  a=(a^1,\dots,a^{p+q}),  \vec b=(b^1,\dots,b^{p+q})$ and
$\vec a, \vec b\in \C^{p+q}$.

Without loss of generality we can assume that $p\leq q$. Let $\Sigma$
be  the  null  cone  of  the  pseudo-Hermitian  form, $\Sigma:=\{\vec
a\in\mathbb{C}^{p+q}\,|\,  \langle\vec a,\vec a\rangle=0\}$. We argue
by  contradiction. Suppose that $V\subset\Sigma$ is a vector subspace
of  dimension  $r$ with $r\geq p+1$. By assumption, we can find $p+1$
linearly  independent  vectors  $\vec  v_1,\dots,\vec  v_{p+1}\in V$.
By using  the  first  $p$  coordinates  of  these  vectors,  we obtain a
collection  of  $p+1$  vectors $\vec w_i=(v_i^1,\dots,v_i^p)\in\C^p$,
$1\leq i\leq p+1$. Thus, one can find a non-trivial linear relation
$$
t_1\vec w_1+\dots+t_{p+1}\vec w_{p+1}=\vec 0\in\C^p\,.
$$
Going   back  to  the  vectors  $\vec  v_i$,  we  conclude  that  the
non-trivial linear combination
$$
\vec v=t_1\vec v_1+\dots+t_{p+1}\vec v_{p+1}
   \in V-\{0\}\subset\Sigma-\{0\}
$$
has  the form $\vec v=(0,\dots,0,v^{p+1},\dots,v^{p+q})$, which leads
to  a  contradiction  since  the  inclusion  $\vec v\in\Sigma$ forces
$0=|v^{p+1}|^2+\dots+|v^{p+q}|^2$ (that is, $\vec v=0$).
\end{proof}

\begin{Lemma}
\label{lm:spectrum:is:symmetric}
The  Lyapunov spectrum~\eqref{eq:Lyapunov:spectrum} is symmetric with
respect to the sign change, that is for any $k$ satisfying $1\le k\le
p+q$ one has
$$
\lambda_k=\lambda_{p+q+1-k}
$$
\end{Lemma}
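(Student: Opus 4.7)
The plan is to deduce the symmetry directly from Lemma~\ref{l:symp-orth} combined with the non-degeneracy of the pseudo-Hermitian form. Concretely, I will show that for every nonzero Lyapunov exponent $\lambda_{(i)}$ appearing in the spectrum \eqref{eq:Lyapunov:spectrum:without:multiplicities}, the opposite value $-\lambda_{(i)}$ also appears, and moreover
$$
\dim_{\C{}} E_{\lambda_{(i)}}=\dim_{\C{}} E_{-\lambda_{(i)}}\,.
$$
Together with the obvious equality $\dim E_0=\dim E_0$ this gives the stated symmetry $\lambda_k=-\lambda_{p+q+1-k}$ of the Lyapunov spectrum listed with multiplicities.

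The first step is to rule out the case that $-\lambda_{(i)}$ is absent from the spectrum. Suppose, by contradiction, that $\lambda_{(i)}\neq 0$ and $-\lambda_{(i)}\neq\lambda_{(j)}$ for every index $j$. By Lemma~\ref{l:symp-orth} the subspace $E_{\lambda_{(i)}}$ is isotropic and orthogonal to each $E_{\lambda_{(j)}}$, hence it is orthogonal to the whole space $\C^{p+q}=\bigoplus_j E_{\lambda_{(j)}}$. Since the pseudo-Hermitian form is non-degenerate, this forces $E_{\lambda_{(i)}}=\{0\}$, contradicting the assumption that $\lambda_{(i)}$ is an exponent.

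Second, I split the Oseledets decomposition \eqref{eq:Oseledets:direct:sum} into two orthogonal pieces. Let
$$
U:=E_{\lambda_{(i)}}\oplus E_{-\lambda_{(i)}},\qquad
W:=\bigoplus_{\lambda_{(j)}\neq\pm\lambda_{(i)}} E_{\lambda_{(j)}}\,,
$$
so that $\C^{p+q}=U\oplus W$. By Lemma~\ref{l:symp-orth} every element of $U$ is orthogonal to every element of $W$. Since the ambient form is non-degenerate and $U\perp W$, the restriction of the pseudo-Hermitian form to $U$ must itself be non-degenerate (any element of the radical of $U$ would, together with the orthogonality $U\perp W$, be in the radical of the whole form). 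Because $E_{\lambda_{(i)}}$ and $E_{-\lambda_{(i)}}$ are both isotropic in $U$, the restricted form provides a non-degenerate pairing $E_{\lambda_{(i)}}\times E_{-\lambda_{(i)}}\to\C{}$, and therefore $\dim E_{\lambda_{(i)}}=\dim E_{-\lambda_{(i)}}$, as desired.

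The argument is essentially a packaging of Lemma~\ref{l:symp-orth}; the only point requiring minor care is the bookkeeping with the zero exponent (which pairs with itself and contributes an even-dimensional piece of $W$ only if needed for non-degeneracy) and the verification that the pairing between opposite-exponent subspaces is non-degenerate, so I do not expect a real obstacle beyond this linear-algebraic checking.
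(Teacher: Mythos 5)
Your proof is correct and follows the paper's argument almost exactly: you first rule out that $-\lambda_{(i)}$ is absent, then split the Oseledets decomposition into $E_{\lambda_{(i)}}\oplus E_{-\lambda_{(i)}}$ and its orthogonal complement, and use Lemma~\ref{l:symp-orth} together with non-degeneracy of the restricted form to force equality of dimensions. The only small variation is in the final step --- the paper invokes Lemma~\ref{lemma:nullcone} (the bound $\dim V\le\min(p,q)$ for isotropic $V$), whereas you argue directly that the non-degenerate form on $E_{\lambda_{(i)}}\oplus E_{-\lambda_{(i)}}$, whose two summands are isotropic, is a non-degenerate pairing between them; both routes are elementary and yield the same conclusion.
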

\begin{proof}
First  note  that together with any nonzero entry $\lambda_{(i)}$ the
spectrum~\eqref{eq:Lyapunov:spectrum:without:multiplicities}
necessarily   contains  the  entry  $-\lambda_{(i)}$.  Otherwise,  by
Lemma~\ref{l:symp-orth}  the  subspace  $E_{\lambda_{(i)}}$  would be
orthogonal  to  the entire vector space $\C^{p+q}$, which contradicts
the assumption that the pseudo-Hermitian form is nondegenerate.

Consider     a     nonzero     entry     $\lambda_{(i)}$    in    the
spectrum~\eqref{eq:Lyapunov:spectrum:without:multiplicities}.  Let us
decompose  the  direct  sum~\eqref{eq:Oseledets:direct:sum}  into two
terms.   As   the   first  term  we  choose  $E_{\lambda_{(i)}}\oplus
E_{-\lambda_{(i)}}$,   and   we   place   all   the   other  summands
from~\eqref{eq:Oseledets:direct:sum}   to   the   second   term.   By
Lemma~\ref{l:symp-orth} the two terms of the resulting direct sum are
orthogonal.  Hence,  the  restriction of the pseudo-Hermitian form to
the  first  term  is  non-degenerate. By Lemma~\ref{l:symp-orth} both
subspaces $E_{\lambda_{(i)}}$ and $E_{-\lambda_{(i)}}$ are isotropic.
It  follows now from Lemma~\ref{lemma:nullcone} that their dimensions
coincide.
\end{proof}

\begin{Lemma}
\label{lm:at:least:p:minus:q}
The  dimension  of  the  neutral  subspace  $E_0$  in  the  Oseledets
decomposition~\eqref{eq:Oseledets:direct:sum} is at least $|p-q|$.
\end{Lemma}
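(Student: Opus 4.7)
The plan is to use the orthogonal decomposition of $\mathbb{C}^{p+q}$ coming from Oseledets theorem, and then to compute signatures block by block. First, I would group the summands in~\eqref{eq:Oseledets:direct:sum} into hyperbolic pairs $E_{\lambda_{(i)}} \oplus E_{-\lambda_{(i)}}$ for each positive Lyapunov exponent $\lambda_{(i)}>0$ (paired up via Lemma~\ref{lm:spectrum:is:symmetric}), together with the neutral block $E_0$. By Lemma~\ref{l:symp-orth} these blocks are mutually orthogonal with respect to the pseudo-Hermitian form $\langle\cdot,\cdot\rangle$, so the form decomposes as an orthogonal direct sum of its restrictions to each block.

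Next, since $\langle\cdot,\cdot\rangle$ is non-degenerate on $\mathbb{C}^{p+q}$ and the blocks are mutually orthogonal, the restriction to each block must itself be non-degenerate. For a fixed pair $E_{\lambda_{(i)}} \oplus E_{-\lambda_{(i)}}$ with $\lambda_{(i)}>0$, Lemma~\ref{l:symp-orth} tells me that both summands are isotropic, and Lemma~\ref{lm:spectrum:is:symmetric} tells me they have the same dimension $d_i$. The standard linear algebra fact that a non-degenerate Hermitian space of dimension $2d_i$ containing two complementary isotropic subspaces of dimension $d_i$ has signature $(d_i, d_i)$ then forces the signature of the form on $E_{\lambda_{(i)}}\oplus E_{-\lambda_{(i)}}$ to be $(d_i, d_i)$.

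Finally, let $(p_0, q_0)$ denote the signature of the restriction of $\langle\cdot,\cdot\rangle$ to $E_0$. Additivity of signature under orthogonal direct sum yields
$$
(p,q) \;=\; \Bigl(p_0 + \sum_{\lambda_{(i)}>0} d_i,\ q_0 + \sum_{\lambda_{(i)}>0} d_i\Bigr),
$$
so $p_0 - q_0 = p - q$. Since $p_0, q_0 \ge 0$, one concludes
$$
\dim E_0 \;=\; p_0 + q_0 \;\ge\; |p_0 - q_0| \;=\; |p-q|,
$$
which is the desired inequality.

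The only non-trivial step here is computing the signature of each hyperbolic pair $E_{\lambda_{(i)}}\oplus E_{-\lambda_{(i)}}$, but this reduces to a routine linear algebra lemma about non-degenerate Hermitian forms admitting a pair of complementary maximal isotropic subspaces; the rest of the argument is bookkeeping. The proof of Theorem~\ref{th:spec:of:unitary:cocycle} is then complete, since the symmetry of the spectrum was already established in Lemma~\ref{lm:spectrum:is:symmetric} and the bound $\dim E_0 \ge |p-q|$ forces at least $|p-q|$ zero entries in the Lyapunov spectrum~\eqref{eq:Lyapunov:spectrum}, with the remaining nonzero exponents coming in symmetric pairs whose total multiplicity is at most $2\min(p,q)$.
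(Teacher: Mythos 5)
Your proof is correct, and it takes a genuinely different route from the paper. The paper's argument is more economical: it sets $E_u$ (respectively $E_s$) to be the sum of all Oseledets blocks with positive (respectively negative) exponent, observes via Lemma~\ref{l:symp-orth} that $E_u$ and $E_s$ are each isotropic, invokes Lemma~\ref{lemma:nullcone} to bound each of their dimensions by $\min(p,q)$, and then concludes $\dim E_0 = (p+q) - \dim E_u - \dim E_s \ge (p+q) - 2\min(p,q) = |p-q|$. Your argument instead pairs up the nonzero blocks into hyperbolic couples $E_{\lambda_{(i)}} \oplus E_{-\lambda_{(i)}}$, computes the signature $(d_i,d_i)$ of each such couple using the balanced-isotropic-pair fact and Lemma~\ref{lm:spectrum:is:symmetric} (to get equal dimensions), and then uses additivity of signature to force $p_0 - q_0 = p - q$. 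Your route actually proves a sharper statement — that the restriction of the pseudo-Hermitian form to $E_0$ has signature $(p_0,q_0)$ with $p_0-q_0 = p-q$, which is closely related to the content of Lemma~\ref{lemma:isometryUpq} later in the appendix — at the cost of an extra (but standard) linear-algebra input about hyperbolic pairs, and of relying on Lemma~\ref{lm:spectrum:is:symmetric}, which the paper's proof of this particular lemma does not need. Both approaches are fully valid.
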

\begin{proof}
Consider  the  direct  sum  $E_u$  of  all subspaces in the Oseledets
decomposition~\eqref{eq:Oseledets:direct:sum}     corresponding    to
strictly positive Lyapunov exponents $\lambda_{(i)}> 0$,
$$
E_u:=\bigoplus_{\lambda_{(i)}>0} E_{\lambda_{(i)}}\,.
$$
Similarly,  consider  the  direct  sum  $E_s$ of all subspaces in the
Oseledets decomposition~\eqref{eq:Oseledets:direct:sum} corresponding
to strictly negative Lyapunov exponents $\lambda_{(j)}< 0$,
$$
E_s:=\bigoplus_{\lambda_{(j)}<0} E_{\lambda_{(j)}}\,.
$$
By   Lemma~\ref{l:symp-orth}  both  subspaces  $E_u$  and  $E_s$  are
isotropic. Hence, by Lemma~\ref{lemma:nullcone} the dimension of each
of  them  is  at  most  $\min(p,q)$. Since the dimension of the space
$E_u\oplus  E_0\oplus E_s$ is $p+q$, it follows that the dimension of
the neutral subspace $E_0$ (when it is present) is at least $|p-q|$.
\end{proof}

By combining  the statements of Lemma~\ref{lm:spectrum:is:symmetric} and
of   Lemma~\ref{lm:at:least:p:minus:q},  we   get  the  statement  of
Theorem~\ref{th:spec:of:unitary:cocycle}.

Concluding  this appendix, we show the following simple criterion for
the cocycle $U$ to act by isometries on $E_0$.

\begin{Lemma}\label{lemma:isometryUpq}
Suppose  that  the  neutral  subspace (subbundle) $E_0$  has  dimension  exactly
$|p-q|$. Then, the cocycle $U$ acts on $E_0$ by isometries in the sense that the restriction of the pseudo-Hermitian form to the
neutral  subspace  (subbundle)  $E_0$  is either positive definite or
negative definite.
\end{Lemma}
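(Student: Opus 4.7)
The plan is to show that $E_0$ contains no non-zero isotropic vector; this forces the restriction of the pseudo-Hermitian form to $E_0$ to be definite, since a non-degenerate Hermitian form of indefinite signature always admits isotropic vectors. Once definiteness is established, the cocycle $U$, which preserves the global pseudo-Hermitian form by hypothesis, automatically preserves its restriction to the invariant subbundle $E_0$, and hence acts there by isometries of a (up to sign) positive-definite Hermitian inner product.

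First, I would set up the relevant dimension count. Let $E_u := \bigoplus_{\lambda_{(i)} > 0} E_{\lambda_{(i)}}$ and $E_s := \bigoplus_{\lambda_{(j)} < 0} E_{\lambda_{(j)}}$, as in the proof of Lemma~\ref{lm:at:least:p:minus:q}. By Lemma~\ref{l:symp-orth} both $E_u$ and $E_s$ are isotropic, so by Lemma~\ref{lemma:nullcone} each has dimension at most $\min(p,q)$. By Lemma~\ref{lm:spectrum:is:symmetric} one has $\dim E_u = \dim E_s$, and combining this with the orthogonal decomposition $\C^{p+q} = E_u \oplus E_0 \oplus E_s$ and the standing hypothesis $\dim E_0 = |p-q|$, I conclude that $\dim E_u = \dim E_s = \min(p,q)$. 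In particular $E_u$ is a maximal isotropic subspace.

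Next, I argue by contradiction. Suppose there exists a non-zero $v \in E_0$ with $\langle v, v \rangle = 0$, and consider $W := E_u \oplus \C v$. Since the Lyapunov exponents along $E_u$ are strictly positive and the exponent along $E_0$ is zero, no two sum to zero, and Lemma~\ref{l:symp-orth} yields $E_u \perp E_0$; together with the isotropy of $E_u$ and the assumption $\langle v, v \rangle = 0$, this shows that $W$ is isotropic of dimension $\min(p,q) + 1$, contradicting Lemma~\ref{lemma:nullcone}. Hence the restriction of the pseudo-Hermitian form to $E_0$ has no non-zero isotropic vectors and is therefore either positive-definite or negative-definite.

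I do not anticipate any serious obstacle: the argument is a dimension count plus the maximal-isotropic-subspace bound of Lemma~\ref{lemma:nullcone}. The only point requiring attention is to verify that the orthogonality furnished by Lemma~\ref{l:symp-orth} applies to the pair $(E_u, E_0)$, which it does because $\lambda_{(i)} + 0 \neq 0$ for every positive $\lambda_{(i)}$. The isometry statement then follows formally: since $U$ preserves the pseudo-Hermitian form and maps $E_0$ to itself, its restriction to $E_0$ preserves the definite Hermitian form obtained above, i.e.\ acts by isometries.
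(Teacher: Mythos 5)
Your proof is correct and takes essentially the same approach as the paper: compute $\dim E_u = \dim E_s = \min(p,q)$, adjoin a hypothetical isotropic vector of $E_0$ to a maximal isotropic subspace using the orthogonality from Lemma~\ref{l:symp-orth}, and contradict the bound of Lemma~\ref{lemma:nullcone}. Your version is if anything slightly more careful than the paper's, which loosely writes $E_s \oplus (E_0 \cap \Sigma)$ even though $E_0\cap\Sigma$ is only a cone, while you explicitly form $E_u \oplus \C v$ with a single isotropic vector $v$.
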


\begin{proof}
We  claim  that  $E_0\cap\Sigma=\{0\}$,  where  $\Sigma=\{v:  \langle
v,v\rangle=0\}$   is  the  null-cone  of  the  pseudo-Hermitian  form
$\langle.,.\rangle$  preserved  by $U$. Indeed, since $E_s$ and $E_u$
have  the  same  dimension (by Lemma~\ref{lm:spectrum:is:symmetric}),
and  $E_0$  has  dimension  $|p-q|$  (by  hypothesis),  we  have that
$\dim_\C       E_s=\dim_\C       E_u=\min\{p,q\}$.       So,       if
$E_0\cap\Sigma\neq\{0\}$,    the    arguments   of   the   proof   of
Lemma~\ref{l:symp-orth}  show  that $E_s\oplus (E_0\cap\Sigma)$ is an
isotropic subspace whose dimension is at least $\min\{p,q\}+1$, which
contradicts Lemma~\ref{lemma:nullcone}.

Since    the    pseudo-Hermitian    form    $\langle.,.\rangle$    is
non-degenerate,  the fact that $E_0\cap\Sigma=\{0\}$ implies that the
restriction of $\langle.,.\rangle$ to $E_0$ is (positive or negative)
definite.  In  other  words,  the  cocycle  $U$  restricted  to $E_0$
preserves  a  family  of  definite  forms $\langle.,.\rangle|_{E_0}$,
i.e., $U$ acts by isometries on $E_0$.
\end{proof}

%####################################################################
\section{Evaluation of the monodromy representation}
\label{a:matrix:calculation}

\subsection{Scheme of the construction.}
Our  plan  is  as  follows. We start by constructing the square-tiled
cyclic  cover  $\hat  S=\hat S_3$ of the initial square-tiled surface
$S$   of  Figure~\ref{fig:oneline:6}.   Then   we   construct  the
$\PSLZ$-orbit  of  $\hat S=\hat S_3$. The results of this calculation
are  presented  in  Figure~\ref{fig:PSL2Z:orbit}.  In particular, the
$\PSLZ$-orbit  of  the initial square-tiled surface $\hat S=\hat S_3$
has  cardinality three, see Figure~\ref{fig:PSL2Z:orbit}.

For each of the three square-tiled surfaces $\hat S_1, \hat S_2, \hat
S_3$  in  the $\PSLZ$-orbit of $\hat S_3$ we construct an appropriate
generating  set  of  integer  cycles  and  a  basis of the eigenspace
$E_{\hat  S_i}(\zeta)\subset H_1(\hat S_i,\C{})$. Then we compute the
six  matrices  of  the  action  in  homology  induced  by  the  basic
horizontal  shear  $h$  and  by  the counterclockwise rotation $r$ by
$\pi/2$ of these flat surfaces, where
$$
h=\begin{pmatrix}1&1\\0&1\end{pmatrix}
\qquad
r=\begin{pmatrix}0&-1\\1&0\end{pmatrix}\,,
$$
thus    obtaining   an   explicit   description   of   the   holonomy
representation.  Note  that  we  work with the \textit{homology}; the
representation in the \textit{cohomology} is dual.

\begin{NNRemark}
Since we consider the representation of $\PSLZ$ we might consider all
matrices up to multiplication by $-1$.
\end{NNRemark}

%--------------------------------------------------------------------
\subsection{Construction of homology bases and evaluation of induced homomorphisms}
\label{ss:Construction:of:homology:bases}

\paragraph{\textbf{Step 1 (Figure~\ref{fig:oneline:shear}).}}
As a generating set of cycles of $\hat S_3$ we take the cycles
$$
a_1, b_1, c_1, d_1, \dots, a_3, b_3, c_3, d_3
$$
represented    in   the    second   picture   from   the   left   in
Figure~\ref{fig:oneline:shear}.  Each  of these cycles is represented
by  a  close loop with a base point $C$. The loops $d_i$ are composed
from   the   subpaths  $d_{i,1}$,  and  $d_{i,2}$,  as  indicated  in
Figure~\ref{fig:oneline:shear}.

Consider  the  affine  map  $\hat  S_3\to  \hat  S_3$  induced by the
horizontal shear
$$
h=\begin{pmatrix}1&1\\0&1\end{pmatrix}\,,
$$
see  Figure~\ref{fig:oneline:shear}.      (Recall     that,     by
Convention~\ref{conv:hor:vert}  established  in  the  beginning  of
Section~\ref{ss:The:PSLZ:orbit},   the   notions   ``horizontal''  and
``vertical'' correspond to the ``landscape'' orientation.)

It  is clear from Figure~\ref{fig:oneline:shear} that the induced map
$h_3$ in the integer homology acts on the chosen cycles as follows:
\begin{align}
\label{eq:h3:abc}
h_3&: a_i\mapsto a'_i = b_{i-1}\notag\\
h_3&: b_i\mapsto b'_i = c_{i-1}\\
h_3&: c_i\mapsto c'_i = a_{i}\notag\,,
\end{align}
where  we  use  the  standard  convention that indices are considered
modulo $3$.

To  compute  the  images  of  the cycles $d_i$ we introduce auxiliary
relative  cycles  $e_1,e_2,e_3$;  see  the left edge of the rightmost
picture in Figure~\ref{fig:oneline:shear}. Then,
\begin{align*}
h_3: d_{i,1}\mapsto d'_{i,1} &= b_{i-1}+c_{i-1}-d_{i+1,2}+e_{i-1}\\
h_3: d_{i,2}\mapsto d'_{i,2} &= -e_{i-1}-d_{i+1,1}+a_{i+1}\,,
\end{align*}
and taking the sum $d_i=d_{i,1}+d_{i,2}$, we get
\begin{equation}
\label{eq:h3:d}
h_3: d_i\mapsto d'_i=a_{i+1}+b_{i-1}+c_{i-1}-d_{i+1}\,.
\end{equation}

The  induced  action  of  the  generator  $T$  of  the  group of deck
transformations,  defined  in~\eqref{eq:T}, has the following form on
the generating cycles:
\begin{align*}
T_\ast: a_i&\mapsto a_{i-1}\\
T_\ast: b_i&\mapsto b_{i-1}\\
T_\ast: c_i&\mapsto c_{i-1}\\
T_\ast: d_i&\mapsto d_{i-1}
\end{align*}
This  implies  that  the following elements of $H_1(\hat S_3,\C{})$:
\begin{align*}
a_+&:=a_1+\zeta a_2 + \zeta^2 a_3\\
b_+&:=b_1+\zeta b_2 + \zeta^2 b_3\\
c_+&:=c_1+\zeta c_2 + \zeta^2 c_3\\
d_+&:=d_1+\zeta d_2 + \zeta^2 d_3
\end{align*}
are   eigenvectors   of  $T_\ast$  corresponding  to  the  eigenvalue
$\zeta=\exp(2\pi  i/3)$,  and  hence,  they  belong  to  the subspace
$\cE_*(\zeta)$.  These elements are linearly independent, and, thus, form
a  basis  of  this  four-dimensional  subspace.  To verify the latter
statement  we  compute  the  intersection  numbers  of the generating
cycles  $a_i,  b_j,  c_k,  d_l$.  Using these intersection numbers we
evaluate the quadratic form
$$
\frac{i}{2} (\alpha\cdot\bar\beta)
$$
on   the  collection  $a_+,b_+,c_+,d_+$,  and  observe  that  it  has
signature   $(3,1)$.   We   skip   the  details  of  this  elementary
calculation.

By combining  the  definition  of  the  basis $a_+,b_+,c_+,d_+$ with the
transformation  rules~\eqref{eq:h3:abc}  and~\eqref{eq:h3:d}, we  see
that the matrix $A^{\mathit{hor}}_3$ of the induced map
$$
h_3: {\cE_*}_{\hat S_3}(\zeta)\to {\cE_*}_{\hat S_3}(\zeta)
$$
has the form
\begin{equation}
\label{eq:A3:hor}
A^{\mathit{hor}}_3=\left(
\begin{array}{cccc}
 0 & 0 & 1 & \zeta^2 \\
 \zeta & 0 & 0 & \zeta \\
 0 & \zeta & 0 & \zeta \\
 0 & 0 & 0 & -\zeta^2
\end{array}
\right)
\end{equation}

%--------------------------------------------------------------------
\paragraph{\textbf{Step   2  (Figure~\ref{fig:oneline:rotate}).}}
In  Step  1  we  used  the  horizontal cylinder decomposition for the
initial square-tiled surface $\hat S_3$, see the left two pictures in
Figure~\ref{fig:oneline:rotate}.   Here,   as  usual,  ``horizontal''
corresponds      to      the      landscape      orientation,     see
Convention~\ref{conv:hor:vert} in Section~\ref{ss:The:PSLZ:orbit}. In
the   bottom   two  pictures  of  Figure~\ref{fig:oneline:rotate}  we
construct a pattern of the same flat surface $\hat S_3$ corresponding
to  the  vertical  cylinder  decomposition.  Finally,  we  rotate the
resulting  pattern  by $\pi/2$ clockwise; see the right two pictures.
We  renumber the squares after the rotation. The resulting surface is
the surface $\hat S_2$. It inherits a collection of generating cycles
and the basis of the subspace $\cE_*(\zeta)$ from the surface $\hat S_3$.
By construction, the matrix $R_3$ of the induced map
$$
r_3: {\cE_*}_{\hat S_3}(\zeta)\to {\cE_*}_{\hat S_2}(\zeta)
$$
is  the  identity  matrix,  $r_3=\Id$  for our choice of the basis in
${\cE_*}_{\hat S_3}(\zeta)$ and in ${\cE_*}_{\hat S_2}(\zeta)$.

Note   that   by   construction,  the  points  of  the  $\PSLZ$-orbit
corresponding  to  the  surfaces  $\hat  S_3$  and $\hat S_2$ satisfy
$[\hat     S_2]=r^{-1}    [\hat    S_3]=r    [\hat    S_3]$,    where
$r=\begin{pmatrix}0&1\\-1&0\end{pmatrix}\in\PSLZ$.

%--------------------------------------------------------------------
\paragraph{\textbf{Step 3 (Figure~\ref{fig:twoline:one:shear}).}}
We  consider  the  affine  map  $\hat S_2\to \hat S_1$ induced by the
horizontal      (in      the     landscape     orientation)     shear
$h=\begin{pmatrix}1&1\\0&1\end{pmatrix}$  and define a generating set
of  cycles  in the homology $H_1(\hat S_1; \Z{})$ of $\hat S_1$ and a
basis  of  cycles in the subspace ${\cE_*}_{\hat S_1}(\zeta)$ as the images
of  generating cycles previously defined in $H_1(\hat S_2; \Z{})$. By
construction, the matrix $A^{\mathit{hor}}_2$ of the induced map
$$
h_2: {\cE_*}_{\hat S_2}(\zeta)\to {\cE_*}_{\hat S_1}(\zeta)
$$
is  the  identity matrix, $A^{\mathit{hor}}_2=\Id$, for our choice of
the bases in ${\cE_*}_{\hat S_2}(\zeta)$ and in ${\cE_*}_{\hat S_1}(\zeta)$.

%--------------------------------------------------------------------
\paragraph{\textbf{Step 4 (Figure~\ref{fig:twoline:shear}).}}
Consider  the  affine  map  $\hat  S_1\to  \hat  S_2$  induced by the
horizontal  shear  $h=\begin{pmatrix}1&1\\0&1\end{pmatrix}$.  Now
both  homology spaces $H_1(\hat S_1; \Z{})$ and $H_1(\hat S_2; \Z{})$
are  already  endowed with the generating sets, so we can compute the
matrix $A^{\mathit{hor}}_1$ of the induced map
$$
h_1: {\cE_*}_{\hat S_1}(\zeta)\to {\cE_*}_{\hat S_2}(\zeta)\,.
$$
For  our choice of the basis in ${\cE_*}_{\hat S_1}(\zeta)$ and in ${\cE_*}_{\hat
S_2}(\zeta)$  the  matrix  $A^{\mathit{hor}}_1$  coincides  with  the
matrix   of   the   automorphism   ${\cE_*}_{\hat   S_2}(\zeta)\to  {\cE_*}_{\hat
S_2}(\zeta)$  induced  by the affine diffeomorphism $\hat S_2\to \hat
S_2$       corresponding       to      the      horizontal      shear
$h^2=\begin{pmatrix}1&2\\0&1\end{pmatrix}$.
Figure~\ref{fig:twoline:shear} describes this automorphism.

It is convenient to introduce auxiliary cycles $s_1, s_2, s_3$,
as  in  the  right  picture of Figure~\ref{fig:twoline:shear}. We use
Figure~\ref{fig:twoline:shear} to trace how the induced map $h_1\circ
h_2$  in the integer homology acts on the generating cycles. We start
by noting that
$$
h_1\circ h_2: d_i\mapsto d'_i = d_i\,.
$$
Next, we remark that,
\begin{align*}
h_1\circ h_2&: a_{i,1}\mapsto a'_{i,1} = -s_i+c_{i,1}\\
h_1\circ h_2&: a_{i,2}\mapsto a'_{i,2} = c_{i,2}+s_{i+1}\,,
\end{align*}
and, hence, taking the sum, we get
$$
h_1\circ h_2: a_i\mapsto a'_i = c_i+s_{i+1}-s_i\,.
$$
We proceed with the relations
\begin{align*}
h_1\circ h_2&: b_{i,1}\mapsto b'_{i,1} = s_{i-1}+b_{i+1,1}\notag\\
h_1\circ h_2&: b_{i,2}\mapsto b'_{i,2} = b_{i+1,2}-s_i\,,
\end{align*}
and, hence, taking the sum, we get
$$
h_1\circ h_2: b_i\mapsto b'_i = b_{i+1}+s_{i-1}-s_i\,.
$$
We conclude with the relations
\begin{align*}
h_1\circ h_2&: c_{i,1}\mapsto c'_{i,1} = -d_{i+1}+a_{i+1,1}\notag\\
h_1\circ h_2&: c_{i,2}\mapsto c'_{i,2} = a_{i+1,2}+d_{i-1}\,,
\end{align*}
and, hence, taking the sum, we get
$$
h_1\circ h_2: c_i\mapsto c'_i = a_{i+1}-d_{i+1}+d_{i-1}\,.
$$

In  order  to  express  the  auxiliary  cycles  $s_i$ in terms of the
generating  cycles  $a_i,  b_j,  c_k,  d_l$  it  is convenient
introduce   the  relative  cycle  $\vec{j}_b$  following  the  bottom
horizontal  (in  the landscape orientation) side of the square number
$j$ from left to right. In these notations
\begin{align*}
s_1&=c_{1,1}+\vec{0}_b+\vec{1}_b+a_{3,2}\\
d_3&=a_{3,1}+\vec{8}_b+\vec{9}_b+c_{1,2}\,.
\end{align*}
Adding  up  the  latter  equations and taking into consideration that
$\vec{0}_b=-\vec{9}_b$ and $\vec{1}_b=-\vec{8}_b$ we obtain
$$
s_1+d_3=c_1+a_3
$$
Analogous considerations show that
\begin{equation}
\label{eq:aux:cycles:s}
s_i=a_{i-1}+c_i-d_{i-1}\,.
\end{equation}
Summarizing  the  above relations we get
\begin{align*}
h_1\circ h_2&: a_i\mapsto a'_i = a_i-a_{i-1}+c_{i+1}-d_i+d_{i-1}\\
h_1\circ h_2&: b_i\mapsto b'_i =
             -a_{i-1}+a_{i+1}+b_{i+1}+c_{i-1}-c_i+d_{i-1}-d_{i+1}\\
h_1\circ h_2&: c_i\mapsto c'_i = a_{i+1}-d_{i+1}+d_{i-1}\\
h_1\circ h_2&: d_i\mapsto d'_i = d_i
\end{align*}
which  implies  the  following  expression  for $A^{\mathit{hor}}_1$:
$$
%\begin{equation}
%\label{eq:A1:vert}
   %
A^{\mathit{hor}}_1=\left(
\begin{array}{cccc}
 1-\zeta & \zeta^2-\zeta & \zeta^2 & 0 \\
 0 & \zeta^2 & 0 & 0 \\
 \zeta^2 & \zeta-1 & 0 & 0 \\
 \zeta-1 & \zeta-\zeta^2 & \zeta-\zeta^2 & 1
\end{array}
\right)\,.
%\end{equation}
$$

%--------------------------------------------------------------------
\paragraph{\textbf{Step 5 (Figure~\ref{fig:twoline:rotate}).}}
We compute the action in homology of $\hat S_1$ induced by the
automorphism   of  $\hat  S_1$  associated  to  the  counterclockwise
rotation   $r=\begin{pmatrix}0&-1\\1&0\end{pmatrix}$   by  the  angle
$\pi/2$.  More  specifically,  we want to compute the matrix $R_1$ of
the induced map
$$
r_1: {\cE_*}_{\hat S_1}(\zeta)\to {\cE_*}_{\hat S_1}(\zeta)\,.
$$
in the chosen basis.

In a way similar to  the  calculation in Step 4, we use the auxiliary cycles
$s_i$       indicated      in      the      left      picture      of
Figure~\ref{fig:twoline:rotate}. Note that the cycles $a_i, b_j, c_k,
d_l,  s_m$  on  the  surface  $S_1$  are defined as the images of the
corresponding  cycles  on  the  surface  $S_2$. This implies that the
auxiliary       cycles       $s_i$       satisfy       the       same
relation~\eqref{eq:aux:cycles:s} as on the surface $S_2$.

We note that
\begin{align*}
r_1&: a_{i,1}\mapsto a'_{i,1} = s_i+a_{i,1}\\
r_1&: a_{i,2}\mapsto a'_{i,2} = a_{i,2}-s_{i+1}\,,
\end{align*}
and,      hence,      taking      the      sum,      and     applying
relations~~\eqref{eq:aux:cycles:s} we get
$$
r_1: a_i\mapsto a'_i= a_i+s_i-s_{i+1}=
  % a_i+(a_{i-1}+c_i-d_{i-1})-
  % (a_{i}+c_{i+1}-d_{i})=
a_{i-1}+c_i-c_{i+1}+d_i-d_{i-1}
\,.
$$
We proceed with the relations
\begin{align*}
r_1&: b_{i,1}\mapsto b'_{i,1} = d_{i+1}+c_{i,1}\\
r_1&: b_{i,2}\mapsto b'_{i,2} = c_{i,2}-d_{i-1}\,,
\end{align*}
and, hence, taking the sum, we get
$$
r_1: b_i\mapsto b'_i= c_i+d_{i+1}-d_{i-1}\,.
$$
We proceed further with the relations
\begin{align*}
r_1&: c_{i,1}\mapsto c'_{i,1} = -s_{i-1}+b_{i,1}\\
r_1&: c_{i,2}\mapsto c'_{i,2} = b_{i,2}+s_i\,,
\end{align*}
and, hence, taking the sum, we get
$$
r_1: c_i\mapsto c'_i= b_i+s_i-s_{i-1}
=a_{i-1}-a_{i+1}+b_i+c_i-c_{i-1}+d_{i+1}-d_{i-1}\,.
$$

To  establish  the  relations  for  the images of the cycles $d_i$ we
introduce  relative cycle $\vec j_t$ following from left to right the
top  horizontal edge of the square number $j$ in the \textit{initial}
enumeration  on  the left picture of Figure~\ref{fig:twoline:rotate}.
As  usual, ``horizontal'' is considered with respect to the landscape
orientation in Figure~\ref{fig:twoline:rotate}. In these notations we
get
\begin{align*}
r_1&: d_{1,1}\mapsto d'_{1,1} = b_{2,1}-\vec{4}_t\\
r_1&: d_{i,2}\mapsto d'_{i,2} = -\vec{17}_t+b_{2,2}+s_2\,.
\end{align*}
By adding  up  the  latter  equations and by taking into account that
$\vec{4}_t=-\vec{17}_t$ we obtain
$$
r_1: d_1\mapsto d'_1= b_2+s_2\,.
$$
Analogous considerations show that
$$
r_1: d_i\mapsto d'_i= b_{i+1}+s_{i+1}
=a_i+b_{i+1}+c_{i+1}-d_i\,.
$$

By applying   the   above   relations   to  the  images  of  the  cycles
$a_+,b_+,c_+,d_+$,  we  get  the  following  expression for the matrix
$R_1$:
$$
%\begin{equation}
%\label{eq:R1}
   %
R_1=\left(
\begin{array}{cccc}
\zeta     & 0              & \zeta-\zeta^2 & 1       \\
0         & 0              &       1       & \zeta^2 \\
1-\zeta^2 & 1              & 1-\zeta       & \zeta^2 \\
1-\zeta   & \zeta^2-\zeta  & \zeta^2-\zeta &   -1
\end{array}
\right)\,.
%\end{equation}
$$

%\input{matrix_calculation_monodromy.tex}
% matrix_calculation_monodromy.tex

%--------------------------------------------------------------------
\subsection{Choice of concrete paths and calculation of the monodromy}
\label{ss:calculation}

Consider the maps
\begin{align*}
v_1: = r^{-1}_3\cdot h^{-1}_2\cdot r_1 &: H_1(\hat S_1;\Z)\to H_1(\hat S_3;\Z)\\
v_2: = r^{-1}_2\cdot h^{-1}_3\cdot r_2 &: H_1(\hat S_2;\Z)\to H_1(\hat S_2;\Z)\\
v_3: = r^{-1}_1\cdot h^{-1}_1\cdot r_3 &: H_1(\hat S_3;\Z)\to H_1(\hat S_1;\Z)
\end{align*}
induced by the vertical shear
$
v=\begin{pmatrix}1&0\\1&1\end{pmatrix}.
$
In  the  chosen  bases  of homology, the restrictions of these linear
maps to the subspaces ${\cE_*}_{\hat S_i}(\zeta)$ have matrices
\begin{align*}
A^{vert}_1&=R^{-1}_3\cdot (A^{hor}_2)^{-1}\cdot R_1 =\Id\cdot\Id\cdot R_1                    =R_1\\
A^{vert}_2&=R^{-1}_2\cdot (A^{hor}_3)^{-1}\cdot R_2 =\Id\cdot (A^{hor}_3)^{-1}\cdot\Id       =(A^{hor}_3)^{-1}\\
A^{vert}_3&=R^{-1}_1\cdot (A^{hor}_1)^{-1}\cdot R_3 =R_1^{-1}\cdot (A^{hor}_1)^{-1}\cdot \Id =R_1^{-1}\cdot (A^{hor}_1)^{-1}
\end{align*}
correspondingly. By multiplying, we obtain

\begin{align*}
A^{vert}_1&=\left(
\begin{array}{cccc}
 \zeta & 0 & \zeta-\zeta^2 & 1 \\
 0 & 0 & 1 & \zeta^2 \\
 1-\zeta^2 & 1 & 1-\zeta & \zeta^2 \\
 1-\zeta & \zeta^2-\zeta & \zeta^2-\zeta & -1
\end{array}
\right)
\\
\\
A^{vert}_2&=\left(
\begin{array}{cccc}
 0 & \zeta^2 & 0 & \zeta \\
 0 & 0 & \zeta^2 & \zeta \\
 1 & 0 & 0 & 1 \\
 0 & 0 & 0 & -\zeta
\end{array}
\right)
\\
\\
A^{vert}_3&=\left(
\begin{array}{cccc}
 0 & 0 & 1 & \zeta^2 \\
 \zeta & 0 & 0 & \zeta \\
 0 & \zeta & 0 & \zeta \\
 0 & 0 & 0 & -\zeta^2
\end{array}
\right)
\end{align*}

In  the  natural  bases of homology, the restrictions of these linear
maps to the subspaces $\Lambda^2 {\cE_*}_{\hat S_i}(\zeta)$ have matrices
\begin{align*}
W^{hor}_1&=\left(
\begin{array}{cccccc}
 \zeta^2-1 & 0 & 0 & -\zeta & 0 & 0 \\
 \zeta-\zeta^2 & -\zeta & 0 & \zeta^2-1 & 0 & 0 \\
 0 & \zeta-\zeta^2 & 1-\zeta & 1-\zeta^2 & \zeta^2-\zeta & \zeta^2 \\
 -\zeta & 0 & 0 & 0 & 0 & 0 \\
 \zeta^2-1 & 0 & 0 & 1-\zeta & \zeta^2 & 0 \\
 \zeta-\zeta^2 & 1-\zeta & \zeta^2 & 2 \zeta^2-\zeta-1 & \zeta-1 & 0
\end{array}
\right)
\\
\\
W^{hor}_3&=\left(
\begin{array}{cccccc}
 0 & -\zeta & -1 & 0 & 0 & \zeta \\
 0 & 0 & 0 & -\zeta & -1 & \zeta \\
 0 & 0 & 0 & 0 & 0 & -\zeta^2 \\
 \zeta^2 & 0 & \zeta^2 & 0 & -\zeta^2 & 0 \\
 0 & 0 & -1 & 0 & 0 & 0 \\
 0 & 0 & 0 & 0 & -1 & 0
\end{array}
\right)
\end{align*}

\begin{align*}
W^{vert}_1&=\left(
\begin{array}{cccccc}
 0 & \zeta & 1 & 0 & 0 & -\zeta \\
 \zeta & 1-\zeta & \zeta^2 & \zeta^2-\zeta & -1 & 0 \\
 1-\zeta^2 & \zeta^2-\zeta & -1 & \zeta^2+\zeta-2 & \zeta-\zeta^2 & 0 \\
 0 & \zeta^2-1 & \zeta-\zeta^2 & -1 & -\zeta^2 & 1 \\
 0 & \zeta-1 & 1-\zeta^2 & \zeta-\zeta^2 & 1-\zeta & -\zeta \\
 \zeta^2-\zeta & 0 & 0 & 1-\zeta^2 & -\zeta & 0
\end{array}
\right)
\\
\\
W^{vert}_2&=\left(
\begin{array}{cccccc}
 0 & 0 & 0 & \zeta & 1 & -1 \\
 -\zeta^2 & 0 & -\zeta & 0 & \zeta^2 & 0 \\
 0 & 0 & 0 & 0 & -1 & 0 \\
 0 & -\zeta^2 & -\zeta & 0 & 0 & \zeta^2 \\
 0 & 0 & 0 & 0 & 0 & -1 \\
 0 & 0 & -\zeta & 0 & 0 & 0
\end{array}
\right)
\\
\\
W^{vert}_3&=\left(
\begin{array}{cccccc}
 0 & -\zeta & -1 & 0 & 0 & \zeta \\
 0 & 0 & 0 & -\zeta & -1 & \zeta \\
 0 & 0 & 0 & 0 & 0 & -\zeta^2 \\
 \zeta^2 & 0 & \zeta^2 & 0 & -\zeta^2 & 0 \\
 0 & 0 & -1 & 0 & 0 & 0 \\
 0 & 0 & 0 & 0 & -1 & 0
\end{array}
\right)
\end{align*}
and $W^{hor}_1=\Id$.

Consider  now  the following loop $\rho_1$ on $\hat\cT$: start with a
horizontal move from $\hat S_1$ and follow the trajectory:
\begin{equation}
\label{eq:rho1}
1\xrightarrow{hor}
2\xrightarrow{vert}
2\xrightarrow{vert}
2\xrightarrow{vert}
2\xrightarrow{hor}
1\xrightarrow{vert}
3\xrightarrow{vert}
1
\end{equation}
The  corresponding  monodromy  matrices  $X$ in $\cE_*(\zeta)$ and $U$ in
$\Lambda^2 \cE_*(\zeta)$ are computed as follows:

$$
X:=A^{vert}_3\cdot A^{vert}_1\cdot A^{hor}_2\cdot\left(A^{vert}_2\right)^3\cdot A^{hor}_1
$$
$$
U:=W^{vert}_3\cdot W^{vert}_1\cdot W^{hor}_2\cdot\left(W^{vert}_2\right)^3\cdot W^{hor}_1
$$

Consider  now  the  second  loop  $\rho_2$ on $\hat\cT$: start with a
vertical move from $\hat S_1$ and follow the trajectory:
\begin{equation}
\label{eq:rho2}
1\xrightarrow{vert}
3\xrightarrow{hor}
3\xrightarrow{hor}
3\xrightarrow{hor}
3\xrightarrow{vert}
1
\end{equation}
The  corresponding  monodromy  matrices  $Y$ in $\cE_*(\zeta)$ and $V$ in
$\Lambda^2 \cE_*(\zeta)$ are computed as follows:
$$
Y:=A^{vert}_3\cdot\left(A^{hor}_3\right)^3\cdot A^{vert}_1
$$
$$
V:=W^{vert}_3\cdot\left(W^{hor}_3\right)^3\cdot W^{vert}_1
$$

As a result we get the following numerical matrices:
$$
X=\left(
\begin{array}{cccc}
 \zeta^2-\zeta & \zeta^2+\zeta-1 & \zeta^2-1 & \zeta \\
 3 \zeta^2-3 & -\zeta^2+3 \zeta-2 & 3 \zeta-2 & -\zeta^2+\zeta+1 \\
 6 \zeta^2-5 & 4 \zeta-4 & \zeta^2+5 \zeta-6 & -3 \zeta^2+3 \zeta +1 \\
 -\zeta^2+6 \zeta-5 & -5 \zeta^2+4 \zeta+1 & -6 \zeta^2+5 \zeta+1 & -2 \zeta^2-2 \zeta+3
\end{array}
\right)
$$
$$
Y=\left(
\begin{array}{cccc}
 \zeta^2-\zeta & \zeta^2 & \zeta^2-1 & \zeta \\
 \zeta^2-\zeta+1 & 2 \zeta^2-\zeta-1 & \zeta^2-1 & \zeta \\
 \zeta^2-2 \zeta+1 & 2 \zeta^2-\zeta-1 & 2 \zeta^2-\zeta & \zeta^2+\zeta-1 \\
 \zeta^2-1 & \zeta-1 & \zeta-1 & -\zeta^2
\end{array}
\right)
$$
$$
U=\left(
\begin{array}{cccccc}
 \zeta^2+\zeta-2 & \zeta^2-\zeta & \zeta-1 & 2 \zeta^2-\zeta
  & \zeta & -\zeta^2 \\
 3 \zeta^2+3 \zeta
 -7 & \zeta-1 & \zeta-2 \zeta^2 & 2 \zeta^2-6 \zeta+4 & 3 \zeta^2-\zeta-1 & 1-\zeta^2 \\
 -5 \zeta^2+6 \zeta-1 & 0 & 1-\zeta^2 & 6 \zeta^2-\zeta-5 & -\zeta^2+2 \zeta-2 & 1-\zeta \\
 -5 \zeta^2+9 \zeta-4 & -9 \zeta^2+3 \zeta+5 & \zeta^2-7 \zeta+5 & 7 \zeta^2-9 \zeta+2 & 6 \zeta^2-6 & 3 \zeta^2-\zeta-1 \\
 -7 \zeta^2-\zeta+8 & \zeta^2-6 \zeta+5 & 5 \zeta^2-2 \zeta-3 & 4 \zeta^2+4 \zeta-8 & -3 \zeta^2+5 \zeta-2 & \zeta-2 \\
 3 \zeta^2-6 \zeta+3 & 5 \zeta^2+\zeta-6 & -4 \zeta^2+4 \zeta-1 & -7 \zeta^2+8 \zeta-1 & -4 \zeta^2-2 \zeta+6 & -\zeta^2-\zeta+2
\end{array}
\right)
$$
$$
V=\left(
\begin{array}{cccccc}
 -\zeta^2+2 \zeta-2 & 1-\zeta^2 & -\zeta & 2 \zeta^2-2 \zeta & \zeta^2+\zeta-1 & 0 \\
 -\zeta^2+2 \zeta-1 & \zeta^2-\zeta & \zeta-1 & 3 \zeta^2-\zeta-1 & 2 \zeta-1 & -\zeta^2 \\
 1-\zeta^2 & 0 & 0 & \zeta-1 & -\zeta^2 & 0 \\
 -\zeta^2-\zeta+2 & 3 \zeta^2-2 \zeta & \zeta^2+2 \zeta-2 & 2 \zeta^2+2 \zeta-4 & 3 \zeta-3 \zeta^2 & -\zeta^2 \\
 \zeta^2-\zeta & \zeta-1 & -\zeta^2 & -2 \zeta^2+\zeta+1 & 1-\zeta
   & 0 \\
 0 & 1-\zeta^2 & \zeta^2-\zeta & 1-\zeta & \zeta^2-1 & -1
\end{array}
\right)
$$

Computing the determinants we get:
$$
\det(XY-YX)=-285
$$
and
$$
\det(UV-VU)=-5292\,.
$$

\section{Computation of the Zariski closure of a monodromy representation}\label{a:Zariski:closure}

Let $\zeta=\exp(2\pi i/3)$ and $\eta=\exp(2\pi i/6)$. Consider the following matrices
$$A=A_3^{hor}=\left(\begin{array}{cccc}0&0&1&\zeta^2 \\ \zeta&0&0&\zeta \\ 0&\zeta&0&\zeta \\ 0&0&0&-\zeta^2\end{array}\right)$$
and
$$B=A_1^{vert}\cdot A_3^{vert}=\left(\begin{array}{cccc}0&\zeta^2-1&\zeta&0 \\ 0&\zeta&0&0 \\
\zeta&\zeta-\zeta^2&1-\zeta^2&0 \\ 1-\zeta^2&1-\zeta^2&1-\zeta&1\end{array}\right)$$

The product
$$C=B\cdot A=\left(\begin{array}{cccc}1-\zeta&\zeta^2&0&-2\zeta \\ \zeta^2&0&0&\zeta^2
\\ \zeta^2-1&\zeta-1&\zeta&-2 \\ \zeta-1&\zeta-\zeta^2&1-\zeta^2&2\zeta\end{array}\right)$$
has characteristic polynomial
$$T^4+(\zeta^2-\zeta)T^3-2\zeta^2T^2+(\zeta^2-1)T+\zeta$$
$$= (T-1)\cdot(T^3-2\zeta T^2+2T-\zeta)$$
Denoting by $\alpha$, $\beta$ and $\mu$ the roots of $T^3-2\zeta T^2+2T-\zeta=0$ with $|\alpha|=|\beta|^{-1}>1=|\mu|$, we have that $C$ has eigenvalues $\alpha$, $1$, $\mu$ and $\beta$ with eigenvectors
\begin{itemize}
\item $v_{\alpha}=\left(-\frac{(-\zeta+2\zeta\alpha)}{(-1+\alpha)\cdot(\zeta+\alpha)}, \frac{\eta(1+\zeta-\alpha)}{(-1+\alpha)\cdot(\zeta+\alpha)}, \frac{\eta(1+\zeta+2\zeta\alpha)}{\eta+\alpha^2}, 1\right)$,
\item $v_1=(\zeta,1,0,0)$,
\item $v_{\mu}=\left(-\frac{(-\zeta+2\zeta\mu)}{(-1+\mu)\cdot(\zeta+\mu)}, \frac{\eta(1+\zeta-\mu)}{(-1+\mu)\cdot(\zeta+\mu)}, \frac{\eta(1+\zeta+2\zeta\mu)}{\eta+\mu^2}, 1\right)$ and
\item $v_{\beta}=\left(-\frac{(-\zeta+2\zeta\beta)}{(-1+\beta)\cdot(\zeta+\beta)}, \frac{\eta(1+\zeta-\beta)}{(-1+\beta)\cdot(\zeta+\beta)}, \frac{\eta(1+\zeta+2\zeta\beta)}{\eta+\beta^2}, 1\right)$
\end{itemize}

Since $\det A=-\zeta$ and $\det B=-1$, we have that $\det C=\zeta$. In particular, $\det C^3=1$ and hence $\alpha^3\beta^3\mu^3=1$. This allows us to write
$\alpha^3=R\cdot e^{iT_{\alpha}}$, $\beta^3=R^{-1}\cdot e^{iT_{\beta}}$ and $\mu^3=e^{iT_{\mu}}$ with $T_{\mu}=-T_{\alpha}-T_{\beta}$.

\begin{theorem}\label{t.ZclAB}The subgroup $\langle A, B\rangle\,\cap\, SU(3,1)$ is Zariski dense in $SU(3,1)$.
\end{theorem}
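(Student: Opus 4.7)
The plan is to carry out the strategy outlined in Lemma~\ref{lm:Ezeta:strong:irreducibility}, reducing the theorem to the exhibition of $15$ linearly independent vectors in the Lie algebra $\mathfrak{g}_0$ of the connected component of the identity in the Zariski closure of $G := \langle A, B\rangle$. Both generators preserve the pseudo-Hermitian form of signature $(3,1)$ and have determinants $\det A = -\zeta$, $\det B = -1$ that are roots of unity, so every element of $G$ has determinant in a finite cyclic subgroup of $\C^\times$. Consequently, the identity component of the Zariski closure of $G$ is contained in $SU(3,1)$ and $\mathfrak{g}_0 \subseteq \mathfrak{su}(3,1)$, a real Lie algebra of dimension $15$. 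A matching lower bound $\dim_\R \mathfrak{g}_0 \ge 15$ then forces equality, identifying the identity component of the Zariski closure of $G$ with the full real algebraic group $SU(3,1)$. Since $G \cap SU(3,1)$ has finite index in $G$ (the index being the order of the image of $\det$), its Zariski closure has the same identity component; the connectedness of $SU(3,1)$ as a real algebraic group then yields that the Zariski closure of $G \cap SU(3,1)$ is the full $SU(3,1)$, as desired.

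The key step is to produce the $15$ vectors in $\mathfrak{g}_0$. The construction starts from the explicit diagonalization $C = PDP^{-1}$ of $C = BA$ recorded above, where $D = \operatorname{diag}(\alpha, 1, \mu, \beta)$ and $P$ has columns $v_\alpha, v_1, v_\mu, v_\beta$. The factorization of the characteristic polynomial together with Lemma~\ref{lemma:irrational} ensures that $C$ is regular semisimple and hyperbolic (since $|\alpha| > 1 > |\beta|$), with unit-modulus eigenvalue $\mu$ that is not a root of unity. Setting $X := \log C = P(\log D)P^{-1}$ for a fixed branch of the complex logarithm, a standard argument with the Zariski closure of the cyclic subgroup $\langle C \rangle$ places $X$ (or a nonzero integer multiple of it) in $\mathfrak{g}_0$. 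By the $\operatorname{Ad}$-invariance of $\mathfrak{g}_0$ under $G$, the $15$ vectors
\begin{align*}
& A^n\, X\, A^{-n}, \qquad n = 0, 1, \dots, 8,\\
& B A^n\, X\, A^{-n} B^{-1}, \qquad n = 0, 2, 3, 4, 5, 6,
\end{align*}
then also belong to $\mathfrak{g}_0$.

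The principal obstacle is the verification that these $15$ vectors are linearly independent in the ambient real vector space $\mathfrak{u}(3,1) \cong \R^{16}$. I would do this by a high-precision numerical computation, packing the vectors into the columns of a real $16 \times 15$ matrix and checking that some $15 \times 15$ minor is non-zero to a numerically safe number of digits. The subtlety is that the entries of $X$ involve the transcendental number $\log \mu$, so the calculation cannot be carried out symbolically in $\Q(\zeta)$; the precision must be high enough to distinguish genuine non-degeneracy from rounding artifacts. The particular exponents $n$ listed above are not canonical but have been selected by the greedy procedure described in the proof of Lemma~\ref{lm:Ezeta:strong:irreducibility}, so that the resulting family is indeed linearly independent; other natural choices (for example, including $n = 1$ in the second family) produce linearly dependent families that must be avoided.
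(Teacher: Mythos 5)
Your proposal is correct and follows essentially the same route as the paper's own proof: reduce to exhibiting $15$ linearly independent vectors in $\mathfrak{g}_0 \subseteq \mathfrak{su}(3,1)$, use the hyperbolic element $C=BA$ (via $C^3 \in SU(3,1)$, matching your "nonzero integer multiple" hedge), conjugate $\log C$ by the same powers $A^n$ and $BA^n\cdot B^{-1}$ with the same choice of exponents, and finish by a high-precision numerical rank check. The only cosmetic difference is that the paper isolates the step "log of a hyperbolic element lies in the Lie algebra of any Zariski-closed subgroup containing it" as an explicit lemma, whereas you invoke it as a standard fact; the content and the computation are identical.
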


We start by noticing that $C^3\in SU(3,1)$ is contained in the $1$-parameter subgroup of $SU(3,1)$ with infinitesimal generator $X\in \mathfrak{su}(3,1)$ where
\begin{itemize}
\item $Xv_{\alpha}=(r+iT_{\alpha})\cdot v_{\alpha}$,
\item $Xv_1=0$,
\item $Xv_{\mu}=iT_{\mu} v_{\mu}$ and
\item $X v_{\beta}=(-r+iT_{\beta})\cdot v_{\beta}$
\end{itemize}
and $r=\log R=3\log|\alpha|$.

For computational reasons, let's write $X$ in terms of the canonical basis $\{e_1,\dots,e_4\}$ of $\mathbb{C}^4$ as follows. Put
$$P=\left(
\begin{array}{cccc}
-\frac{(-\zeta+2\zeta\alpha)}{(-1+\alpha)\cdot(\zeta+\alpha)} & \zeta & -\frac{(-\zeta+2\zeta\mu)}{(-1+\mu)\cdot(\zeta+\mu)} & -\frac{(-\zeta+2\zeta\beta)}{(-1+\beta)\cdot(\zeta+\beta)} \\
\frac{\eta(1+\zeta-\alpha)}{(-1+\alpha)\cdot(\zeta+\alpha)} &1 & \frac{\eta(1+\zeta-\mu)}{(-1+\mu)\cdot(\zeta+\mu)} & \frac{\eta(1+\zeta-\beta)}{(-1+\beta)\cdot(\zeta+\beta)} \\
\frac{\eta(1+\zeta+2\zeta\alpha)}{\eta+\alpha^2} & 0 & \frac{\eta(1+\zeta+2\zeta\mu)}{\eta+\mu^2} & \frac{\eta(1+\zeta+2\zeta\beta)}{\eta+\beta^2} \\
1& 0 & 1 & 1
\end{array}
\right)$$
so that $Pe_1=v_\alpha$, $Pe_2=v_1$, $Pe_3=v_\mu$ and $Pe_4=v_\beta$. Then,
$$X=P\cdot D_X\cdot P^{-1}$$
where
$$D_X=\left(\begin{array}{cccc}
r+iT_\alpha&0&0&0\\
0&0&0&0 \\
0&0& iT_\mu&0 \\
0&0&0&-r+iT_\beta
\end{array}\right)$$

Next,  we observe that $A$ has order $18$ (and $B$ has order $6$), so
that we can construct
$$X(n)=A^{n-1}\cdot X\cdot A^{-(n-1)}$$
for $n=1,\dots, 17$.

Applying  the  method  of least squares (cf. Remark \ref{r.Mathematica} below) we verify that
$$X(1), \dots, X(9)$$
are $9$ linearly independent vectors.

Now  we  use the matrix $B$ to conjugate the
resulting   independent   vectors   $X(1),\dots,  X(9)$. We
construct vectors
$$
Y(n)=B\cdot X(n)\cdot B\,,
$$
and    applying   the   method   of   least   squares (cf. Remark \ref{r.Mathematica} below)  verify   that
$$Y(1), Y(3),\dots,Y(7)$$
give  six more vectors such that $X(1),\dots, X(9), Y(1), Y(3), \dots, Y(7)$ span a vector space of dimension $15$.

Note that the vectors $X(1),\dots,X(9), Y(1), Y(3),\dots,Y(7)$ belong to the Lie algebra $\mathfrak{g}_0$ of the Zariski closure $G=\textrm{Zcl}(\langle A, B\rangle\cap SU(3,1))$ of $\langle A, B\rangle\cap SU(3,1)$. Indeed, this is a consequence of the following general lemma:
\begin{lemma} Let $C$ be a hyperbolic or unipotent element of $SU(p,q)$. Then, the logarithm $X\in \mathfrak{su}(p,q)$ of $C$ belongs to the Lie algebra $\mathfrak{g}$ of any Zariski closed subgroup $G$ of $SU(p,q)$ containing $C$.
\end{lemma}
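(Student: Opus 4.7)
My approach would be to reduce the claim to showing that the real one-parameter subgroup $\phi: \R \to GL_{p+q}(\C)$ defined by $\phi(t) = \exp(tX)$, where $X = \log C$, is entirely contained in $G$. This reduction is the crux: once $\phi(\R) \subset G$ is known, the fact that $G$ is Zariski closed (hence closed in the Euclidean topology, hence a Lie subgroup of $SU(p,q)$) immediately gives $X = \phi'(0) \in \mathfrak{g}$. Moreover, since the Zariski closure $H := \overline{\langle C\rangle}^{\mathrm{Zar}}$ of the cyclic subgroup generated by $C$ is itself a Zariski closed subgroup of $G$, it suffices to prove $\phi(\R) \subset H$. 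The proof then splits into the two cases of the hypothesis.

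For the unipotent case, I would exploit the fact that $X = \log C = \sum_{k \ge 1} \tfrac{(-1)^{k+1}}{k}(C-I)^k$ is nilpotent (the sum is finite), so that $\phi(t) = \sum_{k\ge 0} t^k X^k/k!$ is a polynomial morphism of algebraic varieties $\mathbb{A}^1 \to GL_{p+q}(\C)$. The preimage $\phi^{-1}(H) \subset \mathbb{A}^1$ is therefore Zariski closed and contains every integer, since $\phi(n) = C^n \in \langle C\rangle \subset H$. A Zariski closed subset of $\mathbb{A}^1$ containing infinitely many points must equal $\mathbb{A}^1$, and so $\phi(\R) \subset H$.

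For the hyperbolic case, I would first conjugate in $GL_{p+q}(\C)$ so that $C = D = \operatorname{diag}(e^{r_1},\ldots,e^{r_{p+q}})$ with real exponents $r_i$, and correspondingly $X = \operatorname{diag}(r_1,\ldots,r_{p+q})$. Since $\langle D\rangle$ consists of diagonal matrices, its Zariski closure $T$ sits inside the diagonal torus $(\mathbb{G}_m)^{p+q}$, and $T$ is precisely the intersection of the kernels of the characters $\chi^{\mathbf{a}}(t_1,\ldots,t_{p+q}) = \prod_i t_i^{a_i}$, $\mathbf{a}\in\Z^{p+q}$, with $\chi^{\mathbf{a}}(D) = \exp\!\big(\sum_i a_i r_i\big) = 1$; equivalently, those $\mathbf{a}$ with $\sum_i a_i r_i \in 2\pi i\Z$. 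The key observation is that the reality of the $r_i$ forces $\sum_i a_i r_i = 0$, whence $\chi^{\mathbf{a}}(\phi(t)) = \exp\!\big(t\sum_i a_i r_i\big) = 1$ for every $t\in\R$. Hence $\phi(\R) \subset T \subset H$, and undoing the conjugation concludes the argument.

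The main obstacle is the hyperbolic case; the unipotent argument is essentially the standard fact that nilpotent one-parameter subgroups are algebraic. In the hyperbolic case, the reality of the eigenvalues is essential: for a semisimple element whose eigenvalues lie on the unit circle but are not roots of unity, the Zariski closure of $\langle C\rangle$ is generically a strict algebraic subgroup of the one-parameter subgroup through $C$, and $\log C$ need not belong to $\mathfrak{g}$. The reality hypothesis is used precisely to rule out the nontrivial characters that could record such periodic relations among the eigenvalues.
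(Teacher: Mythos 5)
Your proof is correct and proceeds by the same overall strategy as the paper's --- get $\phi(\R)=\{\exp(tX):t\in\R\}$ inside any Zariski closed $G$ containing $C$, then use that $G$ is a closed (Lie) subgroup to conclude $X\in\mathfrak{g}$ --- but it actually supplies the crucial step that the paper's own argument leaves unjustified. The paper asserts that $\{\exp(tX)\}$ is ``the smallest one-parameter subgroup containing all iterates $C^n$'' and then writes ``so it follows that $\{\exp(tX)\}\subset G$''; that implication is the whole content of the lemma, and your reduction to the Zariski closure $H=\overline{\langle C\rangle}$, followed by the polynomial-morphism argument over $\mathbb{A}^1$ in the unipotent case and the character-lattice argument exploiting the reality of the $r_i$ (a real number in $2\pi i\Z$ is zero) in the hyperbolic case, is a genuine proof of it. So this is not a different route but a completed version of the paper's sketch.

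One correction to your closing remark, though. For a semisimple $C$ whose unit-modulus eigenvalues are \emph{not} roots of unity, the Zariski closure of $\langle C\rangle$ is not ``a strict algebraic subgroup of the one-parameter subgroup through $C$'' --- the inclusion runs the other way: it is a positive-dimensional torus which \emph{contains} $\phi(\R)$, and the lemma in fact \emph{holds} for such $C$. Your own kernel-of-characters computation shows that $\phi(\R)\subset\overline{\langle C\rangle}$ precisely when no integer relation $\prod_j\lambda_j^{a_j}=1$ has argument part $\sum_j a_j\arg\lambda_j$ equal to a \emph{nonzero} multiple of $2\pi$, and generic non-root-of-unity arguments admit no such relation. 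The genuine obstruction is a ``hidden $2\pi$'' in a multiplicative relation among the eigenvalues, the simplest case being a nontrivial root of unity in the spectrum. Your final sentence --- that the reality hypothesis is there to kill exactly such periodic relations --- is the right diagnosis; it is only the preceding assertion that is inverted. As it happens, this subtlety is live in the paper's own application: the element $C^3$ to which the lemma is applied has a non-real unit-modulus eigenvalue $\mu^3$, and the conclusion there depends on the separately proved fact that $\mu$ is not a root of unity.
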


\begin{proof} Since $C$ is hyperbolic or unipotent, its iterates $C^n$, $n\in \mathbb{Z}$, form an infinite discrete subset of $SU(p,q)$. Therefore, any Zariski closed subgroup $G$ of $SU(p,q)$ containing $C$ has dimension $1$ at least: in fact, any $0$-dimensional Zariski closed subgroup is finite. On the other hand, denoting by $X$ the logarithm of $C$, we have that  $\{\exp(tX):t\in\mathbb{R}\}$ is the smallest $1$-parameter subgroup containing all iterates $C^n$, $n\in\mathbb{Z}$, of $C$. So, it follows that $\{\exp(tX):t\in\mathbb{R}\}\subset G$, and, thus, $X\in\mathfrak{g}$ where $\mathfrak{g}$ is the Lie algebra of $G$.
\end{proof}

In particular, coming back to the proof of Theorem \ref{t.ZclAB},
we have  proved that $\mathfrak{g}_0\subset \mathfrak{su}(3,1)$ is a vector space of dimension $15$ at least. Since $\mathfrak{su}(3,1)$ is a $15$-dimensional real Lie algebra, we conclude that $\mathfrak{g}_0=\mathfrak{su}(3,1)$, and hence $G=SU(3,1)$.

This completes the proof of Theorem \ref{t.ZclAB}.
%Summarizing, our basis is composed from the $15$ vectors
   %
%\begin{align*}
%A^{n}\,\cdot \,&\, X\cdot A^{-n}\quad &n=0,\dots,8;\\
%B\,\cdot \,&\, X\cdot B^{-1} \quad &; \\
%B\cdot A^{n}\,\cdot \,&\, X\cdot A^{-n}\cdot B^{-1}\quad &n=2,\dots,6\,.
%\end{align*}

\begin{remark}\label{r.Mathematica} In the webpages of the last two authors (C.M. and A.Z.), the reader will find a Mathematica routine called ``FMZ3-Zariski-numerics$\_$det1.nb'' where the numerical verification of the linear independence of the vectors $X(n)$, $n=1,\dots,9$, $Y(m)$, $m=1,3,\dots,7$.
\end{remark}

\bigskip

%--------------------------------------------------------------------
\subsection*{Acknowledgments}
The authors are grateful to A.~Eskin, M.~Kontsevich, M.~M\"oller, and
J.-C.~Yoccoz for extremely stimulating discussions, and to Y. Guivarch
for the question behind Proposition~\ref{prop:isometryU31} above.

% We  express  our  special  thanks  to  A.~Eskin and to A.~Wright, who
% simultaneously and independently came to, basically, the same list of
% conjectures  (and  even  some  further  ones)  as  the ones stated in
% Section~\ref{ss:Conjectural:general:picture}. The revised formulation
% of  these  conjectures  was  strongly influenced by the conversations
% with them.
We  highly  appreciated explanations of M.~Kontsevich and A.~Wright
concerning the Deligne Semisimplicity Theorem.

We   would   like   to   thank   M.~M\"oller  for  suggesting to us that the statement of Lemma~\ref{lm:Ezeta:strong:irreducibility} should hold,  A.~Eskin for his ideas
leading to the proof of this Lemma, and G.~Pearlstein, I.~Rivin and
M.~Sapir  for  interesting discussions around the computation of the
Zariski closure of concrete examples.

The  authors  are  thankful  to  Coll\`ege de France, HIM, IHES, IUF,
IMPA,  MPIM, and the Universities of Chicago, Maryland, Rennes 1, and
Paris 7 for hospitality during the preparation of this paper.

C.M. was partially supported by the
Balzan Research Project of J. Palis, and C.M. and A.Z. were partially supported by the French ANR grant ``GeoDyM'' (ANR-11-BS01-0004).

\clearpage

\special{% Surface 3; left picture
psfile=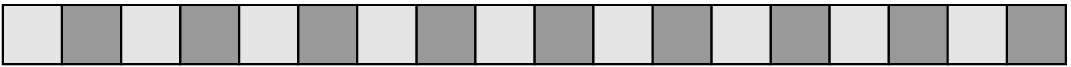
hscale=85
vscale=85
angle=90
hoffset=40
voffset=-453 % -433
}
\begin{picture}(0,0)(-9,22) % (-9,2)
\begin{picture}(0,0)(0,0)
\put(0,0){11}
\put(0,-24.5){10}
\put(3,-49){9}
\put(3,-73.5){8}
\put(3,-98){7}
\put(3,-122.5){6}
\end{picture}
\begin{picture}(0,0)(2.5,145)
\put(0,0){17}
\put(0,-24.5){16}
\put(0,-49){15}
\put(0,-73.5){14}
\put(0,-98){13}
\put(0,-122.5){12}
\end{picture}
\begin{picture}(0,0)(5,290)
\put(3,0){5}
\put(3,-24.5){4}
\put(3,-49){3}
\put(3,-73.5){2}
\put(3,-98){1}
\put(3,-122.5){0}
\end{picture}
\begin{picture}(0,0)(2.5,0)
\put(-16,12){\tiny\textit A}
\put(14.5,12){\tiny\textit C}
\put(14.5,-12.75){\tiny\textit F}
\put(14.5,-37){\tiny\textit C}
\put(-16,-61){\tiny\textit B}
\put(14.5,-61){\tiny\textit E}
\put(14.5,-86){\tiny\textit C}
\put(14.5,-109.5){\tiny\textit D}
\end{picture}
\begin{picture}(0,0)(5,145)
\put(-16,12){\tiny\textit A}
\put(14.5,12){\tiny\textit C}
\put(14.5,-12.75){\tiny\textit F}
\put(14.5,-37){\tiny\textit C}
\put(-16,-61){\tiny\textit B}
\put(14.5,-61){\tiny\textit E}
\put(14.5,-86){\tiny\textit C}
\put(14.5,-109.5){\tiny\textit D}
\end{picture}
\begin{picture}(0,0)(7.5,290)
\put(-16,12){\tiny\textit A}
\put(14.5,12){\tiny\textit C}
\put(14.5,-12.75){\tiny\textit F}
\put(14.5,-37){\tiny\textit C}
\put(-16,-61){\tiny\textit B}
\put(14.5,-61){\tiny\textit E}
\put(14.5,-86){\tiny\textit C}
\put(14.5,-109.5){\tiny\textit D}
\end{picture}
\begin{picture}(0,0)(10,436)
\put(-16,12){\tiny\textit A}
\put(14.5,12){\tiny\textit C}
\end{picture}
\put(-12,15){\tiny $0$}
\put(-30,4.5){\rotatebox{180}{\tiny $12$}} % (-30,0.5)
\put(4,4.5){\rotatebox{180}{\tiny $4$}}
\put(-30,-19.5){\rotatebox{180}{\tiny $13$}}
\put(4,-19.5){\rotatebox{180}{\tiny $17$}}
\put(-30,-44){\rotatebox{180}{\tiny $14$}}
\put(4,-44){\rotatebox{180}{\tiny $2$}}
\put(-26,-68){\rotatebox{180}{\tiny $3$}}
\put(4,-68){\rotatebox{180}{\tiny $15$}}
\put(-26,-92.5){\rotatebox{180}{\tiny $4$}}
\put(4,-92.5){\rotatebox{180}{\tiny $0$}}
\put(-26,-117){\rotatebox{180}{\tiny $5$}}
\put(4,-117){\rotatebox{180}{\tiny $13$}}
\put(-26,-139.5){\rotatebox{180}{\tiny $0$}}
\put(4,-139.5){\rotatebox{180}{\tiny $10$}}
\put(-26,-164){\rotatebox{180}{\tiny $1$}}
\put(4,-164){\rotatebox{180}{\tiny $5$}}
\put(-26,-189){\rotatebox{180}{\tiny $2$}}
\put(4,-189){\rotatebox{180}{\tiny $8$}}
\put(-26,-213){\rotatebox{180}{\tiny $9$}}
\put(4,-213){\rotatebox{180}{\tiny $3$}}
\put(-30,-238){\rotatebox{180}{\tiny $10$}}
\put(4,-238){\rotatebox{180}{\tiny $6$}}
\put(-30,-262){\rotatebox{180}{\tiny $11$}}
\put(4,-262){\rotatebox{180}{\tiny $1$}}
\put(-26,-285.5){\rotatebox{180}{\tiny $6$}}
\put(4,-285.5){\rotatebox{180}{\tiny $16$}}
\put(-26,-310){\rotatebox{180}{\tiny $7$}}
\put(4,-310){\rotatebox{180}{\tiny $11$}}
\put(-26,-334.5){\rotatebox{180}{\tiny $8$}}
\put(4,-334.5){\rotatebox{180}{\tiny $14$}}
\put(-30,-359){\rotatebox{180}{\tiny $15$}}
\put(4,-359){\rotatebox{180}{\tiny $9$}}
\put(-30,-383.5){\rotatebox{180}{\tiny $16$}}
\put(4,-383.5){\rotatebox{180}{\tiny $12$}}
\put(-30,-408){\rotatebox{180}{\tiny $17$}}
\put(4,-408){\rotatebox{180}{\tiny $7$}}
\put(-13,-428){\tiny $11$}
\put(-13.5,-446){$\hat S_3$}
\end{picture}

% -------------------------------------------------------------------

\special{% Surface 1; middle picture
psfile=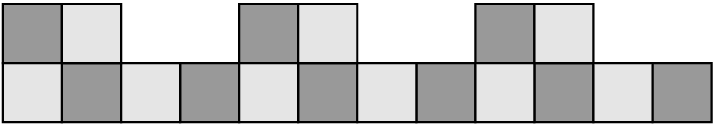
hscale=85
vscale=85
angle=90
hoffset=195    % 230
voffset=-298.5 % -245
}
   % 1
   %----------------------------------------------
\begin{picture}(0,0)(-165,-135.5) % (-200,-135.5)
\begin{picture}(0,0)(0,97.5)
\put(-24.5,-98){17}
\put(-24.5,-122.5){16}
\put(0,-49){15}
\put(0,-73.5){14}
\put(0,-98){13}
\put(0,-122.5){12}
\end{picture}
\begin{picture}(0,0)(2.5,193)
\put(-24.5,-98){11}
\put(-24.5,-122.5){10}
\put(3,-49){9}
\put(3,-73.5){8}
\put(3,-98){7}
\put(3,-122.5){6}
\end{picture}
\begin{picture}(0,0)(5,290)
\put(-21.5,-98){5}
\put(-21.5,-122.5){4}
\put(3,-49){3}
\put(3,-73.5){2}
\put(3,-98){1}
\put(3,-122.5){0}
\end{picture}
\begin{picture}(0,0)(2.5,145)
\put(-16,12){\tiny\textit C}
\put(14.5,12){\tiny\textit F}
\put(-16,-12.75){\tiny\textit A}
\put(-40,-37){\tiny\textit E}
\put(-16,-34){\tiny\textit C}
\put(14.5,-37){\tiny\textit D}
\put(-40,-61){\tiny\textit B}
\put(-40,-85){\tiny\textit E}
\put(-16,-88){\tiny\textit C}
\put(14.5,-85){\tiny\textit F}
\end{picture}
\begin{picture}(0,0)(5,241.5)
\put(-16,-12.75){\tiny\textit A}
\put(-40,-37){\tiny\textit E}
\put(-16,-34){\tiny\textit C}
\put(14.5,-37){\tiny\textit D}
\put(-40,-61){\tiny\textit B}
\put(-40,-85){\tiny\textit E}
\put(-16,-88.5){\tiny\textit C}
\put(14.5,-85){\tiny\textit F}
\end{picture}
\begin{picture}(0,0)(7.5,338)
\put(-16,-12.75){\tiny\textit A}
\put(-40,-37){\tiny\textit E}
\put(-16,-34){\tiny\textit C}
\put(14.5,-37){\tiny\textit D}
\put(-40,-61.5){\tiny\textit B}
\put(-40,-86){\tiny\textit E}
\put(-14,-89){\tiny\textit C}
\put(14.5,-86){\tiny\textit F}
\end{picture}
\begin{picture}(0,0)(-0.5,145)
\put(-12,15){\tiny $0$}
\put(-26,4.5){\rotatebox{180}{\tiny $2$}} % (-30,0.5)
\put(4,4.5){\rotatebox{180}{\tiny $6$}}
\put(-26,-19.5){\rotatebox{180}{\tiny $9$}}
\put(4,-19.5){\rotatebox{180}{\tiny $7$}}
\put(-39,-34){\tiny $10$}
\put(-52,-45){\rotatebox{180}{\tiny $4$}}
\put(4,-45){\rotatebox{180}{\tiny $2$}}
\put(-54,-70){\rotatebox{180}{\tiny $11$}}
\put(4,-70){\rotatebox{180}{\tiny $3$}}
\put(-37,-88){\tiny $5$}
\put(-30,-93){\rotatebox{180}{\tiny $14$}}
\put(4,-93){\rotatebox{180}{\tiny $0$}}
\put(-26,-117){\rotatebox{180}{\tiny $3$}}
\put(4,-117){\rotatebox{180}{\tiny $1$}}
\put(-37,-130){\tiny $4$}
\put(-54,-140.5){\rotatebox{180}{\tiny $16$}}
\put(4,-140.5){\rotatebox{180}{\tiny $14$}}
\put(-50,-165){\rotatebox{180}{\tiny $5$}}
\put(4,-165){\rotatebox{180}{\tiny $15$}}
\put(-39,-185){\tiny $17$}
\put(-26,-189.5){\rotatebox{180}{\tiny $8$}}
\put(4,-189.5){\rotatebox{180}{\tiny $12$}}
\put(-30,-213){\rotatebox{180}{\tiny $15$}}
\put(4,-213){\rotatebox{180}{\tiny $13$}}
\put(-39,-227){\tiny $16$}
\put(-54,-238){\rotatebox{180}{\tiny $10$}}
\put(4,-238){\rotatebox{180}{\tiny $8$}}
\put(-54,-262){\rotatebox{180}{\tiny $17$}}
\put(4,-262){\rotatebox{180}{\tiny $9$}}
\put(-39,-282){\tiny $11$}
\put(-14.5,-282){\tiny $15$}
\put(-26,-300){$\hat S_2$}
\end{picture}
\end{picture}

% -----------------------------------------------------------------

\special{ % Surface 2 = right picture
psfile=twolines18_0.eps
hscale=85
vscale=85
angle=90
hoffset=350 % 350
voffset=-286.5 % -293
}
   %
   %----------------------------------------------
\begin{picture}(0,0)(-320,-147.5) % (-320,-140.75)
\begin{picture}(0,0)(0,97.5)
\put(-24.5,-98){17}
\put(-24.5,-122.5){16}
\put(0,-49){15}
\put(0,-73.5){14}
\put(0,-98){13}
\put(0,-122.5){12}
\end{picture}
\begin{picture}(0,0)(2.5,193)
\put(-24.5,-98){11}
\put(-24.5,-122.5){10}
\put(3,-49){9}
\put(3,-73.5){8}
\put(3,-98){7}
\put(3,-122.5){6}
\end{picture}
\begin{picture}(0,0)(5,290)
\put(-21.5,-98){5}
\put(-21.5,-122.5){4}
\put(3,-49){3}
\put(3,-73.5){2}
\put(3,-98){1}
\put(3,-122.5){0}
\end{picture}
\begin{picture}(0,0)(2.5,144.5)
\put(-16,12){\tiny\textit C}
\put(14.5,-12.75){\tiny\textit F}
\put(-16,-12.75){\tiny\textit A}
\put(-40,-37){\tiny\textit B}
\put(-16,-34){\tiny\textit C}
\put(14.5,-61){\tiny\textit D}
\put(-40,-61){\tiny\textit E}
\put(-40,-85){\tiny\textit B}
\put(-16,-88){\tiny\textit C}
\end{picture}
\begin{picture}(0,0)(5,241.5)
\put(14.5,-12.75){\tiny\textit F}
\put(-16,-12.75){\tiny\textit A}
\put(-40,-37){\tiny\textit B}
\put(-16,-34){\tiny\textit C}
\put(14.5,-61){\tiny\textit D}
\put(-40,-61){\tiny\textit E}
\put(-40,-85){\tiny\textit B}
\put(-16,-88){\tiny\textit C}
\end{picture}
\begin{picture}(0,0)(7.5,338)
\put(14.5,-12.75){\tiny\textit F}
\put(-16,-12.75){\tiny\textit A}
\put(-40,-37){\tiny\textit B}
\put(-16,-34){\tiny\textit C}
\put(14.5,-61){\tiny\textit D}
\put(-40,-61){\tiny\textit E}
\put(-40,-89){\tiny\textit B}
\put(-14,-89){\tiny\textit C}
\put(14,-89){\tiny\textit F}
\end{picture}
\begin{picture}(0,0)(-0.5,145)
\put(-12,15){\tiny $0$}
\put(-26,4.5){\rotatebox{180}{\tiny $2$}} % (-30,0.5)
\put(4,4.5){\rotatebox{180}{\tiny $8$}}
\put(-26,-19.5){\rotatebox{180}{\tiny $9$}}
\put(4,-19.5){\rotatebox{180}{\tiny $3$}}
\put(-39,-34){\tiny $10$}
\put(-52,-45){\rotatebox{180}{\tiny $4$}}
\put(4,-45){\rotatebox{180}{\tiny $6$}}
\put(-54,-70){\rotatebox{180}{\tiny $11$}}
\put(4,-70){\rotatebox{180}{\tiny $1$}}
\put(-37,-88){\tiny $5$}
\put(-30,-93){\rotatebox{180}{\tiny $14$}}
\put(4,-93){\rotatebox{180}{\tiny $2$}}
\put(-26,-117){\rotatebox{180}{\tiny $3$}}
\put(4,-117){\rotatebox{180}{\tiny $15$}}
\put(-37,-130){\tiny $4$}
\put(-54,-140.5){\rotatebox{180}{\tiny $16$}}
\put(4,-140.5){\rotatebox{180}{\tiny $0$}}
\put(-50,-165){\rotatebox{180}{\tiny $5$}}
\put(4,-165){\rotatebox{180}{\tiny $13$}}
\put(-39,-185){\tiny $17$}
\put(-26,-189.5){\rotatebox{180}{\tiny $8$}}
\put(4,-189.5){\rotatebox{180}{\tiny $14$}}
\put(-30,-213){\rotatebox{180}{\tiny $15$}}
\put(4,-213){\rotatebox{180}{\tiny $9$}}
\put(-39,-227){\tiny $16$}
\put(-54,-238){\rotatebox{180}{\tiny $10$}}
\put(4,-238){\rotatebox{180}{\tiny $12$}}
\put(-54,-262){\rotatebox{180}{\tiny $17$}}
\put(4,-262){\rotatebox{180}{\tiny $7$}}
\put(-39,-282){\tiny $11$}
\put(-14.5,-282){\tiny $15$}
\put(-26,-300){$\hat S_1$}
\end{picture}
\end{picture}

%-------------------------------------------------------------------
\includegraphics{Tdisc3.eps}
\begin{picture}(0,0)(-248,411) % (-245,410)
\put(-176,-66){$h$}
\put(-118,-61){$r$}
\put(-67,-50){$h$}
\put(-67,-83){$h$}
\put(-9,-66){$r$}
\put(-140,-72){\scriptsize $\hat S_3$}
\put(-97,-72){\scriptsize $\hat S_2$}
\put(-36,-66){\scriptsize  $\hat S_1$}
\end{picture}

\vspace{500bp}

\begin{figure}[htb]
\caption{
\label{fig:PSL2Z:orbit}
\label{fig:three:surfaces}
$\PSLZ$-orbit of $\hat S=\hat S_3$.
}
\end{figure}

%

%\input{matrix_calculation_one_cyl_shear.tex}
%\newpage
\clearpage

\special{% 0
psfile=oneline18.eps
hscale=85
vscale=85
angle=90
hoffset=40
voffset=-538 % -537
}
   % 0
\begin{picture}(0,0)(-9,107) % (-9,106)
\begin{picture}(0,0)(0,0)
\put(0,0){11}
\put(0,-24.5){10}
\put(3,-49){9}
\put(3,-73.5){8}
\put(3,-98){7}
\put(3,-122.5){6}
\end{picture}
\begin{picture}(0,0)(2.5,145)
\put(0,0){17}
\put(0,-24.5){16}
\put(0,-49){15}
\put(0,-73.5){14}
\put(0,-98){13}
\put(0,-122.5){12}
\end{picture}
\begin{picture}(0,0)(5,290)
\put(3,0){5}
\put(3,-24.5){4}
\put(3,-49){3}
\put(3,-73.5){2}
\put(3,-98){1}
\put(3,-122.5){0}
\end{picture}
\begin{picture}(0,0)(2.5,0)
\put(-16,12){\tiny\textit A}
\put(14.5,12){\tiny\textit C}
\put(14.5,-12.75){\tiny\textit F}
\put(14.5,-37){\tiny\textit C}
\put(-16,-61){\tiny\textit B}
\put(14.5,-61){\tiny\textit E}
\put(14.5,-86){\tiny\textit C}
\put(14.5,-109.5){\tiny\textit D}
\end{picture}
\begin{picture}(0,0)(5,145)
\put(-16,12){\tiny\textit A}
\put(14.5,12){\tiny\textit C}
\put(14.5,-12.75){\tiny\textit F}
\put(14.5,-37){\tiny\textit C}
\put(-16,-61){\tiny\textit B}
\put(14.5,-61){\tiny\textit E}
\put(14.5,-86){\tiny\textit C}
\put(14.5,-109.5){\tiny\textit D}
\end{picture}
\begin{picture}(0,0)(7.5,290)
\put(-16,12){\tiny\textit A}
\put(14.5,12){\tiny\textit C}
\put(14.5,-12.75){\tiny\textit F}
\put(14.5,-37){\tiny\textit C}
\put(-16,-61){\tiny\textit B}
\put(14.5,-61){\tiny\textit E}
\put(14.5,-86){\tiny\textit C}
\put(14.5,-109.5){\tiny\textit D}
\end{picture}
\begin{picture}(0,0)(10,436)
\put(-16,12){\tiny\textit A}
\put(14.5,12){\tiny\textit C}
\end{picture}
\put(-12,15){\tiny $0$}
\put(-30,4.5){\rotatebox{180}{\tiny $12$}} % (-30,0.5)
\put(4,4.5){\rotatebox{180}{\tiny $4$}}
\put(-30,-19.5){\rotatebox{180}{\tiny $13$}}
\put(4,-19.5){\rotatebox{180}{\tiny $17$}}
\put(-30,-44){\rotatebox{180}{\tiny $14$}}
\put(4,-44){\rotatebox{180}{\tiny $2$}}
\put(-26,-68){\rotatebox{180}{\tiny $3$}}
\put(4,-68){\rotatebox{180}{\tiny $15$}}
\put(-26,-92.5){\rotatebox{180}{\tiny $4$}}
\put(4,-92.5){\rotatebox{180}{\tiny $0$}}
\put(-26,-117){\rotatebox{180}{\tiny $5$}}
\put(4,-117){\rotatebox{180}{\tiny $13$}}
\put(-26,-139.5){\rotatebox{180}{\tiny $0$}}
\put(4,-139.5){\rotatebox{180}{\tiny $10$}}
\put(-26,-164){\rotatebox{180}{\tiny $1$}}
\put(4,-164){\rotatebox{180}{\tiny $5$}}
\put(-26,-189){\rotatebox{180}{\tiny $2$}}
\put(4,-189){\rotatebox{180}{\tiny $8$}}
\put(-26,-213){\rotatebox{180}{\tiny $9$}}
\put(4,-213){\rotatebox{180}{\tiny $3$}}
\put(-30,-238){\rotatebox{180}{\tiny $10$}}
\put(4,-238){\rotatebox{180}{\tiny $6$}}
\put(-30,-262){\rotatebox{180}{\tiny $11$}}
\put(4,-262){\rotatebox{180}{\tiny $1$}}
\put(-26,-285.5){\rotatebox{180}{\tiny $6$}}
\put(4,-285.5){\rotatebox{180}{\tiny $16$}}
\put(-26,-310){\rotatebox{180}{\tiny $7$}}
\put(4,-310){\rotatebox{180}{\tiny $11$}}
\put(-26,-334.5){\rotatebox{180}{\tiny $8$}}
\put(4,-334.5){\rotatebox{180}{\tiny $14$}}
\put(-30,-359){\rotatebox{180}{\tiny $15$}}
\put(4,-359){\rotatebox{180}{\tiny $9$}}
\put(-30,-383.5){\rotatebox{180}{\tiny $16$}}
\put(4,-383.5){\rotatebox{180}{\tiny $12$}}
\put(-30,-408){\rotatebox{180}{\tiny $17$}}
\put(4,-408){\rotatebox{180}{\tiny $7$}}
\put(-13,-428){\tiny $11$}
\end{picture}

% -------------------------------------------------------------------
\special{% 1
psfile=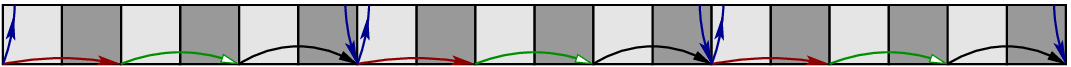
hscale=85
vscale=85
angle=90
hoffset=115
voffset=-528 % -527
}
   % 1
\begin{picture}(0,0)(-77.5,81) % (-77.5,83)
\begin{picture}(0,0)(0,0)
\put(2,4){\textcolor{blue}{$d_{2,2}$}}
\put(27,-25){$c_3$}
\put(27,-75){\textcolor{mygreen}{$b_3$}}
\put(27,-122){\textcolor{red}{$a_3$}}
\end{picture}
\begin{picture}(0,0)(2.5,146)
\put(2,9){\textcolor{blue}{$d_{3,1}$}}
\put(2,-11){\textcolor{blue}{$d_{1,2}$}}
\put(27,-25){$c_2$}
\put(27,-75){\textcolor{mygreen}{$b_2$}}
\put(27,-122){\textcolor{red}{$a_2$}}
\end{picture}
\begin{picture}(0,0)(5,291)
\put(2,9){\textcolor{blue}{$d_{2,1}$}}
\put(2,-11){\textcolor{blue}{$d_{3,2}$}}
\put(27,-25){$c_1$}
\put(27,-75){\textcolor{mygreen}{$b_1$}}
\put(27,-122){\textcolor{red}{$a_1$}}
\end{picture}
\begin{picture}(0,0)(7.5,437)
\put(2,9){\textcolor{blue}{$d_{1,1}$}}
\end{picture}
\end{picture}
\special{ % 2 ---------------------------------------------------------
psfile=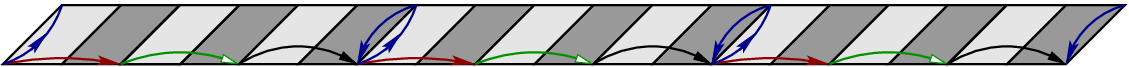
hscale=85
vscale=85
angle=90
hoffset=175.5
voffset=-516 % -517
}
   % 2
\begin{picture}(0,0)(-147,81) % (-147,82)
\begin{picture}(0,0)(0,0)
\put(2,0){\textcolor{blue}{$d'_{2,2}$}}
\put(27,-25){$c'_3$}
\put(27,-75){\textcolor{mygreen}{$b'_3$}}
\put(27,-122){\textcolor{red}{$a'_3$}}
\end{picture}
\begin{picture}(0,0)(2.5,146)
\put(2,26){\textcolor{blue}{$d'_{3,1}$}}
\put(2,2){\textcolor{blue}{$d'_{1,2}$}}
\put(27,-25){$c'_2$}
\put(27,-75){\textcolor{mygreen}{$b'_2$}}
\put(27,-122){\textcolor{red}{$a'_2$}}
\end{picture}
\begin{picture}(0,0)(5,291)
\put(2,26){\textcolor{blue}{$d'_{2,1}$}}
\put(2,2){\textcolor{blue}{$d'_{3,2}$}}
\put(27,-25){$c'_1$}
\put(27,-75){\textcolor{mygreen}{$b'_1$}}
\put(27,-122){\textcolor{red}{$a'_1$}}
\end{picture}
\begin{picture}(0,0)(7.5,437)
\put(2,27){\textcolor{blue}{$d'_{1,1}$}}
\end{picture}
\end{picture}
\special{ % 3 ---------------------------------------------------------
psfile=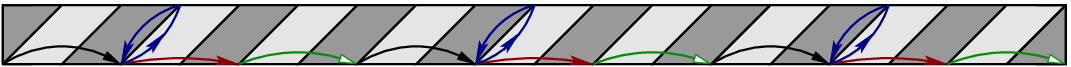
hscale=85
vscale=85
angle=90
hoffset=245.5
voffset=-420
}
   % 3
\begin{picture}(0,0)(-217,82)
\begin{picture}(0,0)(0,0)
\put(27,70){\textcolor{mygreen}{$b'_1$}}
\put(27,23){\textcolor{red}{$a'_1$}}
\put(2,26){\textcolor{blue}{$d'_{1,1}$}}
\put(2,0){\textcolor{blue}{$d'_{2,2}$}}
\put(27,-25){$c'_3$}
\put(27,-75){\textcolor{mygreen}{$b'_3$}}
\put(27,-122){\textcolor{red}{$a'_3$}}
\end{picture}
\begin{picture}(0,0)(2.5,146)
\put(2,26){\textcolor{blue}{$d'_{3,1}$}}
\put(2,2){\textcolor{blue}{$d'_{1,2}$}}
\put(27,-25){$c'_2$}
\put(27,-75){\textcolor{mygreen}{$b'_2$}}
\put(27,-122){\textcolor{red}{$a'_2$}}
\end{picture}
\begin{picture}(0,0)(5,291)
\put(2,26){\textcolor{blue}{$d'_{2,1}$}}
\put(2,2){\textcolor{blue}{$d'_{3,2}$}}
\put(27,-25){$c'_1$}
\end{picture}
\end{picture}
\special{ % 4-----------------------------------------------------
psfile=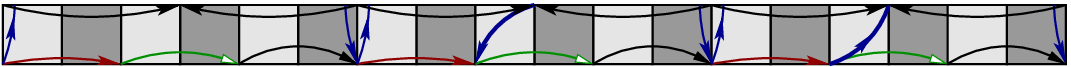
hscale=85
vscale=85
angle=90
hoffset=316
voffset=-420.5
}
   % 4
\begin{picture}(0,0)(-287.5,-14.5)
\begin{picture}(0,0)(0,0)
\put(4,-10){\textcolor{blue}{$d_{2,2}$}}
\put(27,-25){$c_3$}
\put(-9,-36.5){$e_3$}
\put(27,-75){\textcolor{mygreen}{$b_3$}}
\put(5,-88){\textcolor{blue}{$d'_{1,1}$}}
\put(-9,-109){$e_1$}
\put(27,-122){\textcolor{red}{$a_3$}}
\end{picture}
\begin{picture}(0,0)(2.5,146)
\put(5,8){\textcolor{blue}{$d_{3,1}$}}
\put(4,-10){\textcolor{blue}{$d_{1,2}$}}
\put(27,-25){$c_2$}
\put(-9,-36.5){$e_2$}
\put(27,-75){\textcolor{mygreen}{$b_2$}}
\put(5,-79){\textcolor{blue}{$d'_{1,2}$}}
\put(-9,-109){$e_3$}
\put(27,-122){\textcolor{red}{$a_2$}}
\end{picture}
\begin{picture}(0,0)(5,291)
\put(4,9){\textcolor{blue}{$d_{2,1}$}}
\put(4,-10){\textcolor{blue}{$d_{3,2}$}}
\put(27,-25){$c_1$}
\put(-9,-36.5){$e_1$}
\put(27,-75){\textcolor{mygreen}{$b_1$}}
\put(-9,-109){$e_2$}
\put(27,-122){\textcolor{red}{$a_1$}}
\end{picture}
\begin{picture}(0,0)(7.5,437)
\put(4,9){\textcolor{blue}{$d_{1,1}$}}
\end{picture}
\end{picture}

\vspace*{501pt} % {505pt}

\begin{figure}[bht]
 %
 % Shear of the one-cylinder representative
 %
\caption{
\label{fig:oneline:shear}
One-cylinder surface $\hat S_3$ (left two pictures) is sheared by $h$
(middle  picture);  then  cut  and  reglued  (the picture next to the
right)  to fit finally the initial surface $\hat S_3$ (the picture on
the right). }
\end{figure}

% -------------------------------------------------------------------

%\input{matrix_calculation_one_cyl_rotate.tex}

\clearpage

\special{% 0
psfile=oneline18.eps
hscale=85
vscale=85
angle=90
hoffset=40
voffset=-448 % -438
}
   % 0
\begin{picture}(0,0)(-9,17)
\begin{picture}(0,0)(0,0)
\put(0,0){11}
\put(0,-24.5){10}
\put(3,-49){9}
\put(3,-73.5){8}
\put(3,-98){7}
\put(3,-122.5){6}
\end{picture}
\begin{picture}(0,0)(2.5,145)
\put(0,0){17}
\put(0,-24.5){16}
\put(0,-49){15}
\put(0,-73.5){14}
\put(0,-98){13}
\put(0,-122.5){12}
\end{picture}
\begin{picture}(0,0)(5,290)
\put(3,0){5}
\put(3,-24.5){4}
\put(3,-49){3}
\put(3,-73.5){2}
\put(3,-98){1}
\put(3,-122.5){0}
\end{picture}
\begin{picture}(0,0)(2.5,0)
\put(-16,12){\tiny\textit A}
\put(14.5,12){\tiny\textit C}
\put(14.5,-12.75){\tiny\textit F}
\put(14.5,-37){\tiny\textit C}
\put(-16,-61){\tiny\textit B}
\put(14.5,-61){\tiny\textit E}
\put(14.5,-86){\tiny\textit C}
\put(14.5,-109.5){\tiny\textit D}
\end{picture}
\begin{picture}(0,0)(5,145)
\put(-16,12){\tiny\textit A}
\put(14.5,12){\tiny\textit C}
\put(14.5,-12.75){\tiny\textit F}
\put(14.5,-37){\tiny\textit C}
\put(-16,-61){\tiny\textit B}
\put(14.5,-61){\tiny\textit E}
\put(14.5,-86){\tiny\textit C}
\put(14.5,-109.5){\tiny\textit D}
\end{picture}
\begin{picture}(0,0)(7.5,290)
\put(-16,12){\tiny\textit A}
\put(14.5,12){\tiny\textit C}
\put(14.5,-12.75){\tiny\textit F}
\put(14.5,-37){\tiny\textit C}
\put(-16,-61){\tiny\textit B}
\put(14.5,-61){\tiny\textit E}
\put(14.5,-86){\tiny\textit C}
\put(14.5,-109.5){\tiny\textit D}
\end{picture}
\begin{picture}(0,0)(10,436)
\put(-16,12){\tiny\textit A}
\put(14.5,12){\tiny\textit C}
\end{picture}
\put(-12,15){\tiny $0$}
\put(-30,4.5){\rotatebox{180}{\tiny $12$}} % (-30,0.5)
\put(4,4.5){\rotatebox{180}{\tiny $4$}}
\put(-30,-19.5){\rotatebox{180}{\tiny $13$}}
\put(4,-19.5){\rotatebox{180}{\tiny $17$}}
\put(-30,-44){\rotatebox{180}{\tiny $14$}}
\put(4,-44){\rotatebox{180}{\tiny $2$}}
\put(-26,-68){\rotatebox{180}{\tiny $3$}}
\put(4,-68){\rotatebox{180}{\tiny $15$}}
\put(-26,-92.5){\rotatebox{180}{\tiny $4$}}
\put(4,-92.5){\rotatebox{180}{\tiny $0$}}
\put(-26,-117){\rotatebox{180}{\tiny $5$}}
\put(4,-117){\rotatebox{180}{\tiny $13$}}
\put(-26,-139.5){\rotatebox{180}{\tiny $0$}}
\put(4,-139.5){\rotatebox{180}{\tiny $10$}}
\put(-26,-164){\rotatebox{180}{\tiny $1$}}
\put(4,-164){\rotatebox{180}{\tiny $5$}}
\put(-26,-189){\rotatebox{180}{\tiny $2$}}
\put(4,-189){\rotatebox{180}{\tiny $8$}}
\put(-26,-213){\rotatebox{180}{\tiny $9$}}
\put(4,-213){\rotatebox{180}{\tiny $3$}}
\put(-30,-238){\rotatebox{180}{\tiny $10$}}
\put(4,-238){\rotatebox{180}{\tiny $6$}}
\put(-30,-262){\rotatebox{180}{\tiny $11$}}
\put(4,-262){\rotatebox{180}{\tiny $1$}}
\put(-26,-285.5){\rotatebox{180}{\tiny $6$}}
\put(4,-285.5){\rotatebox{180}{\tiny $16$}}
\put(-26,-310){\rotatebox{180}{\tiny $7$}}
\put(4,-310){\rotatebox{180}{\tiny $11$}}
\put(-26,-334.5){\rotatebox{180}{\tiny $8$}}
\put(4,-334.5){\rotatebox{180}{\tiny $14$}}
\put(-30,-359){\rotatebox{180}{\tiny $15$}}
\put(4,-359){\rotatebox{180}{\tiny $9$}}
\put(-30,-383.5){\rotatebox{180}{\tiny $16$}}
\put(4,-383.5){\rotatebox{180}{\tiny $12$}}
\put(-30,-408){\rotatebox{180}{\tiny $17$}}
\put(4,-408){\rotatebox{180}{\tiny $7$}}
\put(-13,-428){\tiny $11$}
\put(5,25){$\hat S_3$} % (-9,17)
\end{picture}

% -------------------------------------------------------------------
\special{% 1
psfile=oneline18_arcs.eps
hscale=85
vscale=85
angle=90
hoffset=78
voffset=-438
}
   % 1
\begin{picture}(0,0)(-40.5,-9) % (-77.5,-9)
\begin{picture}(0,0)(0,0)
\put(2,4){\textcolor{blue}{$d_{2,2}$}}
\put(27,-25){$c_3$}
\put(27,-75){\textcolor{mygreen}{$b_3$}}
\put(27,-122){\textcolor{red}{$a_3$}}
\end{picture}
\begin{picture}(0,0)(2.5,146)
\put(2,9){\textcolor{blue}{$d_{3,1}$}}
\put(2,-11){\textcolor{blue}{$d_{1,2}$}}
\put(27,-25){$c_2$}
\put(27,-75){\textcolor{mygreen}{$b_2$}}
\put(27,-122){\textcolor{red}{$a_2$}}
\end{picture}
\begin{picture}(0,0)(5,291)
\put(2,9){\textcolor{blue}{$d_{2,1}$}}
\put(2,-11){\textcolor{blue}{$d_{3,2}$}}
\put(27,-25){$c_1$}
\put(27,-75){\textcolor{mygreen}{$b_1$}}
\put(27,-122){\textcolor{red}{$a_1$}}
\end{picture}
\begin{picture}(0,0)(7.5,437)
\put(2,9){\textcolor{blue}{$d_{1,1}$}}
\end{picture}
\end{picture}

%-----------------------------------------------------

\special{% 2
psfile=twolines18_0.eps
hscale=85
vscale=85
angle=180
hoffset=305
voffset=-452
}
   % 2
   %----------------------------------------------
   % 2
\begin{picture}(0,0)(-33,459)
\put(-24.5,7){\rotatebox{180}{11}}
\put(0,0.25){12}
\put(27,7){\rotatebox{180}{1}}
\put(49,0.25){16}
\put(75,7){\rotatebox{180}{5}}
\put(99.5,0.25){6}
\put(123,7){\rotatebox{180}{13}}
\put(146.5,0.25){10}
\put(171,7){\rotatebox{180}{17}}
\put(197,0.25){0}
\put(220,7){\rotatebox{180}{7}}
\put(244.5,0.25){4}
\put(27,-16.6){\rotatebox{180}{2}}
\put(49,-24.25){15}
\put(123,-16.5){\rotatebox{180}{14}}
\put(148.5,-24.25){9}
\put(220,-16.5){\rotatebox{180}{8}}
\put(244.5,-24.25){3}
   % ------------------------------------
\put(-21.5,22){\rotatebox{180}{\tiny 10}}
\put(2.5,18){\tiny 13}
\put(28.5,22){\rotatebox{180}{\tiny 0}}
\put(51,18){\tiny 17}
\put(76.5,22){\rotatebox{180}{\tiny 4}}
\put(100.5,18){\tiny 7}
\put(123.5,22){\rotatebox{180}{\tiny 12}}
\put(143,18){\tiny 11}
\put(171.5,22){\rotatebox{180}{\tiny 16}}
\put(198,18){\tiny 1}
\put(221,22){\rotatebox{180}{\tiny 6}}
\put(246,18){\tiny 5}
\put(-35,1){\tiny 4}
\put(261,5){\rotatebox{180}{\tiny 11}}
   %
%\put(-21.5,-10){\rotatebox{180}{\tiny 12}}
%\put(2.5,-14){\tiny 11}
%\put(76.5,-10){\rotatebox{180}{\tiny 6}}
%\put(100.5,-14){\tiny 5}
%\put(171.5,-10){\rotatebox{180}{\tiny 16}}
%\put(198,-14){\tiny 1}
   %
\put(-21,-10){\rotatebox{180}{\tiny 0}}
\put(3.5,-14){\tiny 5}
\put(75,-10){\rotatebox{180}{\tiny 12}}
\put(99,-14){\tiny 17}
\put(174,-10){\rotatebox{180}{\tiny 6}}
\put(196.5,-14){\tiny 11}
\put(14,-22){\tiny 9}
\put(67.5,-18){\rotatebox{180}{\tiny 8}}
\put(110,-22){\tiny 3}
\put(164,-18){\rotatebox{180}{\tiny 2}}
\put(204,-22){\tiny 15}
\put(261,-18){\rotatebox{180}{\tiny 14}}
\put(28.5,-34){\rotatebox{180}{\tiny 3}}
\put(51,-38){\tiny 14}
\put(124.5,-34){\rotatebox{180}{\tiny 15}}
\put(149,-38){\tiny 8}
\put(221,-34){\rotatebox{180}{\tiny 9}}
\put(245,-38){\tiny 2}
\end{picture}

\special{% 3
psfile=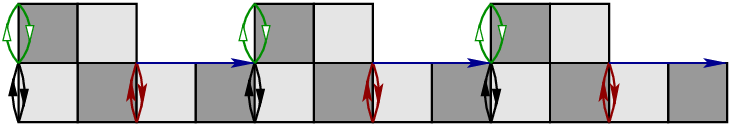
hscale=85
vscale=85
angle=90
angle=180
hoffset=311  %
voffset=-501 % -512
}
   % 3
   %----------------------------------------------
\begin{picture}(0,0)(-30,506)
\begin{picture}(0,0)(0,0)
\put(0,0){\textcolor{red}{$a_{2,1}$}}
      \put(-8,-19){\textcolor{blue}{$d_2$}}
\put(25,0){\textcolor{red}{$a_{1,2}$}}
\put(50,0){\textcolor{black}{$c_{2,1}$}}
\put(74,0){\textcolor{black}{$c_{1,2}$}}
\put(48,-24.5){\textcolor{mygreen}{$b_{2,2}$}}
\put(75,-24.5){\textcolor{mygreen}{$b_{3,1}$}}
\end{picture}
\begin{picture}(0,0)(-94.5,0)
\put(0,0){\textcolor{red}{$a_{3,1}$}}
      \put(-8,-19){\textcolor{blue}{$d_3$}}
\put(25,0){\textcolor{red}{$a_{2,2}$}}
\put(50,0){\textcolor{black}{$c_{3,1}$}}
\put(74,0){\textcolor{black}{$c_{2,2}$}}
\put(48,-24.5){\textcolor{mygreen}{$b_{3,2}$}}
\put(75,-24.5){\textcolor{mygreen}{$b_{1,1}$}}
\end{picture}
\begin{picture}(0,0)(-189,0)
\put(0,0){\textcolor{red}{$a_{1,1}$}}
      \put(-8,-19){\textcolor{blue}{$d_1$}}
\put(25,0){\textcolor{red}{$a_{3,2}$}}
\put(50,0){\textcolor{black}{$c_{1,1}$}}
\put(74,0){\textcolor{black}{$c_{3,2}$}}
\put(48,-24.5){\textcolor{mygreen}{$b_{1,2}$}}
\put(75,-24.5){\textcolor{mygreen}{$b_{2,1}$}}
\end{picture}
\end{picture}

%-----------------------------------------------------

\special{% 4
psfile=twolines18_1.eps
hscale=85
vscale=85
angle=90
hoffset=294.5
voffset=-263.5
}
   % 4
\begin{picture}(0,0)(-259.5,-182) % (-212.5,-158)
\begin{picture}(0,0)(2.5,244)
    \put(-10,81){\textcolor{blue}{$d_2$}}
\put(5,66.5){\textcolor{red}{$a_{2,1}$}}
\put(5,50){\textcolor{red}{$a_{1,2}$}}
\put(5,18.5){\textcolor{black}{$c_{2,1}$}}
\put(5,-1){\textcolor{black}{$c_{1,2}$}}
\put(-19.5,18.5){\textcolor{mygreen}{$b_{2,2}$}}
\put(-19.55,-1){\textcolor{mygreen}{$b_{3,1}$}}
\end{picture}
\begin{picture}(0,0)(5,341)
    \put(-10,81){\textcolor{blue}{$d_3$}}
\put(5,66.5){\textcolor{red}{$a_{3,1}$}}
\put(5,50){\textcolor{red}{$a_{2,2}$}}
\put(5,18.5){\textcolor{black}{$c_{3,1}$}}
\put(5,-1){\textcolor{black}{$c_{2,2}$}}
\put(-19.5,18.5){\textcolor{mygreen}{$b_{3,2}$}}
\put(-19.55,-1){\textcolor{mygreen}{$b_{1,1}$}}
\end{picture}
\begin{picture}(0,0)(7.5,437)
    \put(-10,81){\textcolor{blue}{$d_1$}}
\put(5,66.5){\textcolor{red}{$a_{1,1}$}}
\put(5,50){\textcolor{red}{$a_{3,2}$}}
\put(5,18.5){\textcolor{black}{$c_{1,1}$}}
\put(5,-1){\textcolor{black}{$c_{3,2}$}}
\put(-19.5,18.5){\textcolor{mygreen}{$b_{1,2}$}}
\put(-19.55,-1){\textcolor{mygreen}{$b_{2,1}$}}
\end{picture}
\end{picture}

\special{% 5
psfile=twolines18_0.eps
hscale=85
vscale=85
angle=90
hoffset=356    % 356
voffset=-245 % -245
}
   % 5
   %----------------------------------------------
\begin{picture}(0,0)(-326,-189) % (-326,-189)
\begin{picture}(0,0)(0,97.5)
\put(-24.5,-98){17}
\put(-24.5,-122.5){16}
\put(0,-49){15}
\put(0,-73.5){14}
\put(0,-98){13}
\put(0,-122.5){12}
\end{picture}
\begin{picture}(0,0)(2.5,193)
\put(-24.5,-98){11}
\put(-24.5,-122.5){10}
\put(3,-49){9}
\put(3,-73.5){8}
\put(3,-98){7}
\put(3,-122.5){6}
\end{picture}
\begin{picture}(0,0)(5,290)
\put(-21.5,-98){5}
\put(-21.5,-122.5){4}
\put(3,-49){3}
\put(3,-73.5){2}
\put(3,-98){1}
\put(3,-122.5){0}
\end{picture}
\begin{picture}(0,0)(2.5,145)
\put(-16,12){\tiny\textit C}
\put(14.5,12){\tiny\textit F}
\put(-16,-12.75){\tiny\textit A}
\put(-40,-37){\tiny\textit E}
\put(-16,-34){\tiny\textit C}
\put(14.5,-37){\tiny\textit D}
\put(-40,-61){\tiny\textit B}
\put(-40,-85){\tiny\textit E}
\put(-16,-88){\tiny\textit C}
\put(14.5,-85){\tiny\textit F}
\end{picture}
\begin{picture}(0,0)(5,241.5)
\put(-16,-12.75){\tiny\textit A}
\put(-40,-37){\tiny\textit E}
\put(-16,-34){\tiny\textit C}
\put(14.5,-37){\tiny\textit D}
\put(-40,-61){\tiny\textit B}
\put(-40,-85){\tiny\textit E}
\put(-16,-88.5){\tiny\textit C}
\put(14.5,-85){\tiny\textit F}
\end{picture}
\begin{picture}(0,0)(7.5,338)
\put(-16,-12.75){\tiny\textit A}
\put(-40,-37){\tiny\textit E}
\put(-16,-34){\tiny\textit C}
\put(14.5,-37){\tiny\textit D}
\put(-40,-61.5){\tiny\textit B}
\put(-40,-86){\tiny\textit E}
\put(-14,-89){\tiny\textit C}
\put(14.5,-86){\tiny\textit F}
\end{picture}
\begin{picture}(0,0)(-0.5,145)
\put(-12,15){\tiny $0$}
\put(-26,4.5){\rotatebox{180}{\tiny $2$}} % (-30,0.5)
\put(4,4.5){\rotatebox{180}{\tiny $6$}}
\put(-26,-19.5){\rotatebox{180}{\tiny $9$}}
\put(4,-19.5){\rotatebox{180}{\tiny $7$}}
\put(-39,-34){\tiny $10$}
\put(-52,-45){\rotatebox{180}{\tiny $4$}}
\put(4,-45){\rotatebox{180}{\tiny $2$}}
\put(-54,-70){\rotatebox{180}{\tiny $11$}}
\put(4,-70){\rotatebox{180}{\tiny $3$}}
\put(-37,-88){\tiny $5$}
\put(-30,-93){\rotatebox{180}{\tiny $14$}}
\put(4,-93){\rotatebox{180}{\tiny $0$}}
\put(-26,-117){\rotatebox{180}{\tiny $3$}}
\put(4,-117){\rotatebox{180}{\tiny $1$}}
\put(-37,-130){\tiny $4$}
\put(-54,-140.5){\rotatebox{180}{\tiny $16$}}
\put(4,-140.5){\rotatebox{180}{\tiny $14$}}
\put(-50,-165){\rotatebox{180}{\tiny $5$}}
\put(4,-165){\rotatebox{180}{\tiny $15$}}
\put(-39,-185){\tiny $17$}
\put(-26,-189.5){\rotatebox{180}{\tiny $8$}}
\put(4,-189.5){\rotatebox{180}{\tiny $12$}}
\put(-30,-213){\rotatebox{180}{\tiny $15$}}
\put(4,-213){\rotatebox{180}{\tiny $13$}}
\put(-39,-227){\tiny $16$}
\put(-54,-238){\rotatebox{180}{\tiny $10$}}
\put(4,-238){\rotatebox{180}{\tiny $8$}}
\put(-54,-262){\rotatebox{180}{\tiny $17$}}
\put(4,-262){\rotatebox{180}{\tiny $9$}}
\put(-39,-282){\tiny $11$}
\put(-14.5,-282){\tiny $15$}
\put(-47,24){$\hat S_2$}
\end{picture}
\end{picture}

\vspace*{261pt}

\begin{figure}[hbt]
%
% Rotation of the one-cylinder representative
%
\caption{
\label{fig:oneline:rotate}
Rotation $r_3:\hat S_3\to \hat S_2$.
}
\end{figure}

\vspace*{-10pt}\hspace*{89.75truept}
\begin{minipage}{151truept} % 143
We  pass from the original horizontal cylinder decomposition of $\hat
S_3$  (left  two  pictures)  to  the  vertical cylinder decomposition
(bottom two pictures) and then rotate $\hat S_3$ by $\pi/2$ clockwise
(right two pictures). The squares are renumbered after the rotation.
\end{minipage}

% -------------------------------------------------------------------

%\input{matrix_calculation_two_cyl_one_shear.tex}

\clearpage

\special{% 0
psfile=twolines18_0.eps
hscale=85
vscale=85
angle=90
hoffset=58
voffset=-338.5 % -363
}
   % 0
   %----------------------------------------------
\begin{picture}(0,0)(-27,-93) % (-27,-68.5)
\begin{picture}(0,0)(0,97.5)
\put(-24.5,-98){17}
\put(-24.5,-122.5){16}
\put(0,-49){15}
\put(0,-73.5){14}
\put(0,-98){13}
\put(0,-122.5){12}
\end{picture}
\begin{picture}(0,0)(2.5,193)
\put(-24.5,-98){11}
\put(-24.5,-122.5){10}
\put(3,-49){9}
\put(3,-73.5){8}
\put(3,-98){7}
\put(3,-122.5){6}
\end{picture}
\begin{picture}(0,0)(5,290)
\put(-21.5,-98){5}
\put(-21.5,-122.5){4}
\put(3,-49){3}
\put(3,-73.5){2}
\put(3,-98){1}
\put(3,-122.5){0}
\end{picture}
\begin{picture}(0,0)(2.5,145)
\put(-16,12){\tiny\textit C}
\put(14.5,12){\tiny\textit F}
\put(-16,-12.75){\tiny\textit A}
\put(-40,-37){\tiny\textit E}
\put(-16,-34){\tiny\textit C}
\put(14.5,-37){\tiny\textit D}
\put(-40,-61){\tiny\textit B}
\put(-40,-85){\tiny\textit E}
\put(-16,-88){\tiny\textit C}
\put(14.5,-85){\tiny\textit F}
\end{picture}
\begin{picture}(0,0)(5,241.5)
\put(-16,-12.75){\tiny\textit A}
\put(-40,-37){\tiny\textit E}
\put(-16,-34){\tiny\textit C}
\put(14.5,-37){\tiny\textit D}
\put(-40,-61){\tiny\textit B}
\put(-40,-85){\tiny\textit E}
\put(-16,-88.5){\tiny\textit C}
\put(14.5,-85){\tiny\textit F}
\end{picture}
\begin{picture}(0,0)(7.5,338)
\put(-16,-12.75){\tiny\textit A}
\put(-40,-37){\tiny\textit E}
\put(-16,-34){\tiny\textit C}
\put(14.5,-37){\tiny\textit D}
\put(-40,-61.5){\tiny\textit B}
\put(-40,-86){\tiny\textit E}
\put(-14,-89){\tiny\textit C}
\put(14.5,-86){\tiny\textit F}
\end{picture}
\begin{picture}(0,0)(-0.5,145)
\put(-12,15){\tiny $0$}
\put(-26,4.5){\rotatebox{180}{\tiny $2$}} % (-30,0.5)
\put(4,4.5){\rotatebox{180}{\tiny $6$}}
\put(-26,-19.5){\rotatebox{180}{\tiny $9$}}
\put(4,-19.5){\rotatebox{180}{\tiny $7$}}
\put(-39,-34){\tiny $10$}
\put(-52,-45){\rotatebox{180}{\tiny $4$}}
\put(4,-45){\rotatebox{180}{\tiny $2$}}
\put(-54,-70){\rotatebox{180}{\tiny $11$}}
\put(4,-70){\rotatebox{180}{\tiny $3$}}
\put(-37,-88){\tiny $5$}
\put(-30,-93){\rotatebox{180}{\tiny $14$}}
\put(4,-93){\rotatebox{180}{\tiny $0$}}
\put(-26,-117){\rotatebox{180}{\tiny $3$}}
\put(4,-117){\rotatebox{180}{\tiny $1$}}
\put(-37,-130){\tiny $4$}
\put(-54,-140.5){\rotatebox{180}{\tiny $16$}}
\put(4,-140.5){\rotatebox{180}{\tiny $14$}}
\put(-50,-165){\rotatebox{180}{\tiny $5$}}
\put(4,-165){\rotatebox{180}{\tiny $15$}}
\put(-39,-185){\tiny $17$}
\put(-26,-189.5){\rotatebox{180}{\tiny $8$}}
\put(4,-189.5){\rotatebox{180}{\tiny $12$}}
\put(-30,-213){\rotatebox{180}{\tiny $15$}}
\put(4,-213){\rotatebox{180}{\tiny $13$}}
\put(-39,-227){\tiny $16$}
\put(-54,-238){\rotatebox{180}{\tiny $10$}}
\put(4,-238){\rotatebox{180}{\tiny $8$}}
\put(-54,-262){\rotatebox{180}{\tiny $17$}}
\put(4,-262){\rotatebox{180}{\tiny $9$}}
\put(-39,-282){\tiny $11$}
\put(-14.5,-282){\tiny $15$}
\put(25,25){$\hat S_2$} % (-9,17)
\end{picture}
\end{picture}

\special{% 1
psfile=twolines18_1.eps
hscale=85
vscale=85
angle=90
hoffset=131
voffset=-335 % -359.5
}
   % 1
\begin{picture}(0,0)(-96,-110.5) % (-96,-86)
\begin{picture}(0,0)(2.5,244)
\put(-10,81){\textcolor{blue}{$d_2$}}
\put(5,66.5){\textcolor{red}{$a_{2,1}$}}
\put(5,50){\textcolor{red}{$a_{1,2}$}}
\put(5,18.5){\textcolor{black}{$c_{2,1}$}}
\put(5,-1){\textcolor{black}{$c_{1,2}$}}
\put(-19.5,18.5){\textcolor{mygreen}{$b_{2,2}$}}
\put(-19.55,-1){\textcolor{mygreen}{$b_{3,1}$}}
\end{picture}
\begin{picture}(0,0)(5,341)
\put(-10,81){\textcolor{blue}{$d_3$}}
\put(5,66.5){\textcolor{red}{$a_{3,1}$}}
\put(5,50){\textcolor{red}{$a_{2,2}$}}
\put(5,18.5){\textcolor{black}{$c_{3,1}$}}
\put(5,-1){\textcolor{black}{$c_{2,2}$}}
\put(-19.5,18.5){\textcolor{mygreen}{$b_{3,2}$}}
\put(-19.55,-1){\textcolor{mygreen}{$b_{1,1}$}}
\end{picture}
\begin{picture}(0,0)(7.5,437)
\put(-10,81){\textcolor{blue}{$d_1$}}
\put(5,66.5){\textcolor{red}{$a_{1,1}$}}
\put(5,50){\textcolor{red}{$a_{3,2}$}}
\put(5,18.5){\textcolor{black}{$c_{1,1}$}}
\put(5,-1){\textcolor{black}{$c_{3,2}$}}
\put(-19.5,18.5){\textcolor{mygreen}{$b_{1,2}$}}
\put(-19.55,-1){\textcolor{mygreen}{$b_{2,1}$}}
\end{picture}
\end{picture}
\special{ % 2 ---------------------------------------------------------
psfile=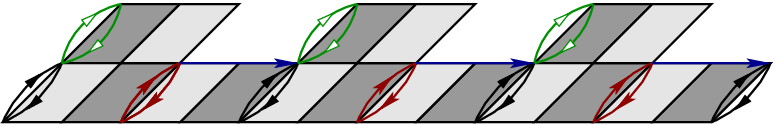
hscale=85
vscale=85
angle=90
hoffset=189
voffset=-316.5 % -341
}
   % 2
\begin{picture}(0,0)(-163.5,-120) % (-163.5,-144.5)
\begin{picture}(0,0)(2.5,244)
\put(10.5,115.5){\textcolor{black}{$c'_{1,1}$}}
\put(1,100.5){\textcolor{black}{$c'_{3,2}$}}
\put(-10,95){\textcolor{blue}{$d'_2$}}
\put(10,68){\textcolor{red}{$a'_{2,1}$}}
\put(1,52){\textcolor{red}{$a'_{1,2}$}}
\put(10.5,19.5){\textcolor{black}{$c'_{2,1}$}}
\put(1.25,4){\textcolor{black}{$c'_{1,2}$}}
\put(-14.5,46){\textcolor{mygreen}{$b'_{2,2}$}}
\put(-23,25){\textcolor{mygreen}{$b'_{3,1}$}}
\end{picture}
\begin{picture}(0,0)(5,341)
\put(-10,95){\textcolor{blue}{$d'_3$}}
\put(10,68){\textcolor{red}{$a'_{3,1}$}}
\put(1,52){\textcolor{red}{$a'_{2,2}$}}
\put(10.5,19.5){\textcolor{black}{$c'_{3,1}$}}
\put(1.25,4){\textcolor{black}{$c'_{2,2}$}}
\put(-14.5,46){\textcolor{mygreen}{$b'_{3,2}$}}
\put(-23,25){\textcolor{mygreen}{$b'_{1,1}$}}
\end{picture}
\begin{picture}(0,0)(7.5,437)
\put(-10,95){\textcolor{blue}{$d'_1$}}
\put(10,68){\textcolor{red}{$a'_{1,1}$}}
\put(1,52){\textcolor{red}{$a'_{3,2}$}}
\put(-14.5,46){\textcolor{mygreen}{$b'_{1,2}$}}
\put(-23,25){\textcolor{mygreen}{$b'_{2,1}$}}
  \put(10.5,19.5){\textcolor{black}{$c'_{1,1}$}}
  \put(1,2){\textcolor{black}{$c'_{3,2}$}}
\end{picture}
\end{picture}
\special{ % 3 ---------------------------------------------------------
psfile=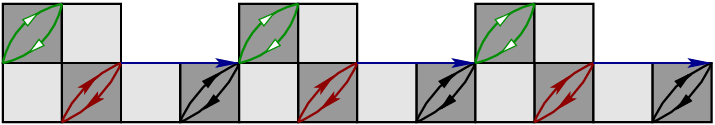
hscale=85
vscale=85
angle=90
hoffset=257.5
voffset=-293 % -300
}
   % 3
\begin{picture}(0,0)(-232,-120) % (-232,-120)
\begin{picture}(0,0)(2.5,244)
\put(10.5,115.5){\textcolor{black}{$c_{1,1}$}}
\put(1,100.5){\textcolor{black}{$c_{3,2}$}}
\put(-10,95){\textcolor{blue}{$d_2$}}
\put(10,68){\textcolor{red}{$a_{2,1}$}}
\put(1,52){\textcolor{red}{$a_{1,2}$}}
\put(10.5,19.5){\textcolor{black}{$c_{2,1}$}}
\put(1.25,4){\textcolor{black}{$c_{1,2}$}}
\put(-14.5,46){\textcolor{mygreen}{$b_{2,2}$}}
\put(-23,25){\textcolor{mygreen}{$b_{3,1}$}}
\end{picture}
\begin{picture}(0,0)(5,341)
\put(10,68){\textcolor{red}{$a_{3,1}$}}
\put(1,52){\textcolor{red}{$a_{2,2}$}}
\put(-10,95){\textcolor{blue}{$d_3$}}
\put(10.5,19.5){\textcolor{black}{$c_{3,1}$}}
\put(1.25,4){\textcolor{black}{$c_{2,2}$}}
\put(-14.5,46){\textcolor{mygreen}{$b_{3,2}$}}
\put(-23,25){\textcolor{mygreen}{$b_{1,1}$}}
\end{picture}
\begin{picture}(0,0)(7.5,437)
\put(10,68){\textcolor{red}{$a_{1,1}$}}
\put(1,52){\textcolor{red}{$a_{3,2}$}}
\put(-10,95){\textcolor{blue}{$d_1$}}
\put(-14.5,46){\textcolor{mygreen}{$b_{1,2}$}}
\put(-23,25){\textcolor{mygreen}{$b_{2,1}$}}
\end{picture}
\end{picture}

\special{ % 4-----------------------------------------------------
psfile=twolines18_0.eps
hscale=85
vscale=85
angle=90
hoffset=350 % 58
voffset=-293
}
   % 4
   %----------------------------------------------
\begin{picture}(0,0)(-320,-140.75) % (-27,-68.5)
\begin{picture}(0,0)(0,97.5)
\put(-24.5,-98){17}
\put(-24.5,-122.5){16}
\put(0,-49){15}
\put(0,-73.5){14}
\put(0,-98){13}
\put(0,-122.5){12}
\end{picture}
\begin{picture}(0,0)(2.5,193)
\put(-24.5,-98){11}
\put(-24.5,-122.5){10}
\put(3,-49){9}
\put(3,-73.5){8}
\put(3,-98){7}
\put(3,-122.5){6}
\end{picture}
\begin{picture}(0,0)(5,290)
\put(-21.5,-98){5}
\put(-21.5,-122.5){4}
\put(3,-49){3}
\put(3,-73.5){2}
\put(3,-98){1}
\put(3,-122.5){0}
\end{picture}
\begin{picture}(0,0)(2.5,144.5)
\put(-16,12){\tiny\textit C}
\put(14.5,-12.75){\tiny\textit F}
\put(-16,-12.75){\tiny\textit A}
\put(-40,-37){\tiny\textit B}
\put(-16,-34){\tiny\textit C}
\put(14.5,-61){\tiny\textit D}
\put(-40,-61){\tiny\textit E}
\put(-40,-85){\tiny\textit B}
\put(-16,-88){\tiny\textit C}
\end{picture}
\begin{picture}(0,0)(5,241.5)
\put(14.5,-12.75){\tiny\textit F}
\put(-16,-12.75){\tiny\textit A}
\put(-40,-37){\tiny\textit B}
\put(-16,-34){\tiny\textit C}
\put(14.5,-61){\tiny\textit D}
\put(-40,-61){\tiny\textit E}
\put(-40,-85){\tiny\textit B}
\put(-16,-88){\tiny\textit C}
\end{picture}
\begin{picture}(0,0)(7.5,338)
\put(14.5,-12.75){\tiny\textit F}
\put(-16,-12.75){\tiny\textit A}
\put(-40,-37){\tiny\textit B}
\put(-16,-34){\tiny\textit C}
\put(14.5,-61){\tiny\textit D}
\put(-40,-61){\tiny\textit E}
\put(-40,-89){\tiny\textit B}
\put(-14,-89){\tiny\textit C}
   % \put(14,-89){\tiny\textit F}
\end{picture}
\begin{picture}(0,0)(-0.5,145)
\put(-12,15){\tiny $0$}
\put(-26,4.5){\rotatebox{180}{\tiny $2$}} % (-30,0.5)
\put(4,4.5){\rotatebox{180}{\tiny $8$}}
\put(-26,-19.5){\rotatebox{180}{\tiny $9$}}
\put(4,-19.5){\rotatebox{180}{\tiny $3$}}
\put(-39,-34){\tiny $10$}
\put(-52,-45){\rotatebox{180}{\tiny $4$}}
\put(4,-45){\rotatebox{180}{\tiny $6$}}
\put(-54,-70){\rotatebox{180}{\tiny $11$}}
\put(4,-70){\rotatebox{180}{\tiny $1$}}
\put(-37,-88){\tiny $5$}
\put(-30,-93){\rotatebox{180}{\tiny $14$}}
\put(4,-93){\rotatebox{180}{\tiny $2$}}
\put(-26,-117){\rotatebox{180}{\tiny $3$}}
\put(4,-117){\rotatebox{180}{\tiny $15$}}
\put(-37,-130){\tiny $4$}
\put(-54,-140.5){\rotatebox{180}{\tiny $16$}}
\put(4,-140.5){\rotatebox{180}{\tiny $0$}}
\put(-50,-165){\rotatebox{180}{\tiny $5$}}
\put(4,-165){\rotatebox{180}{\tiny $13$}}
\put(-39,-185){\tiny $17$}
\put(-26,-189.5){\rotatebox{180}{\tiny $8$}}
\put(4,-189.5){\rotatebox{180}{\tiny $14$}}
\put(-30,-213){\rotatebox{180}{\tiny $15$}}
\put(4,-213){\rotatebox{180}{\tiny $9$}}
\put(-39,-227){\tiny $16$}
\put(-54,-238){\rotatebox{180}{\tiny $10$}}
\put(4,-238){\rotatebox{180}{\tiny $12$}}
\put(-54,-262){\rotatebox{180}{\tiny $17$}}
\put(4,-262){\rotatebox{180}{\tiny $7$}}
\put(-39,-282){\tiny $11$}
\put(-14.5,-282){\tiny $15$}
\end{picture}
\put(-53,-120){$\hat S_1$}
\end{picture}

\vspace*{315pt}

\begin{figure}[bht]
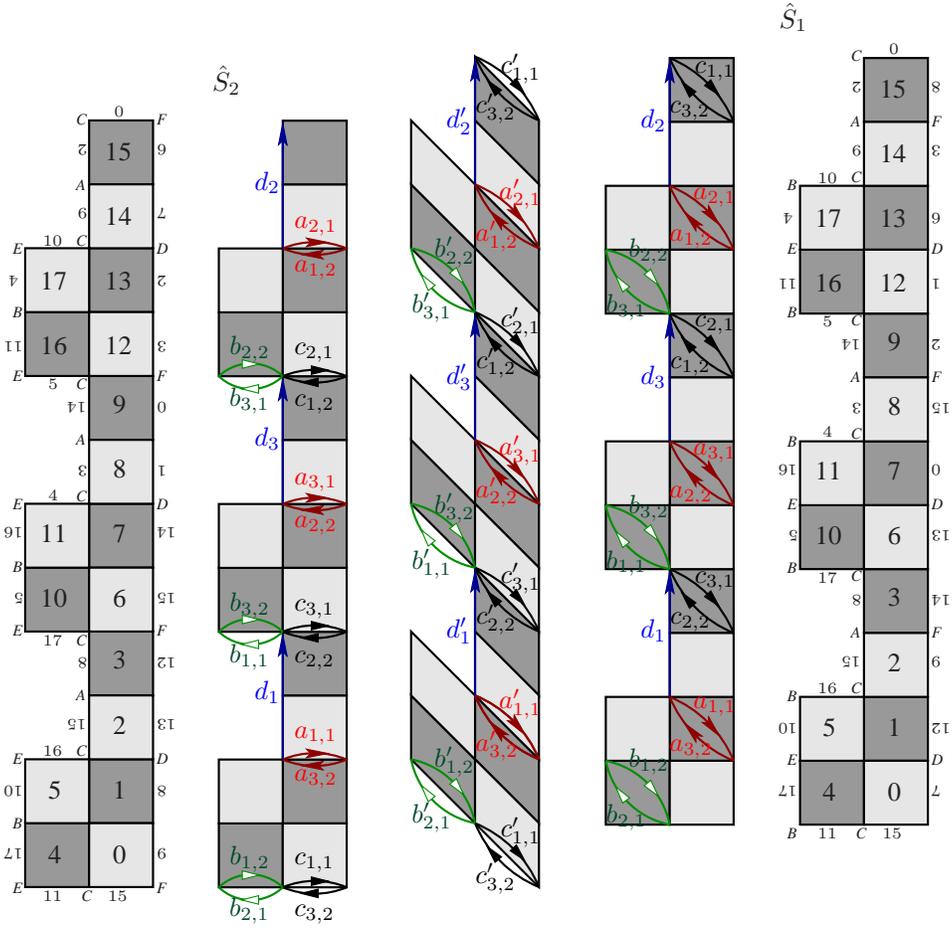

 %
 % Shear of the two-cylinder representative
 %
\caption{
\label{fig:twoline:one:shear}
Two-cylinder surface $\hat S_2$ (left two pictures) is sheared by $h$
(middle  picture);  then  cut  and  reglued  (the picture next to the
right) to produce the two-cylinder surface $\hat S_1$ (the picture on
the right).
}
\end{figure}

% -------------------------------------------------------------------

%\input{matrix_calculation_two_cyl_shear.tex}
\clearpage

\special{% 0
psfile=twolines18_0.eps
hscale=85
vscale=85
angle=90
hoffset=58
voffset=-363
}
   % 0
   %----------------------------------------------
\begin{picture}(0,0)(-27,-68.5) % (-28,-71)
\begin{picture}(0,0)(0,97.5)
\put(-24.5,-98){17}
\put(-24.5,-122.5){16}
\put(0,-49){15}
\put(0,-73.5){14}
\put(0,-98){13}
\put(0,-122.5){12}
\end{picture}
\begin{picture}(0,0)(2.5,193)
\put(-24.5,-98){11}
\put(-24.5,-122.5){10}
\put(3,-49){9}
\put(3,-73.5){8}
\put(3,-98){7}
\put(3,-122.5){6}
\end{picture}
\begin{picture}(0,0)(5,290)
\put(-21.5,-98){5}
\put(-21.5,-122.5){4}
\put(3,-49){3}
\put(3,-73.5){2}
\put(3,-98){1}
\put(3,-122.5){0}
\end{picture}
\begin{picture}(0,0)(2.5,145)
\put(-16,12){\tiny\textit C}
\put(14.5,12){\tiny\textit F}
\put(-16,-12.75){\tiny\textit A}
\put(-40,-37){\tiny\textit E}
\put(-16,-34){\tiny\textit C}
\put(14.5,-37){\tiny\textit D}
\put(-40,-61){\tiny\textit B}
\put(-40,-85){\tiny\textit E}
\put(-16,-88){\tiny\textit C}
\put(14.5,-85){\tiny\textit F}
\end{picture}
\begin{picture}(0,0)(5,241.5)
\put(-16,-12.75){\tiny\textit A}
\put(-40,-37){\tiny\textit E}
\put(-16,-34){\tiny\textit C}
\put(14.5,-37){\tiny\textit D}
\put(-40,-61){\tiny\textit B}
\put(-40,-85){\tiny\textit E}
\put(-16,-88.5){\tiny\textit C}
\put(14.5,-85){\tiny\textit F}
\end{picture}
\begin{picture}(0,0)(7.5,338)
\put(-16,-12.75){\tiny\textit A}
\put(-40,-37){\tiny\textit E}
\put(-16,-34){\tiny\textit C}
\put(14.5,-37){\tiny\textit D}
\put(-40,-61.5){\tiny\textit B}
\put(-40,-86){\tiny\textit E}
\put(-14,-89){\tiny\textit C}
\put(14.5,-86){\tiny\textit F}
\end{picture}
\begin{picture}(0,0)(-0.5,145)
\put(-12,15){\tiny $0$}
\put(-26,4.5){\rotatebox{180}{\tiny $2$}} % (-30,0.5)
\put(4,4.5){\rotatebox{180}{\tiny $6$}}
\put(-26,-19.5){\rotatebox{180}{\tiny $9$}}
\put(4,-19.5){\rotatebox{180}{\tiny $7$}}
\put(-39,-34){\tiny $10$}
\put(-52,-45){\rotatebox{180}{\tiny $4$}}
\put(4,-45){\rotatebox{180}{\tiny $2$}}
\put(-54,-70){\rotatebox{180}{\tiny $11$}}
\put(4,-70){\rotatebox{180}{\tiny $3$}}
\put(-37,-88){\tiny $5$}
\put(-30,-93){\rotatebox{180}{\tiny $14$}}
\put(4,-93){\rotatebox{180}{\tiny $0$}}
\put(-26,-117){\rotatebox{180}{\tiny $3$}}
\put(4,-117){\rotatebox{180}{\tiny $1$}}
\put(-37,-130){\tiny $4$}
\put(-54,-140.5){\rotatebox{180}{\tiny $16$}}
\put(4,-140.5){\rotatebox{180}{\tiny $14$}}
\put(-50,-165){\rotatebox{180}{\tiny $5$}}
\put(4,-165){\rotatebox{180}{\tiny $15$}}
\put(-39,-185){\tiny $17$}
\put(-26,-189.5){\rotatebox{180}{\tiny $8$}}
\put(4,-189.5){\rotatebox{180}{\tiny $12$}}
\put(-30,-213){\rotatebox{180}{\tiny $15$}}
\put(4,-213){\rotatebox{180}{\tiny $13$}}
\put(-39,-227){\tiny $16$}
\put(-54,-238){\rotatebox{180}{\tiny $10$}}
\put(4,-238){\rotatebox{180}{\tiny $8$}}
\put(-54,-262){\rotatebox{180}{\tiny $17$}}
\put(4,-262){\rotatebox{180}{\tiny $9$}}
\put(-39,-282){\tiny $11$}
\put(-14.5,-282){\tiny $15$}
\put(25,25){$\hat S_2$}
\end{picture}
\end{picture}

\special{% 1
psfile=twolines18_1.eps
hscale=85
vscale=85
angle=90
hoffset=131
voffset=-359.5 % -408.5
}
   % 1
\begin{picture}(0,0)(-96,-86) % (-93.5,-37)
\begin{picture}(0,0)(2.5,244)
   \put(-10,81){\textcolor{blue}{$d_2$}}
\put(5,66.5){\textcolor{red}{$a_{2,1}$}}
\put(5,50){\textcolor{red}{$a_{1,2}$}}
\put(5,18.5){\textcolor{black}{$c_{2,1}$}}
\put(5,-1){\textcolor{black}{$c_{1,2}$}}
\put(-19.5,18.5){\textcolor{mygreen}{$b_{2,2}$}}
\put(-19.55,-1){\textcolor{mygreen}{$b_{3,1}$}}
\end{picture}
\begin{picture}(0,0)(5,341)
   \put(-10,81){\textcolor{blue}{$d_3$}}
\put(5,66.5){\textcolor{red}{$a_{3,1}$}}
\put(5,50){\textcolor{red}{$a_{2,2}$}}
\put(5,18.5){\textcolor{black}{$c_{3,1}$}}
\put(5,-1){\textcolor{black}{$c_{2,2}$}}
\put(-19.5,18.5){\textcolor{mygreen}{$b_{3,2}$}}
\put(-19.55,-1){\textcolor{mygreen}{$b_{1,1}$}}
\end{picture}
\begin{picture}(0,0)(7.5,437)
   \put(-10,81){\textcolor{blue}{$d_1$}}
\put(5,66.5){\textcolor{red}{$a_{1,1}$}}
\put(5,50){\textcolor{red}{$a_{3,2}$}}
\put(5,18.5){\textcolor{black}{$c_{1,1}$}}
\put(5,-1){\textcolor{black}{$c_{3,2}$}}
\put(-19.5,18.5){\textcolor{mygreen}{$b_{1,2}$}}
\put(-19.55,-1){\textcolor{mygreen}{$b_{2,1}$}}
\end{picture}
\end{picture}
\special{ % 2 ---------------------------------------------------------
psfile=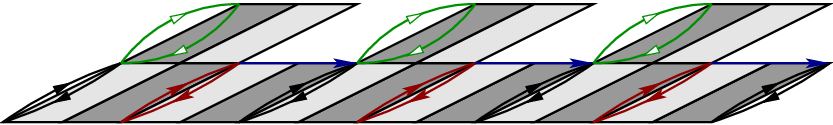
hscale=85
vscale=85
angle=90
hoffset=189
voffset=-341.5 % -340
}
   % 2
\begin{picture}(0,0)(-163.5,-120) % (-163.5,-120)
\begin{picture}(0,0)(2.5,244)
   \put(-9,97){\textcolor{blue}{$d'_2$}}
\put(7,65){\textcolor{red}{$a'_{2,1}$}}
\put(3,30){\textcolor{red}{$a'_{1,2}$}}
\put(9,11){\textcolor{black}{$c'_{2,1}$}}
\put(2,-15){\textcolor{black}{$c'_{1,2}$}}
\put(-14.5,65){\textcolor{mygreen}{$b'_{2,2}$}}
\put(-23,30){\textcolor{mygreen}{$b'_{3,1}$}}
\end{picture}
\begin{picture}(0,0)(5,341)
   \put(-9,97){\textcolor{blue}{$d'_3$}}
\put(7,65){\textcolor{red}{$a'_{3,1}$}}
\put(3,30){\textcolor{red}{$a'_{2,2}$}}
\put(9,11){\textcolor{black}{$c'_{3,1}$}}
\put(2,-15){\textcolor{black}{$c'_{2,2}$}}
\put(-14.5,65){\textcolor{mygreen}{$b'_{3,2}$}}
\put(-22,30){\textcolor{mygreen}{$b'_{1,1}$}}
\end{picture}
\begin{picture}(0,0)(7.5,437)
   \put(-9,97){\textcolor{blue}{$d'_1$}}
\put(7,64){\textcolor{red}{$a'_{1,1}$}}
\put(3,30){\textcolor{red}{$a'_{3,2}$}}
\put(9,11){\textcolor{black}{$c'_{1,1}$}}
\put(2,-15){\textcolor{black}{$c'_{3,2}$}}
\put(-14.5,64){\textcolor{mygreen}{$b'_{1,2}$}}
\put(-22,30){\textcolor{mygreen}{$b'_{2,1}$}}
\end{picture}
\end{picture}
\special{ % 3 ---------------------------------------------------------
psfile=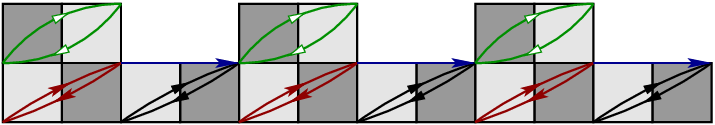
hscale=85
vscale=85
angle=90
hoffset=257.5
voffset=-294 % -300
}
   % 3
\begin{picture}(0,0)(-232,-120) % (-232,-120)
\begin{picture}(0,0)(2.5,244)
   \put(-9,96){\textcolor{blue}{$d'_2$}}
\put(9,110){\textcolor{black}{$c'_{1,1}$}}
\put(2,84){\textcolor{black}{$c'_{3,2}$}}
\put(7,65){\textcolor{red}{$a_{2,1}$}}
\put(3,30){\textcolor{red}{$a_{1,2}$}}
\put(9,11){\textcolor{black}{$c_{2,1}$}}
\put(2,-15){\textcolor{black}{$c_{1,2}$}}
\put(-14.5,65){\textcolor{mygreen}{$b_{2,2}$}}
\put(-23,30){\textcolor{mygreen}{$b_{3,1}$}}
\end{picture}
\begin{picture}(0,0)(5,341)
   \put(-9,96){\textcolor{blue}{$d'_3$}}
\put(7,65){\textcolor{red}{$a'_{3,1}$}}
\put(3,30){\textcolor{red}{$a'_{2,2}$}}
\put(9,11){\textcolor{black}{$c'_{3,1}$}}
\put(2,-15){\textcolor{black}{$c'_{2,2}$}}
\put(-14.5,65){\textcolor{mygreen}{$b'_{3,2}$}}
\put(-22,30){\textcolor{mygreen}{$b'_{1,1}$}}
\end{picture}
\begin{picture}(0,0)(7.5,437)
   \put(-9,96){\textcolor{blue}{$d'_1$}}
\put(7,64){\textcolor{red}{$a'_{1,1}$}}
\put(3,30){\textcolor{red}{$a'_{3,2}$}}
\put(-14.5,64){\textcolor{mygreen}{$b'_{1,2}$}}
\put(-22,30){\textcolor{mygreen}{$b'_{2,1}$}}
\end{picture}
\end{picture}
\special{ % 4-----------------------------------------------------
psfile=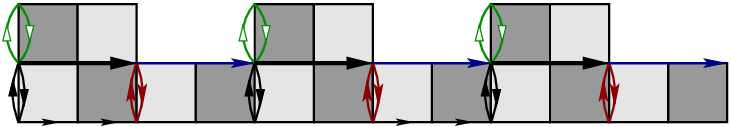
hscale=85
vscale=85
angle=90
hoffset=326 % 356
voffset=-300.5 % -245
}
   % 4
\begin{picture}(0,0)(-302,-132.5) % (-297.5,-134.5)
\begin{picture}(0,0)(2.5,244)
   \put(-10.5,82){\textcolor{blue}{$d_2$}}
   \put(-10,38){\textcolor{black}{$s_2$}}
\put(5,66.5){\textcolor{red}{$a_{2,1}$}}
\put(5,50){\textcolor{red}{$a_{1,2}$}}
\put(5,18.5){\textcolor{black}{$c_{2,1}$}}
\put(5,-1){\textcolor{black}{$c_{1,2}$}}
\put(-19.5,18.5){\textcolor{mygreen}{$b_{2,2}$}}
\put(-19.55,-1){\textcolor{mygreen}{$b_{3,1}$}}
\end{picture}
\begin{picture}(0,0)(5,341)
   \put(25, 94){\textcolor{black}{$9_b$}}
   \put(-10.5,82){\textcolor{blue}{$d_3$}}
   \put(25, 70){\textcolor{black}{$8_b$}}
   \put(-10,38){\textcolor{black}{$s_3$}}
\put(5,66.5){\textcolor{red}{$a_{3,1}$}}
\put(5,50){\textcolor{red}{$a_{2,2}$}}
\put(5,18.5){\textcolor{black}{$c_{3,1}$}}
\put(5,-1){\textcolor{black}{$c_{2,2}$}}
\put(-19.5,18.5){\textcolor{mygreen}{$b_{3,2}$}}
\put(-19.55,-1){\textcolor{mygreen}{$b_{1,1}$}}
\end{picture}
\begin{picture}(0,0)(7.5,437)
   \put(-10.5,82){\textcolor{blue}{$d_1$}}
   \put(-10,38){\textcolor{black}{$s_1$}}
   \put(25, 44){\textcolor{black}{$1_b$}}
\put(5,66.5){\textcolor{red}{$a_{1,1}$}}
\put(5,50){\textcolor{red}{$a_{3,2}$}}
   \put(25,20){\textcolor{black}{$0_b$}}
\put(5,18.5){\textcolor{black}{$c_{1,1}$}}
\put(5,-1){\textcolor{black}{$c_{3,2}$}}
\put(-19.5,18.5){\textcolor{mygreen}{$b_{1,2}$}}
\put(-19.55,-1){\textcolor{mygreen}{$b_{2,1}$}}
\end{picture}
\put(-38,-126){$\hat S_2$}
\end{picture}

\vspace*{350pt}

\begin{figure}[bht]
 %
 % Double shear of the two-cylinder representative $\hat S_2$
 %
\caption{
\label{fig:twoline:shear}
Two-cylinder  surface  $\hat  S_2$  (left two pictures) is sheared by
$h^2$ (middle picture); then cut and reglued (the picture next to the
right) to fit finally the original surface $\hat S_2$ (the picture on
the right).
}
\end{figure}

% -------------------------------------------------------------------

%\input{matrix_calculation_two_cyl_rotate.tex}
\clearpage

\special{ % 0 ---------------------------------------------------------
psfile=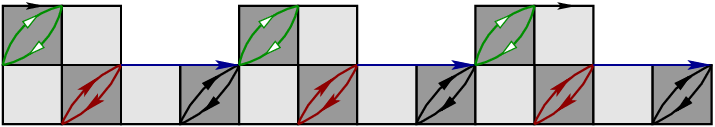
hscale=85
vscale=85
angle=90
hoffset=57.5
voffset=-302 % -293
}
   % 0
\begin{picture}(0,0)(-22.5,-121) % (-32,-120)
\begin{picture}(0,0)(2.5,244)
\put(10.5,115.5){\textcolor{black}{$c_{1,1}$}}
\put(1,100.5){\textcolor{black}{$c_{3,2}$}}
\put(-10,95){\textcolor{blue}{$d_2$}}
\put(10,68){\textcolor{red}{$a_{2,1}$}}
   \put(-38,58){$17_t$}
   \put(-9.5,64){$s_2$}
\put(1,52){\textcolor{red}{$a_{1,2}$}}
\put(10.5,19.5){\textcolor{black}{$c_{2,1}$}}
\put(1.25,4){\textcolor{black}{$c_{1,2}$}}
\put(-14.5,46){\textcolor{mygreen}{$b_{2,2}$}}
\put(-23,25){\textcolor{mygreen}{$b_{3,1}$}}
\end{picture}
\begin{picture}(0,0)(5,341)
\put(10,68){\textcolor{red}{$a_{3,1}$}}
\put(1,52){\textcolor{red}{$a_{2,2}$}}
   \put(-9.5,64){$s_3$}
\put(-10,95){\textcolor{blue}{$d_3$}}
\put(10.5,19.5){\textcolor{black}{$c_{3,1}$}}
\put(1.25,4){\textcolor{black}{$c_{2,2}$}}
\put(-14.5,46){\textcolor{mygreen}{$b_{3,2}$}}
\put(-23,25){\textcolor{mygreen}{$b_{1,1}$}}
\end{picture}
\begin{picture}(0,0)(7.5,437)
\put(10,68){\textcolor{red}{$a_{1,1}$}}
\put(1,52){\textcolor{red}{$a_{3,2}$}}
   \put(-33.5,34){$4_t$}
   \put(-9.5,64){$s_1$}
\put(-10,95){\textcolor{blue}{$d_1$}}
\put(-14.5,46){\textcolor{mygreen}{$b_{1,2}$}}
\put(-23,25){\textcolor{mygreen}{$b_{2,1}$}}
\end{picture}
\put(34,-111){$\hat S_1$} % (-9,17)
\end{picture}

\special{ % 1-----------------------------------------------------
psfile=twolines18_0.eps
hscale=85
vscale=85
angle=90
hoffset=130.5 % 150
voffset=-292 %-293
}
   % 1
   %----------------------------------------------
\begin{picture}(0,0)(-99.5,-142) % (-27,-68.5)
\begin{picture}(0,0)(0,97.5)
\put(-24.5,-98){17}
\put(-24.5,-122.5){16}
\put(0,-49){15}
\put(0,-73.5){14}
\put(0,-98){13}
\put(0,-122.5){12}
\end{picture}
\begin{picture}(0,0)(2.5,193)
\put(-24.5,-98){11}
\put(-24.5,-122.5){10}
\put(3,-49){9}
\put(3,-73.5){8}
\put(3,-98){7}
\put(3,-122.5){6}
\end{picture}
\begin{picture}(0,0)(5,290)
\put(-21.5,-98){5}
\put(-21.5,-122.5){4}
\put(3,-49){3}
\put(3,-73.5){2}
\put(3,-98){1}
\put(3,-122.5){0}
\end{picture}
\begin{picture}(0,0)(2.5,144.5)
\put(-16,12){\tiny\textit C}
\put(14.5,-12.75){\tiny\textit F}
\put(-16,-12.75){\tiny\textit A}
\put(-40,-37){\tiny\textit B}
\put(-16,-34){\tiny\textit C}
\put(14.5,-61){\tiny\textit D}
\put(-40,-61){\tiny\textit E}
\put(-40,-85){\tiny\textit B}
\put(-16,-88){\tiny\textit C}
\end{picture}
\begin{picture}(0,0)(5,241.5)
\put(14.5,-12.75){\tiny\textit F}
\put(-16,-12.75){\tiny\textit A}
\put(-40,-37){\tiny\textit B}
\put(-16,-34){\tiny\textit C}
\put(14.5,-61){\tiny\textit D}
\put(-40,-61){\tiny\textit E}
\put(-40,-85){\tiny\textit B}
\put(-16,-88){\tiny\textit C}
\end{picture}
\begin{picture}(0,0)(7.5,338)
\put(14.5,-12.75){\tiny\textit F}
\put(-16,-12.75){\tiny\textit A}
\put(-40,-37){\tiny\textit B}
\put(-16,-34){\tiny\textit C}
\put(14.5,-61){\tiny\textit D}
\put(-40,-61){\tiny\textit E}
\put(-40,-89){\tiny\textit B}
\put(-14,-89){\tiny\textit C}
%\put(14,-89){\tiny\textit F}
\end{picture}
\begin{picture}(0,0)(-0.5,145)
\put(-12,15){\tiny $0$}
\put(-26,4.5){\rotatebox{180}{\tiny $2$}} % (-30,0.5)
\put(4,4.5){\rotatebox{180}{\tiny $8$}}
\put(-26,-19.5){\rotatebox{180}{\tiny $9$}}
\put(4,-19.5){\rotatebox{180}{\tiny $3$}}
\put(-39,-34){\tiny $10$}
\put(-52,-45){\rotatebox{180}{\tiny $4$}}
\put(4,-45){\rotatebox{180}{\tiny $6$}}
\put(-54,-70){\rotatebox{180}{\tiny $11$}}
\put(4,-70){\rotatebox{180}{\tiny $1$}}
\put(-37,-88){\tiny $5$}
\put(-30,-93){\rotatebox{180}{\tiny $14$}}
\put(4,-93){\rotatebox{180}{\tiny $2$}}
\put(-26,-117){\rotatebox{180}{\tiny $3$}}
\put(4,-117){\rotatebox{180}{\tiny $15$}}
\put(-37,-130){\tiny $4$}
\put(-54,-140.5){\rotatebox{180}{\tiny $16$}}
\put(4,-140.5){\rotatebox{180}{\tiny $0$}}
\put(-50,-165){\rotatebox{180}{\tiny $5$}}
\put(4,-165){\rotatebox{180}{\tiny $13$}}
\put(-39,-185){\tiny $17$}
\put(-26,-189.5){\rotatebox{180}{\tiny $8$}}
\put(4,-189.5){\rotatebox{180}{\tiny $14$}}
\put(-30,-213){\rotatebox{180}{\tiny $15$}}
\put(4,-213){\rotatebox{180}{\tiny $9$}}
\put(-39,-227){\tiny $16$}
\put(-54,-238){\rotatebox{180}{\tiny $10$}}
\put(4,-238){\rotatebox{180}{\tiny $12$}}
\put(-54,-262){\rotatebox{180}{\tiny $17$}}
\put(4,-262){\rotatebox{180}{\tiny $7$}}
\put(-39,-282){\tiny $11$}
\put(-14.5,-282){\tiny $15$}
\end{picture}
\end{picture}

%---------------------------------------------------
\special{% 2
psfile=twolines18_0.eps
hscale=85
vscale=85
angle=180
hoffset=324 % 305
voffset=-332 % -452
}
   %----------------------------------------------
   % 2
\begin{picture}(0,0)(-52,339) % (-33,459)
\put(-21.5,0.25){\rotatebox{90}{15}}
\put(2.5,0.25){\rotatebox{90}{14}}
\put(27,0.25){\rotatebox{90}{13}}
\put(49.5,0.25){\rotatebox{90}{12}}
\put(75,2.25){\rotatebox{90}{9}}
\put(99.5,2.25){\rotatebox{90}{8}}
\put(123,2.25){\rotatebox{90}{7}}
\put(146.5,2.25){\rotatebox{90}{6}}
\put(171,2.25){\rotatebox{90}{3}}
\put(197,2.25){\rotatebox{90}{2}}
\put(220,2.25){\rotatebox{90}{1}}
\put(244.5,2.25){\rotatebox{90}{0}}
\put(27,-24.25){\rotatebox{90}{17}}
\put(49.5,-24.25){\rotatebox{90}{16}}
\put(123,-24.25){\rotatebox{90}{11}}
\put(146.5,-24.25){\rotatebox{90}{10}}
\put(220,-21.25){\rotatebox{90}{5}}
\put(244.5,-21.25){\rotatebox{90}{4}}
   % ------------------------------------
\put(-20,21){\rotatebox{-90}{\tiny 8}}
\put(3.5,21){\rotatebox{-90}{\tiny 3}}
\put(28.5,21){\rotatebox{-90}{\tiny 6}}
\put(51,21){\rotatebox{-90}{\tiny 1}}
\put(76.5,21){\rotatebox{-90}{\tiny 2}}
\put(100.5,24){\rotatebox{-90}{\tiny 15}}
\put(124.5,21){\rotatebox{-90}{\tiny 0}}
\put(149,24){\rotatebox{-90}{\tiny 13}}
\put(172,24){\rotatebox{-90}{\tiny 14}}
\put(198,21){\rotatebox{-90}{\tiny 9}}
\put(221,24){\rotatebox{-90}{\tiny 12}}
\put(246,21){\rotatebox{-90}{\tiny 7}}
\put(-35.5,3){\rotatebox{90}{\tiny 0}}
\put(261,1){\rotatebox{90}{\tiny 15}}
\put(-21.5,-9){\rotatebox{-90}{\tiny 2}}
\put(3.5,-9){\rotatebox{-90}{\tiny 9}}
\put(76.5,-8.5){\rotatebox{-90}{\tiny 14}}
\put(100.5,-9){\rotatebox{-90}{\tiny 3}}
\put(172,-9){\rotatebox{-90}{\tiny 8}}
\put(198,-8.5){\rotatebox{-90}{\tiny 15}}
\put(13,-24){\rotatebox{90}{\tiny 10}}
\put(67.5,-22){\rotatebox{90}{\tiny 5}}
\put(110,-22){\rotatebox{90}{\tiny 4}}
\put(164,-24){\rotatebox{90}{\tiny 17}}
\put(206,-24){\rotatebox{90}{\tiny 16}}
\put(261,-24){\rotatebox{90}{\tiny 11}}
\put(28.5,-33){\rotatebox{-90}{\tiny 4}}
\put(51,-32.5){\rotatebox{-90}{\tiny 11}}
\put(124.5,-32.5){\rotatebox{-90}{\tiny 16}}
\put(149,-33){\rotatebox{-90}{\tiny 5}}
\put(221,-32.5){\rotatebox{-90}{\tiny 10}}
\put(246,-32.5){\rotatebox{-90}{\tiny 17}}
\begin{picture}(0,0)(-53,-3)
\put(-61,14.5){\tiny\textit F}
\put(-12.75,14.5){\tiny\textit D}
\put(-88,-16){\tiny\textit C}
\put(-61.5,-16){\tiny\textit A}
\put(-40,-16){\tiny\textit C}
\put(14,-16){\tiny\textit C}
\put(-39,-40){\tiny\textit B}
\put(-12.75,-40){\tiny\textit E}
\put(14,-40){\tiny\textit B}
\end{picture}
\begin{picture}(0,0)(-147.5,-3)
\put(-61,14.5){\tiny\textit F}
\put(-12.75,14.5){\tiny\textit D}
\put(-61.5,-16){\tiny\textit A}
\put(-40,-16){\tiny\textit C}
\put(14,-16){\tiny\textit C}
\put(-39,-40){\tiny\textit B}
\put(-12.75,-40){\tiny\textit E}
\put(13.5,-40){\tiny\textit B}
\end{picture}
\begin{picture}(0,0)(-242,-3)
\put(-61,14.5){\tiny\textit F}
\put(-12.75,14.5){\tiny\textit D}
\put(-61.5,-16){\tiny\textit A}
\put(-40,-16){\tiny\textit C}
\put(14,-13){\tiny\textit C}
\put(-39,-40){\tiny\textit B}
\put(-12.75,-40){\tiny\textit E}
\put(14,-40){\tiny\textit B}
\end{picture}
\end{picture}

\special{% 3
psfile=twolines18_second_surface.eps
hscale=85
vscale=85
angle=90
angle=180
hoffset=324
voffset=-395 % -370 -25
}
   % 3
   %----------------------------------------------
\begin{picture}(0,0)(-43,400) % (-30,506)
\begin{picture}(0,0)(0,0)
\put(-26,9){\textcolor{black}{$c'_{1,1}$}}
\put(-7,-6){\textcolor{black}{$c'_{3,2}$}}
\put(-1,-18.5){\textcolor{blue}{$d'_2$}}
\put(21,9){\textcolor{red}{$a'_{2,1}$}}
\put(40.5,-6){\textcolor{red}{$a'_{1,2}$}}
\put(70.75,9){\textcolor{black}{$c'_{2,1}$}}
\put(89.5,-6){\textcolor{black}{$c'_{1,2}$}}
\put(42.5,-18.5){\textcolor{mygreen}{$b'_{2,2}$}}
\put(67,-30.5){\textcolor{mygreen}{$b'_{3,1}$}}
\end{picture}
\begin{picture}(0,0)(-94.5,0)
\put(-1,-18.5){\textcolor{blue}{$d'_3$}}
\put(21,9){\textcolor{red}{$a'_{3,1}$}}
\put(40.25,-6){\textcolor{red}{$a'_{2,2}$}}
\put(70.75,9){\textcolor{black}{$c'_{3,1}$}}
\put(89.5,-6){\textcolor{black}{$c'_{2,2}$}}
\put(42.5,-18.5){\textcolor{mygreen}{$b'_{3,2}$}}
\put(67,-30.5){\textcolor{mygreen}{$b'_{1,1}$}}
\end{picture}
\begin{picture}(0,0)(-189,0)
\put(-1,-18.5){\textcolor{blue}{$d'_1$}}
\put(20.75,9){\textcolor{red}{$a'_{1,1}$}}
\put(40,-6){\textcolor{red}{$a'_{3,2}$}}
\put(42,-18.5){\textcolor{mygreen}{$b'_{1,2}$}}
\put(66,-30.5){\textcolor{mygreen}{$b'_{2,1}$}}
\end{picture}
\end{picture}

\special{ % 4 ---------------------------------------------------------
psfile=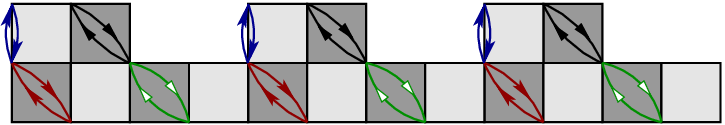
hscale=85
vscale=85
angle=90
hoffset=276.5 % 276.5
voffset=-259.5 % -256 -3.5
}
   % 4
\begin{picture}(0,0)(-242,-165.7) % (-242,-169)
\begin{picture}(0,0)(2.5,244)
\put(1,99.5){\textcolor{mygreen}{$b'_{1,1}$}}
\put(10,74){\textcolor{mygreen}{$b'_{3,2}$}}
\put(-4.5,47){\textcolor{red}{$a'_{2,1}$}}
\put(10,30){\textcolor{red}{$a'_{1,2}$}}
\put(-23.5,74){\textcolor{black}{$c'_{2,2}$}}
\put(-9,60){\textcolor{black}{$c'_{3,1}$}}
\put(-20,35){\textcolor{blue}{$d'_{3,2}$}}
\put(-20,18.5){\textcolor{blue}{$d'_{2,1}$}}
\end{picture}
\begin{picture}(0,0)(5,341)
\put(1,99.5){\textcolor{mygreen}{$b'_{1,1}$}}
\put(10,74){\textcolor{mygreen}{$b'_{3,2}$}}
\put(-4.5,47){\textcolor{red}{$a'_{3,1}$}}
\put(10,30){\textcolor{red}{$a'_{2,2}$}}
\put(-23.5,74){\textcolor{black}{$c'_{3,2}$}}
\put(-9,60){\textcolor{black}{$c'_{1,1}$}}
\put(-20,35){\textcolor{blue}{$d'_{1,2}$}}
\put(-20,18.5){\textcolor{blue}{$d'_{3,1}$}}
\end{picture}
\begin{picture}(0,0)(7.5,437)
\put(1,99.5){\textcolor{mygreen}{$b'_{1,1}$}}
\put(10,74){\textcolor{mygreen}{$b'_{3,2}$}}
\put(-4.5,47){\textcolor{red}{$a'_{1,1}$}}
\put(10,30){\textcolor{red}{$a'_{3,2}$}}
\put(-23.5,74){\textcolor{black}{$c'_{1,2}$}}
\put(-9,60){\textcolor{black}{$c'_{2,1}$}}
\put(-20,35){\textcolor{blue}{$d'_{2,2}$}}
\put(-20,18.5){\textcolor{blue}{$d'_{1,1}$}}
\end{picture}
\end{picture}

\special{ % 5-----------------------------------------------------
psfile=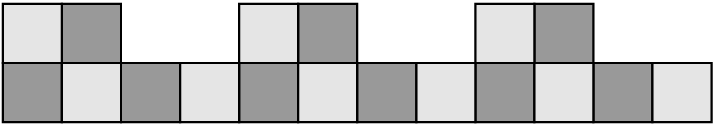
hscale=85
vscale=85
angle=90
hoffset=350  % 350
voffset=-244 % -293
}
   % 5
   %----------------------------------------------
\begin{picture}(0,0)(-320,-189.75) % (-320,-140.75)
\begin{picture}(0,0)(-2.5,98) % (0,97.5)
\put(-24.5,-92){\rotatebox{-90}{3}}
\put(-24.5,-123){\rotatebox{90}{14}}
\put(0,-47){\rotatebox{90}{5}}
\put(0,-64){\rotatebox{-90}{10}}
\put(0,-92){\rotatebox{-90}{6}}
\put(0,-122.5){\rotatebox{90}{13}}
\end{picture}
\begin{picture}(0,0)(0,196)
\put(-24.5,-87){\rotatebox{-90}{15}}
\put(-24.5,-119){\rotatebox{90}{8}}
\put(0,-49){\rotatebox{90}{17}}
\put(0,-66){\rotatebox{-90}{4}}
\put(0,-89.5){\rotatebox{-90}{0}}
\put(0,-119){\rotatebox{90}{7}}
\end{picture}
\begin{picture}(0,0)(2.5,290.5)
\put(-24.5,-92){\rotatebox{-90}{9}}
\put(-24.5,-121){\rotatebox{90}{2}}
\put(0,-51){\rotatebox{90}{11}}
\put(0,-65){\rotatebox{-90}{16}}
\put(0,-90){\rotatebox{-90}{12}}
\put(0,-121){\rotatebox{90}{1}}
\end{picture}
\begin{picture}(0,0)(2.5,144.5)
\put(-16,12){\tiny\textit C}
\put(-16,-12.75){\tiny\textit B}
\put(14.5,-12.75){\tiny\textit E}
\put(-40,-37){\tiny\textit A}
\put(-16,-34){\tiny\textit C}
\put(-40,-61){\tiny\textit F}
\put(14.5,-61){\tiny\textit D}
\put(-40,-85){\tiny\textit A}
\put(-16,-88.5){\tiny\textit C}
\end{picture}
\begin{picture}(0,0)(5,241.5)
\put(-16,-12.75){\tiny\textit B}
\put(14.5,-12.75){\tiny\textit E}
\put(-40,-37){\tiny\textit A}
\put(-16,-34){\tiny\textit C}
\put(-40,-61){\tiny\textit F}
\put(14.5,-61){\tiny\textit D}
\put(-40,-85){\tiny\textit A}
\put(-16,-88){\tiny\textit C}
\end{picture}
\begin{picture}(0,0)(7.5,338)
\put(-16,-12.75){\tiny\textit B}
\put(14.5,-12.75){\tiny\textit E}
\put(-40,-37){\tiny\textit A}
\put(-16,-34){\tiny\textit C}
\put(-40,-61){\tiny\textit F}
\put(14.5,-61){\tiny\textit D}
\put(-40,-89){\tiny\textit A}
\put(-14,-89){\tiny\textit C}
\end{picture}
\begin{picture}(0,0)(-0.25,145)
\put(-12,15){\rotatebox{90}{\tiny $1$}}
\put(-26,-1.5){\rotatebox{90}{\tiny $16$}} % (-30,0.5)
\put(4,0.5){\rotatebox{90}{\tiny $4$}}
\put(-26,-19.5){\rotatebox{-90}{\tiny $17$}}
\put(4,-19.5){\rotatebox{-90}{\tiny $11$}}
\put(-37,-34){\rotatebox{90}{\tiny $8$}}
\put(-51,-45){\rotatebox{-90}{\tiny $2$}}
\put(4,-45){\rotatebox{-90}{\tiny $7$}}
\put(-51,-75){\rotatebox{90}{\tiny $15$}}
\put(4,-75){\rotatebox{90}{\tiny $12$}}
\put(-37,-84){\rotatebox{-90}{\tiny $9$}}
\put(-26,-99){\rotatebox{90}{\tiny $10$}}
\put(4,-99){\rotatebox{90}{\tiny $16$}}
\put(-26,-115){\rotatebox{-90}{\tiny $11$}}
\put(4,-117){\rotatebox{-90}{\tiny $5$}}
\put(-37,-130){\rotatebox{90}{\tiny $2$}}
\put(-51,-140){\rotatebox{-90}{\tiny $14$}}
\put(4,-141.5){\rotatebox{-90}{\tiny $1$}}
\put(-51,-170){\rotatebox{90}{\tiny $9$}}
\put(4,-170){\rotatebox{90}{\tiny $6$}}
\put(-37,-181){\rotatebox{-90}{\tiny $3$}}
\put(-26,-194.5){\rotatebox{90}{\tiny $4$}}
\put(4,-196.5){\rotatebox{90}{\tiny $10$}}
\put(-26,-215){\rotatebox{-90}{\tiny $5$}}
\put(4,-213){\rotatebox{-90}{\tiny $17$}}
\put(-37,-227){\rotatebox{90}{\tiny $14$}}
\put(-51,-238){\rotatebox{-90}{\tiny $8$}}
\put(4,-238){\rotatebox{-90}{\tiny $13$}}
\put(-51,-267){\rotatebox{90}{\tiny $3$}}
\put(4,-267){\rotatebox{90}{\tiny $0$}}
\put(-37,-277){\rotatebox{-90}{\tiny $15$}}
\put(-11,-282){\rotatebox{90}{\tiny $5$}}
\put(-50,24){$\hat S_1$}
\end{picture}
\end{picture}

\vspace*{405pt}

\begin{figure}[bht]
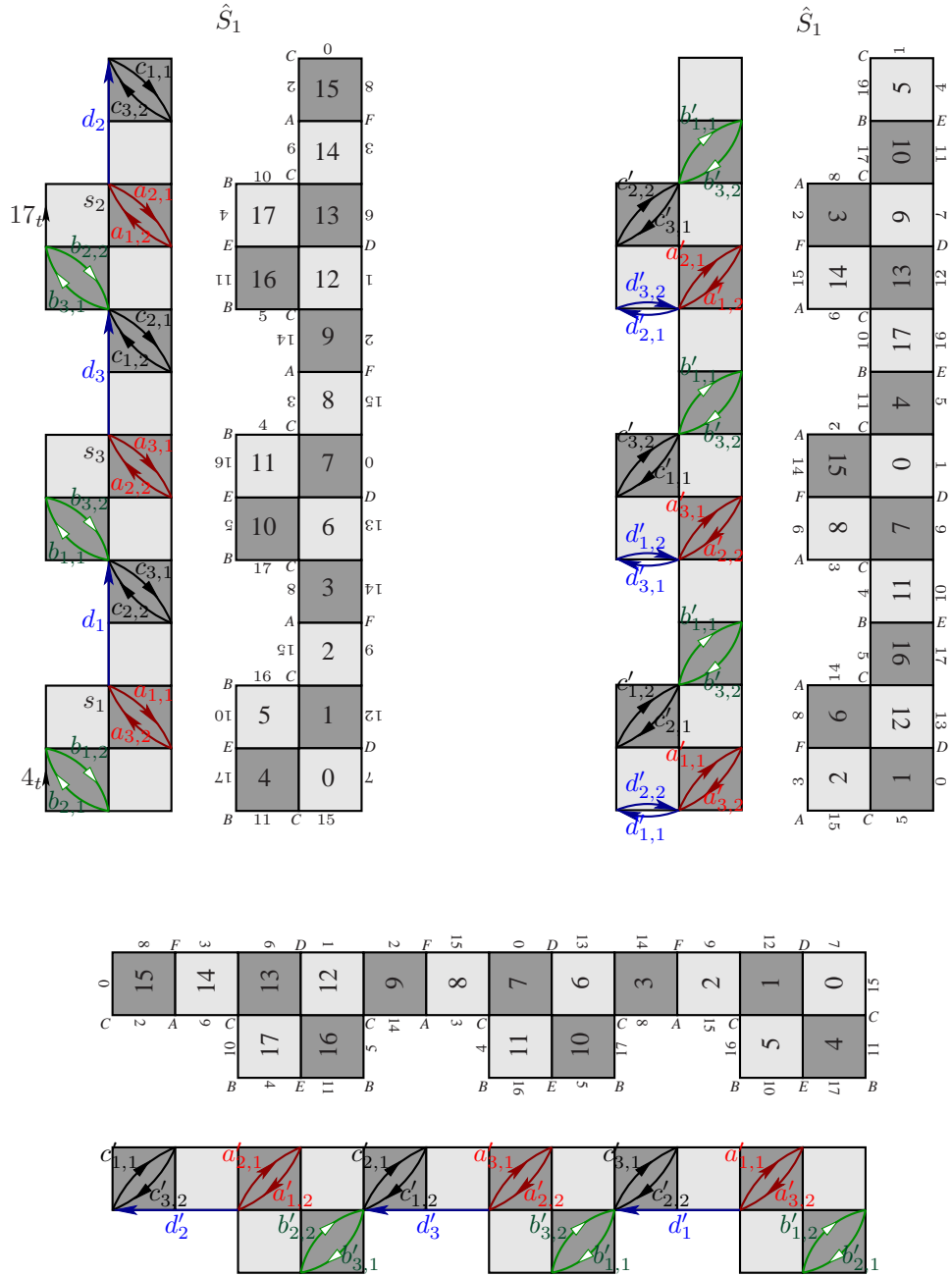

 %
 % Rotation of the two-cylinder surface
 %
\caption{
\label{fig:twoline:rotate}
Rotation  $r_1:\hat  S_1\to \hat S_1$. The two-cylinder surface
$\hat S_1$ (left two pictures) is rotated by $\pi/2$ counterclockwise
(bottom  two  pictures)  then  cut  and  reglued  to  fit the initial
pattern. }
\end{figure}

% -------------------------------------------------------------------

\clearpage

\end{document}